\providecommand{\tabularnewline}{\\}
\DeclareRobustCommand{\lyxsout}[1]{\ifx\\#1\else\sout{#1}\fi}
\def\th@exercise{%
  \normalfont 
  \thm@headpunct{:}%
}
\title{Newtheorem and theoremstyle test}
\author{Michael Downes\\updated by Barbara Beeton}
\newtheorem{thm}{Theorem}[section]
\newtheorem{cor}[thm]{Corollary}
\newtheorem{lem}[thm]{Lemma}
\theoremstyle{remark}
\newtheorem*{rmk}{Remark}
\theoremstyle{plain}
\newtheorem{Def}{Definition}
\newtheoremstyle{note}
  {3pt}
  {3pt}
  {}
  {}
  {\itshape}
  {:}
  {.5em}
  {}
\theoremstyle{note}
\newtheoremstyle{citing}
  {3pt}
  {3pt}
  {\itshape}
  {}
  {\bfseries}
  {.}
  {.5em}
  {\thmnote{#3}}
\theoremstyle{citing}
\newtheoremstyle{break}
  {9pt}
  {9pt}
  {\itshape}
  {}
  {\bfseries}
  {.}
  {\newline}
  {}
\theoremstyle{break}
\theoremstyle{exercise}
\theoremstyle{plain}
\let\lvert=|\let\rvert=|
\begin{document}

\title{Constant mean curvature surfaces of Delaunay type along a closed
geodesic}

\author{Shiguang Ma\thanks{School of Mathematical Sciences and LPMC, Nankai University, Tianjin,
P.R.China, 300071, msgdyx8741@nankai.edu.cn.}}
\maketitle
\begin{abstract}
In this paper, we construct Delaunay type constant mean curvature
surfaces along a nondegenerate closed geodesic in a 3-dimensional
Riemannian manifold. 
\end{abstract}
\tableofcontents{}

\section{Introduction}

\subsection{The history and the main result}

Constant mean curvature (CMC) surfaces are a class of important submanifolds.
Let $(M^{m+1},g)$ be a Riemannian manifold of $m+1$ dimension. We
consider the embedded CMC hypersurfaces. In early 1990s, R. Ye proved
in \cite{CMC-Sphere-Point-Ye} the existence of the foliation of constant
mean curvature spheres in Riemannian manifolds around the nondegenerate
critical points of the scalar curvature. In 1996, in \cite{Huisken-Yau}
G. Huisken and S.T. Yau proved the existence of constant mean curvature
foliation in the asymptotically flat end (of a manifold) with positive
mass. Huisken and Yau's result were extended by L. Huang in \cite{Huang-CMC}
and C. Nerz in \cite{Nerz-CMC}. Similar problems were also considered
in asymptotically hyperbolic manifolds. For this topic, see \cite{Rigger-CMC-Hyperbolic,Neves-Tian1,Neves-Tian2,Mazzeo-Pacard-CMC-hyperbolic}.
In \cite{Pacard-Xu-CMC-Degenerate}, Pacard and Xu proved the existence
of constant mean curvature spheres around the degenerate critical
points of scalar curvature which can be regarded as a complement of
Ye's result. In \cite{CMC-tubes-Pacard-Mazzeo}, Mazzeo and Pacard
proved the existence of constant mean curvature tubes along a closed
nondegenerate geodesic. Geodesic is a kind of simple minimal submanifold.
In \cite{CMC-Along-submanifold}, Mahmoudi, Mazzeo and Pacard proved
the existence of constant mean curvature hypersurfaces along minimal
submanifolds. In contrast to the result of R. Ye in \cite{CMC-Sphere-Point-Ye},
the CMC hypersurfaces constructed in \cite{CMC-tubes-Pacard-Mazzeo}
and \cite{CMC-Along-submanifold} constitute a partial foliation,
that is a foliation with gaps. There is a good reason for such gaps,
around which, bifurcation occurs. In particular, the CMC surfaces
which bifurcate from the tubes are of Delaunay type, which are the
main objects those will be constructed in this paper. 

We will give the definition of Delaunay surfaces in the next section.
The Delaunay surfaces were discovered in 1841 by C. Delaunay in \cite{Delaunay-Original}.
It is a one parameter family of complete non-compact surfaces in $\mathbb{R}^{3}.$
The Delaunay surfaces have rotational symmetry, so they  are generated
by an ODE. One may refer to \cite{Eells-Delaunay-surface} for a description
of Delaunay surfaces. They play an analogous role in the theory of
complete CMC surfaces as catenoids do in the theory of complete minimal
surfaces. In \cite{Schoen-Minimal-surface}, it is proved that any
complete minimal immersion $M^{n}\subset\mathbb{R}^{n+1}$ with two
embedded ends and with finite total curvature must be a catenoid or
a pair of planes. And it is proved in \cite{Korevaar-Kusner-Solomon}
that any CMC surface embedded in $\mathbb{R}^{3}$ having two ends
is a Delaunay surface. Another fact is that any end of a complete
minimal surface in $\mathbb{R}^{3}$ of finite total curvature must
be asymptotic to a catenoid or a plane. Parallelling to this fact,
each end of an embedded CMC surface with finite topology converges
exponentially to the end of some Delaunay surface. Delaunay surfaces
can be generalized in a proper way as CMC hypersurfaces which exists
in $\mathbb{S}^{n},\mathbb{R}^{n}$ and $\mathbb{H}^{n}.$ In the
more recent work \cite{Delauany-Cohomogeneity-one}, Bettiol and Piccione
managed to construct Delaunay type CMC surfaces in cohomogeneity one
manifolds. Cohomogeneity one manifolds are those support an isometric
action of a Lie group such that the orbit space $M/G$ is one dimensional.
We see that the metrics of cohomogeneity one manifolds are not generic.
For generic metrics, the existence of Delaunay type CMC surfaces is
unknown, despite some partial results. In this paper we focus on the
existence of Delaunay type CMC surfaces along closed geodesics in
generic metrics. As mentioned just now, this kind of surfaces can
be regarded as the bifurcation branches of the CMC tubes constructed
in \cite{CMC-tubes-Pacard-Mazzeo}. One can refer to \cite{CMC-tubes-Pacard-Mazzeo}
for a description of the moduli space of CMC surfaces along $\Gamma$
which are isotopic to geodesic tubes, which is the motivation of this
paper.

Let's state the main theorem roughly:

\begin{thm}\label{main theorem} Suppose $(M^{3},g)$ is a 3-dimensional
Riemannian manifold and $\Gamma$ is a simple closed embedded geodesic
with nondegenerate Jacobi operator. Then for any $\tau_{0}\in(0,\frac{1}{4})$
we can find $\varepsilon_{0}>0$ which depends on the manifold $M$
and $\tau_{0}$ such that there is a monotone sequence $\varepsilon_{n}\rightarrow0$
with $\varepsilon_{0}>\varepsilon_{1}>\cdots>\varepsilon_{n}>\cdots$
such that along the geodesic there are at least two embedded Delaunay
type CMC surfaces of size $\varepsilon_{n}$ , with mean curvature
$2/\varepsilon_{n}$ and with Delaunay parameter close to $\tau_{0}.$

\end{thm}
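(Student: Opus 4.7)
The plan is a singular perturbation / gluing construction. Rescaling the metric by $1/\varepsilon^{2}$, the prescribed mean curvature $2/\varepsilon$ becomes $2$, so one is looking for CMC$=2$ surfaces in a manifold which, in a neighborhood of $\Gamma$, converges as $\varepsilon\to 0$ to a flat $\mathbb{R}^{3}$ with a distinguished line (the rescaling of $\Gamma$). It is then natural to take as approximate solution a Delaunay unduloid $D_{\tau}$ of parameter $\tau\approx\tau_{0}$, of Delaunay-period $T(\tau)$, wrapped around $\Gamma$ using Fermi coordinates $(s,x_{1},x_{2})$, and to look for the true CMC surface as a normal graph $\Sigma_{\varepsilon,\phi}$ over this model.

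First I would set up Fermi coordinates along $\Gamma$ and expand the metric to second order in the normal directions, bringing in the curvatures of $M$ evaluated along $\Gamma$. Composing with the rescaling $x\mapsto\varepsilon x$, the mean curvature of the approximate unduloid is $2+\varepsilon^{2}\,\mathcal{E}(s,\theta)+O(\varepsilon^{3})$, with $\mathcal{E}$ linear in the ambient sectional curvature along $\Gamma$. Writing the CMC equation for the graph $\Sigma_{\varepsilon,\phi}$ yields
\begin{equation*}
L_{\tau}\phi \;=\; \varepsilon^{2}\,\mathcal{E}(s,\theta) \;+\; Q_{\varepsilon}(\phi),
\end{equation*}
where $L_{\tau}$ is the Jacobi operator of the model Delaunay surface (acting on functions on the cylinder $\mathbb{R}\times S^{1}$) and $Q_{\varepsilon}$ collects the nonlinear and higher-order errors. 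The model is now truly perturbative.

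The central analytical step is the Fredholm / Lyapunov--Schmidt theory for $L_{\tau}$. Following the by-now-standard analysis of the Delaunay Jacobi operator, $L_{\tau}$ has a two-dimensional bounded kernel modulo the $S^{1}$-fibers: one mode generated by translations along the axis of $D_{\tau}$, and one ``$\tau$-Jacobi field'' generated by varying the Delaunay parameter. To close up on $\Gamma$, I would impose periodicity in $s$ with period equal to the length of $\Gamma$; this is possible only when the Delaunay period $T(\tau_{0})$ is commensurable with $|\Gamma|$, i.e.\ $|\Gamma|=n\,\varepsilon\,T(\tau_{n})$ for an integer $n$. Since $T$ is smooth and positive, this resonance condition can be solved for a monotone sequence $\varepsilon_{n}\to 0$ with $\tau_{n}\to\tau_{0}$; this is the origin of the discrete sequence in the statement. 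With these $\varepsilon_{n}$ fixed, standard weighted Schauder/$L^{2}$ estimates give a right-inverse for $L_{\tau_{n}}$ modulo a finite-dimensional space spanned by the (now closed-up) Jacobi fields, and a contraction mapping in a ball of radius $O(\varepsilon_{n}^{2})$ in a suitable weighted H\"older space produces, for each prescribed projection, a solution $\phi_{\varepsilon_{n}}$ of the projected equation.

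The remaining obstruction is finite-dimensional: the reduced problem consists of the projections of the equation onto the translation-type and $\tau$-type Jacobi fields, and reads schematically as critical points on $\Gamma\simeq S^{1}$ of a function $F_{\varepsilon_{n}}$ whose $\varepsilon_{n}^{2}$-term is a geometric functional built from the ambient curvatures along $\Gamma$; the non-degeneracy of the Jacobi operator of $\Gamma$ ensures that this functional is a Morse (or at worst non-degenerate) perturbation of a constant, so by Lyusternik--Schnirelmann theory on $S^{1}$ (or simply taking a minimum and a maximum) it has at least two critical points, producing at least two distinct embedded CMC surfaces for each $n$. Embeddedness follows because, for $\varepsilon_{n}$ small, $\phi_{\varepsilon_{n}}$ is small in $C^{2}$ compared with the necksize of the rescaled unduloid. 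The main obstacle, and the technical core of the argument, is the careful control of $L_{\tau}$ under the closing-up condition: keeping the right-inverse uniformly bounded as $\varepsilon_{n}\to 0$, tracking the dependence of the reduced functional on both the translation mode on $\Gamma$ and the $\tau$-mode, and showing that the $\tau$-component of the reduced problem is solvable precisely through the resonance selection of $\varepsilon_{n}$.
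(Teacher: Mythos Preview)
Your overall architecture---Fermi coordinates, a Delaunay unduloid wrapped around $\Gamma$ at the resonant sizes $\varepsilon_n$, Lyapunov--Schmidt onto a finite-dimensional space, and a min/max on $S^1$---matches the paper. But the sentence ``standard weighted Schauder/$L^2$ estimates give a right-inverse for $L_{\tau_n}$ \ldots\ and a contraction mapping in a ball of radius $O(\varepsilon_n^{2})$'' is exactly where the standard scheme breaks down, and repairing it is the technical heart of the paper. On the rotationally invariant ($0$th) Fourier mode the two fundamental solutions of $\mathcal{L}_0 w=0$ are $\phi_\psi$ (periodic) and $h\,\psi\,\phi_\psi+v$ (linearly growing); on a closed circle of length $\sim 1/\varepsilon$ this forces the solution operator, even after projecting off the periodic kernel, to be of size $\varepsilon^{-2}$. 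With an $O(\varepsilon^{2})$ right-hand side you obtain only $w_0=O(1)$, so the quadratic remainder $\varepsilon^{-1}Q(w_0)$ is $O(\varepsilon^{-1})$ and the contraction fails. (You cannot simultaneously project off a ``$\tau$-Jacobi field'' and declare the $\tau$-obstruction already solved by the choice of $\varepsilon_n$; these are the same degree of freedom.) The paper's remedy is not to linearize the $0$th mode at all but to solve the full nonlinear prescribed-mean-curvature ODE for $\phi$ along the circle, carrying two free parameters $(\omega,\phi(0))$ to close it up; the key inputs are Delaunay-specific cancellation identities (the ``average $0$'' lemma, and a companion identity for the curvature term $F_1$) showing that both the curvature forcing and the coupling to $\eta$ perturb the first integral $\tau$ only at order $\varepsilon^{2}$ rather than $\varepsilon$. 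This yields $w_0=O(\varepsilon)$ and lets the fixed-point argument close. The leftover parameter $\omega$ is what the energy argument on $S^1$ then kills.

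A second misplacement: the non-degeneracy of $\Gamma$ is not used in the reduced functional. It enters in the first Fourier mode ($\cos\theta,\sin\theta$), where the operator $\mathcal{J}$ acting on sections $\eta$ of $N\Gamma$ is shown---via an averaging over the fast Delaunay oscillation made precise by the ``average $1$'' lemma---to converge to the Jacobi operator $\mathcal{J}_A$ of the geodesic. Non-degeneracy of $\Gamma$ is exactly the invertibility of $\mathcal{J}_A$, hence of $\mathcal{J}$. The perpendicular-translation Jacobi fields of the Delaunay model, which you omitted from your kernel list, are dealt with here rather than in the finite-dimensional reduction.
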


The terms ``Delaunay type'', ``size $\varepsilon_{n}$'' and ``Delaunay
parameter close to $\tau_{0}$'' will be made clear in Theorem \ref{rigorous main thm}
which is the rigorous version of this theorem. The assumption that
the Jacobi operator is nondegenerate is a mild restriction which holds
for generic metrics. Our method can also be used in the case that
there is symmetry.

\begin{cor}\label{rotational symmetry}If the metric has rotational
symmetry (at least in a tubular neighborhood of $\Gamma$) with $\Gamma$
being the axis, then in Theorem \ref{main theorem} we can remove
the condition that the geodesic is nondegenerate.

\end{cor}

In \cite{CMC-Along-submanifold}, the authors also did similar things
as in \cite{CMC-tubes-Pacard-Mazzeo}, namely, if $\Sigma\subset M$
is a compact nondegenerate closed minimal submanifold and $\Sigma$
is at least of codimension two, then the authors constructed a partial
foliation of CMC hypersurfaces condensing along $\Sigma.$ The authors
believe that there should be also bifurcation phenomenon in this setting.
However geometric picture of the bifurcating CMC surfaces is still
not clear in this case. 

We use the perturbation method to solve this problem. Yet the argument
is long and involved. This paper is organized as follows: 

In Section 2, we revise some basic facts of Delaunay unduloïd (which
we call Delaunay surface) embedded in the Euclidean space $\mathbb{R}^{3}$
and we make a description on how we arrange an initial surface along
the closed geodesic and how we perturb the initial surface. Then after
long calculations we get the expression (\ref{eq:mean curvature expression})
for the mean curvature of the perturbed initial surface. 

In Section 3 we analyze the Jacobi operator of the perturbed initial
surface. We divide the function space into 3 parts, according to the
invariant subspaces of the Jacobi operator. When restricting the Jacobi
operator to each part we have high mode, 1st mode and 0th mode. 

For high mode, it is easy to prove that the operator is invertible
and the inverse has good bounds. The main result of high mode is contained
in Subsection \ref{subsec:High-mode}. 

For 1st mode, after careful examination, we find the Jacobi operator
converges in certain sense to the Jacobi operator of the geodesic,
which is invertible by the assumption that the geodesic is nondegenerate.
Here we prove an ``average 1'' lemma (Lemma \ref{average 1 lemma}),
which verifies the convergence. The main result of 1st mode is contained
in Theorem \ref{main thm 1st mode}.

The reader may refer to Subsection \ref{subsec:The-sketch-of} for
the sketch of the whole procedure and the main difficulties of the
proof. Due to the difficulties, in $0$th mode, we consider a nonlinear
ODE and prove an ``average 0'' lemma. The reader is suggested to
understand Subsection \ref{subsec:The-sketch-of} before going deep
into the details of $0$th mode. In Subsection \ref{subsec:0th-mode},
we analyze the 0th mode. The main theorem of 0th mode is Theorem \ref{0th mode nonlinear ode solution},
whose proof takes 6 steps. 

In Section \ref{sec:The-existence-of}, we will prove the existence
of the Delaunay type CMC surfaces, using a fixed point argument. One
can also refer to Subsection \ref{subsec:The-sketch-of} for the sketch
of this section. 

\section{Geometry of Delaunay surfaces}

\subsection{The initial surface and the perturbation}

\subsubsection{Delaunay surfaces in Euclidean space $\mathbb{R}^{3}$}

First we give a brief revision of the definition of Delaunay surfaces
in Euclidean space $\mathbb{R}^{3}.$ There are two kinds of Delaunay
surfaces in $\mathbb{R}^{3},$ Delaunay nodoïds and Delaunay unduloïds.
The first type can be immersed into $\mathbb{R}^{3},$ and the second
type can be embedded into $\mathbb{R}^{3}.$ In this paper, by Delaunay
surface, we always mean Delaunay unduloïd. The Delaunay unduloïd $D_{\tau_{0}},0<\tau_{0}<\frac{1}{4}$
can be parameterized by
\[
X_{\tau_{0}}(s,\theta):=(\phi_{\tau_{0}}(s)\cos\theta,\phi_{\tau_{0}}(s)\sin\theta,\psi_{\tau_{0}}(s)),
\]
where $(s,\theta)\in\mathbb{R}\times S^{1}$ and $(\phi,\psi)$ is
the solution to the following system
\begin{equation}
\begin{cases}
\dot{\phi}^{2}+(\phi^{2}+\tau_{0})^{2}=\phi^{2}, & \phi(0)=\frac{1-\sqrt{1-4\tau_{0}}}{2},\\
\dot{\psi}=\phi^{2}+\tau_{0}, & \psi(0)=0,
\end{cases}\label{Delaunay definition}
\end{equation}
where the derivative ``$\cdot$'' is taken with respect to parameter
$s.$ 

From $\dot{\psi}=\phi^{2}+\tau_{0}$, $\psi$ is strictly increasing
in $s$. So we can also regard $\phi$ as a function of $\psi.$ Easy
calculation yields, $\phi(\psi)$ satisfies 
\begin{equation}
\begin{cases}
\phi_{\psi\psi}-\phi^{-1}(1+\phi_{\psi}^{2})+2(1+\phi_{\psi}^{2})^{\frac{3}{2}}=0,\\
\phi(0)=\frac{1-\sqrt{1-4\tau_{0}}}{2},\\
\phi_{\psi}(0)=0,
\end{cases}\label{eq:Delannay Definition directly.}
\end{equation}
which can be regarded as an equivalent definition of Delaunay surfaces.
One may refer to \cite{Mazzeo-Pacard-Delaunay-ends} for more information
of Delaunay surfaces. Direct calculations shows that $X_{\tau_{0}}(s,\theta)=(\phi\cos\theta,\phi\sin\theta,\psi)\subset(\mathbb{R}^{3},g_{edu})$
has mean curvature
\[
-\phi_{\psi\psi}(1+\phi_{\psi}^{2})^{-3/2}+\phi^{-1}(1+\phi_{\psi}^{2})^{-1/2}\equiv2,
\]
 which is independent of $\tau_{0}\in(0,\frac{1}{4}).$

\begin{rmk}

Easy calculation gives that $\dot{\phi},\dot{\psi},\ddot{\phi},\ddot{\psi},\tau_{0}$
can be expressed as functions of $(\phi,\phi_{\psi}).$ 

\end{rmk}

\begin{rmk}The solution to (\ref{Delaunay definition}) or (\ref{eq:Delannay Definition directly.})
is periodic because 
\[
\tau(\phi,\phi_{\psi})=-\phi^{2}+\frac{\phi}{\sqrt{1+\phi_{\psi}^{2}}}\equiv\tau_{0}.
\]
  And the solution satisfies
\[
\frac{1-\sqrt{1-4\tau_{0}}}{2}\leq\phi(\psi)\leq\frac{1+\sqrt{1-4\tau_{0}}}{2}.
\]
So $\phi(\psi)$ attains its minimum at $\psi=0.$ 

\end{rmk}

\subsubsection{\label{section 2.2}Fermi coordinates and Taylor expansion of the
metric near the geodesic}

From now on we discuss the geometry of Delaunay type surface along
a geodesic in a Riemannian 3-manifold. Fix an arc length parametrization
$x_{0}$ of the geodesic $\Gamma,$ $x_{0}\in[0,L_{\Gamma}]$, where
$L_{\Gamma}$ is the length of $\Gamma.$ We denote the normal bundle
of $\Gamma$ by $N\Gamma.$ Choose a parallel orthonormal basis $E_{1},E_{2}$
for $N\Gamma$ (say along $[a,b]$) which determines a coordinate
system
\[
x:(x_{0},x_{1},x_{2})\mapsto\exp_{\Gamma(x_{0})}(x_{1}E_{1}+x_{2}E_{2}):=F(x),
\]
and we denote the corresponding coordinate vector fields by $X_{\alpha}:=F_{*}(\partial_{x_{\alpha}}).$
We adopt the convention that indices $i,j,k,\cdots\in\{1,2\}$ while
$\alpha,\beta,\cdots\in\{0,1,2\}.$ Let $r=\sqrt{x_{1}^{2}+x_{2}^{2}}.$
By Gauss' Lemma $r$ is the geodesic distance from $x$ to $\Gamma$
and the vector $\partial_{r}=\frac{1}{r}(x_{1}X_{1}+x_{2}X_{2})$
is perpendicular to $X_{0}$. We also denote $\partial_{\theta}=-x_{2}X_{1}+x_{1}X_{2}$,
where $(r,\theta)$ is the polar coordinate. 

It is easy to see that the metric coefficients $g_{\alpha\beta}=<X_{\alpha},X_{\beta}>$
equal $\delta_{\alpha\beta}$ along $\Gamma.$ Now we are going to
calculate higher order terms in the Taylor expansions of $g_{\alpha\beta.}$
By the notation $\tilde{O}(r^{m}),$ we mean a function $f$ such
that it and its partial derivatives of any order, with respect to
the vector fields $X_{0}$ and $x_{i}X_{j},$ are bounded by $Cr^{m}$
in some fixed tube $\{p|r(p,\Gamma)\leq r_{0}\}.$ 

First for the covariant derivative, we have

\begin{lem}\label{Connection coefficient estimate}For $\alpha,\beta=0,1,2,$
\[
\nabla_{X_{\alpha}}X_{\beta}=\sum_{\gamma=0}^{2}\tilde{O}(r)X_{\gamma,}
\]
and more precisely for $\alpha=\beta=0,$ we have
\[
\nabla_{X_{0}}X_{0}=-\sum_{i,j=1}^{2}R(X_{j},X_{0},X_{i},X_{0})_{p}x_{i}X_{j}+\sum_{\gamma=0}^{2}\tilde{O}(r^{2})X_{\gamma},
\]
where $R(X_{i},X_{j},X_{k},X_{l})=<\nabla_{X_{i}}\nabla_{X_{j}}X_{k}-\nabla_{X_{j}}\nabla_{X_{i}}X_{k}-\nabla_{[X_{i},X_{j}]}X_{k},X_{l}>.$

\end{lem}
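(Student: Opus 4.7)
The plan is to compute the Taylor expansion of the metric coefficients $g_{\alpha\beta}$ up to order $O(r^2)$ in Fermi coordinates, and then read off the Christoffel symbols $\Gamma^\gamma_{\alpha\beta}$ from the standard formula. The structural properties of Fermi coordinates pin down enough derivatives along $\Gamma$ to make this expansion explicit.

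First I would show that every Christoffel symbol vanishes on $\Gamma$, which already yields the generic bound $\nabla_{X_\alpha}X_\beta = \sum_\gamma O(r)\, X_\gamma$. Four observations suffice, all evaluated on $\Gamma$: (i) $\nabla_{X_0}X_0 = 0$ since $\Gamma$ is a geodesic; (ii) $\nabla_{X_0}X_i = 0$ since the frame $E_1, E_2$ is parallel along $\Gamma$; (iii) $\nabla_{X_i}X_0 = \nabla_{X_0}X_i = 0$ by torsion-freeness together with $[X_\alpha,X_\beta]=0$; (iv) each radial curve $t\mapsto F(x_0, tv^1, tv^2)$ is a geodesic with tangent $v^iX_i$, so $v^iv^j \nabla_{X_i}X_j = 0$ along it, and polarization gives $\nabla_{X_i}X_j|_\Gamma = 0$. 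As a corollary, all first partials of $g_{\alpha\beta}$ vanish on $\Gamma$, so $g_{0i} = O(r^2)$ and $g_{ij} - \delta_{ij} = O(r^2)$.

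For the refined expansion of $\nabla_{X_0}X_0$, I need the $O(r)$ term of $\Gamma^i_{00}$, which requires the $O(r^2)$ part of $g_{00}$. The key observation is that along the radial geodesic $\gamma(t) = F(x_0, tv^1, tv^2)$ with tangent $V = v^iE_i$, the field $X_0|_{\gamma(t)}$ is the Jacobi field $J$ produced by the $x_0$-variation, with $J(0) = \dot\Gamma$ and $DJ/dt(0) = v^i\nabla_{X_i}X_0|_\Gamma = 0$. Integrating $J'' = -R(J,\dot\gamma)\dot\gamma$ to second order and substituting $x^i = tv^i$ yields
\[
g_{00}(\gamma(t)) = |J(t)|^2 = 1 + R(X_i, X_0, X_j, X_0)|_{(x_0,0)}\, x^i x^j + O(r^3),
\]
where the sign follows from the curvature convention stated in the lemma.

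Substituting these expansions into $\Gamma^\gamma_{00} = \tfrac{1}{2} g^{\gamma\delta}(2\partial_0 g_{0\delta} - \partial_\delta g_{00})$, the $\partial_0$ terms contribute only $O(r^2)$ (since both $g_{0\delta}$ and $\partial_0 g_{00}$ are $O(r^2)$), the $g^{i0}$ mixing contributes only higher order corrections, and the dominant piece is
\[
\Gamma^i_{00} = -\tfrac{1}{2}\delta^{ij}\partial_j g_{00} + O(r^2) = -R(X_i, X_0, X_k, X_0)_p\, x^k + O(r^2),
\]
using the symmetry $R(X_j,X_0,X_k,X_0) = R(X_k,X_0,X_j,X_0)$; this matches the claimed formula after relabeling the summation indices. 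The one genuine obstacle is careful bookkeeping of the curvature sign conventions when identifying $X_0$ as a Jacobi field; everything else is routine Fermi-coordinate calculus.
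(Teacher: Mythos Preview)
Your argument is correct, but the route differs from the paper's in the second half. For the first claim (all $\nabla_{X_\alpha}X_\beta$ vanish on $\Gamma$) you and the paper do essentially the same thing: use the geodesic property, the parallelism of the frame, torsion-freeness, and polarization from the radial-geodesic identity $v^iv^j\nabla_{X_i}X_j=0$.

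For the refined expansion of $\nabla_{X_0}X_0$, however, the paper proceeds more directly: it differentiates $\langle\nabla_{X_0}X_0,X_j\rangle$ in the normal direction $X_i$ at $p$, uses the product rule, discards the terms involving $\nabla_{X_0}X_0$ and $\nabla_{X_i}X_j$ (which vanish at $p$), and recognizes $\langle\nabla_{X_i}\nabla_{X_0}X_0,X_j\rangle_p = R(X_i,X_0,X_0,X_j)_p + \langle\nabla_{X_0}\nabla_{X_i}X_0,X_j\rangle_p$. Since $\nabla_{X_i}X_0$ vanishes identically along $\Gamma$, so does its $X_0$-derivative, and only the curvature term survives. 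This gives the first-order Taylor coefficient of $\langle\nabla_{X_0}X_0,X_j\rangle$ immediately, without ever writing down a metric expansion or a Christoffel symbol.

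Your approach instead derives the $g_{00}$ expansion via the Jacobi-field identification of $X_0$ along radial geodesics, and then reads off $\Gamma^i_{00}$ from the Christoffel formula. This is a perfectly valid and standard alternative; it is slightly longer for this lemma in isolation, but it has the advantage that the $g_{00}$ expansion you obtain is precisely what the paper states (and needs) in its very next lemma. In effect you are reversing the paper's logical order: the paper proves the connection estimate first and cites it in the metric-expansion lemma, whereas you derive the relevant piece of the metric expansion first and deduce the connection estimate from it.
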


\begin{proof}

We follow Lemma 2.1 in \cite{CMC-tubes-Pacard-Mazzeo}. At any point
$p\in\Gamma$
\[
\nabla_{X_{0}}X_{0}=\nabla_{X_{0}}X_{j}=\nabla_{X_{j}}X_{0}=\nabla_{X_{i}}X_{j}=0
\]
where the last one holds because on $\Gamma$, $\nabla_{X_{i}}X_{i}=0,\nabla_{X_{i}+X_{j}}(X_{i}+X_{j})=0.$
So the first equality follows. For the second one note that 
\begin{align*}
X_{i}<\nabla_{X_{0}}X_{0},X_{j}>_{p} & =<\nabla_{X_{i}}\nabla_{X_{0}}X_{0},X_{j}>_{p}+<\nabla_{X_{0}}X_{0},\nabla_{X_{i}}X_{j}>_{p}\\
 & =<\nabla_{X_{i}}\nabla_{X_{0}}X_{0},X_{j}>_{p}+\tilde{O}(r^{2})\\
 & =<R(X_{i},X_{0})X_{0},X_{j}>_{p}+<\nabla_{X_{0}}\nabla_{X_{i}}X_{0},X_{j}>_{p}+O(r^{2})\\
 & =<R(X_{i},X_{0})X_{0},X_{j}>_{p}+\tilde{O}(r),
\end{align*}
which implies the second one.

\end{proof}

The next lemma gives the expansion of the metric coefficients in Fermi
coordinates. 

\begin{lem}\label{gij expansion}In the same notation as before,
we have
\begin{equation}
\begin{cases}
g_{ij}(q) & =\delta_{ij}+\frac{1}{3}R(X_{k},X_{i},X_{l},X_{j})_{p}x_{k}x_{l}+\tilde{O}(r^{3}),\\
g_{0i}(q) & =\frac{2}{3}R(X_{k},X_{0},X_{l},X_{i})_{p}x_{k}x_{l}+\tilde{O}(r^{3}),\\
g_{00}(q) & =1+R(X_{k},X_{0},X_{l},X_{0})_{p}x_{k}x_{l}+\tilde{O}(r^{3}).
\end{cases}\label{eq:metric expansion}
\end{equation}

\end{lem}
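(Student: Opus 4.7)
The plan is to Taylor-expand each coefficient $g_{\alpha\beta}(q)=\langle X_\alpha, X_\beta\rangle_q$ in the transverse variables, with base point $p = \Gamma(x_0)$. The constant term is $\delta_{\alpha\beta}$ because $(\Gamma'(x_0), E_1, E_2)$ is orthonormal along $\Gamma$. Writing $X_\gamma g_{\alpha\beta} = \langle \nabla_{X_\gamma}X_\alpha, X_\beta\rangle + \langle X_\alpha, \nabla_{X_\gamma}X_\beta\rangle$ and invoking Lemma \ref{Connection coefficient estimate}, which gives $\nabla_{X_\gamma}X_\beta|_p = 0$, every first derivative of $g_{\alpha\beta}$ vanishes on $\Gamma$. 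The task reduces to identifying the quadratic form $x^{i}x^{j}\,X_iX_j g_{\alpha\beta}|_p$ in terms of the Riemann tensor.

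For $g_{00}$, I would use a Jacobi-field argument. For each $\omega\in S^1$ the curve $\gamma_\omega(r):=F(x_0, r\omega_1, r\omega_2)$ is a geodesic with $\dot\gamma_\omega(0) = V_p := \omega^{i}X_i|_p$, and the restriction of $X_0$ to $\gamma_\omega$ is a Jacobi field (obtained by varying $x_0$) satisfying $X_0(0) = X_0|_p$ and $(\nabla_{\dot\gamma}X_0)(0) = \omega^{i}\nabla_{X_i}X_0|_p = 0$. The Jacobi equation then gives
\[
X_0(\gamma_\omega(r)) = X_0|_p - \tfrac{r^2}{2}R(X_0, V_p)V_p + O(r^3),
\]
and squaring, together with the pair-swap symmetry $R(a,b,c,d) = R(c,d,a,b)$, immediately produces $g_{00}(q) = 1 + R(X_k, X_0, X_l, X_0)_p\,x^{k} x^{l} + O(r^3)$.

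For $g_{ij}$ and $g_{0i}$, I would combine two ingredients. First, Gauss' lemma gives the exact identities $\omega^{i}\omega^{j} g_{ij}\equiv 1$ and $\omega^{i} g_{0i}\equiv 0$ along $\gamma_\omega$. Second, the angular field $\partial_\theta = -x_2X_1 + x_1X_2$ is itself a Jacobi field along $\gamma_\omega$, now with $\partial_\theta(0) = 0$ and $(\nabla_{\dot\gamma}\partial_\theta)(0) = J_0 := -\omega_2 X_1|_p + \omega_1 X_2|_p$, so
\[
\partial_\theta(\gamma_\omega(r)) = rJ_0 - \tfrac{r^3}{6}R(J_0, V_p)V_p + O(r^4).
\]
Using the polar decomposition $X_i = \omega_i\partial_r + \tfrac{(A\omega)_i}{r}\partial_\theta$ for the $90^\circ$ rotation $A$, together with Gauss' identities, one has $g_{0i} = \tfrac{(A\omega)_i}{r}\langle X_0, \partial_\theta\rangle$ and $(A\omega)^{i}(A\omega)^{j}(g_{ij}-\delta_{ij}) = |\partial_\theta|^2/r^2 - 1$. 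Substituting the two Jacobi expansions yields $|\partial_\theta|^2 = r^2 - \tfrac{r^4}{3}R(J_0, V_p, V_p, J_0) + O(r^5)$, and a short first-Bianchi identification of the two a priori distinct cubic terms appearing in $\langle X_0, \partial_\theta\rangle$ gives $\langle X_0, \partial_\theta\rangle = -\tfrac{2r^3}{3}R(V_p, X_0, J_0, V_p) + O(r^4)$. Finally, polarizing in $\omega$ and using the 2D identity $(A\omega)_i(A\omega)^{c} = \delta_{ic} - \omega_i\omega^{c}$ extracts the coefficients $1/3$ and $2/3$ in the stated form.

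The main difficulty is the tensor bookkeeping in the last step. The geometric input is essentially one Jacobi expansion per vector field, but converting the quartic-in-$\omega$ expressions $R(J_0, V_p, V_p, J_0)$ and $R(V_p, X_0, J_0, V_p)$ into contractions of the form $R(X_k,\cdot,X_l,\cdot)_p\,x^{k}x^{l}$ requires careful use of the Riemann symmetries, in particular the first Bianchi identity. The strong sense of the remainder $O(r^3)$ introduced in Section \ref{section 2.2}---that it is bounded together with all $X_0$- and $x_iX_j$-derivatives---is automatic since the Jacobi expansion depends smoothly on $(x_0, \omega)$ and both $X_0$ and $x_iX_j$ preserve the radial order when applied to smooth functions of $(x_0, x_1, x_2)$.
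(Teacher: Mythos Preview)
Your argument is correct. The Jacobi-field computation for $X_0$ and $\partial_\theta$ along the normal geodesics, combined with Gauss' lemma and the polar decomposition of $X_i$, cleanly produces all three coefficients; the Bianchi step showing $R(J_0,V_p,V_p,X_0)=R(X_0,V_p,V_p,J_0)$ is exactly what is needed to collapse the two cubic terms in $\langle X_0,\partial_\theta\rangle$ into the single $-\tfrac{2r^3}{3}R(V_p,X_0,J_0,V_p)$, and the 2D identity $X_i=\omega_iV_p+(A\omega)_iJ_0$ then converts each expression into the stated form $R(X_k,\cdot,X_l,\cdot)_p\,x_kx_l$.

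As for comparison: the paper does not actually prove this lemma but simply refers the reader to Proposition~2.1 of \cite{CMC-tubes-Pacard-Mazzeo}, remarking only that here a sharper expansion of $g_{0i}$ is recorded. Your write-up therefore supplies the self-contained argument the paper omits. The Mazzeo--Pacard proof proceeds by differentiating $g_{\alpha\beta}$ twice along normal directions and identifying $X_kX_l\,g_{\alpha\beta}|_p$ with curvature via the formula $\langle\nabla_{X_k}\nabla_{X_l}X_\alpha,X_\beta\rangle_p=R(X_k,X_l,X_\alpha,X_\beta)_p$ together with the radial-geodesic constraint $\nabla_{\partial_r}\partial_r=0$; your Jacobi-field route packages the same information differently and has the modest advantage that the Bianchi manipulation is isolated in one step rather than distributed across the second-derivative identities.
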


\begin{proof}The reader may refer to the proof of Proposition 2.1
in \cite{CMC-tubes-Pacard-Mazzeo} for the proof . Here we give more
accurate expansion for $g_{0i.}$

\end{proof}

\subsubsection{\label{2.1.3}Initial Delaunay surfaces and the perturbation}

First we arrange an initial Delaunay surface of size $\varepsilon$
along the geodesic. Fix a point $p_{0}\in\Gamma$ with $x_{0}(p_{0})=0,\mod L_{\Gamma}$
and a parameter $\tau_{0}\in(0,1/4)$. We assume $\phi_{\tau_{0}}(\psi)$
is the solution to (\ref{eq:Delannay Definition directly.}). Suppose
the smallest positive period of $\phi(\psi)$ is $\psi_{1}(\tau_{0}).$ 

\begin{Def}We define the set of ``proper size'' for $L_{\Gamma}$
and $\tau_{0}$ by 
\[
{\rm PS}(L_{\Gamma},\tau_{0})=\{\varepsilon_{N}=\frac{L_{\Gamma}}{\psi_{1}(\tau_{0})N};N\in\mathbb{N}^{+}\}.
\]

\end{Def}

In the following, we always assume $\varepsilon\in{\rm PS}(L_{\Gamma},\tau_{0})$.
We can arrange a Delaunay type initial surface of size $\varepsilon$
around the geodesic. Let's make it precise.

The unit circle bundle is locally trivialized by the map
\[
[a,b]\times S^{1}\ni(x_{0},\Upsilon)\mapsto(\Gamma(x_{0}),\sum_{j=1}^{2}\Upsilon_{j}E_{j})\in SN\Gamma.
\]
The image 
\[
F(x_{0},\varepsilon\phi_{\tau_{0}}(\frac{x_{0}}{\varepsilon})\Upsilon)
\]
can be defined locally and extended globally, as it does not depend
on the choice of orthonormal basis $E_{i}.$ We denote the image by
$\mathcal{D}_{\phi_{\tau_{0}},p_{0},\varepsilon}.$ 

Consider the following perturbation of $\mathcal{D}_{\phi_{\tau_{0}},p_{0},\varepsilon}$,
denoted by $\mathcal{D}_{\phi_{\tau_{0}},p_{0},\varepsilon}(w,\eta)$,
where $w$ is a function on unit circle bundle $SN\Gamma$ and $\eta$
is a section of $N\Gamma.$ 

Fix $\varepsilon>0,$ and denote the image
\[
F(x_{0},\varepsilon(\phi(\frac{x_{0}}{\varepsilon})+w(\frac{x_{0}}{\varepsilon},\theta))\Upsilon+\eta(x_{0}));
\]
by $\mathcal{D}_{\phi_{\tau_{0}},p_{0},\varepsilon}(w,\eta).$ It
is obtained by first taking the vertical graph of the function $\varepsilon w$
over the initial Delaunay surface $\mathcal{D}_{\phi_{\tau_{0}},p_{0},\varepsilon}$
and then translating it by $\eta.$ 

First it is clear what do the derivatives with respect to $x_{0},\theta$
mean for a function on $SN\Gamma.$ For a smooth section $\eta$ of
$N\Gamma,$ locally we may write it as $\eta_{1}E_{1}+\eta_{2}E_{2}.$
By $\frac{\partial\eta}{\partial x_{0}}$ we mean $\frac{\partial\eta_{1}}{\partial x_{0}}E_{1}+\frac{\partial\eta_{2}}{\partial x_{0}}E_{2}$
and it is similar for higher order derivatives. We will work in the
following spaces.
\begin{itemize}
\item $C_{x_{0}}^{m}(\Gamma,N\Gamma),C_{x_{0}}^{m}(SN\Gamma),0\leq m\leq\infty$
are the spaces in which all functions have continuous derivatives
up to order $m$, with respect to $x_{0}$ (and also $\theta$ for
the second one). $C_{x_{0}}^{m,\alpha}(\Gamma,N\Gamma),C_{x_{0}}^{m,\alpha}(SN\Gamma),0\leq m<\infty,0<\alpha<1$
are the usual Hölder spaces, where the derivatives are taken with
respect to $x_{0}$ ( and also $\theta$ for the second one). If we
replace $\alpha$ with $1$, they are the usual Lipschitz spaces.
If we replace $x_{0}$ with $y_{0}$, then the derivatives are taken
with respect to $y_{0}$ (and also $\theta$ if needed). 
\item $C_{\varepsilon}^{m,\alpha}(\Gamma,N\Gamma)$, $C_{\varepsilon}^{m,\alpha}(SN\Gamma)$
are the modified Hölder spaces, where the derivatives are taken with
respect to $\varepsilon x_{0}=\psi$ (and also $\theta$ for the second
one). 
\item $C_{x_{0},\varepsilon}^{m,\alpha}(\Gamma,N\Gamma),C_{x_{0},\varepsilon}^{m,\alpha}(SN\Gamma)$
are the modified Hölder spaces with $\|f\|_{C_{x_{0},\varepsilon}^{m,\alpha}}=\|f\|_{C^{0}}+\|\partial_{x_{0}}f\|_{C_{\varepsilon}^{m-1,\alpha}}.$ 
\item $W_{\varepsilon}^{1,2}(SN\Gamma)$ is the modified Sobolev space,
where the derivatives are taken with respect to $\psi$ and $\theta.$
$W_{\varepsilon}^{-1,2}$ is the dual space of $W_{\varepsilon}^{1,2}$. 
\item $\|(f,g)\|_{\varepsilon,\alpha}=\|f\|_{C_{x_{0}}^{1}}+\varepsilon\|g\|_{C_{\varepsilon}^{\alpha}}.$
\end{itemize}
Sometimes we omit the symbols like $\Gamma,SN\Gamma$ in the norms
when it is clear from the context.

For $p\in\Gamma,$ let $S_{p}^{1}$ denote the unit circle fibre of
$SN\Gamma$ over $p.$ Any function $w$ on $SN\Gamma$ decomposes
into a sum of three terms
\begin{equation}
w=w_{0}+w_{1}+\tilde{w}.\label{eq:w-decomposition}
\end{equation}
Here the restriction of any one of $w_{0},w_{1},\tilde{w}$ to each
$S_{p}^{1}$ lies in the span of the eigenfunctions $\xi_{j}$ on
$S^{1}$ with $j=0,j=1,2,$ and $j>2,$ respectively. $w_{0}$ is
a function on $\Gamma.$ 
\begin{align*}
w_{1}(s,\theta) & =w_{1}^{1}(s)\xi_{1}+w_{1}^{2}(s)\xi_{2}\\
 & =w_{1}^{1}(s)\cos\theta+w_{1}^{2}(s)\sin\theta.
\end{align*}
Note that any linear combination of $\xi_{1}$ and $\xi_{2}$ can
be identified with a translation in $\mathbb{R}^{2}$ ($\xi_{1}$
and $\xi_{2}$ correspond to the translations in $x$ and $y$ direction).
Correspondingly, $w_{1}$ is canonically associated to a section $\eta$
of the normal bundle $N\Gamma.$ At last
\[
\tilde{w}(s,\theta)=\sum_{j>2}\tilde{w}_{j}(s)\xi_{j}.
\]

We denote by $\Pi_{0},$ $\Pi_{1}$ and $\tilde{\Pi}$ the projections
onto these three components respectively. We assume $\Pi_{1}w=0$
and the $\Pi_{1}$ part of $w$ is actually represented by $\eta.$ 

Now we can state Theorem \ref{main theorem} rigorously.

\begin{thm}\label{rigorous main thm}

Suppose that $(M^{3},g)$ is a Riemannian manifold of $3$ dimension
and $\Gamma$ is a simple closed embedded geodesic with nondegenerate
Jacobi operator. Then for any $\tau_{0}\in(0,1/4)$ there is $\varepsilon_{0}>0$
such that when $0<\varepsilon<\varepsilon_{0}$ and $\varepsilon\in{\rm PS}(L_{\Gamma},\tau_{0})$
and $i=1,2,$ we have two different Delaunay type surfaces $\mathcal{D}_{\phi_{\tau_{0}},p_{i},\varepsilon}(w_{0,i}+\tilde{w}_{i},\eta_{i})$
along the geodesic which satisfy 
\[
H(\mathcal{D}_{\phi_{\tau_{0}},p_{i},\varepsilon}(w_{0,i}+\tilde{w}_{i},\eta_{i}))=\frac{2}{\varepsilon}.
\]
Here $p_{i}\in\Gamma$ and $w_{0,i},\eta_{i},\tilde{w}_{i}$ belong
to $0$th part, 1st part and high part respectively. Moreover for
uniform constant $C$ 
\begin{eqnarray*}
\varepsilon\|w_{0,i}\|_{C_{\varepsilon}^{2,\alpha}}+\|\eta_{i}\|_{C_{x_{0},\varepsilon}^{2,\alpha}}+\|\tilde{w}_{i}\|_{C_{\varepsilon}^{2,\alpha}} & \leq & C\varepsilon^{2}.
\end{eqnarray*}

\end{thm}

\subsection{The mean curvature of $\mathcal{D}_{\phi_{\tau_{0}},p_{0},\varepsilon}(w,\eta)$}

\subsubsection{The first fundamental form of $\mathcal{D}_{\phi_{\tau_{0}},p_{0},\varepsilon}(w,\eta)$\label{subsec:The-first-fundamental}}

Now we calculate the first fundamental form of $\mathcal{D}_{\phi_{\tau_{0}},p_{0},\varepsilon}(w,\eta)$
with respect to the coordinate $(s,\theta)$ at the point $q=F(\varepsilon\psi(s),\varepsilon(\phi(s)+w(s,\theta))\Upsilon(\theta)+\eta(\varepsilon\psi(s)))$.
Suppose $p=F(\varepsilon\psi(s),0).$ First we have
\begin{equation}
\begin{cases}
\partial_{s} & =\varepsilon(\dot{\psi}X_{0}+(\dot{\phi}+\frac{\partial w}{\partial s})\Upsilon+\dot{\psi}\frac{\partial\eta}{\partial x_{0}}),\\
\partial_{\theta} & =\varepsilon((\phi+w)\Upsilon_{\theta}+\frac{\partial w}{\partial\theta}\Upsilon),
\end{cases}\label{tangential vectors}
\end{equation}
and $x_{k}(q)=\varepsilon(\phi(s)+w(s,\theta))\Upsilon^{k}+\eta^{k},k=1,2.$ 

\begin{Def}\label{L,Q,E,def}

In the following, $L(w,\eta)$ denotes any expression which is a linear
differential operator (of order at most 2), which satisfies
\[
\|L(w,\eta)\|_{C_{\varepsilon}^{\alpha}}\leq C(\|w\|_{C_{\varepsilon}^{2,\alpha}(SN\Gamma)}+\|\eta\|_{C_{x_{0},\varepsilon}^{2,\alpha}(\Gamma,N\Gamma)}),
\]
where $C$ is independent of $\rho.$ Similarly, $Q(w,\eta)$ denotes
any nonlinear differential operator (of order less than or equal to
$2$) in $w$ and $\eta$ which vanishes quadratically in the pair
$(w,\eta)$ and such that 
\begin{align*}
\|Q(w_{1},\eta_{1})-Q(w_{2},\eta_{2})\|_{C_{\varepsilon}^{\alpha}}\leq & C\sup_{i=1,2}(\|w_{i}\|_{C_{\varepsilon}^{2,\alpha}(SN\Gamma)}+\|\eta_{i}\|_{C_{x_{0},\varepsilon}^{2,\alpha}(\Gamma,N\Gamma)})\\
 & \times(\|w_{1}-w_{2}\|_{C_{\varepsilon}^{2,\alpha}(SN\Gamma)}+\|\eta_{1}-\eta_{2}\|_{C_{x_{0},\varepsilon}^{2,\alpha}(\Gamma,N\Gamma)}).
\end{align*}
Here the spaces $C_{\varepsilon}^{\alpha}$ are either equal to $C_{\varepsilon}^{\alpha}(SN\Gamma)$
or $C_{\varepsilon}^{\alpha}(\Gamma,N\Gamma)$ according to the range
of $L$ and $Q$. Finally, by $O(\varepsilon^{k})$ we mean $\varepsilon^{k}E(\phi(\psi),\phi_{\psi}(\psi),\psi)$,
where $E(\phi(\psi),\phi_{\psi}(\psi),\psi)$ denotes any smooth function
with 
\[
|\frac{\partial E(\phi,\phi_{\psi},\psi)}{\partial\phi}|+|\frac{\partial E(\phi,\phi_{\psi},\psi)}{\partial\phi_{\psi}}|\leq C(\tau_{0}),\,\,|\frac{\partial E(\phi,\phi_{\psi},\psi)}{\partial\psi}|\leq C\varepsilon,
\]
where $C$ is a uniform constant. 

\end{Def}

From (\ref{eq:metric expansion}) we know

\begin{lem}\label{metric expansion for Delaunay}
\begin{eqnarray*}
<X_{0},X_{0}>_{q} & = & 1+\varepsilon^{2}\phi^{2}R(\Upsilon,X_{0},\Upsilon,X_{0})_{p}+2\varepsilon\phi R(\Upsilon,X_{0},\eta,X_{0})_{p}\\
 &  & +\varepsilon^{2}L(w,\eta)+Q(w,\eta)+O(\varepsilon^{3}),\\
<X_{i},X_{j}>_{q} & = & \delta_{ij}+\frac{1}{3}\varepsilon^{2}\phi^{2}R(\Upsilon,X_{i},\Upsilon,X_{j})_{p}+\frac{1}{3}\varepsilon\phi(R(\Upsilon,X_{i},\eta,X_{j})_{p}\\
 &  & +R(\eta,X_{i},\Upsilon,X_{j})_{p})+\varepsilon^{2}L(w,\eta)+Q(w,\eta)+O(\varepsilon^{3}),\\
<X_{0},X_{i}>_{q} & = & \frac{2}{3}\varepsilon^{2}\phi^{2}R(\Upsilon,X_{0},\Upsilon,X_{i})_{p}+\frac{2}{3}\varepsilon\phi(R(\Upsilon,X_{0},\eta,X_{i})_{p}\\
 &  & +R(\eta,X_{0},\Upsilon,X_{i})_{p})+\varepsilon^{2}L(w,\eta)+Q(w,\eta)+O(\varepsilon^{3}).
\end{eqnarray*}

\end{lem}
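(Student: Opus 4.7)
The lemma is a direct application of Lemma \ref{gij expansion} at the point $q$. The Fermi coordinates of $q$ are
\[
x_0(q) = \varepsilon\psi(s), \qquad x_k(q) = \varepsilon(\phi(s)+w(s,\theta))\,\Upsilon^k(\theta) + \eta^k(\varepsilon\psi(s)), \quad k=1,2,
\]
where $\Upsilon^k$ denotes the $k$-th component of $\Upsilon$ in the parallel frame $\{E_1,E_2\}$. Since $|\Upsilon|=1$, we have $r(q) = |\varepsilon(\phi+w)\Upsilon + \eta| = O(\varepsilon)$ in the regime where $w,\eta$ are bounded, so the remainder $O(r^3)$ from Lemma \ref{gij expansion} becomes $O(\varepsilon^3)$ with the conventions of Section \ref{section 2.2}.

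The next step is a routine binomial expansion. I would write
\[
x_k x_l = \varepsilon^2\phi^2\,\Upsilon^k\Upsilon^l + \varepsilon\phi\,(\Upsilon^k\eta^l + \Upsilon^l\eta^k) + \eta^k\eta^l + 2\varepsilon^2\phi w\,\Upsilon^k\Upsilon^l + \varepsilon w\,(\Upsilon^k\eta^l + \Upsilon^l\eta^k) + \varepsilon^2 w^2\,\Upsilon^k\Upsilon^l,
\]
and then contract against the curvature tensor $R(X_k,\cdot,X_l,\cdot)_p$ with the appropriate prefactor ($1$, $2/3$, or $1/3$) coming from each line of Lemma \ref{gij expansion}. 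The first monomial produces the explicit leading term of order $\varepsilon^2\phi^2$ in each formula, and the second produces the cross term $\varepsilon\phi\bigl(R(\Upsilon,\cdot,\eta,\cdot)+R(\eta,\cdot,\Upsilon,\cdot)\bigr)_p$ using the pair symmetry of $R$.

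The remaining monomials fall into the reserved categories. The term $2\varepsilon^2\phi w\,\Upsilon^k\Upsilon^l R$ is linear in $w$ with prefactor $\varepsilon^2$ and is absorbed into $\varepsilon^2 L(w,\eta)$; the three quadratic monomials $\varepsilon^2 w^2 R$, $\varepsilon w(\Upsilon^k\eta^l+\Upsilon^l\eta^k)R$, and $\eta^k\eta^l R$ are absorbed into $Q(w,\eta)$. Carrying out this substitution separately for $g_{00}$, $g_{ij}$, and $g_{0i}$ yields the three formulas claimed.

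The only points needing care are mostly bookkeeping. First, the curvature tensor in Lemma \ref{gij expansion} is evaluated at the base point $p$ on $\Gamma$ rather than at $q$; this is already built into the statement, but one should note that any variation of $R$ along the normal direction costs an extra factor of $r = O(\varepsilon)$ and hence is absorbed into $O(\varepsilon^3)$. Second, one must verify that the $O(r^3)$ remainder and its derivatives with respect to $\partial_s$ and $\partial_\theta$ behave consistently, which follows from the definition of $O(r^m)$ given in Section \ref{section 2.2} together with the fact that $\partial_s$ and $\partial_\theta$ hit $x_k(q)$ to produce bounded factors. I do not expect a genuine obstacle in this lemma; the computational work has already been done in Lemma \ref{gij expansion}.
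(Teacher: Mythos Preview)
Your proof is correct and matches the paper's approach: the paper does not write out a separate proof for this lemma, treating it as an immediate consequence of substituting $x_k(q)=\varepsilon(\phi+w)\Upsilon^k+\eta^k$ into the expansions of Lemma~\ref{gij expansion} and sorting the resulting monomials into the leading, $\varepsilon^2 L(w,\eta)$, $Q(w,\eta)$, and $O(\varepsilon^3)$ bins exactly as you describe.
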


We use these expansions to obtain the expansions of the first fundamental
form,

\begin{lem}\label{first fundamental form}

\begin{eqnarray*}
\varepsilon^{-2}<\partial_{s},\partial_{s}> & = & \phi^{2}+\varepsilon^{2}\phi^{2}\dot{\psi}^{2}R(\Upsilon,X_{0},\Upsilon,X_{0})+2\varepsilon\phi\dot{\psi}^{2}R(\Upsilon,X_{0},\eta,X_{0})\\
 &  & +\frac{4}{3}\varepsilon\phi\dot{\phi}\dot{\psi}R(\Upsilon,X_{0},\eta,\Upsilon)+2\dot{\phi}\frac{\partial w}{\partial s}+2\dot{\phi}\dot{\psi}<\Upsilon,\frac{\partial\eta}{\partial x_{0}}>_{e}\\
 &  & +\varepsilon^{2}L(w,\eta)+Q(w,\eta)+O(\varepsilon^{3}),\\
\varepsilon^{-2}<\partial_{s},\partial_{\theta}> & = & \frac{2}{3}\varepsilon^{2}\phi^{3}\dot{\psi}R(\Upsilon,X_{0},\Upsilon,\Upsilon_{\theta})+\frac{2}{3}\varepsilon\phi^{2}\dot{\psi}(R(\Upsilon,X_{0},\eta,\Upsilon_{\theta})\\
 &  & +R(\eta,X_{0},\Upsilon,\Upsilon_{\theta}))+\frac{1}{3}\varepsilon\phi^{2}\dot{\phi}R(\eta,\Upsilon,\Upsilon,\Upsilon_{\theta})+\dot{\phi}\frac{\partial w}{\partial\theta}\\
 &  & +\phi\dot{\psi}<\frac{\partial\eta}{\partial x_{0}},\Upsilon_{\theta}>_{e}+\varepsilon^{2}L(w,\eta)+Q(w,\eta)+O(\varepsilon^{3}),\\
\varepsilon^{-2}<\partial_{\theta},\partial_{\theta}> & = & \phi^{2}+2\phi w+\frac{1}{3}\varepsilon^{2}\phi^{4}R(\Upsilon,\Upsilon_{\theta},\Upsilon,\Upsilon_{\theta})+\frac{2}{3}\varepsilon\phi^{3}R(\Upsilon,\Upsilon_{\theta},\eta,\Upsilon_{\theta})\\
 &  & +\varepsilon^{2}L(w,\eta)+Q(w,\eta)+O(\varepsilon^{3}).
\end{eqnarray*}

Note that all the curvatures here and after are taken on $p\in\Gamma.$

\end{lem}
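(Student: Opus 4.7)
The plan is to substitute the tangent-vector formulas (\ref{tangential vectors}) into each of the three pairings and expand bilinearly; this reduces every computation to a finite sum of $\langle X_\alpha, X_\beta\rangle_q$ weighted by explicit polynomial coefficients in $\phi, \dot{\phi}, \dot{\psi}, w, \partial_s w, \partial_\theta w$ and in the components of $\eta, \partial_{x_0}\eta, \Upsilon, \Upsilon_\theta$. Each factor of $\partial_s$ or $\partial_\theta$ contributes an overall $\varepsilon$, which is why the three formulas are naturally stated for $\varepsilon^{-2}\langle \cdot, \cdot\rangle$.

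Next I would insert Lemma \ref{metric expansion for Delaunay} into every $\langle X_\alpha, X_\beta\rangle_q$, using the identity $x_k(q) = \varepsilon(\phi + w)\Upsilon^k + \eta^k$ to expand the curvature contractions. The $\delta_{\alpha\beta}$ parts of the metric produce the Euclidean Delaunay contributions together with the linear perturbations $2\dot{\phi}\,\partial_s w$, $\dot{\phi}\,\partial_\theta w$, $2\phi w$, and the translation-type cross terms $2\dot{\phi}\dot{\psi}\langle \Upsilon, \partial_{x_0}\eta\rangle_e$, $\phi\dot{\psi}\langle \partial_{x_0}\eta, \Upsilon_\theta\rangle_e$. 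The curvature parts split $x_k x_l$ into three groups — an $\varepsilon^2(\phi+w)^2\Upsilon^k\Upsilon^l$ piece, a mixed $\varepsilon(\phi+w)(\Upsilon^k\eta^l + \Upsilon^l\eta^k)$ piece, and an $\eta$-quadratic piece — which, after contraction with the polynomial coefficients and division by $\varepsilon^2$, furnish every named curvature term in the statement.

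The essential simplification is that the antisymmetries of $R$ kill several contributions one might naively expect: $R(\Upsilon,\Upsilon,\cdot,\cdot)=0$ and $R(\cdot,\cdot,\Upsilon,\Upsilon)=0$ eliminate the would-be $\varepsilon^2$ curvature corrections to $\langle \Upsilon,\Upsilon\rangle_q$ and $\langle \Upsilon_\theta, \Upsilon\rangle_q$, while the remaining $\varepsilon$-order piece of $\langle X_0,\Upsilon\rangle_q$ collapses to $\tfrac{2}{3}\varepsilon\phi\,R(\Upsilon,X_0,\eta,\Upsilon)$. Paired with the cross coefficient $2\dot{\phi}\dot{\psi}$ this yields the $\tfrac{4}{3}\varepsilon\phi\dot{\phi}\dot{\psi}\,R(\Upsilon,X_0,\eta,\Upsilon)$ term in $\langle \partial_s,\partial_s\rangle$, and the same mechanism accounts for the coefficients $\tfrac{2}{3}\phi^3\dot\psi$ and $\tfrac{1}{3}\phi^4$ and for the symmetrized combinations $R(\Upsilon,X_0,\eta,\Upsilon_\theta) + R(\eta,X_0,\Upsilon,\Upsilon_\theta)$ in the $(s,\theta)$ and $(\theta,\theta)$ pairings.

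The main obstacle is purely combinatorial bookkeeping. One must track three independent sources of $\varepsilon$-smallness — the prefactor in (\ref{tangential vectors}), the factor $\varepsilon$ inside $x_k$, and the order in $r$ of the Taylor expansion in Lemma \ref{gij expansion} — invoke the curvature symmetries in the right places, and then sort every remaining contribution into one of the residual bins $\varepsilon^2 L(w,\eta)$, $Q(w,\eta)$, $O(\varepsilon^3)$ according to its $(w,\eta)$-degree and its $\varepsilon$-order. Performing this classification uniformly for all three pairings produces the three stated expansions.
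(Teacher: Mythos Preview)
Your proposal is correct and matches the paper's own approach exactly: the paper also expands $\varepsilon^{-2}\langle\partial_s,\partial_s\rangle$ (and the other two pairings) bilinearly via (\ref{tangential vectors}) into sums of $\langle X_\alpha,X_\beta\rangle_q$, applies Lemma~\ref{metric expansion for Delaunay} term by term, and collects remainders into $\varepsilon^2 L(w,\eta)+Q(w,\eta)+O(\varepsilon^3)$. Your discussion of how the antisymmetries of $R$ kill several terms and produce the specific $\tfrac{4}{3}$, $\tfrac{2}{3}$, $\tfrac{1}{3}$ coefficients is in fact more detailed than the paper, which simply calls it ``direct calculation.''
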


\begin{proof}The proof is direct calculation, using Lemma \ref{metric expansion for Delaunay}.
For example for the first one $\varepsilon^{-2}<\partial_{s},\partial_{s}>$,
first we have 
\[
\varepsilon^{-2}<\partial_{s},\partial_{s}>=<\dot{\psi}X_{0}+(\dot{\phi}+\frac{\partial w}{\partial s})\Upsilon+\dot{\psi}\frac{\partial\eta}{\partial x_{0}},\dot{\psi}X_{0}+(\dot{\phi}+\frac{\partial w}{\partial s})\Upsilon+\dot{\psi}\frac{\partial\eta}{\partial x_{0}}>.
\]
 And we get $6$ different terms on the right hand side. For each
one we can use Lemma \ref{metric expansion for Delaunay}. Finally
we can proof the lemma. 

\end{proof}

\subsubsection{Normal vector}

Now we are going to find the expansion of the unit normal vector of
$\mathcal{D}_{\phi_{\tau_{0}},p_{0},\varepsilon}(w,\eta).$ First
we take
\begin{equation}
N_{0}=\frac{1}{\phi}(\dot{\phi}X_{0}-\dot{\psi}\Upsilon),\label{N0}
\end{equation}
which is the unit normal vector of $\mathcal{D}_{\phi_{\tau_{0}},p_{0},\varepsilon}(0,0)$
when curvature vanishes. We may assume the unit normal vector $N$
of $\mathcal{D}_{\phi_{\tau_{0}},p_{0},\varepsilon}(w,\eta)$ has
the form
\begin{equation}
N=\frac{1}{k}(N_{0}+a_{1}\partial_{s}+a_{2}\partial_{\theta}),\label{eq:normal vector}
\end{equation}
where $k$ is the norm of $N_{0}+a_{1}\partial_{s}+a_{2}\partial_{\theta}.$
We will get the expansion for $a_{1},a_{2}$ and $k.$ First

\begin{equation}
\begin{cases}
0 & =<kN,\partial_{s}>=<N_{0},\partial_{s}>+a_{1}<\partial_{s},\partial_{s}>+a_{2}<\partial_{s},\partial_{\theta}>,\\
0 & =<kN,\partial_{\theta}>=<N_{0},\partial_{\theta}>+a_{1}<\partial_{s},\partial_{\theta}>+a_{2}<\partial_{\theta},\partial_{\theta}>.
\end{cases}\label{eq:a1,a2,equation}
\end{equation}

\begin{lem}\label{normal-vector-tangential}
\begin{eqnarray*}
\varepsilon^{-1}<N_{0},\partial_{s}> & = & -\frac{\dot{\psi}}{\phi}\frac{\partial w}{\partial s}-\frac{\dot{\psi}^{2}}{\phi}<\frac{\partial\eta}{\partial x_{0}},\Upsilon>_{e}+\varepsilon^{2}\phi\dot{\phi}\dot{\psi}R(\Upsilon,X_{0},\Upsilon,X_{0})\\
 &  & +2\varepsilon\dot{\phi}\dot{\psi}R(\Upsilon,X_{0},\eta,X_{0})+\frac{2}{3}\varepsilon(\dot{\phi}^{2}-\dot{\psi}^{2})R(\Upsilon,X_{0},\eta,\Upsilon)\\
 &  & +\varepsilon^{2}L(w,\eta)+Q(w,\eta)+O(\varepsilon^{3}),\\
\varepsilon^{-1}<N_{0},\partial_{\theta}> & = & \frac{2}{3}\varepsilon^{2}\phi^{2}\dot{\phi}R(\Upsilon,X_{0},\Upsilon,\Upsilon_{\theta})\\
 &  & +\frac{2}{3}\varepsilon\phi\dot{\phi}(R(\Upsilon,X_{0},\eta,\Upsilon_{\theta})+R(\eta,X_{0},\Upsilon,\Upsilon_{\theta}))\\
 &  & +\varepsilon^{2}L(w,\eta)+Q(w,\eta)+O(\varepsilon^{3}).
\end{eqnarray*}

\end{lem}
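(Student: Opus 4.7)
The plan is to compute both inner products by direct substitution, using the explicit formulas (\ref{tangential vectors}) for $\partial_s, \partial_\theta$ together with the definition (\ref{N0}) of $N_0 = \phi^{-1}(\dot\phi X_0 - \dot\psi\Upsilon)$, expanding bilinearly, and then inserting the Taylor expansions of the pairings $\langle X_\alpha, X_\beta\rangle$ supplied by Lemma \ref{metric expansion for Delaunay}. The point $q$ at which the metric is evaluated has Fermi coordinates $x_0(q)=\varepsilon\psi(s)$ and $x_k(q) = \varepsilon(\phi+w)\Upsilon^k + \eta^k$, so when Lemma \ref{gij expansion} is specialized to this $q$ the quadratic form $x_k x_l$ produces the leading curvature pair $\varepsilon^2\phi^2 R(\Upsilon,\cdot,\Upsilon,\cdot)$, the mixed term $2\varepsilon\phi R(\Upsilon,\cdot,\eta,\cdot)$, and a pure $Q(\eta,\eta)$ piece that feeds into $Q(w,\eta)$; everything else is absorbed into $\varepsilon^2 L(w,\eta)$, $Q(w,\eta)$, or $O(\varepsilon^3)$.

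For $\varepsilon^{-1}\langle N_0,\partial_s\rangle$ I would first isolate the flat-metric contribution: evaluating the pairings at $p\in\Gamma$ gives $\phi^{-1}[\dot\phi\dot\psi - \dot\psi(\dot\phi+\partial_s w) - \dot\psi^2 \langle \Upsilon,\partial_{x_0}\eta\rangle_e]$, where the two $\dot\phi\dot\psi$ terms cancel to produce the explicit $-\frac{\dot\psi}{\phi}\partial_s w - \frac{\dot\psi^2}{\phi}\langle \partial_{x_0}\eta,\Upsilon\rangle_e$ stated in the lemma. The curvature terms then come from inserting the quadratic metric corrections into each of the six bilinear pieces $\langle X_0,X_0\rangle, \langle X_0,\Upsilon\rangle, \langle \Upsilon,\Upsilon\rangle, \langle X_0, \partial_{x_0}\eta\rangle, \langle \Upsilon, \partial_{x_0}\eta\rangle$ (and their symmetric partners). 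The antisymmetry of $R$ in its first two and last two slots kills the would-be term $R(\Upsilon,\Upsilon,\Upsilon,\cdot)$ coming from $\langle \Upsilon,\Upsilon\rangle$, which is why only $R(\Upsilon,X_0,\Upsilon,X_0)$, $R(\Upsilon,X_0,\eta,X_0)$, and the $\dot\phi^2-\dot\psi^2$ combination of $R(\Upsilon,X_0,\eta,\Upsilon)$ survive, the last of these arising from assembling the $\dot\phi\cdot\frac{2}{3}\varepsilon\phi R(\eta,X_0,\Upsilon,\Upsilon)$ coefficient from $\langle X_0,\Upsilon\rangle$ with the analogous $\dot\psi$ piece from $\langle \Upsilon,\Upsilon\rangle$ using the algebraic Bianchi identity and skew-symmetry.

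For $\varepsilon^{-1}\langle N_0,\partial_\theta\rangle$ the flat-metric leading term vanishes identically because $\langle X_0,\Upsilon_\theta\rangle_p = \langle \Upsilon,\Upsilon_\theta\rangle_p = 0$, so the entire answer is order $\varepsilon$ or smaller and comes purely from the curvature corrections applied to $\langle X_0,\Upsilon_\theta\rangle$ and $\langle \Upsilon,\Upsilon_\theta\rangle$; again the $R(\Upsilon,\Upsilon,\Upsilon,\Upsilon_\theta)$ pieces in $\langle \Upsilon,\Upsilon_\theta\rangle$ drop out by antisymmetry, leaving the $\frac{2}{3}\varepsilon^2\phi^2\dot\phi R(\Upsilon,X_0,\Upsilon,\Upsilon_\theta)$ and the two $\frac{2}{3}\varepsilon\phi\dot\phi$ mixed $\eta$--$\Upsilon$ terms displayed. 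The main obstacle is simply the bookkeeping: keeping track of which prefactor from (\ref{eq:metric expansion}) ($1$, $\tfrac13$, or $\tfrac23$) multiplies which contraction of $\Upsilon, \eta, \Upsilon_\theta, X_0$, and correctly absorbing all cross-products of $w, \eta$ derivatives with the metric corrections into the catch-all symbols $\varepsilon^2 L(w,\eta)$, $Q(w,\eta)$, $O(\varepsilon^3)$. No genuine analytic difficulty arises — the lemma is a careful but mechanical expansion, exactly parallel to the proof sketch of Lemma \ref{first fundamental form}.
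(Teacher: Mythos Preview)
Your approach is correct and is precisely what the paper does: the paper's proof consists of the single sentence ``The proof is again direct calculations using (\ref{tangential vectors}) and (\ref{N0}),'' and your proposal simply spells out that bilinear expansion and the insertion of Lemma~\ref{metric expansion for Delaunay} in more detail. One small bookkeeping slip: the $(\dot\phi^2-\dot\psi^2)$ coefficient on $R(\Upsilon,X_0,\eta,\Upsilon)$ comes entirely from the two occurrences of $\langle X_0,\Upsilon\rangle$ (with prefactors $\dot\phi(\dot\phi+\partial_s w)$ and $-\dot\psi^2$), not from $\langle\Upsilon,\Upsilon\rangle$, whose curvature correction $R(\Upsilon,\Upsilon,\cdot,\cdot)$ vanishes identically---but this does not affect the method or the result.
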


\begin{proof}The proof is again direct calculations using (\ref{tangential vectors})
and (\ref{N0}).

\end{proof}

We denote $g_{ss}=<\partial_{s},\partial_{s}>,g_{s\theta}=g_{\theta s}=<\partial_{s},\partial_{\theta}>,g_{\theta\theta}=<\partial_{\theta},\partial_{\theta}>.$
From Lemma \ref{first fundamental form} we have 
\[
\left(\begin{array}{cc}
g_{ss} & g_{s\theta}\\
g_{s\theta} & g_{\theta\theta}
\end{array}\right)=\varepsilon^{2}\phi^{2}\left(\begin{array}{cc}
1+\sigma_{1} & \sigma_{2}\\
\sigma_{2} & 1+\sigma_{3}
\end{array}\right),
\]
where
\begin{equation}
\begin{cases}
\sigma_{1} & =\varepsilon^{2}\dot{\psi}^{2}R(\Upsilon,X_{0},\Upsilon,X_{0})+2\varepsilon\phi^{-1}\dot{\psi}^{2}R(\Upsilon,X_{0},\eta,X_{0})\\
 & +\frac{4}{3}\varepsilon\phi^{-1}\dot{\phi}\dot{\psi}R(\Upsilon,X_{0},\eta,\Upsilon)+2\phi^{-2}\dot{\phi}\frac{\partial w}{\partial s}\\
 & +2\phi^{-2}\dot{\phi}\dot{\psi}<\Upsilon,\frac{\partial\eta}{\partial x_{0}}>_{e}+\varepsilon^{2}L(w,\eta)+Q(w,\eta)+O(\varepsilon^{3}),\\
\sigma_{2} & =\frac{2}{3}\varepsilon^{2}\phi\dot{\psi}R(\Upsilon,X_{0},\Upsilon,\Upsilon_{\theta})+\frac{2}{3}\varepsilon\dot{\psi}(R(\Upsilon,X_{0},\eta,\Upsilon_{\theta})+R(\eta,X_{0},\Upsilon,\Upsilon_{\theta}))\\
 & +\frac{1}{3}\varepsilon\dot{\phi}R(\eta,\Upsilon,\Upsilon,\Upsilon_{\theta})+\phi^{-2}\dot{\phi}\frac{\partial w}{\partial\theta}+\phi^{-1}\dot{\psi}<\frac{\partial\eta}{\partial x_{0}},\Upsilon_{\theta}>_{e}\\
 & +\varepsilon^{2}L(w,\eta)+Q(w,\eta)+O(\varepsilon^{3}),\\
\sigma_{3} & =2\phi^{-1}w+\frac{1}{3}\varepsilon^{2}\phi^{2}R(\Upsilon,\Upsilon_{\theta},\Upsilon,\Upsilon_{\theta})+\frac{2}{3}\varepsilon\phi R(\Upsilon,\Upsilon_{\theta},\eta,\Upsilon_{\theta})\\
 & +\varepsilon^{2}L(w,\eta)+Q(w,\eta)+O(\varepsilon^{3}).
\end{cases}\label{eq:sigma(1,2,3)}
\end{equation}
Notice that $\sigma_{1}\sigma_{3}-\sigma_{2}^{2}=\varepsilon^{2}L(w,\eta)+Q(w,\eta)+O(\varepsilon^{3}).$
We have 
\[
\det\left(\begin{array}{cc}
g_{ss} & g_{s\theta}\\
g_{s\theta} & g_{\theta\theta}
\end{array}\right)\cong\varepsilon^{4}\phi^{4}(1+\sigma_{1}+\sigma_{3})
\]
and 
\[
\det\left(\begin{array}{cc}
g_{ss} & g_{s\theta}\\
g_{s\theta} & g_{\theta\theta}
\end{array}\right)^{-1}\cong\varepsilon^{-4}\phi^{-4}(1-\sigma_{1}-\sigma_{3}).
\]
So the inverse matrix
\begin{eqnarray}
\left(\begin{array}{cc}
g^{ss} & g^{s\theta}\\
g^{s\theta} & g^{\theta\theta}
\end{array}\right) & = & \det\left(\begin{array}{cc}
g_{ss} & g_{s\theta}\\
g_{s\theta} & g_{\theta\theta}
\end{array}\right)^{-1}\left(\begin{array}{cc}
g_{\theta\theta} & -g_{s\theta}\\
-g_{s\theta} & g_{ss}
\end{array}\right)\nonumber \\
 & \cong & \varepsilon^{-2}\phi^{-2}\left(\begin{array}{cc}
1-\sigma_{1} & -\sigma_{2}\\
-\sigma_{2} & 1-\sigma_{3}
\end{array}\right).\label{eq:inverse matrix of metric}
\end{eqnarray}
 From this and Lemma \ref{normal-vector-tangential} we can get

\begin{lem}
\[
\left(\begin{array}{c}
a_{1}\\
a_{2}
\end{array}\right)\cong-\varepsilon^{-2}\phi^{-2}\left(\begin{array}{c}
<N_{0},\partial_{s}>\\
<N_{0},\partial_{\theta}>
\end{array}\right)=(\begin{array}{c}
O(\varepsilon)+\varepsilon^{-1}L(w,\eta)+\varepsilon^{-1}Q(w,\eta)\\
O(\varepsilon)+\varepsilon L(w,\eta)+\varepsilon^{-1}Q(w,\eta)
\end{array}).
\]

\end{lem}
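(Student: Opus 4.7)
The plan is to treat equation (\ref{eq:a1,a2,equation}) as a $2\times 2$ linear system for $(a_1,a_2)$ with coefficient matrix equal to the first fundamental form and right-hand side $-(\langle N_0,\partial_s\rangle, \langle N_0,\partial_\theta\rangle)^T$. Inverting using the explicit formula (\ref{eq:inverse matrix of metric}) yields
$$\begin{pmatrix} a_1 \\ a_2 \end{pmatrix} = -\begin{pmatrix} g^{ss} & g^{s\theta} \\ g^{s\theta} & g^{\theta\theta} \end{pmatrix}\begin{pmatrix} \langle N_0,\partial_s\rangle \\ \langle N_0,\partial_\theta\rangle \end{pmatrix},$$
and I would read off the first (approximate) equality of the statement by truncating the inverse metric to its principal part $\varepsilon^{-2}\phi^{-2}I$.

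The equality signalled by $\cong$ should follow from this truncation once I check that the $\sigma_i$ corrections, when paired with the right-hand side, contribute only higher-order terms absorbable into the schematic $O(\varepsilon)+\varepsilon^{-1}L(w,\eta)+\varepsilon^{-1}Q(w,\eta)$. This is a direct check using the sizes of $\sigma_1,\sigma_2,\sigma_3$ from (\ref{eq:sigma(1,2,3)}) together with the orders computed in Lemma \ref{normal-vector-tangential}: each $\sigma_i$ is $O(\varepsilon^2)+L(w,\eta)+Q(w,\eta)+O(\varepsilon^3)$ and each component of the right-hand side is $\varepsilon$ times a similar expression, so their product lives in the absorbable error class.

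For the componentwise size estimates I would then apply $-\varepsilon^{-2}\phi^{-2}$ directly to the expansions of Lemma \ref{normal-vector-tangential}. For $a_1$, the pure curvature piece $\varepsilon^{2}\phi\dot\phi\dot\psi R(\Upsilon,X_0,\Upsilon,X_0)$ becomes the $O(\varepsilon)$ contribution, the $-\frac{\dot\psi}{\phi}\partial_s w$ and $-\frac{\dot\psi^2}{\phi}\langle\partial_{x_0}\eta,\Upsilon\rangle_e$ pieces give the $\varepsilon^{-1}L(w,\eta)$ contribution, and the quadratic remainder becomes $\varepsilon^{-1}Q(w,\eta)$. For $a_2$ the key asymmetry is that $\langle N_0,\partial_\theta\rangle$ contains no pure linear-in-$(w,\eta)$ piece at order $\varepsilon$; all its $L(w,\eta)$ contributions already carry an extra factor of $\varepsilon$. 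Consequently, after dividing by $\varepsilon^{2}$, the linear contribution to $a_2$ is only $\varepsilon L(w,\eta)$ rather than $\varepsilon^{-1}L(w,\eta)$, while the quadratic contribution remains $\varepsilon^{-1}Q(w,\eta)$.

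The main obstacle is pure bookkeeping: tracking every cross-term in the matrix-vector product to confirm it fits the schematic error decomposition, and keeping the different $\varepsilon$-weightings of the $s$ and $\theta$ components straight so that the asymmetry in the final formula is recovered correctly. No new geometric or analytic input is required beyond Lemmas \ref{first fundamental form} and \ref{normal-vector-tangential}.
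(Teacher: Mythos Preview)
Your proposal is correct and follows essentially the same route as the paper: invert the linear system (\ref{eq:a1,a2,equation}) using the expansion (\ref{eq:inverse matrix of metric}), verify that the cross-terms $\sigma_i\langle N_0,\partial_s\rangle$ and $\sigma_i\langle N_0,\partial_\theta\rangle$ are of higher order (the paper records them as $\varepsilon^3 L(w,\eta)+\varepsilon Q(w,\eta)+O(\varepsilon^4)$), and then read off the component sizes from Lemma \ref{normal-vector-tangential}. Your more detailed bookkeeping of the $s$-versus-$\theta$ asymmetry is exactly the content of the paper's one-line ``direct calculation''.
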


\begin{proof}From (\ref{eq:a1,a2,equation}), we have
\begin{equation}
\left(\begin{array}{c}
a_{1}\\
a_{2}
\end{array}\right)=\varepsilon^{-2}\phi^{-2}\left(\begin{array}{cc}
1-\sigma_{1} & -\sigma_{2}\\
-\sigma_{2} & 1-\sigma_{3}
\end{array}\right)\left(\begin{array}{c}
-<N_{0},\partial_{s}>\\
-<N_{0},\partial_{\theta}>
\end{array}\right).\label{eq:a1-a2-expressions}
\end{equation}
By direct calculation, we have $\sigma_{i}<N_{0},\partial_{s}>$ and
$\sigma_{i}<N_{0},\partial_{\theta}>$ are in fact $\varepsilon^{3}L(w,\eta)+\varepsilon Q(w,\eta)+O(\varepsilon^{4}).$
So we can get the conclusion.

\end{proof}

From (\ref{eq:a1,a2,equation}), 
\[
k^{2}=<N_{0},N_{0}>+a_{1}<N_{0},\partial_{s}>+a_{2}<N_{0},\partial_{\theta}>.
\]
From
\begin{eqnarray*}
<N_{0},N_{0}> & = & <\frac{1}{\phi}(\dot{\phi}X_{0}-\dot{\psi}\Upsilon),\frac{1}{\phi}(\dot{\phi}X_{0}-\dot{\psi}\Upsilon)>\\
 & = & \frac{\dot{\phi}^{2}}{\phi^{2}}<X_{0},X_{0}>+\frac{\dot{\psi}^{2}}{\phi^{2}}<\Upsilon,\Upsilon>-2\frac{\dot{\phi}\dot{\psi}}{\phi^{2}}<X_{0},\Upsilon>\\
 & = & 1+\varepsilon^{2}\dot{\phi}^{2}R(\Upsilon,X_{0},\Upsilon,X_{0})+2\varepsilon\frac{\dot{\phi}^{2}}{\phi}R(\Upsilon,X_{0},\eta,X_{0})\\
 &  & -\frac{4}{3}\varepsilon\frac{\dot{\phi}\dot{\psi}}{\phi}R(\Upsilon,X_{0},\eta,\Upsilon)+\varepsilon^{2}L(w,\eta)+Q(w,\eta)+O(\varepsilon^{3}),
\end{eqnarray*}
\begin{eqnarray*}
a_{1}<N_{0},\partial_{s}> & = & (\varepsilon E(\phi,\phi_{\psi},\psi)+\varepsilon^{-1}L(w,\eta)+\varepsilon^{-1}Q(w,\eta))\varepsilon(-\frac{\dot{\psi}}{\phi}\frac{\partial w}{\partial s}\\
 &  & -\frac{\dot{\psi}^{2}}{\phi}<\frac{\partial\eta}{\partial x^{0}},\Upsilon>_{e}+\varepsilon^{2}\phi\dot{\phi}\dot{\psi}R(\Upsilon,X_{0},\Upsilon,X_{0})\\
 &  & +2\varepsilon\dot{\phi}\dot{\psi}R(\Upsilon,X_{0},\eta,X_{0})+\frac{2}{3}(\dot{\phi}^{2}-\dot{\psi}^{2})R(\Upsilon,X_{0},\eta,\Upsilon)\\
 &  & +\varepsilon^{2}L(w,\eta)+Q(w,\eta)+O(\varepsilon^{3}))\\
 & = & \varepsilon^{2}L(w,\eta)+Q(w,\eta)+O(\varepsilon^{4}),
\end{eqnarray*}
\begin{eqnarray*}
a_{2}<N_{0},\partial_{\theta}> & = & (O(\varepsilon)+\varepsilon L(w,\eta)+\varepsilon^{-1}Q(w,\eta))\varepsilon(\frac{2}{3}\varepsilon^{2}\phi^{2}\dot{\phi}R(\Upsilon,X_{0},\Upsilon,\Upsilon_{\theta})\\
 &  & +\frac{2}{3}\varepsilon\phi\dot{\phi}(R(\Upsilon,X_{0},\eta,\Upsilon_{\theta})+R(\eta,X_{0},\Upsilon,\Upsilon_{\theta}))\\
 &  & +\varepsilon^{2}L(w,\eta)+Q(w,\eta)+O(\varepsilon^{3}))\\
 & = & \varepsilon^{2}L(w,\eta)+Q(w,\eta)+O(\varepsilon^{4}),
\end{eqnarray*}
we have 
\begin{eqnarray*}
k^{2} & = & 1+\varepsilon^{2}\dot{\phi}^{2}R(\Upsilon,X_{0},\Upsilon,X_{0})+2\varepsilon\frac{\dot{\phi}^{2}}{\phi}R(\Upsilon,X_{0},\eta,X_{0})-\frac{4}{3}\varepsilon\frac{\dot{\phi}\dot{\psi}}{\phi}R(\Upsilon,X_{0},\eta,\Upsilon)\\
 &  & +\varepsilon^{2}L(w,\eta)+Q(w,\eta)+O(\varepsilon^{3}).
\end{eqnarray*}
So we have

\begin{lem}

\begin{eqnarray}
k & = & 1+\frac{\varepsilon^{2}}{2}\dot{\phi}^{2}R(\Upsilon,X_{0},\Upsilon,X_{0})+\varepsilon\frac{\dot{\phi}^{2}}{\phi}R(\Upsilon,X_{0},\eta,X_{0})-\frac{2}{3}\varepsilon\frac{\dot{\phi}\dot{\psi}}{\phi}R(\Upsilon,X_{0},\eta,\Upsilon)\nonumber \\
 &  & +\varepsilon^{2}L(w,\eta)+Q(w,\eta)+O(\varepsilon^{3}).\label{eq:k-expression}
\end{eqnarray}

\end{lem}

\subsubsection{The second fundamental form and the mean curvature of $\mathcal{D}_{\phi_{\tau_{0}},p_{0},\varepsilon}(w,\eta).$}

Here we prove

\begin{eqnarray}
 & H(\mathcal{D}_{\phi_{\tau_{0}},p_{0},\varepsilon}(w,\eta))= & \frac{2}{\varepsilon}+\frac{1}{\varepsilon}\mathcal{L}_{SN\Gamma}w+<\mathcal{J}\eta,\Upsilon>\nonumber \\
 &  & +\varepsilon(F_{1}(\phi,\phi_{\psi})\star R_{1}+F_{2}(\phi,\phi_{\psi})\star R_{2})+\varepsilon^{2}E(\phi,\phi_{\psi},\psi)\nonumber \\
 &  & +F_{3}(\phi,\phi_{\psi})\star R_{3}(\eta)+\varepsilon L(w,\eta)+\varepsilon^{-1}Q(w,\eta).\label{eq:mean curvature expression}
\end{eqnarray}
where
\begin{align}
\mathcal{L}_{SN\Gamma}w= & -\frac{\dot{\psi}}{\phi^{3}}(\frac{\partial^{2}w}{\partial s^{2}}+\frac{\partial^{2}w}{\partial\theta^{2}})-2(\phi^{2}-\tau_{0})\frac{\dot{\phi}}{\phi^{4}}\frac{\partial w}{\partial s}-\frac{\dot{\psi}}{\phi^{3}}w,\label{L 0 and high}\\
<\mathcal{J}\eta,\Upsilon>= & -\frac{\dot{\psi}^{3}}{\phi^{3}}<\frac{\partial^{2}\eta}{\partial x_{0}^{2}},\Upsilon>-\frac{1}{\varepsilon}(\frac{\dot{\psi}\ddot{\psi}}{\phi^{3}}+2(\phi^{2}-\tau_{0})\frac{\dot{\phi}\dot{\psi}}{\phi^{4}})<\frac{\partial\eta}{\partial x_{0}},\Upsilon>\nonumber \\
 & +\phi^{-2}(2\dot{\phi}\ddot{\psi}+2\frac{\dot{\phi}^{2}\dot{\psi}}{\phi}+\frac{\dot{\psi}^{3}}{\phi}-2\frac{\dot{\psi}^{2}}{\phi}(\phi^{2}-\tau_{0})-2\phi\dot{\phi}^{2})\nonumber \\
 & \times R(\Upsilon,X_{0},\eta,X_{0}),\label{J eta}
\end{align}
\begin{align*}
F_{1}(\phi,\phi_{\psi})\star R_{1}= & \frac{1}{3}\dot{\psi}R(\Upsilon,\Upsilon_{\theta},\Upsilon,\Upsilon_{\theta})\\
 & +\phi^{-2}(\phi\dot{\phi}\ddot{\psi}+2\dot{\phi}^{2}\dot{\psi}+\dot{\psi}^{3}-(\phi^{2}-\tau_{0})\dot{\psi}^{2}-\phi^{2}\dot{\phi}^{2})\\
 & \times R(\Upsilon,X_{0},\Upsilon,X_{0}),\\
F_{2}(\phi,\phi_{\psi})\star R_{2}= & \frac{2}{3}\dot{\phi}R(\Upsilon_{\theta},X_{0},\Upsilon,\Upsilon_{\theta}),\\
F_{3}(\phi,\phi_{\psi})\star R_{3}(\eta)= & \phi^{-2}(\frac{2}{3}\dot{\phi}\ddot{\phi}+\frac{2}{3}\frac{\dot{\phi}^{3}}{\phi}-\frac{2}{3}\dot{\psi}\ddot{\psi}-\frac{4}{3}(\phi^{2}-\tau_{0})\frac{\dot{\phi}\dot{\psi}}{\phi}-\frac{2}{3}\phi\dot{\phi}+\frac{4}{3}\phi\dot{\phi}\dot{\psi})\\
 & \times R(\Upsilon,X_{0},\eta,\Upsilon)+\frac{2}{3}\phi^{-1}\dot{\phi}R(\Upsilon_{\theta},X_{0},\eta,\Upsilon_{\theta}).
\end{align*}

For our later needs, we assume
\begin{align*}
F_{4}(\phi,\phi_{\psi}) & =\phi^{-2}(\frac{2}{3}\dot{\phi}\ddot{\phi}+\frac{2}{3}\frac{\dot{\phi}^{3}}{\phi}-\frac{2}{3}\dot{\psi}\ddot{\psi}-\frac{4}{3}(\phi^{2}-\tau_{0})\frac{\dot{\phi}\dot{\psi}}{\phi}+\frac{4}{3}\phi\dot{\phi}\dot{\psi}).
\end{align*}

\begin{rmk}
\begin{equation}
\begin{cases}
\Pi_{0}(R(\Upsilon_{\theta},X_{0},\Upsilon,\Upsilon_{\theta})) & =0,\\
\Pi_{0}(R(\Upsilon,X_{0},\eta,\Upsilon)-R(\Upsilon_{\theta},X_{0},\eta,\Upsilon_{\theta})) & =0,\\
\Pi_{1}(R(\Upsilon,\Upsilon_{\theta},\Upsilon,\Upsilon_{\theta})) & =0,\\
\Pi_{1}(R(\Upsilon,X_{0},\Upsilon,X_{0})) & =0,\\
\Pi_{1}(R(\Upsilon,X_{0},\eta,\Upsilon)) & =0,\\
\Pi_{1}(R(\Upsilon_{\theta},X_{0},\eta,\Upsilon_{\theta})) & =0.
\end{cases}\label{Tail term projection}
\end{equation}

And from the second one, 
\[
\Pi_{0}(F_{3}(\phi,\phi_{\psi})\star R_{3}(\eta))=F_{4}(\phi,\phi_{\psi})R(\Upsilon,X_{0},\eta,\Upsilon).
\]

\end{rmk}

The proof of (\ref{eq:mean curvature expression}) is very long calculation,
which can be found in Appendix \ref{(APP)The-calculation-of-mean curvature}.

\section{Jacobi operator}

In this section we study the linear operators which appear in the
expression of $H(\mathcal{D}_{\phi_{\tau_{0}},p_{0},\varepsilon}(w,\eta)).$

\subsection{Basic properties}

Consider (\ref{L 0 and high})(\ref{J eta}). The operator $\mathcal{L}_{SN\Gamma}$
is conjugate to the Jacobi operator which corresponds to the second
variation of the energy functional. 

It is easy to see that
\begin{eqnarray*}
\mathcal{L}_{SN\Gamma}: & C_{\varepsilon}^{2,\alpha}(SN\Gamma) & \mapsto C_{\varepsilon}^{0,\alpha}(SN\Gamma)
\end{eqnarray*}
 is bounded uniformly in $\varepsilon.$ 
\[
\mathcal{J}:C_{x_{0},\varepsilon}^{2,\alpha}(\Gamma,N\Gamma)\mapsto C_{\varepsilon}^{\alpha}(\Gamma,N\Gamma)
\]
and 
\[
\|\mathcal{J}(\eta)\|_{C_{\varepsilon}^{\alpha}}\leq\frac{C}{\varepsilon}\|\eta\|_{C_{x_{0},\varepsilon}^{2,\alpha}}.
\]

We let
\[
\mathcal{L}_{0}=\mathcal{L}_{SN\Gamma}|_{\Pi_{0}(C_{\varepsilon}^{2,\alpha}(SN\Gamma))},\tilde{\mathcal{L}}=\mathcal{L}_{SN\Gamma}|_{\tilde{\Pi}(C_{\varepsilon}^{2,\alpha}(SN\Gamma))}.
\]

We are going to study the mapping properties of $\tilde{\mathcal{L}},\mathcal{J},\mathcal{L}_{0}$
in three different modes, i.e. high mode, 1st mode, 0th mode. 

\subsection{High mode\label{subsec:High-mode}}

In this mode, we are going to prove that 
\[
\tilde{\mathcal{L}}:\tilde{\Pi}C_{\varepsilon}^{2,\alpha}(SN\Gamma)\rightarrow\tilde{\Pi}C_{\varepsilon}^{0,\alpha}(SN\Gamma)
\]
is an isomorphism whose inverse is bounded independent of $\varepsilon$. 

First it is clear that 
\[
\tilde{\mathcal{L}}(\tilde{\Pi}C_{\varepsilon}^{2,\alpha}(SN\Gamma))\subseteq\tilde{\Pi}C_{\varepsilon}^{0,\alpha}(SN\Gamma).
\]

From $\frac{\partial}{\partial s}=\varepsilon\dot{\psi}\frac{\partial}{\partial x_{0}}=\dot{\psi}\frac{\partial}{\partial\psi},$
we have, for $w,v\in\tilde{\Pi}W_{\varepsilon}^{1,2}(SN\Gamma)$ 
\begin{eqnarray*}
\tilde{\mathcal{L}}w & = & -\frac{\dot{\psi}}{\phi^{3}}(\frac{\partial^{2}w}{\partial s^{2}}+\frac{\partial^{2}w}{\partial\theta^{2}})-2(\phi^{2}-\tau_{0})\frac{\dot{\phi}}{\phi^{4}}\frac{\partial w}{\partial s}-\frac{\dot{\psi}}{\phi^{3}}w\\
 & = & -\frac{\phi}{\dot{\psi}^{3}}(\frac{\dot{\psi}^{3}}{\phi^{2}}\frac{\partial}{\partial\psi}(\frac{\dot{\psi}^{3}}{\phi^{2}}\frac{\partial}{\partial\psi}w)+\frac{\dot{\psi}^{4}}{\phi^{4}}w+\frac{\dot{\psi}^{4}}{\phi^{4}}\frac{\partial^{2}w}{\partial\theta^{2}})
\end{eqnarray*}
Consider the bounded bilinear functional $B:\tilde{\Pi}C_{\varepsilon}^{2,\alpha}(SN\Gamma)\times\tilde{\Pi}C_{\varepsilon}^{2,\alpha}(SN\Gamma)\rightarrow\mathbb{R}$
defined by
\begin{eqnarray*}
B(v,w) & = & \int_{SN\Gamma}(v(\phi\tilde{\mathcal{L}})w)d\theta d\psi.
\end{eqnarray*}
We have, for some positive constant $C(\tau_{0})$ which only depends
on $\tau_{0},$ 
\begin{eqnarray}
B(w,w) & = & \int_{SN\Gamma}(\frac{\dot{\psi}^{3}}{\phi^{2}}|\frac{\partial}{\partial\psi}w|^{2}-\frac{\dot{\psi}}{\phi^{2}}w^{2}+\frac{\dot{\psi}}{\phi^{2}}|\frac{\partial w}{\partial\theta}|^{2})d\theta d\psi\nonumber \\
 & \geq & C(\tau_{0})\int_{SN\Gamma}(|\frac{\partial w}{\partial\psi}|^{2}+|\frac{\partial w}{\partial\theta}|^{2}+|w|^{2})d\theta d\psi.\label{eq:coercive}
\end{eqnarray}
The inequality holds because for $w\in\tilde{\Pi}W_{\varepsilon}^{1,2}(SN\Gamma)$,
we have 
\begin{eqnarray*}
\int_{SN\Gamma\cap\{\psi=\psi_{0}\}}|\frac{\partial w}{\partial\theta}|^{2}d\theta & \geq & 4\int_{SN\Gamma\cap\{\psi=\psi_{0}\}}|w|^{2}d\theta,
\end{eqnarray*}
for every $\psi_{0}.$

From (\ref{eq:coercive}) and the Lax-Milgram theorem we know $\phi\tilde{\mathcal{L}}$
is invertible and 
\begin{eqnarray*}
\|w\|_{W_{\varepsilon}^{1,2}} & \leq & C(\tau_{0})\|\phi\tilde{\mathcal{L}}w\|_{W_{\varepsilon}^{-1,2}}.
\end{eqnarray*}
 And from standard regularity theory of elliptic PDE we can get
\begin{eqnarray*}
\|w\|_{C_{\varepsilon}^{2,\alpha}} & \leq & C(\tau_{0})\|\phi\tilde{\mathcal{L}}w\|_{C_{\varepsilon}^{0,\alpha}}\leq C(\tau_{0})\|\tilde{\mathcal{L}}w\|_{C_{\varepsilon}^{0,\alpha}}.
\end{eqnarray*}

\subsection{1st mode}

In this mode, we are going to prove that
\[
\mathcal{J}\eta:C_{x_{0},\varepsilon}^{2,\alpha}(\Gamma,N\Gamma)\rightarrow C_{\varepsilon}^{\alpha}(\Gamma,N\Gamma)
\]
is invertible and the inverse is independent of $\varepsilon$. 

\subsubsection{Some preparations}

First we need to find the relationship between the operator $\mathcal{J}$
and the Jacobi operator $\mathcal{J}_{A}$ of the geodesic. Notice
that 
\begin{eqnarray*}
\frac{\dot{\psi}^{3}}{\phi}\mathcal{J}\eta & = & -\frac{\dot{\psi}^{3}}{\phi^{2}}\frac{\partial}{\partial x_{0}}(\frac{\dot{\psi}^{3}}{\phi^{2}}\frac{\partial\eta}{\partial x_{0}})\\
 &  & -\frac{\dot{\psi}^{3}}{\phi^{3}}(2\dot{\phi}\ddot{\psi}+2\frac{\dot{\phi}^{2}\dot{\psi}}{\phi}+\frac{\dot{\psi}^{3}}{\phi}-2\frac{\dot{\psi}^{2}}{\phi}(\phi^{2}-\tau_{0})-2\phi\dot{\phi}^{2})R(\eta,X_{0})X_{0}.
\end{eqnarray*}
 We define $y_{0}$ by 
\begin{equation}
\begin{cases}
dy_{0} & =\frac{\phi^{2}}{\dot{\psi}^{3}}dx_{0},\\
y_{0}(0) & =0.
\end{cases}\label{eq:definition of y0(x0)}
\end{equation}
 Then $dy_{0}=\frac{\phi^{2}}{\dot{\psi}^{3}}dx_{0}$ and $\frac{\dot{\psi}^{3}}{\phi^{2}}\frac{\partial}{\partial x_{0}}=\frac{\partial}{\partial y_{0}}$.
We know $x_{0}=\varepsilon\psi.$ So the period of $\phi$ and $\psi$
or the derivatives of them (in $x_{0}$ coordinate) have period of
order $\varepsilon.$ So coefficients such as $\frac{\phi^{2}}{\dot{\psi}^{3}}$
and $\frac{\dot{\psi}^{3}}{\phi^{3}}(2\dot{\phi}\ddot{\psi}+2\frac{\dot{\phi}^{2}\dot{\psi}}{\phi}+\frac{\dot{\psi}^{3}}{\phi}-2\frac{\dot{\psi}^{2}}{\phi}(\phi^{2}-\tau_{0})-2\phi\dot{\phi}^{2})$
are highly oscillating in $x_{0}$ coordinate. To understand the mean
value of the coefficients in the right way is the key to understand
the operator $\mathcal{J}$. 

Suppose $\psi\in[a_{1},b_{1}]$ is one period of $\phi$ . Suppose
\[
\frac{\int_{a_{1}}^{b_{1}}\frac{\phi^{2}}{\dot{\psi}^{3}}d\psi}{\int_{a_{1}}^{b_{1}}d\psi}=I_{1}.
\]
 $I_{1}$ is approximately the ratio of the length of $y_{0}$ and
that of $x_{0}.$ In some sense $dy_{0}\cong I_{1}dx_{0}.$ We have

\[
\frac{\dot{\psi}^{3}}{\phi}\mathcal{J}\eta=-\frac{\partial^{2}\eta}{\partial y_{0}^{2}}|_{y_{0}(x_{0})}-\Psi_{1}(\phi,\phi_{\psi})R(\eta,X_{0})X_{0}|_{x_{0}},
\]
where 
\[
\Psi_{1}(\phi,\phi_{\psi})=\frac{\dot{\psi}^{3}}{\phi^{3}}(2\dot{\phi}\ddot{\psi}+2\frac{\dot{\phi}^{2}\dot{\psi}}{\phi}+\frac{\dot{\psi}^{3}}{\phi}-2\frac{\dot{\psi}^{2}}{\phi}(\phi^{2}-\tau_{0})-2\phi\dot{\phi}^{2}).
\]
Now we need the average of $\Psi_{1}(\phi,\phi_{\psi})$ in the coordinate
$y_{0}.$ Note that $dy_{0}=\frac{\phi^{2}}{\dot{\psi}^{3}}dx_{0}=\varepsilon\frac{\phi^{2}}{\dot{\psi}^{3}}d\psi.$
If we assume $y_{0}(a_{1})=y_{1},y_{0}(b_{1})=y_{2},$ then we have
\begin{eqnarray}
 &  & \frac{\int_{y_{1}}^{y_{2}}\Psi_{1}(\phi,\phi_{\psi})dy_{0}}{\int_{y_{1}}^{y_{2}}dy_{0}}\nonumber \\
 & = & \frac{\int_{a_{1}}^{b_{1}}\frac{1}{\phi}(2\dot{\phi}\ddot{\psi}+2\frac{\dot{\phi}^{2}\dot{\psi}}{\phi}+\frac{\dot{\psi}^{3}}{\phi}-2\frac{\dot{\psi}^{2}}{\phi}(\phi^{2}-\tau_{0})-2\phi\dot{\phi}^{2})d\psi}{\int_{a_{1}}^{b_{1}}\frac{\phi^{2}}{\dot{\psi}^{3}}d\psi}\nonumber \\
 & = & I_{2}.\label{Psi1 average I2}
\end{eqnarray}
This indicates that in some sense
\begin{eqnarray*}
 &  & -\frac{\partial^{2}\eta}{\partial y_{0}^{2}}-\Psi_{1}(\phi,\phi_{\psi})R(\eta,X_{0})X_{0}\\
 & \cong & -\frac{\partial^{2}\eta}{\partial y_{0}^{2}}|_{y_{0}}-I_{2}R(\eta,X_{0})X_{0}|_{x_{0}}.
\end{eqnarray*}
But $\frac{\partial}{\partial y_{0}}\cong I_{1}^{-1}\frac{\partial}{\partial x_{0}}.$
So 
\begin{eqnarray*}
-\frac{\partial^{2}\eta}{\partial y_{0}^{2}}|_{y_{0}}-I_{2}R(\eta,X_{0})X_{0}|_{x_{0}} & \cong & -I_{1}^{-2}(\frac{\partial^{2}\eta}{\partial x_{0}^{2}}|_{x_{0}}+I_{1}^{2}I_{2}R(\eta,X_{0})X_{0})|_{x_{0}}.
\end{eqnarray*}
 So if 
\[
I_{1}^{2}I_{2}=1,
\]
 we will have the chance to unearth the Jacobi operator of the geodesic
$\mathcal{J}_{A}.$ Fortunately, it is true. Due to easy calculation,
it is equivalent to the following lemma

\begin{lem}(``average 1'' lemma)\label{average 1 lemma}
\begin{eqnarray*}
 &  & \int_{a_{1}}^{b_{1}}\frac{1}{\phi}(2\dot{\phi}\ddot{\psi}+2\frac{\dot{\phi}^{2}\dot{\psi}}{\phi}+\frac{\dot{\psi}^{3}}{\phi}-2\frac{\dot{\psi}^{2}}{\phi}(\phi^{2}-\tau_{0})-2\phi\dot{\phi}^{2})d\psi\cdot\int_{a_{1}}^{b_{1}}\frac{\phi^{2}}{\dot{\psi}^{3}}d\psi\\
 & = & (b_{1}-a_{1})^{2}.
\end{eqnarray*}

\end{lem}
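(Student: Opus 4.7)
The plan is to pass to the arc-length parameter $s$ via $d\psi = \dot\psi\,ds$, reducing the identity of the lemma to
\[
\left(\int_{0}^{T_s} F\dot\psi\,ds\right)\left(\int_{0}^{T_s}\frac{\phi^2}{\dot\psi^2}\,ds\right) = T_\psi^2,
\]
where $F$ denotes $1/\phi$ times the bracketed expression in the lemma, $T_s$ is the $s$-period of the Delaunay solution, and $T_\psi = \int_{0}^{T_s}\dot\psi\,ds = b_1 - a_1$. I will establish the two closed-form evaluations
\[
\int_{0}^{T_s}\frac{\phi^2}{\dot\psi^2}\,ds = \frac{T_\psi}{2\tau_0} \qquad \text{and} \qquad \int_{0}^{T_s} F\dot\psi\,ds = 2\tau_0\, T_\psi
\]
separately, so that their product automatically gives $T_\psi^2 = (b_1-a_1)^2$.

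The first evaluation follows from a short computation with the ODE relations $\dot\psi = \phi^2+\tau_0$, $\dot\phi^2 = \phi^2 - \dot\psi^2$, $\ddot\psi = 2\phi\dot\phi$, $\ddot\phi = \phi - 2\phi\dot\psi$, which yields
\[
\frac{d}{ds}\!\left(\frac{\phi\dot\phi}{\dot\psi}\right) = -\dot\psi + \frac{2\tau_0}{\dot\psi} - \frac{2\tau_0^2}{\dot\psi^2}.
\]
The left-hand side integrates to zero over one period, and the right-hand side equals $-\dot\psi + 2\tau_0\phi^2/\dot\psi^2$ after substituting $\phi^2 = \dot\psi - \tau_0$; integrating and rearranging gives $\int_{0}^{T_s}\phi^2/\dot\psi^2\,ds = T_\psi/(2\tau_0)$.

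The second evaluation is the main obstacle. My strategy is to exhibit the exact-differential identity
\[
F\dot\psi - 2\tau_0\dot\psi = \frac{d}{ds}\!\left[\phi\dot\phi(\dot\psi+\tau_0) + \frac{\tau_0^2\,\dot\phi}{\phi}\right].
\]
The ansatz is guided by the following observation: expanding $\frac{d}{ds}(\phi\dot\phi\dot\psi)$ through the Delaunay ODE already reproduces the cubic $-5\dot\psi^3$ and most of the lower-order polynomial content of $F\dot\psi$; adding the correction $\tau_0\frac{d}{ds}(\phi\dot\phi)$ absorbs the remaining polynomial discrepancy; and the surviving piece $-\tau_0^2\dot\psi + \tau_0^3 + \tau_0^4/\phi^2$ coincides with $\tau_0^2\frac{d}{ds}(\dot\phi/\phi)$ after the polynomial long division $\dot\psi^2/\phi^2 = \dot\psi + \tau_0 + \tau_0^2/\phi^2$. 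Verifying the displayed identity is then a mechanical but lengthy calculation with the four ODE relations above. Integrating over one period yields $\int_{0}^{T_s} F\dot\psi\,ds = 2\tau_0 T_\psi$, and multiplication with the first evaluation closes the proof, $(T_\psi/(2\tau_0))(2\tau_0 T_\psi) = T_\psi^2$.
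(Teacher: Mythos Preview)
Your proof is correct. Both arguments ultimately show the same two closed-form evaluations
\[
\int_{a_1}^{b_1}\frac{\phi^2}{\dot\psi^3}\,d\psi=\frac{b_1-a_1}{2\tau_0},
\qquad
\int_{a_1}^{b_1}\frac{1}{\phi}\Bigl(2\dot\phi\ddot\psi+\cdots\Bigr)\,d\psi=2\tau_0\,(b_1-a_1),
\]
and then multiply, but the routes differ. The paper first rewrites the two integrals as $\int\frac{1}{2\phi^2}d\psi$ and $\int\bigl(2\phi^2+\tfrac{2\phi^2\phi_{\psi\psi}}{(1+\phi_\psi^2)^{3/2}}\bigr)d\psi$, then passes to the exponential parametrization $\phi=\sqrt{\tau_0}\,e^{\sigma}$ and uses the reflection symmetry $\sigma(s_2-t)=-\sigma(s_2+t)$ to express all three quantities as multiples of $\int\cosh^2\sigma\,ds$. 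You instead stay in the original variables and exhibit two explicit primitives, $\phi\dot\phi/\dot\psi$ and $\phi\dot\phi(\dot\psi+\tau_0)+\tau_0^2\dot\phi/\phi$, whose exact-differential identities (which I verified) give the two evaluations directly without any change of variables or symmetry argument. Your approach is more self-contained and arguably more elementary; the paper's substitution, on the other hand, reveals the underlying symmetric structure of the Delaunay profile and is reused in the proof of the ``average~0'' lemma.
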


The proof of this lemma is direct calculations. The proof can be found
in Appendix \ref{(APP)-Average1 and 0}. 

Let $\tilde{y}=I_{1}^{-1}y_{0}$. First we define a map $\mathfrak{F}:\Gamma\rightarrow\Gamma$.
$\mathfrak{F}(p)=q$ if and only if $x_{0}(p)=\tilde{y}(q).$ The
following graph illustrate the definition of $\mathfrak{F}.$ 

\includegraphics[scale=0.4]{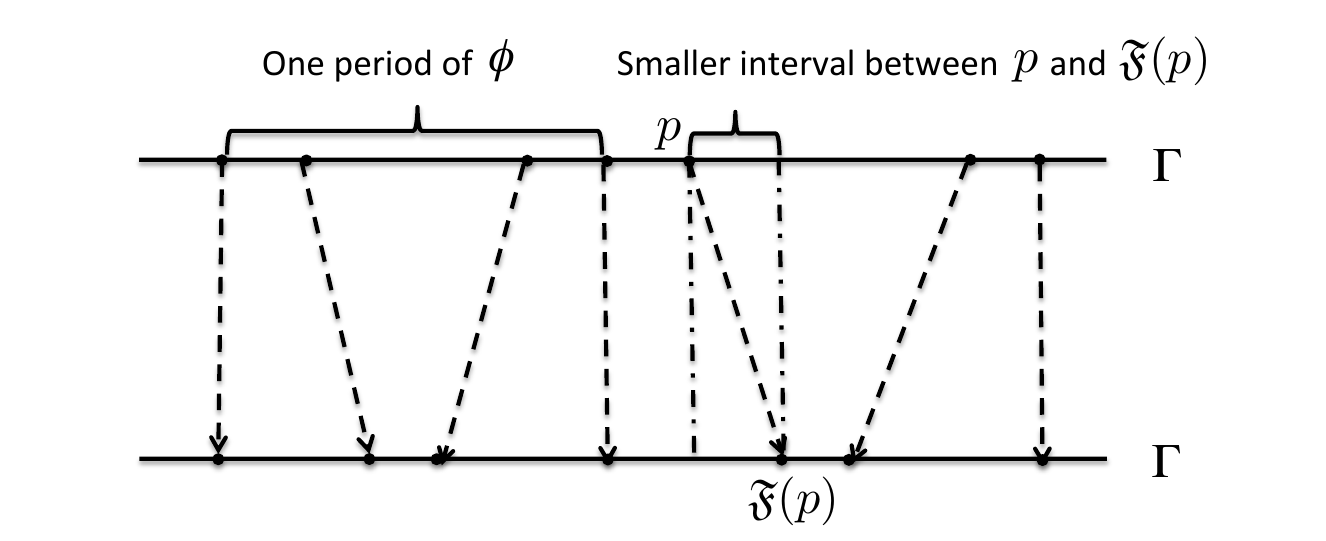}

Let $P_{p}^{\mathfrak{F}(p)}:(N\Gamma)^{*}\otimes N\Gamma|_{p}\rightarrow(N\Gamma)^{*}\otimes N\Gamma|_{\mathfrak{F}(p)}$
be defined by parallel translating any element of $(N\Gamma)^{*}\otimes N\Gamma|_{p}$
to the space $(N\Gamma)^{*}\otimes N\Gamma|_{\mathfrak{F}(p)}$, along
the smaller interval between $p$ and $\mathfrak{F}(p)$. Let $\tilde{\mathcal{J}}_{A}=-\frac{\partial^{2}}{\partial y_{0}^{2}}-I_{2}(P_{p}^{\mathfrak{F}(p)}R(\cdot,X_{0})X_{0})$. 

\begin{lem}\label{eta estimate}

$I_{1}^{2}\tilde{\mathcal{J}}_{A}$ is conjugate to $\mathcal{J}_{A}.$
So $\tilde{\mathcal{J}}_{A}$ is invertible and 
\begin{equation}
\|\eta\|_{C_{y_{0}}^{2}}\leq C\|\tilde{\mathcal{J}}_{A}\eta\|_{C^{0}}.\label{Ja tilda estimate}
\end{equation}

\end{lem}

\begin{proof}

Let $\eta_{1}$ be a smooth section of $N\Gamma$. Define $(\eta_{1}\circ\mathfrak{F})(p)\in N\Gamma(p)$
by parallel translating $\eta_{1}$ from $\mathfrak{F}(p)$ to $p$
along the smaller interval between them. For another section $\eta_{2},$
$\eta_{2}\circ\mathfrak{F}^{-1}$ can be defined similarly. 
\begin{align*}
I_{1}^{2}\tilde{\mathcal{J}}_{A}(\eta_{1})(\mathfrak{F}(p)) & =-\frac{\partial^{2}\eta_{1}(\mathfrak{F}(p))}{\partial\tilde{y}^{2}}-(P_{p}^{\mathfrak{F}(p)}R(\cdot,X_{0})X_{0})(\eta_{1}(\mathfrak{F}(p)))\\
 & =-\frac{\partial^{2}(\eta_{1}\circ\mathfrak{F})(p)}{\partial x_{0}^{2}}-R((\eta_{1}\circ\mathfrak{F})(p),X_{0})X_{0}.
\end{align*}
So 
\[
I_{1}^{2}\tilde{\mathcal{J}}_{A}(\eta_{1})(q)=(-\frac{\partial^{2}(\eta_{1}\circ\mathfrak{F})(p)}{\partial x_{0}^{2}}-R((\eta_{1}\circ\mathfrak{F})(p),X_{0})X_{0})\circ\mathfrak{F}^{-1}(q).
\]
It is obvious that $C^{-1}\|\eta\circ\mathfrak{F}\|_{C_{x_{0}}^{2}}\leq\|\eta\|_{C_{y_{0}}^{2}}\leq C\|\eta\circ\mathfrak{F}\|_{C_{x_{0}}^{2}}$.
So we can prove (\ref{Ja tilda estimate}). 

\end{proof}

\begin{Def}

The Green operator $G(y_{0},z_{0}):N\Gamma|_{z_{0}}\rightarrow N\Gamma|_{y_{0}}$
is a linear map for given $y_{0},z_{0}$ which satisfies
\begin{enumerate}
\item For fixed $z_{0},$ and $\nu\in N\Gamma|_{z_{0}}$, $\tilde{\mathcal{J}}_{A}(G(y_{0},z_{0})\nu)=0,y_{0}\ne z_{0}$,
where $\tilde{\mathcal{J}}_{A}$ acts on $y_{0}.$ 
\item For fixed $z_{0},$ $\nu\in N\Gamma|_{z_{0}}$, $G(y_{0},z_{0})\nu$
can be extended to be continuous at $y_{0}=z_{0}$ and $\frac{\partial G(z_{0}^{+},z_{0})\nu}{\partial y_{0}}-\frac{\partial G(z_{0}^{-},z_{0})\nu}{\partial y_{0}}=\nu.$
\end{enumerate}
\end{Def}

It is obvious that $G_{z_{0}}(y_{0},z_{0})$ and $G_{y_{0}}(y_{0},z_{0})$
make sense in light of the notion of connection of $N\Gamma$. 

\begin{lem}\label{estimate on fundamental solution of 1st mode}

The Green operator exists. If $\tilde{\mathcal{J}}_{A}\eta=f$, then
\begin{equation}
\eta(y_{0})=\int_{\Gamma}G(y_{0},z_{0})f(z_{0})dz_{0}.\label{green operator}
\end{equation}
 Moreover, $G(y_{0},z_{0})$ satisfies that 
\begin{enumerate}
\item For fixed $z_{0},$ let $\nu\in N\Gamma|_{z_{0}}$, $|\nu|=1$. Then
$G(y_{0},z_{0})\nu\in C_{y_{0}}^{\infty}(\Gamma\backslash\{z_{0}\},N\Gamma)$
and can be extended to be in $C_{y_{0}}^{0,1}(\Gamma,N\Gamma)$ and
the $C_{y_{0}}^{0,1}$ norm is bounded independent of $z_{0}$ and
$\nu$. 
\item Let $\nu\in N\Gamma|_{z_{0}}$, $|\nu|=1$. $G_{z_{0}}(y_{0},z_{0})\nu,G_{z_{0}y_{0}}(y_{0},z_{0})\nu,y_{0}\neq z_{0}$
are bounded independent of $y_{0},z_{0}$. 
\end{enumerate}
\end{lem}

\begin{proof}

For fixed $z_{0}$, we choose orthonormal basis $E_{1},E_{2}\in N\Gamma|_{z_{0}}.$
Let $\mathfrak{J}$ be the Jacobi field which is orthonormal to $\gamma$,
with $\mathfrak{J}(z_{0}^{+})=aE_{1}+bE_{2},\mathfrak{J}'(z_{0}^{+})=cE_{1}+dE_{2}.$
We extend $\mathfrak{J}$ along $\gamma$ to $z_{0}^{-}$. And It
is obvious that there is a $4\times4$ matrix $A(z_{0})$ which does
not depends on $\mathfrak{J}$, such that under the basis $E_{1},E_{2}$
\[
\left(\begin{array}{c}
\mathfrak{J}(z_{0}^{-})\\
\mathfrak{J}'(z_{0}^{-})
\end{array}\right)=A(z_{0})\left(\begin{array}{c}
a\\
b\\
c\\
d
\end{array}\right).
\]
 As $\tilde{\mathcal{J}}_{A}$ is nondegenerate, $1$ is not the eigenvalue
of $A(z_{0})$. So we can solve
\[
(A(z_{0})-I)\left(\begin{array}{c}
a_{1}\\
b_{1}\\
c_{1}\\
d_{1}
\end{array}\right)=\left(\begin{array}{c}
0\\
0\\
-1\\
0
\end{array}\right)\,\,{\rm and}\,\,(A(z_{0})-I)\left(\begin{array}{c}
a_{2}\\
b_{2}\\
c_{2}\\
d_{2}
\end{array}\right)=\left(\begin{array}{c}
0\\
0\\
0\\
-1
\end{array}\right).
\]
We let the Jacobi field $\mathfrak{J}_{i}^{z_{0}}(y_{0})$ satisfy
$\mathfrak{J}_{i}^{z_{0}}(z_{0}^{+})=a_{i}E_{1}+b_{i}E_{2},\frac{\partial}{\partial y_{0}}\mathfrak{J}_{i}^{z_{0}}(z_{0}^{+})=c_{i}E_{1}+d_{i}E_{2}.$
So $\mathfrak{J}_{i}^{z_{0}}(z_{0}^{+})=\mathfrak{J}_{i}^{z_{0}}(z_{0}^{-})$
and $\frac{\partial}{\partial y_{0}}\mathfrak{J}_{i}^{z_{0}}(z_{0}^{+})=\frac{\partial}{\partial y_{0}}\mathfrak{J}_{i}^{z_{0}}(z_{0}^{-})+E_{i}.$
$G(y_{0},z_{0})$ is defined by $G(y_{0},z_{0})E_{i}=\mathfrak{J}_{i}^{z_{0}}(y_{0}).$
One can check that $G(y_{0},z_{0})$ is the Green operator and it
is the inverse of $\tilde{\mathcal{J}}_{A}$. 

Now we prove the first item. It is obvious that $\mathfrak{J}_{i}^{z_{0}}(y_{0})$
is a continuous function of $(y_{0},z_{0})\in\Gamma\times\Gamma.$
So it is bounded. From the Jacobi equation, the $\frac{\partial^{2}}{\partial y_{0}^{2}}\mathfrak{J}_{i}^{z_{0}}(y_{0})$
is uniformly bounded when $y_{0}\neq z_{0}.$ As $(A(z_{0})-I)^{-1}$
is uniformly bounded, we know $\frac{\partial}{\partial y_{0}}\mathfrak{J}_{i}^{z_{0}}(z_{0}^{\pm})$
is uniformly bounded. So $\frac{\partial}{\partial y_{0}}\mathfrak{J}_{i}^{z_{0}}(y_{0})$
is uniformly bounded in $\Gamma\times\Gamma\backslash\{(p,p)\}$.
So the first item is clear. 

For the second item, first we know that for fixed $y_{0}$ $G(y_{0},z_{0})$
is smooth in $z_{0}\neq y_{0}$ and $G_{z_{0}}(y_{0},z_{0})E_{i}$
is also a Jacobi field. As $\frac{d}{dz_{0}}(G(z_{0},z_{0})E_{i})=\frac{d}{dz_{0}}\mathfrak{J}_{i}^{z_{0}}(z_{0})$
is uniformly bounded and $\frac{d}{dz_{0}}(G(z_{0},z_{0})E_{i})=G_{y_{0}}(z_{0}^{\pm},z_{0})E_{i}+G_{z_{0}}(z_{0}^{\pm},z_{0})E_{i}$,
we know $G_{z_{0}}(z_{0}^{\pm},z_{0})E_{i}$ is uniformly bounded.
Again as $\frac{d}{dz_{0}}(G_{y_{0}}(z_{0},z_{0})E_{i})=\frac{d}{dz_{0}}\frac{\partial}{\partial y_{0}}\mathfrak{J}_{i}^{z_{0}}(z_{0})$
is bounded, and $\frac{d}{dz_{0}}(G_{y_{0}}(z_{0},z_{0})E_{i})=G_{y_{0}y_{0}}(z_{0}^{\pm},z_{0})E_{i}+G_{z_{0}y_{0}}(z_{0}^{\pm},z_{0})E_{i}$,
we know $G_{z_{0}y_{0}}(z_{0}^{\pm},z_{0})E_{i}$ is uniformly bounded.
So $G_{z_{0}}(y_{0},z_{0})E_{i}$ is uniformly bounded from the comparison
theorem of Jacobi fields. Also $G_{z_{0}y_{0}}(y_{0},z_{0})E_{i}$
is uniformly bounded from the Jacobi equation. 

\end{proof}

\begin{lem}\label{better estimate in 1st mode}

Suppose that $F(\phi,\phi_{\psi})$ is a smooth function of $\phi$
and $\phi_{\psi}$ and it has the cancellation property in $y_{0}$
coordinate, i.e. 
\[
\int_{y_{1}}^{y_{2}}F(\phi,\phi_{\psi})dy_{0}=0
\]
where $[y_{1},y_{2}]$ is one period of $\phi(y_{0}).$ Let $R$ be
a $C_{x_{0}}^{1}$ section of $N\Gamma$. Then 
\[
h(y_{0})=\int_{\Gamma}G(y_{0},z_{0})R(z_{0})F(\phi,\phi_{\psi})(z_{0})dz_{0}
\]
satisfies
\[
\|h(y_{0})\|_{C_{y_{0}}^{1}}\leq C\varepsilon\|R\|_{C_{y_{0}}^{1}}\leq C\varepsilon\|R\|_{C_{x_{0}}^{1}},
\]
where $C$ depends on $\tau_{0}.$ 

\end{lem}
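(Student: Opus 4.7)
The plan is to exploit the zero-mean property of $F$ by introducing an antiderivative $\mathcal{F}$ and integrating by parts: since the period of $\phi(y_0)$ is of order $\varepsilon$ (coming from the period of $\phi(x_0)=\varepsilon\psi_1(\tau_0)$ via the diffeomorphism $dy_0=(\phi^2/\dot\psi^3)dx_0$), the antiderivative of a bounded zero-mean periodic function will itself be periodic with sup norm $O(\varepsilon)$. Shifting one derivative off $F$ and onto $G(y_0,\cdot)R$ via integration by parts then gives the desired $\varepsilon$ gain, using the bounds on $G$ and $G_{z_0}$ from Lemma \ref{estimate on fundamental solution of 1st mode}. The main obstacle is the jump discontinuity of $G_{y_0}$ at $z_0=y_0$, which forces us to split the domain of integration when estimating $\partial_{y_0}h$, but this is manageable because the resulting boundary contributions at $z_0=y_0$ carry a factor $\mathcal{F}(y_0)=O(\varepsilon)$.

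First, define $\mathcal{F}(y_0):=\int_0^{y_0}F(\phi,\phi_\psi)(t)\,dt$. Since $F$ has zero integral over each fundamental period of $\phi(y_0)$ and $L_\Gamma$ is an integer multiple of that period (by our choice of proper size $\varepsilon$), the function $\mathcal{F}$ is well defined on the closed geodesic and periodic. Because $|F|$ is uniformly bounded in $\tau_0$ and the period length in $y_0$ is $O(\varepsilon)$, we obtain $\|\mathcal{F}\|_{L^\infty(\Gamma)}\le C(\tau_0)\varepsilon$. Integrating by parts (using continuity of $z_0\mapsto G(y_0,z_0)$ from Lemma \ref{estimate on fundamental solution of 1st mode}, item 2, and the periodicity just mentioned to cancel the boundary term) gives
\[
h(y_0)=-\int_\Gamma \bigl(G_{z_0}(y_0,z_0)R(z_0)+G(y_0,z_0)R'(z_0)\bigr)\mathcal{F}(z_0)\,dz_0,
\]
where $G_{z_0}$ is in $L^\infty$ uniformly in $y_0$ (Lemma \ref{estimate on fundamental solution of 1st mode}, item 2). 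Bounding in $L^\infty$ immediately yields $|h(y_0)|\le C\varepsilon\|R\|_{C^1_{y_0}}$.

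For the derivative, I differentiate under the integral, $\partial_{y_0}h(y_0)=\int_\Gamma G_{y_0}(y_0,z_0)R(z_0)F(z_0)\,dz_0$, and repeat the integration by parts. Because $G_{y_0}(y_0,\cdot)$ has a jump at $z_0=y_0$ (Lemma \ref{estimate on fundamental solution of 1st mode}, item 3), I split $\Gamma=[0,y_0]\cup[y_0,L_\Gamma]$ and integrate by parts in each subinterval separately. The resulting expression is
\[
\partial_{y_0}h(y_0)=\bigl[G_{y_0}(y_0,y_0-0)-G_{y_0}(y_0,y_0+0)\bigr]R(y_0)\mathcal{F}(y_0)-\int_\Gamma\bigl(G_{y_0z_0}R+G_{y_0}R'\bigr)\mathcal{F}\,dz_0,
\]
with the periodicity of $\mathcal{F}$ cancelling the endpoint terms at $0$ and $L_\Gamma$. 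The jump of $G_{y_0}$ is uniformly bounded, and $G_{y_0z_0}(y_0,\cdot)$, $G_{y_0}(y_0,\cdot)$ are in $L^\infty$ away from $z_0=y_0$ with bounds independent of $y_0$; combined with $\|\mathcal{F}\|_\infty\le C\varepsilon$, this gives $|\partial_{y_0}h(y_0)|\le C\varepsilon\|R\|_{C^1_{y_0}}$.

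Finally, the inequality $\|R\|_{C^1_{y_0}}\le C\|R\|_{C^1_{x_0}}$ follows from the change of variables $dy_0=(\phi^2/\dot\psi^3)dx_0$, whose Jacobian is bounded above and below by constants depending only on $\tau_0$. The hard part was organizing the integration by parts around the jump of $G_{y_0}$, but since the jump term is already multiplied by $\mathcal{F}(y_0)=O(\varepsilon)$, no further cancellation is needed, and the estimate follows.
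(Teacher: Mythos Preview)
Your proof is correct and follows essentially the same approach as the paper: introduce the primitive $\mathcal{F}$ (the paper calls it $\chi$), use periodicity and the $O(\varepsilon)$ period length to get $\|\mathcal{F}\|_{\infty}\le C\varepsilon$, integrate by parts for $h$, and for $h'$ handle the jump of $G_{y_0}$ at $z_0=y_0$ before integrating by parts. The only cosmetic difference is that the paper shifts variables $z_0\mapsto y_0+z$ so the jump appears as a boundary term at the endpoints of $[0,L_\Gamma]$, whereas you split the circle explicitly at $z_0=y_0$; the two computations are identical in content.
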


\begin{proof}Suppose $\chi(y_{0})$ is the primitive function of
$F(\phi,\phi_{\psi})$ in $y_{0}$ coordinate. Because $F(\phi,\phi_{\psi})$
has cancellation property, $\chi(y_{0})$ is a global periodic function
on $\Gamma.$ We may add a constant to $\chi(y_{0})$ such that it
also has cancellation property in $y_{0}$ coordinate. It is easy
verified that 
\begin{equation}
\|\chi(y_{0})\|_{C^{0}}\leq C(\tau_{0})\varepsilon.\label{Chi estimate}
\end{equation}
We have
\[
h(y_{0})=-\int_{\Gamma}\chi(z_{0})(G_{z_{0}}R+GR_{z_{0}})dz_{0},
\]
and 
\begin{align*}
h'(y_{0}) & =\int_{\Gamma}G_{y_{0}}R(z_{0})F(\phi,\phi_{\psi})(z_{0})dz_{0}\\
 & =\int_{0}^{L_{\Gamma}}G_{y_{0}}(y_{0},y_{0}+z)R(y_{0}+z)F(\phi,\phi_{\psi})(y_{0}+z)dz\\
 & =G_{y_{0}}(y_{0},y_{0}+z)R(y_{0}+z)\chi(y_{0}+z)|_{0}^{L_{\Gamma}}-\int_{0}^{L_{\Gamma}}\chi(y_{0}+z)(G_{y_{0}z}R+G_{y_{0}}R_{z})dz.
\end{align*}
From $|R_{z_{0}}|\leq C|R_{x_{0}}|\leq C$, (\ref{Chi estimate})
and Lemma \ref{estimate on fundamental solution of 1st mode}, we
can prove this lemma.

\end{proof}

\subsubsection{The main theorem of 1st mode}

Now we can discuss the operator $\mathcal{J}$. First it is clear
that, there is a map $P_{1}:L^{2}(SN\Gamma)\rightarrow L^{2}(N\Gamma)$
such that for any $f\in L^{2}(SN\Gamma)$
\[
\Pi_{1}(f)=<P_{1}(f),\Upsilon>.
\]
We have 

\begin{thm}\label{main thm 1st mode}

There is $\delta>0$ such that when $0<\varepsilon<\delta$, we have
\begin{enumerate}
\item For each $f\in C_{\varepsilon}^{\alpha}(N\Gamma)$, there exists a
unique $\eta\in C_{x_{0},\varepsilon}^{2,\alpha}(N\Gamma)$ such that
\begin{equation}
\mathcal{J}\eta=f\label{J eta=00003Df}
\end{equation}
and for some uniform $C$ which does not depend on $\varepsilon$
\[
\|\eta\|_{C_{x_{0},\varepsilon}^{2,\alpha}}\leq C\|f\|_{C_{\varepsilon}^{\alpha}}.
\]
\item The equation
\[
\mathcal{J}\eta=-\frac{2}{3}\varepsilon\dot{\phi}P_{1}(R(\Upsilon_{\theta},X_{0},\Upsilon,\Upsilon_{\theta}))
\]
has a unique solution $\eta$ such that for some uniform $C$ 
\[
\|\eta\|_{C_{x_{0},\varepsilon}^{2,\alpha}}\leq C\varepsilon^{2}.
\]
\end{enumerate}
\end{thm}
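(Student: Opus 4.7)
The plan is to view $\mathcal{J}$ as a perturbation of $\tilde{\mathcal{J}}_A$ in the straightened coordinate $y_0$ from (\ref{eq:definition of y0(x0)}), and invert by a Neumann series whose contraction property is supplied by Lemma \ref{better estimate in 1st mode}. Multiplying $\mathcal{J}\eta=f$ by $\phi/\dot\psi^{3}$ and switching to $y_0$ yields
\[
-\partial_{y_0}^{2}\eta \;-\; \Psi_1(\phi,\phi_\psi)\,R(\eta,X_0)X_0 \;=\; \tfrac{\phi}{\dot\psi^{3}}\,f.
\]
Splitting $\Psi_1 = I_2 + (\Psi_1 - I_2)$, with $I_2$ the $y_0$-mean per period and $\tilde{\mathcal{J}}_A = -\partial_{y_0}^{2} - I_2 R(\cdot,X_0)X_0$ invertible by Lemma \ref{average 1 lemma}, and using the Green's function $G$ of $\tilde{\mathcal{J}}_A$ from Lemma \ref{estimate on fundamental solution of 1st mode}, the equation rewrites as the fixed point
\[
\eta(y_0) \;=\; \int_\Gamma G(y_0,z_0)\Bigl[\tfrac{\phi}{\dot\psi^{3}}f \;-\; (\Psi_1 - I_2)\,R(\eta,X_0)X_0\Bigr]\,dz_0.
\]
By construction the oscillating remainder $\Psi_1 - I_2$ has vanishing $y_0$-average over each period of $\phi$, so Lemma \ref{better estimate in 1st mode} gives
\[
\bigl\|\,G * (\Psi_1 - I_2)\,R(\eta,X_0)X_0\,\bigr\|_{C^{1}_{y_0}} \;\leq\; C\varepsilon\,\|\eta\|_{C^{1}_{x_0}}.
\]

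For the first statement of the theorem this makes the perturbative part of the fixed point a contraction on $C^{1}_{x_0}(N\Gamma)$ once $\varepsilon$ is small, so Neumann inversion produces a unique $\eta$ satisfying $\|\eta\|_{C^{1}_{x_0}} \leq C\|f\|_{C^{0}}$, where the $L^{\infty}$ bound on $G$ from Lemma \ref{estimate on fundamental solution of 1st mode}(2) is used to absorb the forcing term. To upgrade to the required $C^{2,\alpha}_{x_0,\varepsilon}$ bound I would then substitute this $\eta$ back into $\mathcal{J}\eta = f$, read it as a linear second-order ODE along $\Gamma$ whose leading coefficient $\dot\psi^{3}/\phi^{3}$ is bounded uniformly above and below and whose first-order coefficient is of size $O(\varepsilon^{-1})$ (precisely the growth that the $\varepsilon$-weights in the definition of $C^{2,\alpha}_{x_0,\varepsilon}$ are designed to accommodate), and apply standard Schauder estimates to recover the two missing derivatives with the stated bound.

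For the second statement, the source $f = -\tfrac{2}{3}\varepsilon\,\dot\phi\,P_1(R(\Upsilon_\theta,X_0,\Upsilon,\Upsilon_\theta))$ is already of size $\varepsilon$, so the first statement alone yields only $\|\eta\|\leq C\varepsilon$; a second application of the cancellation lemma is needed to harvest the extra factor of $\varepsilon$. The key observation is that after multiplication by $\phi/\dot\psi^{3}$ the principal term in the Green's function representation of $\eta$ carries the coefficient $\varepsilon\cdot\phi\dot\phi/\dot\psi^{3}$, which is a function of $\phi,\phi_\psi$. Because $\phi$ is even while $\dot\phi$ is odd in the Delaunay parameter $s$ around its minimum, and $dy_0 = \varepsilon(\phi^{2}/\dot\psi^{2})ds$ is even, the $y_0$-average of $\phi\dot\phi/\dot\psi^{3}$ over each period of $\phi$ vanishes; one more application of Lemma \ref{better estimate in 1st mode} therefore contributes an additional factor of $\varepsilon$, producing $\|\eta\|_{C^{2,\alpha}_{x_0,\varepsilon}} \leq C\varepsilon^{2}$. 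The main obstacle I anticipate is the careful bookkeeping of $\varepsilon$ powers when passing between $x_0$- and $y_0$-Hölder norms (which differ by a factor involving the Jacobian $\phi^{2}/\dot\psi^{3}$), together with confirming that the Schauder upgrade step meshes with the Neumann series without degrading any of these gains.
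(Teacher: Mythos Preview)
Your approach is essentially the paper's own: rewrite $\mathcal{J}\eta=f$ in the $y_0$-coordinate as $\tilde{\mathcal{J}}_A\eta$ plus an oscillating remainder with zero period-average, invert by a Neumann series whose contraction comes from Lemma~\ref{better estimate in 1st mode}, then upgrade to $C^{2,\alpha}_{x_0,\varepsilon}$ from the equation; for item~(2) apply Lemma~\ref{better estimate in 1st mode} once more to the forcing, using that the relevant coefficient is odd in $s$ and hence has zero $y_0$-average. Two small corrections: the multiplier is $\dot\psi^{3}/\phi$, not $\phi/\dot\psi^{3}$ (so the forcing becomes $\tfrac{\dot\psi^{3}}{\phi}f$ and the coefficient in item~(2) is $\varepsilon\,\tfrac{\dot\psi^{3}\dot\phi}{\phi}$, which still has zero average by the same parity argument), and there is an additional harmless $O(\varepsilon)L(\eta)$ term arising from evaluating $R$ at $x_0$ versus $I_1^{-1}y_0(x_0)$ in the definition of $\tilde{\mathcal{J}}_A$, which the paper also absorbs into the contraction.
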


\begin{proof}The proof of the first item. Consider
\begin{align}
\frac{\dot{\psi}^{3}}{\phi}\mathcal{J}\eta(q)= & -\frac{\partial^{2}\eta}{\partial y_{0}^{2}}(q)-\Psi_{1}(\phi,\phi_{\psi})R|_{q}(\eta(q),X_{0})X_{0}\nonumber \\
= & \tilde{\mathcal{J}}_{A}\eta+(I_{2}-\Psi_{1}(\phi,\phi_{\psi})|_{q})(P_{\mathfrak{F}^{-1}(q)}^{q}R)(\eta(q),X_{0})X_{0}\nonumber \\
 & +\Psi_{1}(\phi,\phi_{\psi})|_{q}((P_{\mathfrak{F}^{-1}(q)}^{q}R)-R|_{q})(\eta(q),X_{0})X_{0}\nonumber \\
= & \tilde{\mathcal{J}}_{A}\eta+(I_{2}-\Psi_{1}(\phi,\phi_{\psi})|_{q})(P_{\mathfrak{F}^{-1}(q)}^{q}R)(\eta(q),X_{0})X_{0}+O(\varepsilon)L(\eta)\nonumber \\
= & \frac{\dot{\psi}^{3}}{\phi}f.\label{1st-mode one ode}
\end{align}
First we solve
\[
\tilde{\mathcal{J}}_{A}\eta_{1}=\frac{\dot{\psi}^{3}}{\phi}f.
\]
 From the invertibility of $\tilde{\mathcal{J}}_{A}$ and Lemma \ref{estimate on fundamental solution of 1st mode}
we know there is one unique solution $\eta_{1}$ 
\begin{align*}
\|\eta_{1}\|_{C_{y_{0}}^{2}} & \leq\|\int_{\Gamma}G(y_{0},z_{0})\frac{\dot{\psi}^{3}}{\phi}fdz_{0}\|_{C^{0}}\\
 & \leq\|f\|_{C^{0}}.
\end{align*}
Then for each $i\geq1$ we solve
\begin{align}
 & \tilde{\mathcal{J}}_{A}(\eta_{i+1}-\eta_{i})(q)\nonumber \\
= & (\Psi_{1}(\phi,\phi_{\psi})|_{q}-I_{2})(P_{\mathfrak{F}^{-1}(q)}^{q}R)(\eta_{i}(q)-\eta_{i-1}(q),X_{0})X_{0}+O(\varepsilon)L(\eta_{i}-\eta_{i-1})\label{J tilda iteration}
\end{align}
where $\eta_{0}=0.$ From (\ref{Psi1 average I2}) we know $\Psi_{1}(\phi,\phi_{\psi})|_{x_{0}}-I_{2}$
has $0$ average in one period in $y_{0}$ coordinate. From Lemma
\ref{better estimate in 1st mode}, we know
\begin{align*}
\|\eta_{i+1}-\eta_{i}\|_{C_{y_{0}}^{1}} & \leq C\varepsilon\|\eta_{i}-\eta_{i-1}\|_{C_{y_{0}}^{1}}
\end{align*}
where $C$ depends on $\tau_{0}$ and the norm of the curvature. So
we can choose $\delta$ such that $0<C\delta<\frac{1}{2}$. When $\varepsilon<\delta$,
$0<C\varepsilon<\frac{1}{2}$. So $\eta_{i}$ converges to some $\eta$
in $C_{y_{0}}^{1}.$ From (\ref{J tilda iteration})
\[
\|\eta_{i+1}-\eta_{i}\|_{C_{y_{0}}^{2}}\leq C\|\eta_{i}-\eta_{i-1}\|_{C_{y_{0}}^{1}},
\]
we know $\partial_{y_{0}}^{2}\eta_{i}\rightarrow\partial_{y_{0}}^{2}\eta$
in $C^{0}$ sense. So we get a solution for $\mathcal{J}\eta=f$ and
we have the estimate 
\[
\|\eta\|_{C_{y_{0}}^{1}}\leq\|f\|_{C^{0}}.
\]
So we have 
\[
\|\eta\|_{C_{x_{0}}^{1}}\leq\|f\|_{C^{0}}
\]
and hence from (\ref{J eta=00003Df})
\[
\|\eta\|_{C_{x_{0},\varepsilon}^{2,\alpha}}\leq\|f\|_{C_{\varepsilon}^{\alpha}}.
\]
Now we prove the second item. In (\ref{1st-mode one ode}), we let
$f=-\frac{2}{3}\varepsilon\dot{\phi}P_{1}(R(\Upsilon_{\theta},X_{0},\Upsilon,\Upsilon_{\theta})).$
It is obvious that $\frac{\dot{\psi}^{3}}{\phi}\dot{\phi}$ has 0
average in one period. Apply Lemma \ref{better estimate in 1st mode}
again, we can draw the conclusion.

\end{proof}

\subsection{\label{subsec:The-sketch-of}The sketch of the proof and main difficulties}

Now we are in a position to sketch the whole proof and state the main
difficulities. It would be helpful to analyze the three modes together.
Basically, we would like to use an iteration method to solve the following
system. 
\[
\begin{cases}
\mathcal{L}_{0}w_{0}= & -\varepsilon^{2}F_{1}(\phi,\phi_{\psi})\star\Pi_{0}(R_{1})-\varepsilon\Pi_{0}(E)\\
 & -\varepsilon F_{3}(\phi,\phi_{\psi})\star\Pi_{0}(R_{3}(\eta))\\
 & -\varepsilon\Pi_{0}(\varepsilon L(w_{0}+\tilde{w},\eta)+\varepsilon^{-1}Q(w_{0}+\tilde{w},\eta)),\\
<\mathcal{J}\eta,\varUpsilon>= & -\varepsilon F_{2}(\phi,\phi_{\psi})\star\Pi_{1}(R_{2})-\Pi_{1}(E)\\
 & -\Pi_{1}(\varepsilon L(w_{0}+\tilde{w},\eta)+\varepsilon^{-1}Q(w_{0}+\tilde{w},\eta)),\\
\tilde{\mathcal{L}}\tilde{w}= & -\varepsilon^{2}F_{1}(\phi,\phi_{\psi})\star\tilde{\Pi}(R_{1})-\varepsilon^{2}F_{2}(\phi,\phi_{\psi})\star\tilde{\Pi}(R_{2})\\
 & -\varepsilon\tilde{\Pi}(E)-\varepsilon F_{3}(\phi,\phi_{\psi})\star\tilde{\Pi}(R_{3}(\eta))\\
 & -\varepsilon\tilde{\Pi}(\varepsilon L(w_{0}+\tilde{w},\eta)+\varepsilon^{-1}Q(w_{0}+\tilde{w},\eta)).
\end{cases}
\]

The two fundamental solutions for $\mathcal{L}_{0}w_{0}=0$ are $W_{1}=h_{1}(\tau_{0})\phi_{\psi}$
and $W_{2}=h_{2}(\tau_{0})\psi\phi_{\psi}+v$, where $v$ is a periodic
function. $W_{1}$ is periodic and $W_{2}$ has linear growth. $w_{0}$
can be expressed using $\mathcal{L}_{0}w_{0}$ by
\begin{align}
w_{0}(\psi)= & c_{1}W_{1}(\psi)+c_{2}W_{2}(\psi)\nonumber \\
 & -\int_{0}^{\psi}(W_{2}(\psi)W_{1}(t)-W_{1}(\psi)W_{2}(t))\sigma(t)^{-1}(1+\phi_{\psi}^{2})^{-\frac{3}{2}}\mathcal{L}_{0}w_{0}d\psi,\label{w0 expression}
\end{align}
where $\sigma(\psi)=W_{2}'(\psi)W_{1}(\psi)-W_{1}'(\psi)W_{2}(\psi).$

There are three difficulties in solving the above equation system. 
\begin{enumerate}
\item The operator $\mathcal{L}_{0}$ has kernel $\frac{\dot{\phi}}{\dot{\psi}}=\phi_{\psi}$
which corresponds to the translation of the surface along the geodesic.
So one may only solve it up to the kernel. We have to add one term
$\varepsilon^{3}\omega\phi^{-1}\phi_{\psi}$ to the right hand side
of the first equation. Once we have a solution $w_{0}$, which is
smooth at any $p\in\Gamma$, we get infinite many solutions as we
have freedom to choose $c_{1}.$ We let $c_{1}=0$ to have a unique
solution which satisfies $w_{0}'(0)=0.$ To get such a solution, first
we adjust $\omega$ such that $w_{0}(0)=w_{0}(\frac{L_{\Gamma}}{\varepsilon}).$
Then we adjust $c_{2}$ such that $w_{0}'(0^{+})=w_{0}'(\frac{L_{\Gamma}}{\varepsilon}^{-}).$
\item From (\ref{w0 expression}), we generally expect $w_{0}$ to be of
the size $\frac{1}{\varepsilon^{2}}\mathcal{L}_{0}w_{0}$. Then as
$-\varepsilon^{2}F_{1}(\phi,\phi_{\psi})\star\Pi_{0}(R_{1})$ is of
size $\varepsilon^{2}$ and there is one term $(\partial_{s}w)(\partial_{s}^{2}w)$
in $\Pi_{0}(Q(w,\eta))$, it is impossible to solve the first equation
using an iteration method, even given $\tilde{w}=0$ and $\eta=0$.
Actually there is a cancellation property for $-\varepsilon^{2}F_{1}(\phi,\phi_{\psi})\star\Pi_{0}(R_{1})$
so that it looks as if it has order $O(\varepsilon^{3}).$ Even though,
the iteration still breaks down. 
\item The term $F_{3}(\phi,\phi_{\psi})\star R_{3}(\eta)$ also causes problem
because there is a term of the type $\frac{1}{\varepsilon}w\cdot\partial_{x_{0}}\eta$
in $\Pi_{1}(Q(w,\eta))$. Once $w_{0}$ has a variation of $O(\varepsilon).$
From Theorem \ref{main thm 1st mode}, $\eta$ will have a variation
of size $O(\varepsilon^{2}).$ Now if we go back to $0$th mode, $w_{0}$
will again have a variation of size $O(\varepsilon)$. So even given
$\tilde{w}=0$, it is impossible to solve the first two equations
together. 
\end{enumerate}
The second difficulty appears because of the shortcoming of 
\begin{align*}
\mathcal{L}_{0}w_{0}= & -\varepsilon^{2}F_{1}(\phi,\phi_{\psi})\star\Pi_{0}(R_{1})-\varepsilon\Pi_{0}(E)-\varepsilon F_{3}(\phi,\phi_{\psi})\star\Pi_{0}(R_{3}(\eta))\\
 & -\varepsilon\Pi_{0}(\varepsilon L(w_{0}+\tilde{w},\eta)+\varepsilon^{-1}Q(w_{0}+\tilde{w},\eta)),
\end{align*}
that is, $Q(w_{0}+\tilde{w},\eta)$ involves special structures, which
are hard to analyze and use. Now we consider 0th mode in a nonlinear
way. Notice that if $\phi_{\tau_{0}}$ satisfies (\ref{eq:Delannay Definition directly.}),
\begin{align*}
\Pi_{0}(H(\mathcal{D}_{\phi_{\tau_{0}},p_{0},\varepsilon}(w,\eta))-\frac{2}{\varepsilon})= & \varepsilon F_{1}(\phi_{\tau_{0}},(\phi_{\tau_{0}})_{\psi})\star\Pi_{0}(R_{1})+\Pi_{0}(E)\\
 & +F_{3}(\phi_{\tau_{0}},(\phi_{\tau_{0}})_{\psi})\star\Pi_{0}(R_{3}(\eta))\\
 & +\Pi_{0}(\varepsilon L(w_{0}+\tilde{w},\eta)+\varepsilon^{-1}Q(w_{0}+\tilde{w},\eta)).
\end{align*}
The key idea is we substitute the solution $\phi_{\xi,\mu}(\psi)$
of the following nonlinear ODE for $\phi_{\tau_{0}}$. We let $\psi(p_{0})=0,\mod\,\frac{L_{\Gamma}}{\varepsilon}.$
Consider
\begin{equation}
\begin{cases}
\phi_{\psi\psi} & -\phi^{-1}(1+\phi_{\psi}^{2})+(2+\rho)(1+\phi_{\psi}^{2})^{\frac{3}{2}}=0,\\
\phi_{\psi}(0) & =\phi_{\psi}(\frac{L_{\Gamma}}{\varepsilon})=0,\\
\phi(0) & =\phi(\frac{L_{\Gamma}}{\varepsilon})=\frac{1-\sqrt{1-4\tau(0)}}{2},\\
\rho & =-\varepsilon^{2}F_{1}(\phi,\phi_{\psi})\star\Pi_{0}(R_{1})+\varepsilon F_{4}(\phi,\phi_{\psi})\xi(x_{0})\\
 & +\varepsilon^{3}\mu(\psi)+\varepsilon^{3}\omega\phi^{-1}\phi_{\psi},
\end{cases}\label{0th mode ODE}
\end{equation}
where $\tau(0)=\tau(\phi(0),0)$ and
\begin{equation}
\|\xi\|_{C_{x_{0}}^{1}}\leq C_{1}\varepsilon^{2},\|\mu\|_{C_{\varepsilon}^{\alpha}}\leq C_{2},|\omega|\leq C_{3},|\phi(0)-\frac{1-\sqrt{1-4\tau_{0}}}{2}|\leq C(\tau_{0})\varepsilon,\label{C1C2C3C(tau0)}
\end{equation}
where 
\[
C(\tau_{0})=\frac{1}{10}\min\{\frac{\sqrt{1-4\tau_{0}}}{2},\frac{1-\sqrt{1-4\tau_{0}}}{2},\frac{1}{4}-\tau_{0},\tau_{0}\}.
\]
 Among $\xi,\mu,\omega,\tau(0)$, only $\xi,\mu$ have freedom because
this equation is overdetermined. Once we have prescribed $\xi,\mu$,
we need to adjust $\omega$ and $\tau(0)$ to get a global smooth
solution. Theorem \ref{0th mode nonlinear ode solution} contains
the main results of $0$th mode, one may refer to which for the existence,
uniqueness and estimates of the solution $\phi_{\xi,\mu}$ to (\ref{0th mode ODE}). 

In Subsection \ref{subsec:A-fixed-point} we calculate $H(\mathcal{D}_{\phi_{\xi,\mu},p_{0},\varepsilon}(\tilde{w},\eta))$
instead, where $\Pi_{0}(\tilde{w})=0.$ The expression for $H(\mathcal{D}_{\phi_{\xi,\mu},p_{0},\varepsilon}(\tilde{w},\eta))$
is (\ref{Perturbed mean curvature}). 

After we prescribed $\xi$ and $\mu$ and got $\phi_{\xi,\mu}$, the
coefficients of $1$st mode and high mode change accordingly, while
from Theorem \ref{Perturbed high mode and 1st mode}, the first mode
and high mode are still solvable and the solutions enjoy similar estimates
as in Subsection \ref{subsec:High-mode} and Theorem \ref{main thm 1st mode}.
We denote the corresponding solutions as $\tilde{w}_{\xi,\mu}$ and
$\eta_{\xi,\mu}.$ 

We have 
\begin{align*}
\Pi_{0}(H(\mathcal{D}_{\phi_{\xi,\mu},p_{0},\varepsilon}(\tilde{w}_{\xi,\mu},\eta_{\xi,\mu}))-\frac{2}{\varepsilon})= & \Pi_{0}((\varepsilon^{2}+\varepsilon\rho)E_{\xi,\mu}+(\varepsilon+\varepsilon^{-1}\rho)L_{\xi,\mu}(\tilde{w}_{\xi,\mu},\eta_{\xi,\mu})\\
 & +\varepsilon^{-1}(1+\rho)Q_{\xi,\mu}(\tilde{w}_{\xi,\mu},\eta_{\xi,\mu}))+\varepsilon^{2}\mu\\
 & +\Pi_{0}(F_{3}(\phi_{\xi,\mu},\zeta_{\xi,\mu})\star R_{3}(\eta_{\xi,\mu}))\\
 & +F_{4}(\phi_{\xi,\mu},\zeta_{\xi,\mu})\star\xi+\varepsilon^{2}\omega\phi_{\xi,\mu}^{-1}\frac{\partial\phi_{\xi,\mu}}{\partial\psi},
\end{align*}
where $\zeta_{\xi,\mu}=\partial_{\psi}\phi_{\xi,\mu}.$

Note that $\Pi_{0}(F_{3}(\phi_{\xi,\mu},\zeta_{\xi,\mu})\star R_{3}(\eta_{\xi,\mu}))=F_{4}(\phi_{\xi,\mu},\zeta_{\xi,\mu})\star\Pi_{0}(R_{3}(\eta_{\xi,\mu}))$,
because 
\[
\Pi_{0}(R(\Upsilon,X_{0},\eta_{\xi,\mu},\Upsilon)-R(\Upsilon_{\theta},X_{0},\eta_{\xi,\mu},\Upsilon_{\theta}))=0.
\]
 We want to choose $\xi$ and $\mu$ such that 
\[
\begin{cases}
(\varepsilon^{2}+\varepsilon\rho)E_{\xi,\mu}+(\varepsilon+\varepsilon^{-1}\rho)L_{\xi,\mu}(\tilde{w}_{\xi,\mu},\eta_{\xi,\mu}) & +\varepsilon^{-1}(1+\rho)Q_{\xi,\mu}(\tilde{w}_{\xi,\mu},\eta_{\xi,\mu})+\varepsilon^{2}\mu=0,\\
\Pi_{0}(R_{3}(\eta_{\xi,\mu}))+\xi=0.
\end{cases}
\]
From Definition \ref{new L Q E}, Lemma \ref{Delta mu delta xi influence on w and eta},
\ref{fixed point argument}, we know a fixed point argument works
and we will find the solution $(\hat{\xi},\hat{\mu})$ such that
\[
H(\mathcal{D}_{\phi_{\hat{\xi},\hat{\mu}},p_{0},\varepsilon}(\tilde{w}_{\hat{\xi},\hat{\mu}},\eta_{\hat{\xi},\hat{\mu}}))=\frac{2}{\varepsilon}+\varepsilon^{2}\omega_{\hat{\xi},\hat{\mu}}\phi_{\hat{\xi},\hat{\mu}}^{-1}\frac{\partial\phi_{\hat{\xi},\hat{\mu}}}{\partial\psi}.
\]
 In Subsection \ref{subsec:The-energy-of}, we remove the term $\varepsilon^{2}\omega_{\hat{\xi},\hat{\mu}}\phi_{\hat{\xi},\hat{\mu}}^{-1}\frac{\partial\phi_{\hat{\xi},\hat{\mu}}}{\partial\psi}$,
by considering the energy of the surface. Letting $p_{0}$ vary on
the geodesic, we find the critical points of the energy functional,
which correspond to the case $\omega_{\hat{\xi},\hat{\mu}}=0$. So
we can overcome the first difficulty. 

The third difficulty exists even when we consider nonlinear ODE in
$0$th mode. Generally $\eta=O(\varepsilon^{2})$ is too big to solve
the whole system. 

The key to control the size of $\phi_{\xi,\mu}$ is the Delaunay parameter
function $\tau=-\phi^{2}+\frac{\phi}{\sqrt{1+\phi_{\psi}^{2}}}.$
For the solution $\phi_{\xi,\mu}$ of (\ref{0th mode ODE}), $F_{4}\xi$
influence the size of $\phi_{\xi,\mu}$ through the following equation
\[
\frac{d\tau}{d\psi}=\phi\phi_{\psi}\rho.
\]
Here $\xi$ represents $R_{4}(\eta)$ whose norm is $C_{x_{0}}^{1}.$
For $F_{4}$, we have

\begin{lem}(``average 0'' lemma)\label{average 0 lemma} Suppose
$\phi_{\tau_{0}}$ is the solution to (\ref{eq:Delannay Definition directly.})
and $\psi\in[a_{\tau_{0}},b_{\tau_{0}}]$ is one period of $\phi_{\tau_{0}}(\psi)$.
Then we have 
\[
\int_{a_{\tau_{0}}}^{b_{\tau_{0}}}\phi_{\tau_{0}}(\frac{\partial\phi_{\tau_{0}}}{\partial\psi})F_{4}(\phi_{\tau_{0}},\frac{\partial\phi_{\tau_{0}}}{\partial\psi})d\psi=0.
\]

\end{lem}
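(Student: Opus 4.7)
First I would plug the Delaunay relations for $\phi_\tau$ into $F_{4}$. The remark following the definition of $F_{4}$ explains that $\tau_{0}$ there is in effect the first integral $-\phi^{2}+\phi/\sqrt{1+\phi_\psi^{2}}$, which on $\phi_\tau$ equals $\tau$, so with $\dot\psi = \phi^{2}+\tau$, $\ddot\psi = 2\phi\dot\phi$, $\ddot\phi = \phi(1-2(\phi^{2}+\tau))$, and $\dot\phi^{2} = \phi^{2}-(\phi^{2}+\tau)^{2}$ substituted, the terms $-\tfrac{2}{3}\dot\psi\ddot\psi$ and $+\tfrac{4}{3}\phi\dot\phi\dot\psi$ cancel immediately, and a short regrouping of what remains gives
\[
F_{4}(\phi_\tau,\phi_{\tau,\psi}) \;=\; \frac{2\dot\phi_\tau}{3\phi_\tau^{3}}\bigl(2\phi_\tau^{2} - 5\phi_\tau^{4} - 4\phi_\tau^{2}\tau + \tau^{2}\bigr).
\]

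I would then change variables from $\psi$ to $\phi$. Over $[a_\tau,b_\tau]$ the function $\phi_\tau$ traverses $[\phi_{\min},\phi_{\max}]$ twice, and since $F_{4}$ is proportional to $\dot\phi_\tau$ it flips sign between the up-phase and the down-phase, so the two halves reinforce rather than cancel each other, yielding
\[
\int_{a_\tau}^{b_\tau}\phi_\tau\,\phi_{\tau,\psi}\, F_{4}\,d\psi
\;=\; \frac{4}{3}\int_{\phi_{\min}}^{\phi_{\max}} \frac{\sqrt{D(\phi)}}{\phi^{2}}\bigl(2\phi^{2} - 5\phi^{4} - 4\phi^{2}\tau + \tau^{2}\bigr)\,d\phi,
\]
where $D(\phi) := \phi^{2}-(\phi^{2}+\tau)^{2} = (\phi_{\max}^{2}-\phi^{2})(\phi^{2}-\phi_{\min}^{2})$ and the positive branch of $\sqrt{D}$ is used on the up-phase.

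The heart of the proof is then to exhibit an explicit antiderivative of the integrand on the right that vanishes at both endpoints. Since $D(\phi_{\min}) = D(\phi_{\max}) = 0$, the natural ansatz is $H(\phi) = P(\phi)\sqrt{D(\phi)}$, and a quick degree count in the equation $H'(\phi) = \sqrt{D}\,(2\phi^{2}-5\phi^{4}-4\phi^{2}\tau+\tau^{2})/\phi^{2}$ forces $P$ to have the three-term Laurent form $a\phi^{3}+b\phi+c/\phi$. Matching the coefficients of $\phi^{6},\phi^{4},\phi^{2},\phi^{0},\phi^{-2}$ then produces an over-determined linear system in $(a,b,c)$; it is exactly the special structure of the Delaunay first integral that makes the system consistent, with the unique solution $a=-1,\ b=1-2\tau,\ c=-\tau^{2}$, and direct differentiation confirms that
\[
H(\phi) \;=\; \Bigl(-\phi^{3} + (1-2\tau)\phi - \frac{\tau^{2}}{\phi}\Bigr)\sqrt{D(\phi)}
\]
has the required derivative on $(\phi_{\min},\phi_{\max})$.

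Since $D$ vanishes at both $\phi_{\min}$ and $\phi_{\max}$, so does $H$, hence the integral equals $H(\phi_{\max})-H(\phi_{\min}) = 0$ and the lemma is proved. The main obstacle is identifying the Laurent ansatz for $P$ and checking that the over-determined coefficient system is consistent; once that is done the rest is routine polynomial bookkeeping, and no further geometric input is needed beyond the Delaunay ODE relations used in the first step.
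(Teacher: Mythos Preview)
Your proof is correct and takes a genuinely different route from the paper's.

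The paper rewrites the integrand as $\tfrac{2}{3}\phi_{\psi}^{2}\phi^{-2}\dot\psi\bigl(\phi\ddot\phi+\dot\phi^{2}-2(\phi^{4}-\tau^{2})\bigr)$, then passes to the substitution $\phi=\sqrt{\tau}\,e^{\sigma(s)}$ (so $1-\sigma_{s}^{2}=4\tau\cosh^{2}\sigma$, $\sigma_{ss}=-2\tau\sinh 2\sigma$) and uses the odd symmetry $\sigma(s_{2}-t)=-\sigma(s_{2}+t)$ together with one integration by parts to show that the two pieces $\int\sigma_{s}^{2}(\phi\ddot\phi+\dot\phi^{2})\,d\psi$ and $-2\int\sigma_{s}^{2}(\phi^{4}-\tau^{2})\,d\psi$ both equal $\pm 2\tau^{2}\int(1-4\tau\cosh^{2}\sigma)\sinh^{2}2\sigma\,ds$ and hence cancel.

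Your approach avoids the $\sigma$-parametrization and the symmetry argument entirely: after the same algebraic reduction of $F_{4}$ you change variables to $\phi$ and exhibit an explicit antiderivative vanishing at both endpoints. It is worth noting that your polynomial $P(\phi)=-\phi^{3}+(1-2\tau)\phi-\tau^{2}/\phi$ is nothing but $D(\phi)/\phi$, so in fact $H(\phi)=D(\phi)^{3/2}/\phi$; seen this way, the verification $H'=\sqrt{D}\,\bigl(\tfrac{3}{2}\phi D'-D\bigr)/\phi^{2}$ becomes a one-line computation and the apparent miracle of the over-determined linear system dissolves. Your argument is more elementary and self-contained for this lemma; the paper's $\sigma$-substitution, on the other hand, is the same machinery used for the companion ``average 1'' lemma, so it gives a uniform treatment of both identities at the cost of introducing the extra parametrization.
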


We prove this lemma in Appendix \ref{(APP)-Average1 and 0}.

Using this lemma, we can overcome the third difficulty. Indeed, if
in the $C_{x_{0}}^{1}$ norm, $\eta$ has a variation of size $O(\varepsilon^{2})$,
because of Lemma \ref{average 0 lemma}, its influence on 0th mode
is only as if it were $O(\varepsilon^{3})$. The function $\phi_{\tau_{0}}(\frac{\partial\phi_{\tau_{0}}}{\partial\psi})F_{4}(\phi_{\tau_{0}},\frac{\partial\phi_{\tau_{0}}}{\partial\psi})$
is in no sense an ``odd'' function. So it is nontrivial that such
a cancellation result holds.

\subsection{0th mode\label{subsec:0th-mode}}

\subsubsection{The main theorem of 0th mode}

\begin{thm}\label{0th mode nonlinear ode solution} For fixed $\tau_{0}\in(0,\frac{1}{4})$,
$C_{1},C_{2}>0$, we can choose $C_{3},C_{4}>0$ and $\delta_{0}>0$,
such that when $\varepsilon\leq\delta_{0}$ and $\varepsilon\in{\rm PS}(L_{\Gamma},\tau_{0})$,
for every $\|\xi\|_{C_{x_{0}}^{1}}\leq C_{1}\varepsilon^{2},\|\mu\|_{C_{\varepsilon}^{\alpha}}\leq C_{2},$
we can find unique
\[
|\omega_{\xi,\mu}|\leq C_{3},|\phi(0)_{\xi,\mu}-\frac{1-\sqrt{1-4\tau_{0}}}{2}|\leq C_{4}\varepsilon^{2}
\]
 and $\phi_{\xi,\mu,\omega_{\xi,\mu},\phi(0)_{\xi,\mu}}(\psi)$ which
solves (\ref{0th mode ODE}) with $\omega=\omega_{\xi,\mu},\phi(0)=\phi(0)_{\xi,\mu}.$ 

For $\phi_{\xi,\mu,\omega_{\xi,\mu},\phi(0)_{\xi,\mu}}(\psi)$ we
have a $C^{1}$ map 
\[
\tilde{\Phi}=\tilde{\Phi}_{\xi,\mu,\tau_{0}}:\Gamma\rightarrow\Gamma
\]
 such that 
\begin{align*}
 & |\phi_{\xi,\mu,\omega_{\xi,\mu},\phi(0)_{\xi,\mu}}(\psi)-\phi_{\tau_{0}}(\tilde{\Phi}(\psi))|+|\frac{\partial\phi_{\xi,\mu,\omega_{\xi,\mu},\phi(0)_{\xi,\mu}}}{\partial\psi}(\psi)-\frac{\partial\phi_{\tau_{0}}}{\partial\tilde{\Phi}(\psi)}(\tilde{\Phi}(\psi))|\\
\leq & C(\tau_{0},C_{1},C_{2})\varepsilon^{2}
\end{align*}
 and
\[
\begin{cases}
|\tilde{\Phi}(\psi)-\psi| & \leq C(\tau_{0},C_{1},C_{2})\varepsilon,\\
|\tilde{\Phi}'(\psi)-1| & \leq C(\tau_{0},C_{1},C_{2})\varepsilon^{2}.
\end{cases}
\]
Moreover suppose $\psi_{i}$ is the $i$th local minimum point of
$\phi_{\xi,\mu,\omega_{\xi,\mu},\phi(0)_{\xi,\mu}}$, then $\tilde{\Phi}(\psi_{i})$
is the $i$th local minimum point of $\phi_{\tau_{0}}$ . In particular
$\tilde{\Phi}(0)=0,\tilde{\Phi}(L_{\Gamma})=L_{\Gamma}.$

Moreover, 
\begin{align*}
|\omega_{\xi_{2},\mu_{2}}-\omega_{\xi_{1},\mu_{1}}| & \leq C(\tau_{0},C_{1},C_{2})(\|\mu_{2}-\mu_{1}\|_{C_{\varepsilon}^{\alpha}}+\frac{1}{\varepsilon}\|\xi_{2}-\xi_{1}\|_{C_{x_{0}}^{1}}),\\
|\phi(0)_{\xi_{2},\mu_{2}}-\phi(0)_{\xi_{1},\mu_{1}}| & \leq C(\tau_{0},C_{1},C_{2})(\varepsilon^{2}\|\mu_{2}-\mu_{1}\|_{C_{\varepsilon}^{\alpha}}+\varepsilon\|\xi_{2}-\xi_{1}\|_{C_{x_{0}}^{1}}),
\end{align*}
and 
\begin{align}
 & \|\phi_{\xi_{2},\mu_{2},\omega_{\xi_{2},\mu_{2}},\tau(0)_{\xi_{2},\mu_{2}}}(\psi)-\phi_{\xi_{1},\mu_{1},\omega_{\xi_{1},\mu_{1}},\tau(0)_{\xi_{1},\mu_{1}}}(\psi)\|_{C_{\varepsilon}^{2,\alpha}}\nonumber \\
\leq & C(\tau_{0},C_{1},C_{2})(\varepsilon\|\mu_{2}-\mu_{1}\|_{C_{\varepsilon}^{\alpha}}+\|\xi_{2}-\xi_{1}\|_{C_{x_{0}}^{1}}).\label{main estimates for 0th mode-1}
\end{align}

\end{thm}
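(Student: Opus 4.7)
The ODE (\ref{0th mode ODE}) is a perturbation of the standard Delaunay ODE (\ref{eq:Delannay Definition directly.}) by the term $\rho$, which is $O(\varepsilon^2)$ in the worst pieces and $O(\varepsilon^3)$ in the small pieces. My strategy is a shooting argument: I view $(\phi(0),\omega)$ as unknowns and the periodicity conditions
\[
\phi_\psi\!\left(L_\Gamma/\varepsilon\right)=0,\qquad \phi\!\left(L_\Gamma/\varepsilon\right)=\phi(0)
\]
as two real equations, which I solve by an implicit function theorem after controlling the IVP over $N=L_\Gamma/(\varepsilon\,\psi_1(\tau_0))=O(1/\varepsilon)$ periods.

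The main bookkeeping device is the first integral $\tau(\phi,\phi_\psi)=-\phi^2+\phi/\sqrt{1+\phi_\psi^2}$ of the unperturbed Delaunay ODE. A direct computation, using (\ref{0th mode ODE}), gives
\[
\frac{d\tau}{d\psi}=\rho\,\phi\,\phi_\psi.
\]
So $\tau(\psi)$ drifts slowly, and on each period of length $O(1)$ the solution $\phi$ stays $O(\varepsilon^2)$-close to the standard Delaunay profile $\phi_{\tau(\psi)}$. The key to not accumulating a large error over the $N=O(1/\varepsilon)$ periods is that the per-period drift
\[
\Delta\tau \;=\;\int_{\mathrm{period}} \rho\,\phi\,\phi_\psi\,d\psi
\]
exhibits cancellations. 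The $\varepsilon F_4\,\xi(x_0)$ piece, where $\xi$ varies only on the scale $x_0=\varepsilon\psi$ and is thus nearly constant on one period, produces
$\xi(x_0)\cdot\int_{\mathrm{period}}\phi\phi_\psi F_4(\phi,\phi_\psi)\,d\psi$, which vanishes to leading order by the \emph{average 0 lemma} (Lemma \ref{average 0 lemma}); what remains is $O(\varepsilon^3)$ per period, hence $O(\varepsilon^2)$ globally. The pieces $\varepsilon^2 F_1\star\Pi_0(R_1)$ and $\varepsilon^3\mu$ similarly integrate against the odd function $\phi\phi_\psi$ (the unperturbed profile is even about its minima) and contribute at most $O(\varepsilon)$ globally.

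Next I analyse the closing-up map. For the IVP with initial data $(\phi(0),0)$ and parameter $\omega$, the shooting functional is
\[
\mathcal{S}(\phi(0),\omega)\;=\;\bigl(\phi(L_\Gamma/\varepsilon)-\phi(0),\;\phi_\psi(L_\Gamma/\varepsilon)\bigr)\in\mathbb{R}^2.
\]
At the unperturbed reference point $(\phi(0),\omega)=(\tfrac{1-\sqrt{1-4\tau_0}}{2},0)$ with $\rho=0$ the solution is the exact periodic $\phi_{\tau_0}$ and $\mathcal{S}=0$. The differential of $\mathcal{S}$ governs two independent mechanisms: varying $\phi(0)$ changes the Delaunay parameter and thus the return value of $\phi$, while $\omega\,\phi^{-1}\phi_\psi$ is tailored to absorb the secular linear-growth kernel $\phi_\psi$ of $\mathcal{L}_0$, giving a nontrivial derivative in the $\phi_\psi$-component. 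Linearizing around the reference, the $2\times 2$ Jacobian turns out to be triangular to leading order in $\varepsilon$ with nonvanishing diagonal entries $(c_1(\tau_0),\,c_2(\tau_0)\,L_\Gamma/\varepsilon)$. I invert it, uniformly in $\varepsilon$ after rescaling by a diagonal matrix, and apply the quantitative implicit function theorem on a ball of radius $O(\varepsilon^2)$ in $\phi(0)$ and $O(1)$ in $\omega$. This yields the unique $(\phi(0)_{\xi,\mu},\omega_{\xi,\mu})$ together with the stated bounds, and the claimed Lipschitz estimates in $(\xi,\mu)$ follow from the same IFT by differentiating the shooting functional with respect to the parameters and using Lemma \ref{average 0 lemma} again to get the scaling $\varepsilon\|\mu_2-\mu_1\|+\|\xi_2-\xi_1\|$ on the right.

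Finally, the reparametrization $\tilde{\Phi}$ is constructed by sending the $i$th local minimum of the constructed $\phi_{\xi,\mu,\omega,\phi(0)}$ to the $i$th minimum of $\phi_{\tau_0}$ and interpolating smoothly between them so that $\tilde{\Phi}(0)=0$, $\tilde{\Phi}(L_\Gamma)=L_\Gamma$. The period-by-period closeness proved above (with $|\tau(\psi)-\tau_0|\leq C\varepsilon^2$ after the secular mode is removed by the correct choice of $\omega$) gives $|\tilde{\Phi}(\psi)-\psi|\leq C\varepsilon$ and $|\tilde{\Phi}'-1|\leq C\varepsilon^2$. I expect the main technical obstacle to be the uniform-in-$\varepsilon$ inversion of the shooting linearization over $O(1/\varepsilon)$ periods: the naive Gronwall bound is exponential in $N$, and one must exploit (i) the first-integral structure, (ii) the average-0 cancellation, and (iii) the precise role of the $\omega\phi^{-1}\phi_\psi$ term as an effective mode removing the $\phi_\psi$-kernel, in order to get the polynomial-in-$\varepsilon$ estimates that close the fixed-point loop.
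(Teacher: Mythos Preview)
Your overall strategy---shooting in $(\phi(0),\omega)$, tracking the drift of the first integral $\tau$ via $d\tau/d\psi=\rho\,\phi\phi_\psi$, exploiting per-period cancellations, and closing up by an implicit-function argument---is exactly the paper's six-step plan. But the heart of the argument, the structure of the shooting Jacobian, is stated backwards in your proposal, and this is not cosmetic.

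You assert that varying $\phi(0)$ moves the return value $\phi(L_\Gamma/\varepsilon)-\phi(0)$ (diagonal entry $c_1(\tau_0)$) while $\omega$ moves $\phi_\psi(L_\Gamma/\varepsilon)$ (diagonal entry $c_2 L_\Gamma/\varepsilon$), making the Jacobian triangular. In fact the roles are reversed. Varying $\phi(0)$ excites the fundamental solution $\beta_1\sim h_1\psi\,\phi_\psi+v$ (linear growth), so after $N\sim 1/\varepsilon$ periods one gets $\partial\phi_\psi(L_\Gamma/\varepsilon)/\partial\phi(0)\gtrsim 1/\varepsilon$, while $\beta_1(L_\Gamma/\varepsilon)-1=O(\varepsilon)$ because $\phi_\psi$ vanishes at the endpoint. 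Conversely, the $\omega$-term contributes $\varepsilon^3\omega\phi_\psi^2$ to $d\tau/d\psi$, which has a \emph{sign} and nonzero period-average, so $\partial\Delta\tau/\partial\omega\gtrsim\varepsilon^2$; its effect on $\phi_\psi(L_\Gamma/\varepsilon)$ is only $O(\varepsilon)$. The paper therefore works with the functional $(\Delta\tau,\zeta(L_\Gamma/\varepsilon))$ and obtains (Lemma~\ref{Linearization estimates})
\[
\frac{\partial(\Delta\tau,\zeta)}{\partial(\omega,\phi(0))}\sim
\begin{pmatrix}\gtrsim\varepsilon^2 & O(\varepsilon)\\ O(\varepsilon) & \gtrsim 1/\varepsilon\end{pmatrix},
\]
whose diagonal dominates the off-diagonal product for small $\varepsilon$; the boundary matching (Step~5) is then done by alternately adjusting $\omega$ to kill $\Delta\tau$ and $\phi(0)$ to kill $\zeta$. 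With your claimed diagonal $(c_1,c_2/\varepsilon)$ you would get the wrong scaling for $C_3,C_4$ and for the Lipschitz bounds in $(\xi,\mu)$.

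A second, smaller gap: you say the $\varepsilon^2 F_1\star\Pi_0(R_1)$ piece contributes ``at most $O(\varepsilon)$ globally'' to $\Delta\tau$ by oddness of $\phi\phi_\psi$. That rough bound is not enough for $|\tilde\Phi'-1|\le C\varepsilon^2$. The paper's Step~3 upgrades it to $O(\varepsilon^2)$: the full integrand $\phi\phi_\psi F_1(\phi,\phi_\psi)$ has zero period-average (not merely $\phi\phi_\psi$ odd), and $\Pi_0(R_1)$ varies only on the $x_0$-scale, so integration by parts against the bounded primitive gains an extra $\varepsilon$ per period.
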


The proof of Theorem \ref{0th mode nonlinear ode solution} is the
most technical part of this paper.  We prove it in six steps. 

From Step 1 to Step 4, we will study the following initial value problem
instead of (\ref{0th mode ODE}). 

\begin{equation}
\begin{cases}
\phi_{\psi\psi} & -\phi^{-1}(1+\phi_{\psi}^{2})+(2+\rho)(1+\phi_{\psi}^{2})^{\frac{3}{2}}=0,\\
\phi_{\psi}(0) & =0,\\
\phi(0) & =\frac{1-\sqrt{1-4\tau(0)}}{2},
\end{cases}\label{0th mode ODE-1}
\end{equation}
where $\rho$ is defined in (\ref{0th mode ODE}). In Step 1, we prove
the local existence of (\ref{0th mode ODE-1}); In Step 2, we prove
the existence of the solution of (\ref{0th mode ODE-1}) on the whole
interval $[0,\frac{L_{\Gamma}}{\varepsilon}]$. In Step 3, we do some
basic estimates for the solution. In Step 4, we analyze the linearized
equations of (\ref{0th mode ODE-1}), which will be used in the next
two steps. In Step 5, we adjust $\omega$ and $\tau(0)$ to match
the boundary data on both sides of $\psi=0$ (or $\psi=\frac{L_{\Gamma}}{\varepsilon}$)
and get the global solution to (\ref{0th mode ODE}). In Step 6, we
derive the estimates in Theorem \ref{0th mode nonlinear ode solution}.

\paragraph{Step 1. Local existence and uniqueness of ODE (\ref{0th mode ODE})}

Fix $C_{1},C_{2},C_{3}$ and $C(\tau_{0})$ in (\ref{C1C2C3C(tau0)}).
Choose $A_{1},A_{2}$,$B_{1},B_{2}$, $K_{1},K_{2}$ that only depend
on $\tau_{0}$ such that 
\[
0<A_{1}<A_{2}<\frac{1-\sqrt{1-4\tau(0)}}{2}<\frac{1+\sqrt{1-4\tau(0)}}{2}<B_{2}<B_{1}<1,
\]
and 
\[
K_{1}-1=K_{2}=1+\sup\{|\phi_{\psi}|;-\phi^{2}+\frac{\phi}{\sqrt{1+\phi_{\psi}}}=\tau_{0}-C(\tau_{0})\}.
\]
 Define $C_{A_{i},B_{i},K_{i}}^{1}([0,T])=\{\phi(\psi)\in C^{1}([0,T]);A_{i}\leq\phi(\psi)\leq B_{i},|\phi_{\psi}|\leq K_{i}\},i=1,2.$
We have

\begin{lem}\label{local existence of 0th mode} If $\phi(\psi)\in C_{A_{2},B_{2},K_{2}}^{1}([0,T])$
solves (\ref{0th mode ODE-1}) for $T\geq0$. Then for some $\delta>0$
which only depends on $\tau_{0}$, this solution can be uniquely extended
to $\phi(\psi)\in C_{A_{1},B_{1},K_{1}}^{1}([0,T+\delta]).$

\end{lem}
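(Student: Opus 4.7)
The plan is to treat this as a standard ODE local-existence result, with care taken so that the existence time $\delta$ depends only on $\tau_0$ and not on the starting point $T$ or on the prescribed data $\xi,\mu,\omega$ (whose sizes are controlled uniformly by the hypotheses in \eqref{C1C2C3C(tau0)}). First I would rewrite the second-order equation in \eqref{0th mode ODE} as a first-order system for the vector $U=(\phi,\phi_\psi)$:
\[
U' \;=\; F(\psi,U)\;:=\;\bigl(\phi_\psi,\; \phi^{-1}(1+\phi_\psi^2)-(2+\rho)(1+\phi_\psi^2)^{3/2}\bigr),
\]
where $\rho=\rho(\psi,\phi,\phi_\psi;\xi,\mu,\omega,\varepsilon)$. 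On the closed region
\[
\mathcal{R}_1\;:=\;\{(\phi,p):A_1\le\phi\le B_1,\;|p|\le K_1\}\subset(0,1)\times\mathbb{R},
\]
the map $F$ is bounded and Lipschitz in $U$: indeed $\phi^{-1}$ is bounded by $A_1^{-1}$, the factor $(1+p^2)^{3/2}$ is bounded by $(1+K_1^2)^{3/2}$, and $|\rho|\le C(\tau_0,C_1,C_2,C_3)\,\varepsilon$ on $\mathcal{R}_1$ by \eqref{C1C2C3C(tau0)}, so in particular $|2+\rho|\le 3$ for $\varepsilon$ small. All of these bounds depend only on $\tau_0$ (once $A_1,B_1,K_1$ are fixed as functions of $\tau_0$).

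Next I would invoke Picard--Lindelöf on the initial value problem started at $\psi=T$, with initial data $U(T)=(\phi(T),\phi_\psi(T))\in\mathcal{R}_2:=\{A_2\le\phi\le B_2,\;|p|\le K_2\}$. Standard fixed-point arguments for the integral form
\[
U(\psi)\;=\;U(T)+\int_T^\psi F(r,U(r))\,dr
\]
on the Banach space $C^0([T,T+\delta];\mathbb{R}^2)$ yield a unique continuous solution provided $\delta$ is small enough that (i) $\delta\cdot\mathrm{Lip}_{\mathcal{R}_1}(F)<1$, and (ii) $\delta\cdot\|F\|_{L^\infty(\mathcal{R}_1)}\le\min(A_2-A_1,\;B_1-B_2,\;K_1-K_2)$. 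Condition (ii) is exactly what guarantees that the trajectory cannot leave $\mathcal{R}_1$ during the interval $[T,T+\delta]$: starting inside the strictly smaller box $\mathcal{R}_2$, the change in each coordinate over time $\delta$ is bounded by $\delta\|F\|_{L^\infty(\mathcal{R}_1)}$, which keeps us inside $\mathcal{R}_1$. Both conditions fix a $\delta=\delta(\tau_0)>0$ (once $A_i,B_i,K_i$ are all chosen as functions of $\tau_0$, which is what the statement of the lemma allows).

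Uniqueness on $[0,T+\delta]$ follows from the Lipschitz bound on $F$ on $\mathcal{R}_1$ via Grönwall, combined with the uniqueness of the original solution on $[0,T]$. Continuity of the extended solution at $\psi=T$ and the fact that the extended $\phi$ is $C^2$ on the open interval (hence $C^1$ up to the boundary by the ODE itself) complete the argument. The only mildly non-routine point is choosing the six constants $A_1<A_2<B_2<B_1$ and $K_2<K_1$ in advance so that (a) the initial datum $\tfrac12(1-\sqrt{1-4\tau(0)})$ lies in the interior of $[A_2,B_2]$ uniformly in the allowed range of $\tau(0)$ near $\tau_0$, and (b) the bounds $\|F\|_{L^\infty(\mathcal{R}_1)}$ and $\mathrm{Lip}_{\mathcal{R}_1}(F)$ are tame enough that a positive $\delta=\delta(\tau_0)$ satisfying (i)--(ii) exists; this is the only place where the particular structure of the nonlinearity $\phi^{-1}(1+\phi_\psi^2)-(2+\rho)(1+\phi_\psi^2)^{3/2}$ enters, and it is routine once $A_1$ is chosen strictly positive.
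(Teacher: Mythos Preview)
Your proposal is correct and follows essentially the same approach as the paper: rewrite as a first-order system, observe that the right-hand side is bounded and Lipschitz on the box $\{A_1\le\phi\le B_1,\ |\phi_\psi|\le K_1\}$ with constants depending only on $\tau_0$, and invoke standard ODE existence theory. You give more detail than the paper (which simply says ``from standard theory of ODE we get the conclusion''), in particular making explicit why the trajectory stays in $\mathcal{R}_1$ when starting from $\mathcal{R}_2$.
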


\begin{proof} Denote $\frac{\partial\phi}{\partial\psi}$ by $\zeta.$
Then $(\phi,\zeta)$ satisfies the following system

\[
\begin{cases}
\phi_{\psi}=\zeta, & \phi(0)=\frac{1-\sqrt{1-4\tau(0)}}{2},\\
\zeta_{\psi}=\phi^{-1}(1+\zeta^{2})-(2+\rho)(1+\zeta^{2})^{\frac{3}{2}}, & \zeta(0)=0,
\end{cases}
\]
The right hand side is uniformly bounded in $(\phi,\zeta,\psi)$ and
is a Lipschitz function with respect to $(\phi,\zeta)$ in the domain
\[
\begin{cases}
|\zeta|<K_{1}\\
A_{1}<\phi<B_{1}\\
0<\psi<\frac{L_{\Gamma}}{\varepsilon}.
\end{cases}
\]
The $C^{0}$ norm and Lipschitz constant of the right hand side only
depend on $\tau_{0}.$ So from standard theory of ODE we get the conclusion.

\end{proof}

\paragraph{Step 2. Existence on $[0,\frac{L_{\Gamma}}{\varepsilon}]$}

To get the existence of the solution on $[0,\frac{L_{\Gamma}}{\varepsilon}]$,
we need to do apriori estimates for the solution. The key to do this
is the first integral 
\begin{eqnarray*}
\tau(\phi,\phi_{\psi}) & = & -\phi^{2}+\frac{\phi}{\sqrt{1+\phi_{\psi}^{2}}}.
\end{eqnarray*}
The following graph is the phase space of (\ref{0th mode ODE}). We
choose $\tau_{0}\in(0,\frac{1}{4})$ and $\delta_{1}$ small. We denote
the closure of the domain between the outside and inside circle by
$A(\tau_{0},\delta_{1}).$ And we denote $\{\phi(\psi)\in C^{1}([0,T_{0}])|(\phi,\phi_{\psi})\in A(\tau_{0},\delta_{1})\}$
as $C_{A(\tau_{0},\delta_{1})}^{1}([0,T_{0}])$. 

\includegraphics[scale=0.4]{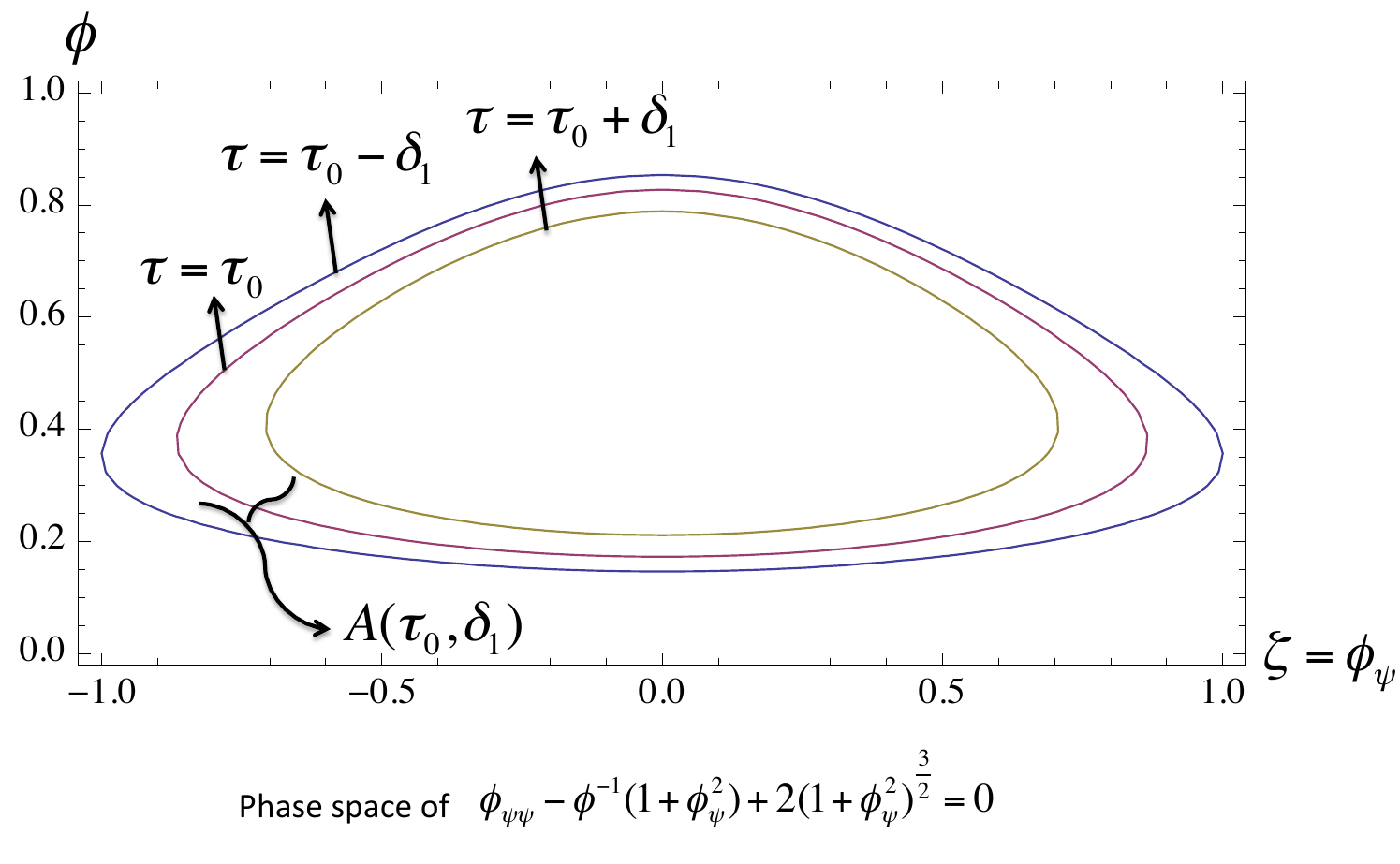}

We have, 

\begin{lem}( Existence on $[0,\frac{L_{\Gamma}}{\varepsilon}]$)
For fixed $\xi,\mu,\omega,\tau(0)$ which satisfy (\ref{C1C2C3C(tau0)}),
there is $\tilde{\delta}>0$ such that when $\tilde{\delta}>\varepsilon\in{\rm PS}(L_{\Gamma},\tau_{0})$,
there is a unique solution $\phi_{\xi,\mu,\omega,\tau(0)}(\psi)\in C_{A(\tau_{0},\delta_{1})}^{1}([0,\frac{1}{\varepsilon}L_{\Gamma}])$
to (\ref{0th mode ODE-1}).

\end{lem}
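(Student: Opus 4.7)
The plan is to iterate Step 1's local extension lemma across the whole interval $[0,L_\Gamma/\varepsilon]$, using the first integral
\begin{equation*}
\tau(\phi,\phi_\psi)=-\phi^2+\phi(1+\phi_\psi^2)^{-1/2}
\end{equation*}
as the a priori confinement mechanism. The identity that drives the argument is
\begin{equation*}
\frac{d}{d\psi}\tau(\phi,\phi_\psi)=\rho\,\phi\,\phi_\psi,
\end{equation*}
obtained by substituting $\phi_{\psi\psi}=\phi^{-1}(1+\phi_\psi^2)-(2+\rho)(1+\phi_\psi^2)^{3/2}$ into the chain-rule expansion of $d\tau/d\psi$; the unperturbed contributions cancel exactly because $\tau$ is a first integral of the standard Delaunay equation, and only the forcing term $\rho$ survives.

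Next I would bound $\rho$. As long as $(\phi,\phi_\psi)\in A(\tau_0,\delta_1)$, the quantities $\phi,\phi_\psi,F_1(\phi,\phi_\psi),F_4(\phi,\phi_\psi)$ are uniformly bounded in terms of $\tau_0$, and the hypotheses $\|\xi\|_{C^1_{x_0}}\le C_1\varepsilon^2$, $\|\mu\|_{C^\alpha_\varepsilon}\le C_2$, $|\omega|\le C_3$ together with the curvature bound on $\Pi_0(R_1)$ give $|\rho|\le C(\tau_0,C_1,C_2,C_3)\varepsilon^2$. Integrating the identity from $0$ to any $\psi\le L_\Gamma/\varepsilon$ yields $|\tau(\psi)-\tau(0)|\le CL_\Gamma\varepsilon$. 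Combined with $\tau(0)=\phi(0)-\phi(0)^2$ and the assumed bound $|\phi(0)-\tfrac{1-\sqrt{1-4\tau_0}}{2}|\le C(\tau_0)\varepsilon$, this gives $|\tau(\psi)-\tau_0|\le C_\ast\varepsilon$ for a constant $C_\ast$ depending on $\tau_0,C_1,C_2,C_3,L_\Gamma$.

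For $\varepsilon$ less than some $\tilde\delta$ making $C_\ast\varepsilon<\delta_1/2$, the trajectory is trapped in the strictly smaller annulus $A(\tau_0,\delta_1/2)$, well away from the boundary of $A(\tau_0,\delta_1)$; note that the $\phi_\psi$-coordinate stays below $K_1$ automatically since $K_1$ was chosen larger than $\max|\phi_\psi|$ on the outer orbit $\tau=\tau_0-\delta_1$. To finish I would run a standard continuation argument: define $T^\ast$ as the supremum of $T\in[0,L_\Gamma/\varepsilon]$ on which a $C^1$ solution exists with $(\phi,\phi_\psi)\in A(\tau_0,\delta_1)$; Lemma \ref{local existence of 0th mode} gives $T^\ast>0$; the a priori estimate shows $(\phi,\phi_\psi)\in A(\tau_0,\delta_1/2)$ on $[0,T^\ast)$; and one more application of Lemma \ref{local existence of 0th mode} extends the solution by the uniform amount $\delta(\tau_0)>0$ while keeping it in $A(\tau_0,\delta_1)$. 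Hence $T^\ast=L_\Gamma/\varepsilon$. Uniqueness is already built into Step 1's Lipschitz right-hand side and glues over the finite cover.

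The main obstacle is the bootstrap circularity: the estimate $|\rho|\le C\varepsilon^2$ depends on $(\phi,\phi_\psi)$ lying in $A(\tau_0,\delta_1)$, which is exactly what we are proving. The argument closes precisely because the a priori bound places the solution in the \emph{strictly} smaller annulus $A(\tau_0,\delta_1/2)$, so the continuation can never run into the boundary of $A(\tau_0,\delta_1)$ inside the interior of $[0,L_\Gamma/\varepsilon]$.
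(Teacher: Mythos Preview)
Your argument is correct and is essentially the paper's own proof: both use the identity $\frac{d\tau}{d\psi}=\rho\,\phi\,\phi_\psi$, the bound $|\rho|\le C\varepsilon^2$ valid inside $A(\tau_0,\delta_1)$, and integration over an interval of length at most $L_\Gamma/\varepsilon$ to trap $\tau$ in a $\delta_1/2$-neighborhood of $\tau_0$, then invoke the local extension lemma. The paper phrases the continuation step as a contradiction (assume a maximal $T<L_\Gamma/\varepsilon$ and show the boundary cannot be hit) whereas you write it as an explicit open-closed/bootstrap argument, but the content is identical.
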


\begin{proof}

First one can choose $\tilde{\delta}$ small that $|\tau(0)-\tau_{0}|\leq C(\tau_{0})\tilde{\delta}<\delta_{1}.$
Suppose for contradiction the lemma were false. From Step 1, we assume
$T<\frac{L_{\Gamma}}{\varepsilon}$ is the maximal value that the
solution $\phi(\psi)$ can be extended in $C_{A(\tau_{0},\delta_{1})}^{1}([0,T]).$
So we must have $(\phi(T),\zeta(T))\in\partial A(\tau_{0},\delta_{1})$,
that is to say $\tau(\phi(T),\zeta(T))=\tau_{0}\pm\delta_{1}.$ 

From easy calculations, 
\[
\frac{d\tau}{d\psi}=\rho(\psi)\phi\phi_{\psi},
\]
where $\|\rho(\psi)\|_{C^{0}}\leq C(\varepsilon^{2}+\varepsilon\|\xi\|_{C^{0}}+\varepsilon^{3}\|\mu\|_{C^{0}}+\varepsilon^{3}|\omega|).$
So 
\begin{align}
 & |\tau(\phi(T),\zeta(T))-\tau(\phi(0),0)|\nonumber \\
= & |\int_{0}^{T}\rho(\psi)\phi\phi_{\psi}d\psi|\nonumber \\
\leq & CT(\varepsilon^{2}+\varepsilon\|\xi\|_{C^{0}}+\varepsilon^{3}\|\mu\|_{C^{0}}+\varepsilon^{3}|\omega|)\label{eq:rough estimate for tau-rho-1}\\
\leq & C(\varepsilon+\|\xi\|_{C^{0}}+\varepsilon^{2}\|\mu\|_{C^{0}}+\varepsilon^{2}|\omega|)\nonumber \\
\leq & C(\varepsilon+(C_{1}+C_{2}+C_{3})\varepsilon^{2}).\nonumber 
\end{align}
So we have
\begin{align}
 & |\tau(\phi(T),\zeta(T))-\tau_{0}|\nonumber \\
\leq & |\tau(\phi(T),\zeta(T))-\tau(\phi(0),0)|+|\tau(\phi(0),0)-\tau_{0}|\label{eq:rough estimate for tau-rho}\\
\leq & C(\varepsilon+(C_{1}+C_{2}+C_{3})\varepsilon^{2})+C(\tau_{0})\varepsilon.\nonumber 
\end{align}
So we can choose $\tilde{\delta}$ small enough, such that, when $\varepsilon\leq\tilde{\delta}$
\[
|\tau(\phi(T),\zeta(T))-\tau_{0}|\leq\frac{\delta_{1}}{2}
\]
which makes $\tau(\phi(T),\zeta(T))=\tau_{0}\pm\delta_{1}$ impossible.
So when $\varepsilon\leq\tilde{\delta}$, for any prescription of
$\xi,\mu,\omega,\tau(0)$ which satisfies (\ref{C1C2C3C(tau0)}),
the solution $\phi_{\xi,\mu,\omega,\tau(0)}(\psi)$ exists for $\psi\in[0,\frac{L_{\Gamma}}{\varepsilon}].$

\end{proof}

\paragraph{Step 3. Estimates of $\phi_{\xi,\mu,\omega,\tau(0)}(\psi)$}

For simplicity we denote $(\phi_{\xi,\mu,\omega,\tau(0)}(\psi),\frac{\partial\phi_{\xi,\mu,\omega,\tau(0)}(\psi)}{\partial\psi})$
as $(\phi(\psi),\zeta(\psi))$. From (\ref{eq:rough estimate for tau-rho-1})
we know, $\tau(\phi(\psi),\zeta(\psi))$ will keeps in $C(C_{1},C_{2},C_{3},\tau_{0})\varepsilon$
neighborhood of $\tau(0)=\tau(\phi(0),0).$ Actually we can improve
this estimate to $C\varepsilon^{2}$ neighborhood. This result comes
from a simple observation. Note that
\begin{eqnarray}
 &  & |\tau(\phi(T),\zeta(T))-\tau(\phi(0),0)|\nonumber \\
 & = & |\int_{0}^{T}\rho(\psi)\phi\phi_{\psi}d\psi|\label{tau(T0)-tau0}\\
 & = & |\int_{0}^{T}\phi\phi_{\psi}(-\varepsilon^{2}F_{1}(\phi,\phi_{\psi})\star\Pi_{0}(R_{1})+\varepsilon F_{4}(\phi,\phi_{\psi})\xi\nonumber \\
 &  & +\varepsilon^{3}\mu+\varepsilon^{3}\omega\phi^{-1}\phi_{\psi})d\psi|.\nonumber 
\end{eqnarray}
First $\varepsilon\xi=O(\varepsilon^{3}),\varepsilon^{3}\mu=O(\varepsilon^{3}),\varepsilon^{3}\omega\phi^{-1}\phi_{\psi}=O(\varepsilon^{3})$.
The integral of these three terms is $O(\varepsilon^{2})$ because
$0\leq T\leq\frac{L_{\Gamma}}{\varepsilon}$. At first glance the
integral of $\phi\frac{\partial\phi}{\partial\psi}(-\varepsilon^{2})F_{1}(\phi,\phi_{\psi})\star\Pi_{0}(R_{1})$
is of order $O(\varepsilon).$ However there is a cancellation property.
Note that $F_{1}(\phi,\phi_{\psi})\phi\frac{\partial\phi}{\partial\psi}$
nearly has 0 average in one period (up to an error of order $O(\varepsilon$)).
Also we know $\nabla_{\psi}R(\Upsilon,\Upsilon_{\theta},\Upsilon,\Upsilon_{\theta})=O(\varepsilon).$
So in one period the integral is only of size $\varepsilon^{3}$.
When $T$ is as large as $\frac{L_{\Gamma}}{\varepsilon}$, the integral
is of size $\varepsilon^{2}$. 

To be precise, we have

\begin{lem}\label{improvement of the estimates of 0th ODE} There
exists $C=C(C_{1},C_{2},C_{3},\tau_{0})$ which doesn't depend on
$\varepsilon$ such that for $\psi\in[0,\frac{L_{\Gamma}}{\varepsilon}],$
$(\phi(\psi),\zeta(\psi))\in A(\tau(0),C\varepsilon^{2}).$

\end{lem}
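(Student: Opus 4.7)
The plan is to sharpen the rough bound \eqref{eq:rough estimate for tau-rho-1} term by term, starting from
\[
\tau(\phi(T),\zeta(T)) - \tau(\phi(0),0) = \int_0^T \rho(\psi)\,\phi\phi_\psi\,d\psi
\]
with $\rho$ decomposed into the four summands appearing in \eqref{0th mode ODE}. For the three ``small'' pieces $\varepsilon F_4(\phi,\phi_\psi)\xi$, $\varepsilon^{3}\mu$, and $\varepsilon^{3}\omega\phi^{-1}\phi_\psi$, the assumptions \eqref{C1C2C3C(tau0)} give pointwise size $O(\varepsilon^{3})$; multiplying by the uniformly bounded factor $\phi\phi_\psi$ and integrating over an interval of length $\le L_\Gamma/\varepsilon$ yields a total contribution of size $O(\varepsilon^{2})$. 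So the whole point is to show that the curvature piece
\[
\mathcal{I}(T)\;:=\;-\varepsilon^{2}\int_0^T F_1(\phi,\phi_\psi)\star\Pi_0(R_1)\,\phi\phi_\psi\,d\psi
\]
is also $O(\varepsilon^{2})$, even though a crude bound gives only $O(\varepsilon)$.

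The mechanism is a per-period cancellation combined with slow variation of the curvature tensor. First, $R_1$ depends on the base point through $x_0=\varepsilon\psi$ only, so it can be regarded as almost constant on any one-period interval of $\phi$, with error $O(\varepsilon)$. Second, by Step 2 the solution $(\phi,\zeta)$ lies in an $O(\varepsilon)$-neighborhood of the standard Delaunay orbit of some parameter $\tilde\tau$ close to $\tau(0)$. For the exact Delaunay solution $\phi_{\tilde\tau}$, the integrand $\phi\phi_\psi\,F_1(\phi,\phi_\psi)$ is explicitly computable from the first integral $\tilde\tau = -\phi^2 + \phi(1+\phi_\psi^2)^{-1/2}$ and the ODE \eqref{eq:Delannay Definition directly.}, and a direct calculation analogous to Lemma \ref{average 0 lemma} shows it has zero mean over one period (by the reflection symmetry of $\phi_{\tilde\tau}$ about its minima and maxima, since $F_1$ is even in $\phi_\psi$ while $\phi_\psi$ itself is odd). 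Lipschitz dependence of $F_1$ on $(\phi,\phi_\psi)$ then transfers this cancellation to our perturbed $\phi$, giving $\int_{\psi_i}^{\psi_{i+1}} \phi\phi_\psi F_1(\phi,\phi_\psi)\,d\psi = O(\varepsilon)$ on each one-period interval $[\psi_i,\psi_{i+1}]$.

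Combining the two observations, on each period the product $\phi\phi_\psi F_1(\phi,\phi_\psi)\,\Pi_0(R_1)$ equals its frozen-curvature value plus $O(\varepsilon)$, and the frozen piece integrates to $O(\varepsilon)$ per period by the cancellation above. Since the number of periods up to $\psi=T\le L_\Gamma/\varepsilon$ is $O(1/\varepsilon)$ and a possible fractional tail contributes at most $O(1)$, summing gives $\int_0^T F_1\star\Pi_0(R_1)\,\phi\phi_\psi\,d\psi = O(1)$, so the prefactor $\varepsilon^{2}$ produces $\mathcal{I}(T)=O(\varepsilon^{2})$. Adding the earlier contributions and the initial error $|\tau(\phi(0),0)-\tau_0|\le C(\tau_0)\varepsilon^{2}$ (by the choice of $\phi(0)$ in \eqref{C1C2C3C(tau0)}, after the sharpening $|\phi(0)-\tfrac{1-\sqrt{1-4\tau_0}}{2}|\le C_4\varepsilon^{2}$ asserted in Theorem \ref{0th mode nonlinear ode solution}), one obtains $|\tau(\phi(\psi),\zeta(\psi))-\tau(0)|\le C\varepsilon^{2}$ uniformly in $\psi\in[0,L_\Gamma/\varepsilon]$, which is exactly the claim.

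The main obstacle is the one-period cancellation identity for $\phi\phi_\psi F_1(\phi,\phi_\psi)$; this is not formal but relies on the specific algebraic structure of $F_1$ relative to the Delaunay first integral $\tau$, mirroring the non-trivial ``average 0'' identity of Lemma \ref{average 0 lemma}. Once that identity is established (on the standard Delaunay and then transferred by Lipschitz continuity), the remaining arguments are bookkeeping: a Taylor expansion of $\Pi_0(R_1)$ in $x_0$ on each period, summation over the $O(1/\varepsilon)$ periods, and the standard bootstrap that keeps $(\phi,\phi_\psi)$ inside $A(\tau_0,\delta_1)$ while the estimate is being improved.
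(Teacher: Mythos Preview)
Your approach is correct and matches the paper's: decompose $\rho$, bound the $\xi$, $\mu$, $\omega$ pieces trivially as $O(\varepsilon^2)$, and handle the $F_1$ term via the one-period cancellation of $\phi\phi_\psi F_1$ for the exact Delaunay combined with the slow $x_0$-variation of $\Pi_0(R_1)$; the paper carries this out by constructing the comparison map $\Phi$ and integrating by parts against the periodic primitive $\tilde\chi$, which is the precise form of your ``freeze $R_1$ on each period and transfer by Lipschitz'' sketch. Your parity argument for the cancellation ($F_1$ is even in $\phi_\psi$, so $\phi\phi_\psi F_1$ is odd about each critical point of $\phi_{\tau}$) is in fact more explicit than anything the paper writes down, and is the correct reason here---note it is \emph{not} analogous to Lemma~\ref{average 0 lemma}, which the paper stresses is genuinely non-parity.

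One small correction: in your final sentence you add an ``initial error'' $|\tau(\phi(0),0)-\tau_0|\le C\varepsilon^2$ by appealing to the $C_4\varepsilon^2$ bound in Theorem~\ref{0th mode nonlinear ode solution}. This is both unnecessary and circular: the lemma asserts $(\phi,\zeta)\in\mathrm{Dom}(\tau(0),C\varepsilon^2)$, i.e.\ closeness to $\tau(0)$, and $\tau(\phi(0),0)=\tau(0)$ by definition, so there is nothing to add; moreover the $C_4\varepsilon^2$ bound is only established in Step~5, after the present lemma.
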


\begin{proof}We know that for each $\psi\in[0,\frac{L_{\Gamma}}{\varepsilon}]$,
$|\tau(\phi(\psi),\zeta(\psi))-\tau(\phi(0),0)|\leq C(\varepsilon+\|\xi\|_{C^{0}}+\varepsilon^{2}\|\mu\|_{C^{0}}+\varepsilon^{2}|\omega|)$
from (\ref{eq:rough estimate for tau-rho-1}). Suppose $\phi_{\tau(0)}(\psi)$
defines the standard Delaunay surface with parameter $\tau\equiv\tau(\phi(0),0)$
and $\zeta_{\tau(0)}(\psi)=\frac{d}{d\psi}\phi_{\tau(0)}(\psi)$.
We assume the arc length parameter of the curve $(\phi_{\tau(0)}(\psi),\zeta_{\tau(0)}(\psi))$
in the phase space is $s_{0}$ ($s_{0}$ is a multi-valued function
on the curve $(\phi_{\tau(0)},\zeta_{\tau(0)})$). We can extend $s_{0}$
to a neighborhood of $(\phi_{\tau(0)}(\psi),\zeta_{\tau(0)}(\psi))$
such that $\frac{\partial}{\partial\tau}\perp\frac{\partial}{\partial s_{0}}$
holds everywhere in this neighborhood. We know that in this neighborhood
\[
d\phi^{2}+d\zeta^{2}=<\partial_{s_{0}},\partial_{s_{0}}>ds_{0}^{2}+<\partial_{\tau},\partial_{\tau}>d\tau^{2}
\]
where $<,>$ denotes the inner product of the metric $d\phi^{2}+d\zeta^{2}.$
We know that $<\partial_{s_{0}},\partial_{s_{0}}>=1$ on $(\phi_{\tau(0)}(\psi),\zeta_{\tau(0)}(\psi)).$
Regard $(s_{0},\tau)$ as new local coordinate of $A(\tau(0),C\varepsilon)$.
We define a continuous map $\Phi=\Phi_{\xi,\mu,\omega,\tau(0)}:[0,\frac{L_{\Gamma}}{\varepsilon}]\rightarrow\mathbb{R}$
such that 
\begin{equation}
s_{0}(\phi(\psi),\zeta(\psi))=s_{0}(\phi_{\tau(0)}(\Phi(\psi)),\zeta_{\tau(0)}(\Phi(\psi))).\label{Phi definition}
\end{equation}
Here is a graph which illustrates the definition of $\Phi.$ 

\includegraphics[scale=0.4]{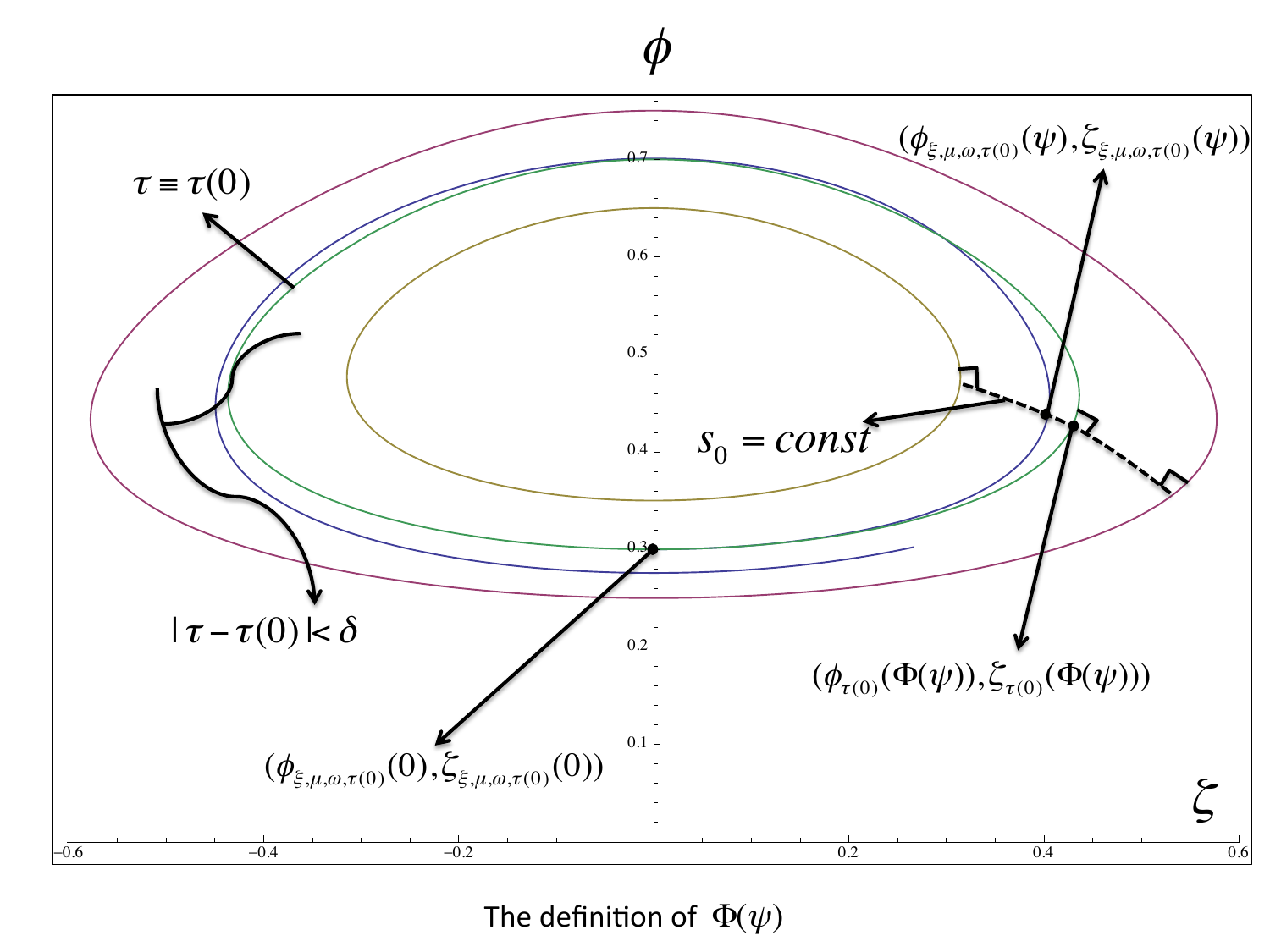}\begin{rmk}From
(\ref{Phi definition}) we see, if $\zeta(\psi)=0$, then $\zeta_{\tau(0)}(\Phi(\psi))=0.$
If $\psi$ is a local minimum point of $\phi$, $\Phi(\psi)$ is a
local minimum point of $\phi_{\tau(0)}$. 

\end{rmk}So we have 
\begin{equation}
\begin{cases}
|\phi(\psi)-\phi_{\tau(0)}(\Phi(\psi))| & \leq C(\tau_{0})(\varepsilon+(C_{1}+C_{2}+C_{3})\varepsilon^{2}),\\
|\zeta(\psi)-\zeta_{\tau(0)}(\Phi(\psi))| & \leq C(\tau_{0})(\varepsilon+(C_{1}+C_{2}+C_{3})\varepsilon^{2}).
\end{cases}\label{eq:rough estimate for phi and zeta}
\end{equation}
\begin{lem}\label{Phi-estimates}
\begin{equation}
\begin{cases}
\Phi(0)=0,\\
|\Phi(\psi)-\psi|\leq\frac{C(\tau_{0})}{\varepsilon}(\varepsilon+(C_{1}+C_{2}+C_{3})\varepsilon^{2}),\\
|\Phi'(\psi)-1|\leq C(\tau_{0})(\varepsilon+(C_{1}+C_{2}+C_{3})\varepsilon^{2}).
\end{cases}\label{eq:first estimate for Phi}
\end{equation}
In particular, $\Phi:[0,\frac{L_{\Gamma}}{\varepsilon}]\rightarrow\Phi([0,\frac{L_{\Gamma}}{\varepsilon}])$
is invertible. 

\end{lem}

We prove Lemma \ref{Phi-estimates} in Appendix \ref{(APP):Proof-of-Lemma Phi-estimate}. 

Now we continue to prove Lemma 3.9. Let $[\tilde{\psi}_{i},\tilde{\psi}_{i+1}]$
be the $i$th period of $\phi_{\tau(0)}(\psi)$. We assume $\psi_{i}=\Phi^{-1}(\tilde{\psi}_{i}).$
We know $\phi_{\psi}(\psi_{i})=0$ . From (\ref{eq:rough estimate for tau-rho-1})(\ref{eq:rough estimate for phi and zeta}),
we have

\begin{align*}
 & |\int_{\psi_{i}}^{\psi_{i+1}}\phi\phi_{\psi}F_{1}(\phi,\phi_{\psi})\star\Pi_{0}(R_{1})d\psi|\\
\leq & |\int_{\tilde{\psi}_{i}}^{\tilde{\psi}_{i+1}}\phi_{\tau(0)}\frac{\partial\phi_{\tau(0)}}{\partial\psi}F_{1}(\phi_{\tau(0)},\frac{\partial\phi_{\tau(0)}}{\partial\psi})|_{\varsigma}\star\Pi_{0}(R_{1})|_{\Phi^{-1}(\varsigma)}d\varsigma|\\
 & +C(\tau_{0})(\varepsilon+\|\xi\|_{C^{0}}+\varepsilon^{2}\|\mu\|_{C^{0}}+\varepsilon^{2}|\omega|)\\
= & |\int_{\tilde{\psi}_{i}}^{\tilde{\psi}_{i+1}}\tilde{\chi}(\varsigma)\star\frac{\partial}{\partial\varsigma}\Pi_{0}(R_{1})|_{\Phi^{-1}(\varsigma)}d\varsigma|\\
 & +C(\tau_{0})(\varepsilon+\|\xi\|_{C^{0}}+\varepsilon^{2}\|\mu\|_{C^{0}}+\varepsilon^{2}|\omega|)\\
= & C(\tau_{0})(\varepsilon+(C_{1}+C_{2}+C_{3})\varepsilon^{2}),
\end{align*}
where $\tilde{\chi}(\varsigma)$ is the primitive of $\phi_{\tau(0)}\frac{\partial\phi_{\tau(0)}}{\partial\psi}F_{1}(\phi_{\tau(0)},\frac{\partial\phi_{\tau(0)}}{\partial\psi})$
and $|\tilde{\chi}(s)|_{C^{0}}\leq C(\tau_{0})\varepsilon.$ Now by
dividing $[0,\frac{L_{\Gamma}}{\varepsilon}]$ into ``periods''
of $\phi(\psi)$, it is easy to prove that, for any $\bar{\psi}\in[0,\frac{L_{\Gamma}}{\varepsilon}]$
\begin{align*}
 & |\int_{0}^{\bar{\psi}}\phi\phi_{\psi}\varepsilon^{2}F_{1}(\phi,\phi_{\psi})\star\Pi_{0}(R_{1})d\psi|\\
\leq & C(\tau_{0})\varepsilon(\varepsilon+(C_{1}+C_{2}+C_{3})\varepsilon^{2}).
\end{align*}

So we can get better estimate for $\tau,$ 
\begin{eqnarray}
|\tau(\phi(T),\zeta(T))-\tau(\phi(0),0)| & \leq & C(\tau_{0})(1+C_{1}+C_{2}+C_{3})\varepsilon^{2}\label{tau-better estimate}
\end{eqnarray}
for $T\in[0,\frac{1}{\varepsilon}L_{\Gamma}].$ 

\end{proof}

Now we can get better estimates for $\Phi$ and $\phi(\psi)$, i.e.
\begin{equation}
\begin{cases}
|\phi(\psi)-\phi_{\tau(0)}(\Phi(\psi))| & \leq C(\tau_{0})(\varepsilon^{2}+(C_{1}+C_{2}+C_{3})\varepsilon^{2}),\\
|\zeta(\psi)-\zeta_{\tau(0)}(\Phi(\psi))| & \leq C(\tau_{0})(\varepsilon^{2}+(C_{1}+C_{2}+C_{3})\varepsilon^{2}).
\end{cases}\label{eq:phi and phi_0}
\end{equation}
and 
\begin{equation}
\begin{cases}
\Phi(0)=0,\\
|\Phi(\psi)-\psi|\leq\frac{C(\tau_{0})}{\varepsilon}(\varepsilon^{2}+\|\xi\|_{C^{0}}+\varepsilon^{2}\|\mu\|_{C^{0}}+\varepsilon^{2}|\omega|),\\
|\Phi^{\prime}(\psi)-1|\leq C(\tau_{0})(\varepsilon^{2}+\|\xi\|_{C^{0}}+\varepsilon^{2}\|\mu\|_{C^{0}}+\varepsilon^{2}|\omega|).
\end{cases}\label{eq:estimates for Phi}
\end{equation}

\begin{cor}\label{phi(psi)-phi_0(psi)}
\begin{eqnarray*}
 &  & \|\phi(\psi)-\phi_{\tau(0)}(\psi)\|_{C_{\varepsilon}^{2,\alpha}}\\
 & \leq & C(\tau_{0})\frac{1}{\varepsilon}(\varepsilon^{2}+\|\xi\|_{C_{x_{0}}^{1}}+\varepsilon^{2}\|\mu\|_{C_{\varepsilon}^{\alpha}}+\varepsilon^{2}|\omega|)\\
 & \leq & C(\tau_{0})(1+C_{1}+C_{2}+C_{3})\varepsilon.
\end{eqnarray*}

\end{cor}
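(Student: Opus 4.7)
The plan is to decompose
\[
\phi(\psi)-\phi_{\tau(0)}(\psi)=\bigl[\phi(\psi)-\phi_{\tau(0)}(\Phi(\psi))\bigr]+\bigl[\phi_{\tau(0)}(\Phi(\psi))-\phi_{\tau(0)}(\psi)\bigr]
\]
and combine the pointwise bounds already obtained in \eqref{eq:phi and phi_0} with a Lipschitz/mean-value estimate for the second bracket, using the control on $\Phi$ from \eqref{eq:estimates for Phi}. Since $\phi_{\tau(0)}$ and all its derivatives are smooth and uniformly bounded on $[0,L_\Gamma/\varepsilon]$ by constants depending only on $\tau_0$, we get
\[
|\phi_{\tau(0)}(\Phi(\psi))-\phi_{\tau(0)}(\psi)|\le\|\phi_{\tau(0)}'\|_{C^0}\,|\Phi(\psi)-\psi|\le \frac{C(\tau_0)}{\varepsilon}\bigl(\varepsilon^2+\|\xi\|_{C^0}+\varepsilon^2\|\mu\|_{C^0}+\varepsilon^2|\omega|\bigr),
\]
which, together with \eqref{eq:phi and phi_0}, yields the $C^0$ part of the claimed bound. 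The factor $1/\varepsilon$ in the final inequality is introduced exactly at this step.

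For the $C^1$ part, the same decomposition applied to $\phi_\psi-\phi_{\tau(0),\psi}(\psi)$ works: the second estimate in \eqref{eq:phi and phi_0} controls $\phi_\psi(\psi)-\phi_{\tau(0),\psi}(\Phi(\psi))$, and $|\phi_{\tau(0),\psi}(\Phi(\psi))-\phi_{\tau(0),\psi}(\psi)|$ is handled by Lipschitz continuity of $\phi_{\tau(0),\psi}$ combined with \eqref{eq:estimates for Phi}.

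To upgrade to $C^{2,\alpha}_\varepsilon$, I would use the two defining ODEs: $\phi$ satisfies \eqref{0th mode ODE} and $\phi_{\tau(0)}$ satisfies \eqref{eq:Delannay Definition directly.} with $\tau=\tau(0)$. Subtracting gives
\[
\phi_{\psi\psi}-\phi_{\tau(0),\psi\psi}=\bigl[\phi^{-1}(1+\phi_\psi^2)-\phi_{\tau(0)}^{-1}(1+\phi_{\tau(0),\psi}^2)\bigr]-2\bigl[(1+\phi_\psi^2)^{3/2}-(1+\phi_{\tau(0),\psi}^2)^{3/2}\bigr]-\rho(1+\phi_\psi^2)^{3/2}.
\]
The first two brackets are Lipschitz-controlled by the $C^0$ and $C^1$ bounds just obtained (both $\phi$ and $\phi_{\tau(0)}$ remain in the compact domain $A(\tau_0,\delta_1)$ by Step~3, so denominators are uniformly bounded away from zero). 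The remaining term $\rho(1+\phi_\psi^2)^{3/2}$ is estimated directly from the definition of $\rho$ and the hypotheses on $\xi,\mu,\omega$, giving the $C^0$ bound on $\phi_{\psi\psi}-\phi_{\tau(0),\psi\psi}$ with the same right-hand side.

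For the $C^\alpha$ seminorm of $\phi_{\psi\psi}-\phi_{\tau(0),\psi\psi}$, I would apply the same identity and bound each Hölder seminorm term by term. The Hölder norm of $\rho$ uses $\|\xi\|_{C^1_{x_0}}$ (interpolating into $C^\alpha_\varepsilon$) and $\|\mu\|_{C^\alpha_\varepsilon}$, which is precisely why the final estimate involves $\|\xi\|_{C^1_{x_0}}$ rather than $\|\xi\|_{C^0}$. The other brackets are Hölder-controlled via the $C^{1,\alpha}$ regularity of $\phi$ and $\phi_{\tau(0)}$, inherited again from the ODE. Tracking constants, every term is bounded by $C(\tau_0)\varepsilon^{-1}(\varepsilon^2+\|\xi\|_{C^1_{x_0}}+\varepsilon^2\|\mu\|_{C^\alpha_\varepsilon}+\varepsilon^2|\omega|)$, and the second inequality in the statement follows from the hypotheses \eqref{C1C2C3C(tau0)} together with $|\omega|\le C_3$. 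The only subtlety worth flagging is the appearance of the factor $1/\varepsilon$, which is exactly the cost of converting the $\Phi$-reparametrized estimate into an estimate at the same $\psi$; once that is noted, the remaining work is routine ODE and Hölder manipulation.
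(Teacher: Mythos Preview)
Your proposal is correct and follows essentially the same approach as the paper: decompose via the reparametrization $\Phi$, use the Lipschitz bound on $\phi_{\tau(0)}$ and its derivative together with \eqref{eq:phi and phi_0} and \eqref{eq:estimates for Phi} for the $C^0$ and $C^1$ parts, then subtract the two ODEs to control the second derivative in $C_\varepsilon^{\alpha}$. Your treatment is in fact slightly more explicit than the paper's in isolating why $\|\xi\|_{C_{x_0}^1}$ and $\|\mu\|_{C_\varepsilon^\alpha}$ (rather than their $C^0$ norms) are needed for the H\"older seminorm of $\rho$.
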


\begin{proof}From (\ref{eq:phi and phi_0}), (\ref{eq:estimates for Phi})
and $\zeta_{\tau(0)}$, $\frac{\partial^{2}\phi_{\tau(0)}}{\partial\psi^{2}}$
are uniformly bounded by $C(\tau_{0})$, we have 
\begin{eqnarray}
|\phi(\psi)-\phi_{\tau(0)}(\psi)| & \leq & |\phi(\psi)-\phi_{\tau(0)}(\Phi(\psi))|+|\phi_{\tau(0)}(\Phi(\psi))-\phi_{\tau(0)}(\psi)|\nonumber \\
 & \leq & C(\tau_{0})(\varepsilon^{2}+\|\xi\|_{C^{0}}+\varepsilon^{2}\|\mu\|_{C^{0}}+\varepsilon^{2}|\omega|)\nonumber \\
 &  & +\sup|\zeta_{\tau(0)}|\cdot|\Phi(\psi)-\psi|\nonumber \\
 & \leq & \frac{C(\tau_{0})}{\varepsilon}(\varepsilon^{2}+\|\xi\|_{C^{0}}+\varepsilon^{2}\|\mu\|_{C^{0}}+\varepsilon^{2}|\omega|)\label{phi,phi tao0}
\end{eqnarray}
and in the same way 
\begin{eqnarray}
|\zeta(\psi)-\zeta_{\tau(0)}(\psi)| & \leq & \frac{C(\tau_{0})}{\varepsilon}(\varepsilon^{2}+\|\xi\|_{C^{0}}+\varepsilon^{2}\|\mu\|_{C^{0}}+\varepsilon^{2}|\omega|).\label{zeta zeta tau 0}
\end{eqnarray}
$|\phi^{-1}-\phi_{\tau(0)}^{-1}|$ also has similar estimates as $\phi$
and $\phi_{\tau(0)}$ has positive lower bounds. Finally from (\ref{0th mode ODE}),
the definition of $\phi_{\tau(0)}$ and (\ref{eq:phi and phi_0})(\ref{zeta zeta tau 0})
we have 
\begin{eqnarray*}
\|\frac{\partial^{2}\phi}{\partial\psi^{2}}(\psi)-\frac{\partial^{2}\phi_{\tau(0)}}{\partial\psi^{2}}(\psi)\|_{C_{\varepsilon}^{\alpha}} & \leq & \frac{C(\tau_{0})}{\varepsilon}(\varepsilon^{2}+\|\xi\|_{C_{x_{0}}^{1}}+\varepsilon^{2}\|\mu\|_{C_{\varepsilon}^{\alpha}}+\varepsilon^{2}|\omega|).
\end{eqnarray*}

\end{proof}

\begin{cor}\label{F(phi)-F(phi0)} Suppose $F(\theta_{1},\theta_{2})$
is a function with the following property. For any $\tilde{C}_{1}(\tau_{0}),\tilde{C}_{2}(\tau_{0})$,
there exists $\widetilde{C}_{3}(\tilde{C}_{1},\tilde{C}_{2})$ such
that 
\[
\sup_{|\theta_{1}|\leq\tilde{C}_{1},|\theta_{2}|\leq\tilde{C}_{2}}|F|+|\partial_{\theta_{1}}F|+|\partial_{\theta_{2}}F|\leq\tilde{C}_{3}.
\]
 Then for any 
\[
\|R(\psi)\|_{C_{x_{0}}^{1}(\Gamma)}\leq C
\]
we have 
\begin{align*}
 & |\int_{0}^{\psi}F(\phi,\phi_{\psi})R(\psi)d\psi-\int_{0}^{\Phi(\psi)}F(\phi_{\tau(0)}(\varsigma),\frac{\partial\phi_{\tau(0)}}{\partial\varsigma})R(\varsigma)d\varsigma|\\
\leq & \frac{C(\tau_{0})}{\varepsilon}(\varepsilon^{2}+\|\xi\|_{C^{0}}+\varepsilon^{2}\|\mu\|_{C^{0}}+\varepsilon^{2}|\omega|)\|R\|_{C_{x_{0}}^{1}}.
\end{align*}
In particular, when $R(\psi)\equiv1$, 
\begin{align*}
 & |\int_{0}^{\psi}F(\phi,\phi_{\psi})d\psi-\int_{0}^{\Phi(\psi)}F(\phi_{\tau(0)}(\varsigma),\frac{\partial\phi_{\tau(0)}}{\partial\varsigma})d\varsigma|\\
\leq & \frac{C(\tau_{0})}{\varepsilon}(\varepsilon^{2}+\|\xi\|_{C^{0}}+\varepsilon^{2}\|\mu\|_{C^{0}}+\varepsilon^{2}|\omega|).
\end{align*}

\end{cor}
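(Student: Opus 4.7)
My plan is to perform a change of variables in the first integral using the diffeomorphism $\Phi$ and then bound the resulting integrand pointwise against the second integrand.

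Let $\delta := \varepsilon^{2}+\|\xi\|_{C^{0}}+\varepsilon^{2}\|\mu\|_{C^{0}}+\varepsilon^{2}|\omega|$, so that the estimates (\ref{eq:phi and phi_0}) and (\ref{eq:estimates for Phi}) can be written uniformly as $|\phi(\psi)-\phi_{\tau(0)}(\Phi(\psi))|+|\zeta(\psi)-\zeta_{\tau(0)}(\Phi(\psi))|\leq C(\tau_{0})\delta$ and $|\Phi'(\psi)-1|\leq C(\tau_{0})\delta$, and $|\Phi(\psi)-\psi|\leq C(\tau_{0})\delta/\varepsilon$. In particular $\Phi^{-1}$ exists (Lemma \ref{Phi-estimates}) and satisfies analogous bounds. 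Substituting $\varsigma=\Phi(\psi)$ in the first integral gives
\[
\int_{0}^{\psi}F(\phi,\phi_{\psi})R(\tilde\psi)\,d\tilde\psi=\int_{0}^{\Phi(\psi)}F\bigl(\phi(\Phi^{-1}(\varsigma)),\zeta(\Phi^{-1}(\varsigma))\bigr)\,R(\Phi^{-1}(\varsigma))\,\frac{d\varsigma}{\Phi'(\Phi^{-1}(\varsigma))}.
\]

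The main step is then a pointwise comparison of the integrand with $F(\phi_{\tau(0)}(\varsigma),\partial_\varsigma\phi_{\tau(0)})R(\varsigma)$. I split the difference into three pieces: the $F$-part, the $1/\Phi'$-part, and the $R$-shift. The first contributes $|F(\phi(\Phi^{-1}(\varsigma)),\zeta(\Phi^{-1}(\varsigma)))-F(\phi_{\tau(0)}(\varsigma),\partial_{\varsigma}\phi_{\tau(0)})|\leq\tilde C_{3}\cdot C(\tau_{0})\delta$ by applying the assumed bound on $\partial_{\theta_{i}}F$ together with (\ref{eq:phi and phi_0}). The second contributes $|1/\Phi'-1|\leq C(\tau_{0})\delta$ by (\ref{eq:estimates for Phi}).

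The third piece is the point where the hypothesis $\|R\|_{C^1_{x_0}}$ (rather than $C^1_\varepsilon$) is essential: since $x_{0}=\varepsilon\psi$, one has $|\partial_{\psi}R|=\varepsilon|\partial_{x_{0}}R|\leq\varepsilon\|R\|_{C^{1}_{x_{0}}}$, so
\[
|R(\Phi^{-1}(\varsigma))-R(\varsigma)|\leq\varepsilon\|R\|_{C^{1}_{x_{0}}}\cdot|\Phi^{-1}(\varsigma)-\varsigma|\leq\varepsilon\|R\|_{C^{1}_{x_{0}}}\cdot\frac{C(\tau_{0})\delta}{\varepsilon}=C(\tau_{0})\delta\,\|R\|_{C^{1}_{x_{0}}}.
\]
This is exactly the cancellation between the two $\varepsilon$'s that keeps the final estimate of the right order. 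Combining the three pieces, the pointwise error in the integrand is at most $C(\tau_{0})\delta\,\|R\|_{C^{1}_{x_{0}}}$; integrating over $[0,\Phi(\psi)]$, whose length is bounded by $L_{\Gamma}/\varepsilon+C\delta/\varepsilon\leq C/\varepsilon$, produces the factor $1/\varepsilon$ in the claimed bound.

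The only subtlety is making sure the arguments of $F$ stay inside the compact set where $\tilde C_{3}$ is defined; this is immediate from Lemma \ref{improvement of the estimates of 0th ODE} since $(\phi,\zeta)\in A(\tau_{0},\delta_{1})$ throughout, and $(\phi_{\tau(0)},\partial_\varsigma\phi_{\tau(0)})$ lies in the same compact annulus. The case $R\equiv 1$ follows by taking $\|R\|_{C^{1}_{x_{0}}}=1$. I do not anticipate a hard step; the proof is essentially a careful bookkeeping of the $\varepsilon$ scales, with the gain coming entirely from the fact that $R$ varies on the $x_{0}$-scale while $\Phi-\mathrm{id}$ is small on the $\psi$-scale.
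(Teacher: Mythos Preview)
Your proposal is correct and follows essentially the same argument as the paper: change variables $\varsigma=\Phi(\psi)$, bound the pointwise difference of the integrands using (\ref{eq:phi and phi_0}) and (\ref{eq:estimates for Phi}), and then integrate over an interval of length $O(1/\varepsilon)$. The paper's proof is just the three-line computation you describe, including the same key observation that $|\partial_{\psi}R|\leq\varepsilon\|R\|_{C_{x_{0}}^{1}}$ cancels against $|\Phi^{-1}(\varsigma)-\varsigma|\leq C(\tau_{0})\delta/\varepsilon$.
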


\begin{proof}
\begin{align*}
 & |\int_{0}^{\psi}F(\phi,\phi_{\psi})R(\psi)d\psi-\int_{0}^{\Phi(\psi)}F(\phi_{\tau(0)}(\varsigma),\frac{\partial\phi_{\tau(0)}}{\partial\varsigma})R(\varsigma)d\varsigma|\\
\leq & \int_{0}^{\Phi(\psi)}|F(\phi,\phi_{\psi})(\Phi^{-1}(\varsigma))(\Phi^{-1})^{\prime}R(\Phi^{-1}(\varsigma))-F(\phi_{\tau(0)}(\varsigma),\frac{\partial\phi_{\tau(0)}}{\partial\varsigma})R(\varsigma)|d\varsigma\\
\leq & \frac{C(\tau_{0})}{\varepsilon}(|\phi(\Phi^{-1}(\varsigma))-\phi_{\tau(0)}(\varsigma)||R|+|\phi_{\psi}(\Phi^{-1}(\varsigma))-\frac{\partial\phi_{\tau(0)}(\varsigma)}{\partial\varsigma}||R|\\
 & +|(\Phi^{-1})'-1||R|+|\partial_{\psi}R||\Phi^{-1}(\varsigma)-\varsigma|)\\
\leq & \frac{C(\tau_{0})}{\varepsilon}(\varepsilon^{2}+\|\xi\|_{C^{0}}+\varepsilon^{2}\|\mu\|_{C^{0}}+\varepsilon^{2}|\omega|)\|R\|_{C_{x_{0}}^{1}}.
\end{align*}

\end{proof}

\paragraph{Step 4. The linearized equations}

In this step we analyze the linearized equations of (\ref{0th mode ODE-1}).
In Step 2, we have got a solution $\phi(\psi)=\phi_{\xi,\mu,\omega,\tau(0)}(\psi),\psi\in[0,\frac{L_{\Gamma}}{\varepsilon}].$
We can linearize the equation in different ways. First, we fix $\xi,\mu,\omega$
and perturb the initial values. Then we fixed the initial values and
perturb $\xi,\mu,\omega$.

For fixed $\xi,\mu,\omega$ we suppose $\phi_{t}$ is a class of solutions
to 
\[
\frac{\partial^{2}\phi}{\partial\psi^{2}}-\phi^{-1}(1+(\frac{\partial\phi}{\partial\psi})^{2})+(2+\rho)(1+(\frac{\partial\phi}{\partial\psi})^{2})^{\frac{3}{2}}=0,
\]
(with different initial values) and $\frac{d}{dt}\phi_{t}|_{t=0}=\beta(\psi)$.
Then we can calculate

\begin{eqnarray}
 &  & \mathcal{L}_{\xi,\mu,\omega,\tau(0)}\beta(\psi)\nonumber \\
 & = & \frac{\partial^{2}\beta}{\partial\psi^{2}}+(6(1+\phi_{\psi}^{2})^{\frac{1}{2}}\phi_{\psi}-2\phi^{-1}\phi_{\psi}+\bar{F}_{1})\frac{\partial\beta}{\partial\psi}+(\phi^{-2}(1+\phi_{\psi}^{2})+\bar{F}_{2})\beta\nonumber \\
 & = & 0\label{linearized operator}
\end{eqnarray}
where $\|\bar{F}_{1}\|_{C_{\varepsilon}^{\alpha}}+\|\bar{F}_{2}\|_{C_{\varepsilon}^{\alpha}}\leq\varepsilon^{2}(C+C_{1}+C_{2}+C_{3})$.
First we analyze the fundamental solutions to the linearized equation
(\ref{linearized operator}). Suppose $\beta_{1}(\psi),\beta_{2}(\psi)$
satisfy
\[
\mathcal{L}_{\xi,\mu,\omega,\tau(0)}\beta_{i}(\psi)=0,\psi\in[0,\frac{L_{\Gamma}}{\varepsilon}]
\]
and 
\[
\left(\begin{array}{cc}
\beta_{1}(0) & \beta_{2}(0)\\
\frac{\partial\beta_{1}}{\partial\psi}(0) & \frac{\partial\beta_{2}}{\partial\psi}(0)
\end{array}\right)=\left(\begin{array}{cc}
1 & 0\\
0 & 1
\end{array}\right).
\]

Let $R(\psi)=\left|\begin{array}{cc}
\beta_{1}(\psi) & \beta_{2}(\psi)\\
\beta_{1}'(\psi) & \beta_{2}'(\psi)
\end{array}\right|.$ Easy calculation yields
\[
\frac{d}{d\psi}\log R(\psi)=-(6(1+\phi_{\psi}^{2})^{\frac{1}{2}}\phi_{\psi}-2\phi^{-1}\phi_{\psi}+\bar{F}_{1}).
\]
Suppose $\phi_{\tau(0)}$ is the solution to (\ref{0th mode ODE-1}),
with $\rho$ replaced by $0.$ From Corollary \ref{F(phi)-F(phi0)},
and the fact that $6(1+(\frac{\partial\phi_{\tau(0)}}{\partial\varsigma})^{2})^{\frac{1}{2}}\frac{\partial\phi_{\tau(0)}}{\partial\varsigma}-2\phi_{\tau(0)}^{-1}\frac{\partial\phi_{\tau(0)}}{\partial\varsigma}$
has $0$ average in one period, we deduce that there exists $C=C(\tau_{0},C_{1},C_{2},C_{3})>0$
such that 
\begin{equation}
{\rm e}^{-C}\leq R(\psi)\leq{\rm e}^{C}.\label{R(t) estimate}
\end{equation}

We define an operator
\begin{eqnarray*}
 &  & \mathcal{L}_{\tau(0)}\beta(\varsigma)\\
 & = & \frac{\partial^{2}\beta}{\partial\varsigma^{2}}+(6(1+(\frac{\partial\phi_{\tau(0)}}{\partial\varsigma})^{2})^{\frac{1}{2}}\frac{\partial\phi_{\tau(0)}}{\partial\varsigma}-2\phi_{\tau(0)}^{-1}\frac{\partial\phi_{\tau(0)}}{\partial\varsigma})\frac{\partial\beta}{\partial\varsigma}\\
 &  & +\phi_{\tau(0)}^{-2}(1+(\frac{\partial\phi_{\tau(0)}}{\partial\varsigma})^{2})\beta.
\end{eqnarray*}
Using Lemma \ref{Phi-estimates} and Corollary \ref{phi(psi)-phi_0(psi)}
we can make comparison between $\mathcal{L}_{\xi,\mu,\omega,\tau(0)}$
and $\mathcal{L}_{\tau(0)}.$

We denote the $i$th local minimum of $\phi$ in $[0,\frac{L_{\Gamma}}{\varepsilon}]$
as $\psi_{i},i=0,1,2,\cdots$. So $\psi_{0}=0$ and $[\psi_{i-1},\psi_{i}]$
resembles the $i$th ``period'' of $\phi(\psi).$ $\tilde{\psi}_{i}=\Phi(\psi_{i})$
is the $i$th local minimum of $\phi_{\tau(0)}$. It is obvious that
$\tilde{\psi}_{i}=i\tilde{\psi}_{1}$. When $\lambda\in[0,1]$, let
$\psi=(1-\lambda)\psi_{i-1}+\lambda\psi_{i}$. From (\ref{eq:phi and phi_0}),(\ref{eq:estimates for Phi})
we have
\begin{align*}
 & |\phi((1-\lambda)\psi_{i-1}+\lambda\psi_{i})-\phi_{\tau(0)}((1-\lambda)\tilde{\psi}_{i-1}+\lambda\tilde{\psi}_{i})|\\
\leq & |\phi(\psi)-\phi_{\tau(0)}(\Phi(\psi))|+|\phi_{\tau(0)}(\Phi(\psi))-\phi_{\tau(0)}((1-\lambda)\tilde{\psi}_{i-1}+\lambda\tilde{\psi}_{i})|\\
\leq & C(\tau_{0})(\varepsilon^{2}+\|\xi\|_{C^{0}}+\varepsilon^{2}\|\mu\|_{C^{0}}+\varepsilon^{2}|\omega|),
\end{align*}
and 

\begin{align*}
 & |\zeta((1-\lambda)\psi_{i-1}+\lambda\psi_{i})-\zeta_{\tau(0)}((1-\lambda)\tilde{\psi}_{i-1}+\lambda\tilde{\psi}_{i})|\\
\leq & C(\tau_{0})(\varepsilon^{2}+\|\xi\|_{C^{0}}+\varepsilon^{2}\|\mu\|_{C^{0}}+\varepsilon^{2}|\omega|).
\end{align*}
Suppose $\beta_{1,i-1},\beta_{2,i-1}$ solves 
\begin{align*}
\mathcal{L}_{\xi,\mu,\omega,\tau(0)}\beta_{j,i-1}(\psi) & =0,\psi\in[\psi_{i-1},\psi_{i}],j=1,2,
\end{align*}
with
\[
\left(\begin{array}{cc}
\beta_{1,i-1}(\psi_{i-1}) & \beta_{2,i-1}(\psi_{i-1})\\
\beta_{1,i-1}^{\prime}(\psi_{i-1}) & \beta_{2,i-1}^{\prime}(\psi_{i-1})
\end{array}\right)=\left(\begin{array}{cc}
1 & 0\\
0 & 1
\end{array}\right).
\]

Suppose $W_{1,i-1},W_{2,i-1}$ solves
\[
\mathcal{L}_{\tau(0)}W_{j,i-1}=0,\psi\in[\tilde{\psi}_{i-1},\tilde{\psi}_{i}],j=1,2,
\]
with
\[
\left(\begin{array}{cc}
W_{1,i-1}(\tilde{\psi}_{i-1}) & W_{2,i-1}(\tilde{\psi}_{i-1})\\
W_{1,i-1}^{\prime}(\tilde{\psi}_{i-1}) & W_{2,i-1}^{\prime}(\tilde{\psi}_{i-1})
\end{array}\right)=\left(\begin{array}{cc}
1 & 0\\
0 & 1
\end{array}\right).
\]
We know 
\begin{align*}
W_{1,i-1}(\varsigma) & =h_{1}(\tau(0))(\varsigma-\tilde{\psi}_{i-1})\frac{\partial\phi_{\tau(0)}}{\partial\varsigma}+v_{\tau(0)}(\varsigma),\\
W_{2,i-1}(\varsigma) & =h_{2}(\tau(0))\frac{\partial\phi_{\tau(0)}}{\partial\varsigma},
\end{align*}
where $v_{\tau(0)}(\varsigma)$ has period $\tilde{\psi}_{1}.$

Comparing the coefficients of $\mathcal{L}_{\xi,\mu,\omega,\tau(0)}$
with that of $\mathcal{L}_{\tau(0)}$ we have
\begin{align}
 & |\beta_{j,i-1}((1-\lambda)\psi_{i-1}+\lambda\psi_{i})-W_{j,i-1}((1-\lambda)\tilde{\psi}_{i-1}+\lambda\tilde{\psi}_{i})|\nonumber \\
 & +|\beta_{j,i-1}'((1-\lambda)\psi_{i-1}+\lambda\psi_{i})-W_{j,i-1}'((1-\lambda)\tilde{\psi}_{i-1}+\lambda\tilde{\psi}_{i})|\nonumber \\
\leq & C(\tau_{0})(1+C_{1}+C_{2}+C_{3})\varepsilon^{2},\label{Beta piecewise estimate}
\end{align}
which implies the following lemma,

\begin{lem} \label{estimates of beta(j,i-1)}
\begin{align*}
 & \|\beta_{1,i-1}(\psi)-(h_{1}(\tau(0))(\psi-\psi_{i})\frac{\partial\phi}{\partial\psi}+v_{1}(\psi))\|_{C_{\varepsilon}^{1}([\psi_{i-1},\psi_{i}])}\\
 & +\|\beta_{2,i-1}(\psi)-h_{2}(\tau(0))\frac{\partial\phi}{\partial\psi}\|_{C_{\varepsilon}^{1}([\psi_{i-1},\psi_{i}])}\\
\leq & C(\tau_{0})(1+C_{1}+C_{2}+C_{3})\varepsilon^{2},
\end{align*}
where $v_{1}((1-\lambda)\psi_{i-1}+\lambda\psi_{i})=v_{\tau(0)}(\lambda(\tilde{\psi}_{i}-\tilde{\psi}_{i-1})).$ 

\end{lem}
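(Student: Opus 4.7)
The plan is to start from the pointwise estimate \eqref{Beta piecewise estimate}, which was already obtained by direct comparison of the coefficients of $\mathcal{L}_{\xi,\mu,\omega,\tau(0)}$ with those of $\mathcal{L}_{\tau(0)}$. The lemma is then essentially a translation of that pointwise bound from the $(\varsigma,\phi_{\tau(0)})$-variables of the limit operator into the $(\psi,\phi)$-variables of the actual solution, together with an upgrade from $C^0$ to the $C^1_\varepsilon$ norm on the period. The two tools I would use throughout are the sharpened $\Phi$-estimate \eqref{eq:estimates for Phi}, which gives $|\Phi'-1|\le C\varepsilon^2$ on each period, and Corollary \ref{phi(psi)-phi_0(psi)}, which gives $\|\phi-\phi_{\tau(0)}\|_{C^{2,\alpha}_\varepsilon}\le C\varepsilon$; together these allow $\partial_\varsigma\phi_{\tau(0)}\circ\Phi$ to be replaced by $\partial_\psi\phi$ at cost $O(\varepsilon^2)$.

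For $\beta_{2,i-1}$ with target $W_2(\varsigma)=h_2(\tau(0))\partial_\varsigma\phi_{\tau(0)}(\varsigma)$, I will pull back $W_2$ via $\varsigma=\Phi(\psi)-\tilde\psi_{i-1}$, apply the above replacement to arrive at $h_2(\tau(0))\partial_\psi\phi(\psi)$ modulo $C\varepsilon^2$ in $C^0$, and combine with the $C^0$ half of \eqref{Beta piecewise estimate}. For the $C^1_\varepsilon$ estimate I will differentiate once in $\psi$: the derivative half of \eqref{Beta piecewise estimate} controls $\partial_\psi\beta_{2,i-1}-W_2'\circ\Phi$, and $W_2'$ is converted to $\partial_\psi^2\phi$ using the $C^{2,\alpha}_\varepsilon$-closeness already cited, with the H\"older part absorbed via the nonlinear ODE \eqref{0th mode ODE} satisfied by $\phi$. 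The case $\beta_{1,i-1}$ with target $W_1(\varsigma)=h_1(\tau(0))\varsigma\partial_\varsigma\phi_{\tau(0)}+v_{\tau(0)}(\varsigma)$ is structurally the same. The periodic summand $v_{\tau(0)}$ is matched to $v_1$ directly by its definition in the statement, so it contributes no error. The linear-growth summand is then handled exactly as for $\beta_{2,i-1}$ applied to $\partial_\varsigma\phi_{\tau(0)}$, combined with the $\Phi$-bound to identify the affine factor $\varsigma$ with the appropriate affine function of $\psi$ on the period; any change of base point between $\psi_{i-1}$ and $\psi_i$ differs only by a constant multiple of $\partial_\psi\phi$, i.e.\ a multiple of the second fundamental solution, which is absorbed in the normalization of $h_1(\tau(0))$ and $v_1$ fixed in the lemma statement.

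I expect the only delicate point to be the verification that every remainder is genuinely of size $\varepsilon^2$ and not merely $\varepsilon$. This is where it is essential to use the sharpened estimate \eqref{eq:estimates for Phi} rather than the weaker \eqref{eq:first estimate for Phi}: the $\varepsilon^{-1}$ factor in the displacement bound in \eqref{eq:first estimate for Phi} would otherwise spoil the gain once multiplied against the $O(1)$ coefficients $\phi_{\tau(0)},\partial_\varsigma\phi_{\tau(0)},v_{\tau(0)}$ appearing in $W_1,W_2$. Once this bookkeeping is carefully tracked on a single period, the lemma follows; the argument is uniform in $i$ because $\mathcal{L}_{\tau(0)}$ is translation-invariant, so the comparison reduces to a single period at a time.
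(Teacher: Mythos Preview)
Your approach is correct and is exactly the paper's: the lemma is deduced immediately from \eqref{Beta piecewise estimate} together with the $O(\varepsilon^{2})$ pointwise closeness of $(\phi,\zeta)$ to $(\phi_{\tau(0)},\zeta_{\tau(0)})$ under the linear reparametrization of the period, which the paper records in the two displayed inequalities just before \eqref{Beta piecewise estimate}. One small correction to your citations: Corollary~\ref{phi(psi)-phi_0(psi)} compares $\phi(\psi)$ with $\phi_{\tau(0)}(\psi)$ at the \emph{same} $\psi$ and only yields $O(\varepsilon)$; the $O(\varepsilon^{2})$ replacement of $\partial_\varsigma\phi_{\tau(0)}\circ\Phi$ by $\partial_\psi\phi$ that you need comes directly from \eqref{eq:phi and phi_0} (together with $|\Phi'-1|\le C\varepsilon^{2}$ from \eqref{eq:estimates for Phi} to pass from $\Phi$ to the affine reparametrization on a single period).
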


From Lemma \ref{estimates of beta(j,i-1)} we have 
\begin{equation}
\left(\begin{array}{cc}
\beta_{1,i-1}(\psi_{i}) & \beta_{2,i-1}(\psi_{i})\\
\frac{\partial\beta_{1,i-1}}{\partial\psi}(\psi_{i}) & \frac{\partial\beta_{2,i-1}}{\partial\psi}(\psi_{i})
\end{array}\right)=\left(\begin{array}{cc}
1+e_{11}^{i} & e_{12}^{i}\\
\kappa+e_{21}^{i} & 1+e_{22}^{i}
\end{array}\right),\label{beta(i-1)(psi i)}
\end{equation}
where $|e_{jk}^{i}|\leq C(\tau_{0})(1+C_{1}+C_{2}+C_{3})\varepsilon^{2}.$

From the theory of linear ODE, we know 
\begin{align*}
\left(\begin{array}{cc}
\beta_{1}(\psi_{i}) & \beta_{2}(\psi_{i})\\
\frac{\partial\beta_{1}}{\partial\psi}(\psi_{i}) & \frac{\partial\beta_{2}}{\partial\psi}(\psi_{i})
\end{array}\right) & =\left(\begin{array}{cc}
1+e_{11}^{i} & e_{12}^{i}\\
\kappa+e_{21}^{i} & 1+e_{22}^{i}
\end{array}\right)\cdots\left(\begin{array}{cc}
1+e_{11}^{1} & e_{12}^{1}\\
\kappa+e_{21}^{1} & 1+e_{22}^{1}
\end{array}\right)\\
 & =\left(\begin{array}{cc}
A_{11}^{i} & A_{12}^{i}\\
A_{21}^{i} & A_{22}^{i}
\end{array}\right).
\end{align*}
By analysis of the matrix, we can prove

\begin{lem}\label{Aij estimate}

\begin{align*}
\exp(-C(\tau_{0})(1+C_{1}+C_{2}+C_{3}))\leq & A_{11}^{i}+A_{22}^{i}+\varepsilon A_{21}^{i}\leq\exp C(\tau_{0})(1+C_{1}+C_{2}+C_{3}),\\
|A_{12}^{i}|\leq & C(\tau_{0},C_{1},C_{2},C_{3})\varepsilon\exp C(\tau_{0})(1+C_{1}+C_{2}+C_{3}).
\end{align*}

\end{lem}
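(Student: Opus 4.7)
Write each factor as $\mathcal{M}_k = \mathcal{M}_0 + \varepsilon^2 F_k$, where $\mathcal{M}_0 := \begin{pmatrix} 1 & 0 \\ \kappa & 1 \end{pmatrix}$ is the unperturbed monodromy and $\|F_k\|_\infty \leq K_0 := C(\tau_0)(1+C_1+C_2+C_3)$. Since $\mathcal{M}_0 = I+\kappa N$ with $N := \begin{pmatrix} 0 & 0 \\ 1 & 0 \end{pmatrix}$ nilpotent ($N^2=0$), we have $\mathcal{M}_0^n = I+n\kappa N$, which grows linearly in $n$. The constraint $i \leq L_\Gamma/(\varepsilon\psi_1(\tau_0))$ implies $\varepsilon i$ is bounded in terms of $\tau_0$ and $L_\Gamma$; this balance is what makes $\varepsilon$-independent bounds on the product possible despite the linear growth of $\mathcal{M}_0^n$.

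For the upper bounds, I would equip $\mathbb{R}^2$ with the adapted norm $\|(x,y)\|_\varepsilon := |x|+\varepsilon|y|$. A direct check gives $\|\mathcal{M}_0\|_\varepsilon = 1+\varepsilon|\kappa|$ and $\|\varepsilon^2 F_k\|_\varepsilon \leq 2K_0\varepsilon$ (the worst case being the $(1,2)$-entry, which contributes at rate $|e_{12}^k|/\varepsilon$). Iterating,
\[
\|A^i\|_\varepsilon \leq \bigl(1+(|\kappa|+2K_0)\varepsilon\bigr)^i \leq \exp\bigl((|\kappa|+2K_0)L_\Gamma/\psi_1(\tau_0)\bigr) \leq \exp C(\tau_0)(1+C_1+C_2+C_3).
\]
Reading off entries from $\|A^i e_j\|_\varepsilon \leq \|A^i\|_\varepsilon\|e_j\|_\varepsilon$ with $\|e_1\|_\varepsilon=1$ and $\|e_2\|_\varepsilon=\varepsilon$ yields $|A_{11}^i|, \varepsilon|A_{21}^i|, |A_{22}^i| \leq \exp C(\tau_0)(1+C_1+C_2+C_3)$ and $|A_{12}^i| \leq \varepsilon\exp C(\tau_0)(1+C_1+C_2+C_3)$, which immediately gives the upper bound on $A_{11}^i+A_{22}^i+\varepsilon A_{21}^i$ and the bound on $A_{12}^i$.

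For the lower bound, I would compare to the unperturbed combination $(\mathcal{M}_0^i)_{11}+(\mathcal{M}_0^i)_{22}+\varepsilon(\mathcal{M}_0^i)_{21} = 2+\varepsilon i\kappa \geq 2$, where the positivity $\kappa(\tau_0) > 0$ for Delaunay unduloids with $\tau_0 \in (0,1/4)$ comes from the Jacobi-field formulas in \cite{Mazzeo-Pacard-Delaunay-ends}. Expanding
\[
A^i = \mathcal{M}_0^i + \varepsilon^2\sum_{k=1}^i \mathcal{M}_0^{i-k}F_k\mathcal{M}_0^{k-1} + O(\varepsilon^4)
\]
and using $N^2=0$ to compute each summand explicitly, the $\ell$-th order correction is bounded by $\binom{i}{\ell}(2K_0\varepsilon^2)^\ell$ times a polynomial factor in $\kappa$ and the gap variables $k_j-k_{j-1}$; since $\binom{i}{\ell}\varepsilon^{2\ell} \leq (i\varepsilon)^\ell/\ell!$ with $i\varepsilon$ bounded, the series converges \emph{multiplicatively}, yielding $A_{11}^i+A_{22}^i+\varepsilon A_{21}^i \in (2+\varepsilon i\kappa)\cdot [\exp(-C'),\exp(C')]$ with $C' = C(\tau_0)(1+C_1+C_2+C_3)$. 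The main obstacle is precisely obtaining multiplicative rather than additive control: a naive operator-norm Gronwall gives only $|A_{11}^i+A_{22}^i+\varepsilon A_{21}^i - (2+\varepsilon i\kappa)| \leq \exp(C')$, which could in principle overwhelm the baseline $2+\varepsilon i\kappa$ and drive the combination negative. The remedy is the combinatorial bookkeeping of the expansion, exploiting $N^2=0$ so that each summand $\mathcal{M}_0^{i-k_\ell}F_{k_\ell}\mathcal{M}_0^{k_\ell-k_{\ell-1}-1}\cdots F_{k_1}\mathcal{M}_0^{k_1-1}$ reduces to an explicit matrix of polynomial entries, and the partial sums assemble into the desired exponential envelope.
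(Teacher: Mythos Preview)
Your upper-bound argument via the weighted norm $\|(x,y)\|_\varepsilon = |x|+\varepsilon|y|$ is correct and is a genuinely different, more economical route than the paper's: each factor satisfies $\|\mathcal{M}_k\|_\varepsilon \leq 1+(|\kappa|+2K_0)\varepsilon$, and iterating over $i \leq L_\Gamma/(\varepsilon\psi_1(\tau_0))$ factors gives $\|A^i\|_\varepsilon \leq \exp C(\tau_0)(1+C_1+C_2+C_3)$, from which all four entrywise upper bounds follow by evaluating on $e_1,e_2$. This is cleaner than what the paper does for the upper bounds.

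The lower bound, however, has a real gap. You correctly diagnose that naive Gronwall gives only additive control, but your proposed fix---``combinatorial bookkeeping of the expansion, exploiting $N^2=0$''---is not carried out, and a first-order check already shows why it is not straightforward: $(\mathcal{M}_0^{i-k}F_k\mathcal{M}_0^{k-1})_{22} = (F_k)_{22} + (i-k)\kappa (F_k)_{12}$, so $\varepsilon^2\sum_k(i-k)\kappa (F_k)_{12}$ is of size $\kappa K_0(\varepsilon i)^2$, an order-one constant with uncontrolled sign when $\varepsilon i\sim L_\Gamma/\psi_1$. There is no reason the higher orders assemble into a multiplicative envelope around the baseline; the series must be \emph{resummed}, not merely bounded termwise.

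The paper supplies exactly that resummation via an algebraic ``shifting'' identity: one checks that
\[
\begin{pmatrix} 1+a_{11} & a_{12} \\ \kappa+a_{21} & 1+a_{22} \end{pmatrix}\begin{pmatrix} 1 & f \\ 0 & 1 \end{pmatrix} = \begin{pmatrix} 1 & \tilde f \\ 0 & 1 \end{pmatrix}\begin{pmatrix} 1+\tilde a_{11} & 0 \\ \kappa+\tilde a_{21} & 1+\tilde a_{22} \end{pmatrix}
\]
with $\tilde f,\tilde a_{kl}$ explicitly controlled by $f,a_{kl}$. Commuting the upper-unipotent factor through from right to left yields $A^i = U\cdot L_i\cdots L_1$ with $U$ upper-unipotent (off-diagonal entry $O(\varepsilon)$) and each $L_j$ lower-triangular with diagonal entries $1+O(\varepsilon)$. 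Then $A_{22}^i = \prod_j(1+\tilde e_{22}^j)$ \emph{exactly}, which is manifestly in $[\exp(-C),\exp(C)]$; a symmetric left-to-right commutation handles $A_{11}^i$, and a monotonicity comparison with the constant lower-triangular product $\bigl(\begin{smallmatrix}1\pm Er & 0\\ \kappa\pm Er & 1\pm Er\end{smallmatrix}\bigr)^i$ handles $A_{21}^i$. The exact diagonal factorization is the structural ingredient your perturbative expansion cannot recover.
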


We will prove this result in Appendix \ref{(APP)Aij estimate}. So
from Lemma \ref{estimates of beta(j,i-1)} we have

\begin{lem}\label{estimate for the fundamental solution} For $\xi,\mu,\omega,\tau(0)$
which satisfy (\ref{C1C2C3C(tau0)}), there is $\delta>0$ such that
when $0<\varepsilon<\delta$, in the interval $[\psi_{i},\psi_{i+1}]$,
we have 
\begin{align*}
 & \|\beta_{1}(\psi)-(A_{11}^{i}[h_{1}(\tau(0))(\psi-\psi_{i})\frac{\partial\phi}{\partial\psi}+v_{i}(\psi)]+A_{21}^{i}h_{2}(\tau(0))\frac{\partial\phi}{\partial\psi})\|_{C_{\varepsilon}^{1}}\\
\leq & C(\tau_{0},C_{1},C_{2},C_{3})\varepsilon,\\
 & \|\beta_{2}(\psi)-(A_{12}^{i}[h_{1}(\tau(0))(\psi-\psi_{i})\frac{\partial\phi}{\partial\psi}+v_{i}(\psi)]+A_{22}^{i}h_{2}(\tau(0))\frac{\partial\phi}{\partial\psi})\|_{C_{\varepsilon}^{1}}\\
\leq & C(\tau_{0},C_{1},C_{2},C_{3})\varepsilon^{2}.
\end{align*}
where $v_{i}((1-\lambda)\psi_{i}+\lambda\psi_{i+1})=v_{\tau(0)}((1-\lambda)\Phi(\psi_{i})+\lambda\Phi(\psi_{i+1}))$
for $\lambda\in[0,1]$ and $A_{kl}^{i}$ satisfy the inequalities
in Lemma \ref{Aij estimate}. 

In particular, $\beta_{1}$ has linear growth and $\beta_{2}$ is
bounded.

\end{lem}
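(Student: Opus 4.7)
The plan is to combine Lemma~\ref{estimates of beta(j,i-1)} (single-period approximation) with Lemma~\ref{Aij estimate} (global transition-matrix bounds) using linearity of the homogeneous ODE $\mathcal{L}_{\xi,\mu,\omega,\tau(0)}\beta=0$. The structure is that Lemma~\ref{estimates of beta(j,i-1)} already describes the shape of the fundamental pair on a single period, and Lemma~\ref{Aij estimate} already controls how the initial data at $\psi_i$ scale with $i$; all that remains is to glue.

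First I would restart the fundamental solutions at $\psi_i$: let $\beta_{1,i},\beta_{2,i}$ solve the ODE on $[\psi_i,\psi_{i+1}]$ with identity initial data at $\psi_i$, exactly as $\beta_{1,i-1},\beta_{2,i-1}$ were defined on $[\psi_{i-1},\psi_i]$. By \eqref{beta(i-1)(psi i)} the initial data of $(\beta_1,\beta_2)$ at $\psi_i$ are precisely the columns of $(A_{kl}^i)$, so uniqueness of the linear IVP gives on $[\psi_i,\psi_{i+1}]$
\begin{equation*}
\beta_1=A_{11}^i\beta_{1,i}+A_{21}^i\beta_{2,i},\qquad \beta_2=A_{12}^i\beta_{1,i}+A_{22}^i\beta_{2,i}.
\end{equation*}

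Next I would feed in the single-period $C_\varepsilon^1$-approximations $\beta_{1,i}=h_1(\tau(0))(\psi-\psi_i)\phi_\psi+v_i+O(\varepsilon^2)$ and $\beta_{2,i}=h_2(\tau(0))\phi_\psi+O(\varepsilon^2)$ (Lemma~\ref{estimates of beta(j,i-1)}, shifted by one index), and extract the sizes of the coefficients from Lemma~\ref{Aij estimate}. The bound $A_{11}^i+A_{22}^i+\varepsilon A_{21}^i=O(1)$, together with the fact that each of these three summands stays close to its unperturbed value $(1,1,\varepsilon\cdot i\kappa)$ and in particular remains nonnegative, yields $A_{11}^i,A_{22}^i=O(1)$ and $A_{21}^i=O(\varepsilon^{-1})$; combined with $|A_{12}^i|=O(\varepsilon)$, the resulting $C_\varepsilon^1$ error is $|A_{11}^i|\cdot O(\varepsilon^2)+|A_{21}^i|\cdot O(\varepsilon^2)=O(\varepsilon)$ for $\beta_1$, and $|A_{12}^i|\cdot O(\varepsilon^2)+|A_{22}^i|\cdot O(\varepsilon^2)=O(\varepsilon^2)$ for $\beta_2$, matching the assertion.

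The qualitative statement then drops out: the dominant contribution to $\beta_1$ is $A_{21}^i h_2(\tau(0))\phi_\psi$ with $A_{21}^i\sim i\kappa(\tau(0))\sim\psi/(\varepsilon\psi_1(\tau_0))$, hence linear in $\psi$, while $\beta_2$ is a sum of two pieces both of size $O(1)$, hence bounded. The main obstacle is not the linearity argument (a one-line consequence of uniqueness) but the accurate bookkeeping of the four matrix entries $A_{kl}^i$; that is really the content of Lemma~\ref{Aij estimate} together with the product-structure analysis carried out in the appendix, and once it is granted the present lemma reduces to the substitution sketched above.
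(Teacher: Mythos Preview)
Your proposal is correct and follows precisely the route the paper intends: the paper itself offers no separate proof, writing only ``So from Lemma~\ref{estimates of beta(j,i-1)} we have'' before stating the result, and your argument---restart the fundamental pair at $\psi_i$, express $\beta_1,\beta_2$ as linear combinations of $\beta_{1,i},\beta_{2,i}$ with coefficients $A^i_{kl}$, then multiply the single-period $O(\varepsilon^2)$ error of Lemma~\ref{estimates of beta(j,i-1)} against the sizes $A^i_{11},A^i_{22}=O(1)$, $A^i_{21}=O(\varepsilon^{-1})$, $A^i_{12}=O(\varepsilon)$ from Lemma~\ref{Aij estimate}---is exactly the intended substitution. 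One small remark: the statement of Lemma~\ref{Aij estimate} only records the bound on the sum $A^i_{11}+A^i_{22}+\varepsilon A^i_{21}$, whereas you need the individual entry bounds; these are in fact established in the appendix proof of that lemma (each of $A^i_{11},A^i_{22}$ lies between $\exp(\pm C)$ and $A^i_{21}$ between $\varepsilon^{-1}\exp(\pm C)$), so your extraction is justified, but it is worth pointing to the appendix rather than deducing it from nonnegativity alone.
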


We denote 
\[
\begin{cases}
\frac{d}{dt}\phi_{\xi,\mu+t\Delta\mu,\omega,\tau(0)}(\psi)|_{t=0} & =\beta_{\mu}(\psi),\\
\frac{d}{dt}\phi_{\xi+t\Delta\xi,\mu,\omega,\tau(0)}(\psi)|_{t=0} & =\beta_{\xi}(\psi),\\
\frac{d}{dt}\phi_{\xi,\mu,\omega+t,\tau(0)}(\psi)|_{t=0} & =\beta_{\omega}(\psi).
\end{cases}
\]

Using (\ref{R(t) estimate}), Lemma \ref{Aij estimate}, \ref{estimate for the fundamental solution},
we have the following estimates

\begin{lem}\label{Linearization estimates}

\begin{align}
\|\beta_{\mu}(\psi)\|_{C_{\varepsilon}^{1}} & \leq C(\tau_{0},C_{1},C_{2},C_{3})\varepsilon\|\Delta\mu\|_{C^{0}},\label{beta-mu estimate}\\
\|\beta_{\xi}(\psi)\|_{C_{\varepsilon}^{1}} & \leq C(\tau_{0},C_{1},C_{2},C_{3})\|\Delta\xi\|_{C_{x_{0}}^{1}},\label{beta xi estimate}\\
\|\beta_{\omega}(\psi)\|_{C_{\varepsilon}^{1}} & \leq C(\tau_{0},C_{1},C_{2},C_{3})\varepsilon.\label{beta omega estimate}
\end{align}

Moreover if we consider 
\[
\frac{\partial(\Delta\tau,\zeta(\frac{L_{\Gamma}}{\varepsilon}))}{\partial(\omega,\phi(0))}
\]
where $\Delta\tau=\tau(\frac{L_{\Gamma}}{\varepsilon})-\tau(0)$ and
$\zeta=\phi_{\psi}.$ We can get
\begin{equation}
\begin{cases}
\frac{\partial\Delta\tau}{\partial\omega} & \geq C_{5}(\tau_{0})\varepsilon^{2}\\
|\frac{\partial\Delta\tau}{\partial\phi(0)}| & \leq K_{1}(\tau_{0},C_{1},C_{2},C_{3})\varepsilon\\
|\frac{\partial\zeta(\frac{L_{\Gamma}}{\varepsilon})}{\partial\omega}| & =|\beta_{\omega}^{\prime}(\frac{L_{\Gamma}}{\varepsilon})|\leq K_{2}(\tau_{0},C_{1},C_{2},C_{3})\varepsilon\\
\frac{\partial\zeta(\frac{L_{\Gamma}}{\varepsilon})}{\partial\phi(0)} & =\beta_{1}^{\prime}(\frac{L_{\Gamma}}{\varepsilon})\geq\exp(-C(\tau_{0})(1+C_{1}+C_{2}+C_{3}))\frac{1}{\varepsilon}.
\end{cases}\label{estimates for Jacobi matrix}
\end{equation}

\end{lem}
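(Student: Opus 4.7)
The plan is to differentiate the nonlinear ODE (\ref{0th mode ODE}) with respect to each of $\mu$, $\xi$, $\omega$, obtaining for every parameter a linear problem of the form $\mathcal{L}_{\xi,\mu,\omega,\tau(0)}\beta=f$ with zero initial data, where the source is
\[
f_\mu=-\varepsilon^{3}\Delta\mu\,(1+\phi_\psi^{2})^{3/2},\quad f_\omega=-\varepsilon^{3}\phi^{-1}\phi_\psi(1+\phi_\psi^{2})^{3/2},
\]
\[
f_\xi=-\varepsilon F_{4}(\phi,\phi_\psi)\Delta\xi\,(1+\phi_\psi^{2})^{3/2},
\]
up to lower-order chain-rule contributions from the $\phi$-dependence of $F_{1}$, $F_{4}$ and $\phi^{-1}\phi_\psi$ inside $\rho$. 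I would then invoke Lemma \ref{estimate for the fundamental solution} together with the bound $e^{-C}\le R(\psi)\le e^{C}$ on the Wronskian to express each solution by variation of parameters,
\[
\beta(\psi)=\int_{0}^{\psi}\frac{\beta_{1}(s)\beta_{2}(\psi)-\beta_{1}(\psi)\beta_{2}(s)}{R(s)}\,f(s)\,ds,
\]
and use $|\beta_{2}|\le C$ together with the linear growth $|\beta_{1}|\le C/\varepsilon$ on $[0,L_{\Gamma}/\varepsilon]$ to estimate $\beta$.

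For $\beta_\mu$ and $\beta_\omega$ a direct kernel bound suffices: the source is of size $\varepsilon^{3}\|\Delta\mu\|_{C^{0}}$ or $\varepsilon^{3}$ respectively, the kernel is $O(1/\varepsilon)$, and the length of integration is $O(1/\varepsilon)$, producing the claimed $C_\varepsilon^{1}$-bounds (the derivative estimate follows from differentiating the representation formula, using that the kernel and its $\psi$-derivative are both controlled). The $\beta_\xi$ estimate is the delicate one: the source has the larger size $\varepsilon\|\Delta\xi\|_{C^{0}}$, so the naive bound only yields $\|\Delta\xi\|_{C^{0}}/\varepsilon$. The missing factor $\varepsilon^{2}$ must come from the slow $x_{0}$-dependence of $\xi$, encoded in $\partial_\psi\xi=\varepsilon\partial_{x_{0}}\xi$. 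I would therefore integrate by parts in $s$, transferring one derivative from $\Delta\xi(\varepsilon s)$ to the remainder of the integrand, and invoke the ``average 0'' Lemma \ref{average 0 lemma}, applied to the dominant $\phi_\psi$-part of $\beta_{2}$ paired against $F_{4}$, to show that the resulting primitive stays bounded uniformly in $\varepsilon$; the gained factor $\varepsilon$, combined with the $\|\partial_{x_{0}}\Delta\xi\|_{C^{0}}$ factor from the slowly-varying term, upgrades the naive estimate to the target $C\|\Delta\xi\|_{C^{1}_{x_{0}}}$.

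For the Jacobian (\ref{estimates for Jacobi matrix}) I would work from the first-integral identity $d\tau/d\psi=\phi\phi_\psi\rho$. Differentiating $\Delta\tau=\int_{0}^{L_\Gamma/\varepsilon}\phi\phi_\psi\rho\,d\psi$ in $\omega$, the leading contribution is the explicit $\varepsilon^{3}\omega\phi^{-1}\phi_\psi$ piece of $\rho$, producing $\varepsilon^{3}\int_{0}^{L_\Gamma/\varepsilon}\phi_\psi^{2}\,d\psi\ge C_{5}(\tau_{0})\varepsilon^{2}$ since $\phi_\psi\not\equiv 0$ when $\tau_{0}<\tfrac14$, while all chain-rule corrections carry an extra $\varepsilon$. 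The bound $|\partial\Delta\tau/\partial\phi(0)|\le K_{1}\varepsilon$ combines the growth $|\beta_{1}|\le C/\varepsilon$ with cancellation furnished again by Lemma \ref{average 0 lemma} on the $\phi\phi_\psi$-factor, which eliminates the would-be $O(1/\varepsilon)$ pairing against the $\varepsilon^{2}F_{1}\star\Pi_{0}(R_{1})$ part of $\rho$. The bound $|\partial\zeta(L_\Gamma/\varepsilon)/\partial\omega|=|\beta_\omega'(L_\Gamma/\varepsilon)|\le K_{2}\varepsilon$ is immediate from the already established $\|\beta_\omega\|_{C_\varepsilon^{1}}\le C\varepsilon$. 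Finally, $\partial\zeta(L_\Gamma/\varepsilon)/\partial\phi(0)=\beta_{1}'(L_\Gamma/\varepsilon)$; because $\psi_{N+1}=L_\Gamma/\varepsilon$ is a local minimum of $\phi$, one has $\phi_\psi(\psi_{N+1})=0$ and Lemma \ref{estimate for the fundamental solution} reduces $\beta_{1}'$ to $A_{21}^{N}h_{2}(\tau(0))\phi_{\psi\psi}(\psi_{N+1})$ plus small corrections; the matrix analysis of Lemma \ref{Aij estimate}, together with the positivity $\phi_{\psi\psi}>0$ at minima, forces $A_{21}^{N}\sim 1/\varepsilon$ and yields the claimed lower bound $\exp(-C(\tau_{0})(1+C_{1}+C_{2}+C_{3}))/\varepsilon$.

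The main obstacle is the $\beta_\xi$ bound together with the analogous cancellation needed for $\partial\Delta\tau/\partial\phi(0)$: both hinge on integration-by-parts arguments driven by the ``average 0'' Lemma \ref{average 0 lemma}, and the technical heart of the proof is verifying that the oscillatory primitives produced by those integrations remain bounded uniformly in $\varepsilon$ rather than picking up an unwanted factor of $1/\varepsilon$.
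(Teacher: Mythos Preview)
Your treatment of $\beta_\mu$, $\beta_\omega$, and the entries $\partial\Delta\tau/\partial\omega$, $\partial\zeta(L_\Gamma/\varepsilon)/\partial\omega$, $\partial\zeta(L_\Gamma/\varepsilon)/\partial\phi(0)$ matches the paper. The gap is in your plan for $\beta_\xi$ (and, by the same token, for $\partial\Delta\tau/\partial\phi(0)$).

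You propose to integrate by parts directly in the variation-of-parameters formula and then invoke Lemma~\ref{average 0 lemma} on ``the dominant $\phi_\psi$-part of $\beta_2$ paired against $F_4$''. But the combination that appears in your integral is $\phi_\psi\,F_4\,(1+\phi_\psi^2)^{3/2}/R(s)$, whereas Lemma~\ref{average 0 lemma} asserts the vanishing of the period-average of $\phi\phi_\psi F_4$, a genuinely different quantity; there is no reason your combination has zero mean, so the primitive you need to stay bounded may well grow. The companion term carrying $\beta_1(s)$ is even worse, since $\beta_1$ itself has linear growth.

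The paper avoids this by routing the estimate through the first integral $\tau=-\phi^2+\phi(1+\phi_\psi^2)^{-1/2}$. One differentiates $\tau(\bar\psi)=\tau(0)+\int_0^{\bar\psi}\phi\phi_\psi\rho\,d\psi$ in the parameter, obtaining
\[
(\beta_\xi\,\tau_\phi+\beta_\xi'\,\tau_\zeta)\big|_{\bar\psi}
=\int_0^{\bar\psi}\Big[\varepsilon^{2}\partial\hat F_1\cdot(\beta_\xi,\beta_\xi')
+\varepsilon\,\partial\hat F_2\cdot(\beta_\xi,\beta_\xi')\,\xi
+\varepsilon\,\hat F_2\,\Delta\xi+\cdots\Big]d\psi,
\]
with $\hat F_2=\phi\phi_\psi F_4$. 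Now the factor $\phi\phi_\psi$ is exactly what Lemma~\ref{average 0 lemma} cancels, so the $\varepsilon\hat F_2\Delta\xi$ contribution is $O(\varepsilon\|\Delta\xi\|_{C^1_{x_0}})$. The $\hat F_1$ contribution is handled by a \emph{different} cancellation: since the dominant part of $\beta_\xi$ on each period is $\beta_\xi'(\psi_i)\,h_2\phi_\psi$, the integrand $\partial_\phi\hat F_1\cdot\phi_\psi+\partial_\zeta\hat F_1\cdot\phi_{\psi\psi}$ is the total $\psi$-derivative of $\hat F_1$ and telescopes. Evaluating at the local minima $\psi_i$ (where $\tau_\zeta=0$ and $|\tau_\phi|$ is bounded below) yields $|\beta_\xi(\psi_i)|\le C\varepsilon\|\Delta\xi\|_{C^1_{x_0}}$; the full $C^1_\varepsilon$ bound then follows from the piecewise description of $\beta_\xi$ via Lemma~\ref{estimates of beta(j,i-1)} together with an induction on $\beta_\xi'(\psi_i)$. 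The same first-integral device, with $\beta_1$ in place of $\beta_\xi$, gives $|\partial\Delta\tau/\partial\phi(0)|\le K_1\varepsilon$; the average-0 lemma is not what controls the $F_1$ term there either.

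In short: the missing idea is to pass through the first integral $\tau$, which is what produces the $\phi\phi_\psi$ weight needed for Lemma~\ref{average 0 lemma} and simultaneously converts the $F_1$ contribution into a telescoping sum.
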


We will prove this lemma in Appendix \ref{(APP)Proof-of-Lemma Linearization}.
We note that the proof of (\ref{beta xi estimate}) relies on Lemma
\ref{average 0 lemma}, the ``average $0$'' lemma. 

\paragraph{Step 5. Match the boundary value}

\begin{lem}\label{match the boundary value}

For $C_{1},C_{2}>0$, if $\|\xi\|_{C_{x_{0}}^{1}}\leq C_{1}\varepsilon^{2}$
and $\|\mu\|_{C_{\varepsilon}^{\alpha}}\leq C_{2}$, we can choose
$C_{3},C_{4}$ and $\delta$ such that when $0<\varepsilon<\delta$,
there are unique $\mbox{\ensuremath{\omega_{\xi,\mu}}}$ and $\phi(0)_{\xi,\mu}$
(or $\tau(0)_{\xi,\mu}$) such that 
\[
\begin{cases}
\phi_{\xi,\mu,\mbox{\ensuremath{\omega_{\xi,\mu}},}\phi(0)_{\xi,\mu}}(0)=\phi_{\xi,\mu,\mbox{\ensuremath{\omega_{\xi,\mu}},}\phi(0)_{\xi,\mu}}(\frac{L_{\Gamma}}{\varepsilon})=\phi(0)_{\xi,\mu},\\
\phi_{\xi,\mu,\mbox{\ensuremath{\omega_{\xi,\mu}},}\phi(0)_{\xi,\mu}}^{\prime}(0)=\phi_{\xi,\mu,\mbox{\ensuremath{\omega_{\xi,\mu}},}\phi(0)_{\xi,\mu}}^{\prime}(\frac{L_{\Gamma}}{\varepsilon})=0
\end{cases}
\]
and 
\begin{eqnarray*}
|\mbox{\ensuremath{\omega_{\xi,\mu}}}| & \leq & C_{3},\\
|\phi(0)_{\xi,\mu}-\frac{1-\sqrt{1-4\tau_{0}}}{2}| & \leq & C_{4}\varepsilon^{2}.
\end{eqnarray*}

\end{lem}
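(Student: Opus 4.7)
The strategy is to apply a quantitative implicit function theorem (or equivalently, Newton's method) to the two-parameter family of endpoint data. Fix $\xi,\mu$ satisfying the bounds in (\ref{C1C2C3C(tau0)}) and consider the map
\[
G_{\xi,\mu}:(\omega,\phi(0))\longmapsto\bigl(\Delta\tau,\ \zeta(L_{\Gamma}/\varepsilon)\bigr),
\]
where $\Delta\tau=\tau(L_{\Gamma}/\varepsilon)-\tau(0)$ and $\zeta=\phi_{\psi}$. We seek a zero of $G_{\xi,\mu}$. Once $\Delta\tau=0$ and $\zeta(L_{\Gamma}/\varepsilon)=0$ are achieved, the first integral $\tau=-\phi^{2}+\phi/\sqrt{1+\zeta^{2}}$ together with the correct root selection forces $\phi(L_{\Gamma}/\varepsilon)=\phi(0)$, so the two conditions in the statement reduce to $G_{\xi,\mu}=0$.

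The first step is to bound the initial error. Starting from $\omega=0$ and $\phi(0)=(1-\sqrt{1-4\tau_{0}})/2$ (so that $\tau(0)=\tau_{0}$), the improved estimate (\ref{tau-better estimate}) of Lemma \ref{improvement of the estimates of 0th ODE} gives $|\Delta\tau|\le C(\tau_{0},C_{1},C_{2})\,\varepsilon^{2}$, while Corollary \ref{phi(psi)-phi_0(psi)} together with $\zeta_{\tau_{0}}(L_{\Gamma}/\varepsilon)=0$ (since $\varepsilon$ is a proper size) yields $|\zeta(L_{\Gamma}/\varepsilon)|\le C(\tau_{0},C_{1},C_{2})\,\varepsilon$. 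Hence the initial error has size $O(\varepsilon^{2})$ in the first coordinate and $O(\varepsilon)$ in the second.

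The second step is to invert the differential of $G_{\xi,\mu}$. The block-estimates (\ref{estimates for Jacobi matrix}) from Lemma \ref{Linearization estimates} give
\[
\det DG_{\xi,\mu}\ge C_{5}(\tau_{0})\,\varepsilon^{2}\cdot\exp(-C(\tau_{0})(1+C_{1}+C_{2}+C_{3}))\,\varepsilon^{-1}-K_{1}K_{2}\,\varepsilon^{2}\ge c(\tau_{0},C_{1},C_{2},C_{3})\,\varepsilon,
\]
for $\varepsilon$ small. Cramer's rule then bounds the entries of $(DG_{\xi,\mu})^{-1}$ by $C\varepsilon^{-2}$, $C$, $C$, $C\varepsilon$ in the four positions. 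Applied to the initial error $(O(\varepsilon^{2}),O(\varepsilon))$, this produces a correction of size $O(1)$ in $\omega$ and $O(\varepsilon^{2})$ in $\phi(0)$, which is precisely the form of the bounds claimed in the lemma. Choosing $C_{3}$ and $C_{4}$ larger than the constants produced here and demanding $\varepsilon<\delta$ small enough guarantees the iterates remain in the prescribed parameter box.

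The final step is to close a contraction. Because the same Jacobian estimates hold throughout the box $|\omega|\le C_{3}$, $|\phi(0)-(1-\sqrt{1-4\tau_{0}})/2|\le C_{4}\varepsilon^{2}$, and because the second derivatives of $G_{\xi,\mu}$ in $(\omega,\phi(0))$ can be controlled by differentiating the linearized equation (\ref{linearized operator}) once more and applying the same matrix-product analysis that gave Lemma \ref{Aij estimate}, the map $(\omega,\phi(0))\mapsto(\omega,\phi(0))-(DG_{\xi,\mu})^{-1}G_{\xi,\mu}$ becomes a contraction on a small box around the initial guess once $\varepsilon$ is small. Its unique fixed point is the pair $(\omega_{\xi,\mu},\phi(0)_{\xi,\mu})$. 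The main obstacle is not the algebra of $DG_{\xi,\mu}$, which is already in hand via Lemma \ref{Linearization estimates}, but verifying that the second-order variations of $G_{\xi,\mu}$ are under control uniformly in $\varepsilon$ on the entire parameter box; this is where the careful growth bounds for fundamental solutions coming from Lemma \ref{estimate for the fundamental solution} and Lemma \ref{Aij estimate} are essential, and where the choice of $C_{3}$ (which enters exponentially through $C(\tau_{0})(1+C_{1}+C_{2}+C_{3})$) must be made before fixing $\delta$.
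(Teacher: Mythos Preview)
Your overall strategy---reducing to a zero of $G_{\xi,\mu}=(\Delta\tau,\zeta(L_\Gamma/\varepsilon))$, bounding the initial error via (\ref{tau-better estimate}) and Corollary \ref{phi(psi)-phi_0(psi)}, and using the Jacobian bounds (\ref{estimates for Jacobi matrix})---is exactly the paper's approach, and your observation that the first integral forces $\phi(L_\Gamma/\varepsilon)=\phi(0)$ once $G_{\xi,\mu}=0$ is what the paper uses at the end.

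The one place you diverge is the iteration itself. You propose Newton's method and identify control of the second derivatives of $G_{\xi,\mu}$ as ``the main obstacle.'' The paper sidesteps this entirely by using an \emph{alternating} iteration: first adjust $\omega$ alone (using only $\partial\Delta\tau/\partial\omega\ge C_5\varepsilon^2$) to kill $\Delta\tau$; then adjust $\phi(0)$ alone (using only $\partial\zeta/\partial\phi(0)\gtrsim\varepsilon^{-1}$) to kill $\zeta(L_\Gamma/\varepsilon)$; then observe that the off-diagonal entries in (\ref{estimates for Jacobi matrix}) make the resulting disturbance to the other coordinate a fixed fraction smaller, so the scheme contracts. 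This needs only the uniform first-derivative bounds already in Lemma \ref{Linearization estimates}, and no second differentiation of the linearized equation. For uniqueness the paper applies the mean value theorem with the Jacobian evaluated at intermediate points, again using only (\ref{estimates for Jacobi matrix}). So your plan is not wrong, but the second-derivative control you flag as essential is in fact avoidable, and the paper's alternating scheme is more economical. Note also that the paper is careful that $C_5(\tau_0)$ is independent of $C_3$ (the dominant term in $\partial\Delta\tau/\partial\omega$ is $\int_0^{L_\Gamma/\varepsilon}\varepsilon^3\phi_\psi^2\,d\psi$), which is what breaks the apparent circularity in choosing $C_3$; you allude to this but it deserves to be stated explicitly.
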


\begin{proof} We begin with $\omega=0,\tau(0)=\tau_{0}.$ From (\ref{tau-better estimate})
and Corollary \ref{phi(psi)-phi_0(psi)} we have for a particular
$\tilde{C}=\tilde{C}(\tau_{0})$ 
\begin{align*}
|\Delta\tau| & =|\tau(\frac{L_{\Gamma}}{\varepsilon})-\tau(0)|\leq\tilde{C}(\tau_{0})\varepsilon^{2}(1+C_{1}+C_{2})\\
|\zeta(\frac{L_{\Gamma}}{\varepsilon})| & \leq\tilde{C}(\tau_{0})\varepsilon(1+C_{1}+C_{2}).
\end{align*}
We choose 
\[
C_{3}=\frac{4\tilde{C}(\tau_{0})(1+C_{1}+C_{2})}{C_{5}(\tau_{0})}
\]
 and 
\[
C_{4}=4\tilde{K}_{2}C_{5}^{-1}\tilde{C}(\tau_{0})(1+C_{1}+C_{2})\exp(C(\tau_{0})(1+C_{1}+C_{2}+C_{3}))
\]
where $\tilde{K}_{2}=\tilde{K}_{2}(\tau_{0},C_{1},C_{2},C_{3},C_{5})=K_{2}(\tau_{0},C_{1},C_{2},C_{3})+C_{5}$.
We choose 
\[
0<\varepsilon\leq C_{6}=\min\{\frac{C(\tau_{0})}{C_{4}},\frac{C_{5}\exp(-C(\tau_{0})(1+C_{1}+C_{2}+C_{3}))}{2|K_{1}\tilde{K}_{2}|+1}\}.
\]
 Note that first $C_{4}\varepsilon^{2}\leq C(\tau_{0})\varepsilon$,
so one can work with a uniform constant $C_{5}(\tau_{0})>0$ such
that 
\[
\frac{\partial\Delta\tau}{\partial\omega}\geq C_{5}\varepsilon^{2}.
\]
First we prove the uniqueness. For $\omega_{1},\omega_{2}\in[-C_{3},C_{3}],\phi(0)_{1},\phi(0)_{2}\in[\frac{1-\sqrt{1-4\tau_{0}}}{2}-C_{4}\varepsilon^{2},\frac{1-\sqrt{1-4\tau_{0}}}{2}+C_{4}\varepsilon^{2}],$
if $\Delta\tau$ and $\zeta(\frac{L_{\Gamma}}{\varepsilon})$ takes
the same value at $(\omega_{1},\phi(0)_{1})$ and $(\omega_{2},\phi(0)_{2})$
then there exists $\omega_{3},\omega_{4}$ lying between $\omega_{1},\omega_{2}$
and $\phi(0)_{3},\phi(0)_{4}$ lying between $\phi(0)_{1},\phi(0)_{2}$
such that 
\[
\left[\begin{array}{cc}
\frac{\partial\Delta\tau}{\partial\omega}|_{(\omega_{3},\phi(0)_{1})} & \frac{\partial\Delta\tau}{\partial\phi(0)}|_{(\omega_{2},\phi(0)_{3})}\\
\frac{\partial\zeta(\frac{L_{\Gamma}}{\varepsilon})}{\partial\omega}|_{(\omega_{4},\phi(0)_{1})} & \frac{\partial\zeta(\frac{L_{\Gamma}}{\varepsilon})}{\partial\phi(0)}|_{(\omega_{2},\phi(0)_{4})}
\end{array}\right]\left[\begin{array}{c}
\omega_{1}-\omega_{2}\\
\phi(0)_{1}-\phi(0)_{2}
\end{array}\right]=\left[\begin{array}{c}
0\\
0
\end{array}\right].
\]
From estimate (\ref{estimates for Jacobi matrix}), the matrix $\left[\begin{array}{cc}
\frac{\partial\Delta\tau}{\partial\omega}|_{(\omega_{3},\phi(0)_{1})} & \frac{\partial\Delta\tau}{\partial\phi(0)}|_{(\omega_{2},\phi(0)_{3})}\\
\frac{\partial\zeta(\frac{L_{\Gamma}}{\varepsilon})}{\partial\omega}|_{(\omega_{4},\phi(0)_{1})} & \frac{\partial\zeta(\frac{L_{\Gamma}}{\varepsilon})}{\partial\phi(0)}|_{(\omega_{2},\phi(0)_{4})}
\end{array}\right]$ is invertible as long as $\varepsilon\leq C_{6}.$ So $\omega_{1}=\omega_{2}$
and $\phi(0)_{1}=\phi(0)_{2}.$ We have proved the uniqueness.

For the existence first we perturb $\omega$ within $\frac{\tilde{C}(1+C_{1}+C_{2})}{C_{5}}$
such that $\Delta\tau=0.$ Then $\zeta(\frac{L_{\Gamma}}{\varepsilon})$
will change no more than $K_{2}\frac{\tilde{C}(1+C_{1}+C_{2})}{C_{5}}\varepsilon.$
Then we perturb $\phi(0)$ within 
\[
\tilde{K}_{2}C_{5}^{-1}\tilde{C}(1+C_{1}+C_{2})\exp(C(\tau_{0})(1+C_{1}+C_{2}+C_{3}))\varepsilon^{2}
\]
 such that $\zeta(\frac{L_{\Gamma}}{\varepsilon})=0$. Then $\Delta\tau$
will change no more than $K_{1}\tilde{K}_{2}C_{5}^{-1}\tilde{C}(1+C_{1}+C_{2})\exp(C(\tau_{0})(1+C_{1}+C_{2}+C_{3}))\varepsilon^{3}.$
From 
\[
\varepsilon\leq\frac{C_{5}\exp(-C(\tau_{0})(1+C_{1}+C_{2}+C_{3}))}{2|K_{1}\tilde{K}_{2}|+1}
\]
 we know
\begin{align*}
 & K_{1}\tilde{K}_{2}C_{5}^{-1}\tilde{C}(1+C_{1}+C_{2})\exp(C(\tau_{0})(1+C_{1}+C_{2}+C_{3}))\varepsilon^{3}\\
\leq & \frac{1}{2}\tilde{C}(1+C_{1}+C_{2})\varepsilon^{2}.
\end{align*}
Then by an iteration argument, we have, there exists $|\omega_{\xi,\mu}|\leq C_{3}$
and $|\phi(0)_{\xi,\mu}-\frac{1-\sqrt{1-4\tau_{0}}}{2}|\leq C_{4}\varepsilon^{2}$
such that $\Delta\tau=0$ and $\zeta(\frac{L_{\Gamma}}{\varepsilon})=0.$
From $|\phi_{\xi,\mu,\omega_{_{\xi,\mu}},\tau(0)_{\xi,\mu}}(\frac{L_{\Gamma}}{\varepsilon})-\phi_{\xi,\mu,\omega_{_{\xi,\mu}},\tau(0)_{\xi,\mu}}(0)|\leq C\varepsilon,$
we know $\phi_{\xi,\mu,\omega_{_{\xi,\mu}},\tau(0)_{\xi,\mu}}(\frac{L_{\Gamma}}{\varepsilon})=\phi_{\xi,\mu,\omega_{_{\xi,\mu}},\tau(0)_{\xi,\mu}}(0)$.

\end{proof}

From Corollary \ref{phi(psi)-phi_0(psi)} and Lemma \ref{match the boundary value}
we have 

\begin{cor}\label{phi(xi,mu) estimate} For given $C_{1},C_{2}$
and $\tau_{0}$ there is $C=C(\tau_{0},C_{1},C_{2})$ such that 
\[
\|\phi_{\xi,\mu}(\psi)-\phi_{\tau_{0}}(\psi)\|_{C_{\varepsilon}^{2,\alpha}}\leq C\varepsilon.
\]

\end{cor}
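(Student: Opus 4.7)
The plan is to decompose the difference via the triangle inequality through the intermediate standard Delaunay profile $\phi_{\tau(0)_{\xi,\mu}}$:
\[
\phi_{\xi,\mu}(\psi)-\phi_{\tau_0}(\psi) = \bigl(\phi_{\xi,\mu}(\psi)-\phi_{\tau(0)_{\xi,\mu}}(\psi)\bigr) + \bigl(\phi_{\tau(0)_{\xi,\mu}}(\psi)-\phi_{\tau_0}(\psi)\bigr),
\]
and to estimate the two pieces separately. The first is handled directly by a previous result; the second requires a short argument about how standard Delaunay profiles depend on the parameter $\tau$ over the long interval $[0,L_\Gamma/\varepsilon]$.

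For the first piece, I would apply Corollary \ref{phi(psi)-phi_0(psi)} to the specific solution $\phi_{\xi,\mu,\omega_{\xi,\mu},\phi(0)_{\xi,\mu}}$ produced by Lemma \ref{match the boundary value}. Since Lemma \ref{match the boundary value} chooses $C_3=C_3(\tau_0,C_1,C_2)$ and $|\omega_{\xi,\mu}|\leq C_3$, $\|\xi\|_{C^1_{x_0}}\leq C_1\varepsilon^2$, $\|\mu\|_{C^\alpha_\varepsilon}\leq C_2$, all the constants appearing in Corollary \ref{phi(psi)-phi_0(psi)} collapse into a single $C(\tau_0,C_1,C_2)$, so that
\[
\|\phi_{\xi,\mu}(\psi)-\phi_{\tau(0)_{\xi,\mu}}(\psi)\|_{C^{2,\alpha}_\varepsilon}\leq C(\tau_0,C_1,C_2)\,\varepsilon.
\]

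For the second piece I would use that, again by Lemma \ref{match the boundary value}, $|\phi(0)_{\xi,\mu}-\tfrac{1-\sqrt{1-4\tau_0}}{2}|\leq C_4\varepsilon^2$, which translates into $|\tau(0)_{\xi,\mu}-\tau_0|\leq C(\tau_0,C_1,C_2)\,\varepsilon^2$. Both $\phi_{\tau(0)_{\xi,\mu}}$ and $\phi_{\tau_0}$ solve the same autonomous ODE \eqref{eq:Delannay Definition directly.} with $\rho=0$, only with different values of the first integral $\tau$, so the difference $\delta(\psi):=\phi_{\tau(0)_{\xi,\mu}}(\psi)-\phi_{\tau_0}(\psi)$ satisfies a linear second-order ODE whose fundamental solutions are (up to normalization) a bounded periodic Jacobi field and a Jacobi field of linear growth, exactly the $W_1,W_2$ analyzed in Step 4. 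Combined with the initial mismatch $|\delta(0)|\leq C\varepsilon^2$ and $\delta_\psi(0)=0$, this yields $|\delta(\psi)|+|\delta_\psi(\psi)|\leq C(\tau_0)|\tau(0)_{\xi,\mu}-\tau_0|\,(1+|\psi|)$, and then the ODE itself gives the corresponding $C^{2,\alpha}$ bound in $\psi$, which is exactly the $C^{2,\alpha}_\varepsilon$ norm.

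The main (and only) subtle point is that $\psi$ ranges over an interval of length $L_\Gamma/\varepsilon$, so the linearly-growing Jacobi field contributes $O(1/\varepsilon)$; this is where the improved $\varepsilon^2$ estimate on $|\tau(0)_{\xi,\mu}-\tau_0|$ from Lemma \ref{match the boundary value} is crucial, since $\varepsilon^2\cdot(1/\varepsilon)=\varepsilon$ is precisely the loss we can absorb. Adding the two pieces gives $\|\phi_{\xi,\mu}-\phi_{\tau_0}\|_{C^{2,\alpha}_\varepsilon}\leq C(\tau_0,C_1,C_2)\,\varepsilon$, as claimed.
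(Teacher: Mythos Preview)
Your proposal is correct and matches the paper's approach: the one-line proof there simply cites Corollary~\ref{phi(psi)-phi_0(psi)} for the first piece and Lemma~\ref{match the boundary value} for the $|\tau(0)_{\xi,\mu}-\tau_0|\le C_4\varepsilon^2$ bound driving the second. Your Jacobi-field argument for the second piece is exactly the content of the $W_1,W_2$ analysis in Step~4, and the same $\varepsilon^2\cdot(1/\varepsilon)=\varepsilon$ accounting could equivalently be obtained by rerunning the phase-map argument of Corollary~\ref{phi(psi)-phi_0(psi)} with $\phi_{\tau_0}$ in place of $\phi_{\tau(0)}$ (as done implicitly in Corollary~\ref{Phi-global}).
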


As $|\tau(0)_{\xi,\mu}-\tau_{0}|\leq C\varepsilon^{2}$, we know $(\phi_{\xi,\mu},\zeta_{\xi,\mu})$
is within $C\varepsilon^{2}$ neighborhood of $(\phi_{\tau_{0}},\zeta_{\tau_{0}}).$
So similar to the construction of $\Phi$, we can construct $\tilde{\Phi}$
by projecting $(\phi_{\xi,\mu},\zeta_{\xi,\mu})$ to the orbit $(\phi_{\tau_{0}},\zeta_{\tau_{0}}).$
We have 

\begin{cor}\label{Phi-global}For $\phi_{\xi,\mu,\omega_{\xi,\mu}\phi(0)_{\xi,\mu}}(\psi)$
we have a $C^{1}$ map 
\[
\tilde{\Phi}=\tilde{\Phi}_{\xi,\mu,\tau_{0}}:\Gamma\rightarrow\Gamma
\]
 such that 
\begin{align*}
 & |\phi_{\xi,\mu,\omega_{\xi,\mu}\phi(0)_{\xi,\mu}}(\psi)-\phi_{\tau_{0}}(\tilde{\Phi}(\psi))|+|\zeta_{\xi,\mu,\omega_{\xi,\mu}\phi(0)_{\xi,\mu}}(\psi)-\zeta_{\tau_{0}}(\tilde{\Phi}(\psi))|\\
\leq & C(\tau_{0},C_{1},C_{2})\varepsilon^{2}
\end{align*}
 and
\[
\begin{cases}
|\tilde{\Phi}(\psi)-\psi| & \leq C(\tau_{0},C_{1},C_{2})\varepsilon,\\
|\tilde{\Phi}'(\psi)-1| & \leq C(\tau_{0},C_{1},C_{2})\varepsilon^{2}.
\end{cases}
\]
Moreover suppose $\psi_{i}$ is the $i$th local minimum point of
$\phi_{\xi,\mu,\omega_{\xi,\mu}\phi(0)_{\xi,\mu}}$, then $\tilde{\Phi}(\psi_{i})$
is the $i$th local minimum point of $\phi_{\tau_{0}}$ . In particular
$\tilde{\Phi}(0)=0,\tilde{\Phi}(L_{\Gamma})=L_{\Gamma}.$

\end{cor}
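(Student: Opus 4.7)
The plan is to build $\tilde{\Phi}$ by a small rescaling of the auxiliary map $\Phi=\Phi_{\xi,\mu,\omega_{\xi,\mu},\tau(0)_{\xi,\mu}}$ already constructed in (\ref{Phi definition}). Recall that $\Phi$ compares $\phi_{\xi,\mu,\omega_{\xi,\mu},\phi(0)_{\xi,\mu}}$ to the standard Delaunay $\phi_{\tau(0)_{\xi,\mu}}$, whose parameter is \emph{not} quite $\tau_0$. The point of Lemma \ref{match the boundary value} is that it pins down $\phi(0)_{\xi,\mu}$ (and hence $\tau(0)_{\xi,\mu}$) to within $O(\varepsilon^2)$ of the target value; the role of the rescaling is to absorb this $O(\varepsilon^2)$ mismatch of periods so that local minima land on local minima.

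First I would record the consequences of the hypothesis $|\phi(0)_{\xi,\mu}-\tfrac{1-\sqrt{1-4\tau_0}}{2}|\le C_4\varepsilon^2$: by smoothness of $\phi_0\mapsto\tau(\phi_0,0)$ this gives $|\tau(0)_{\xi,\mu}-\tau_0|\le C\varepsilon^2$. Plugged back into (\ref{eq:phi and phi_0}) and (\ref{eq:estimates for Phi}), and using $\|\xi\|_{C^0}\le C_1\varepsilon^2$, $\varepsilon^2\|\mu\|_{C^0}\le C_2\varepsilon^2$, $\varepsilon^2|\omega_{\xi,\mu}|\le C_3\varepsilon^2$, the bounds sharpen to $|\phi(\psi)-\phi_{\tau(0)_{\xi,\mu}}(\Phi(\psi))|+|\zeta(\psi)-\zeta_{\tau(0)_{\xi,\mu}}(\Phi(\psi))|\le C\varepsilon^2$ and $|\Phi(\psi)-\psi|\le C\varepsilon$, $|\Phi'(\psi)-1|\le C\varepsilon^2$ with $C=C(\tau_0,C_1,C_2)$.

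Next I would use smoothness of the standard Delaunay family in $\tau$. Writing $\psi_1(\tau)$ for the period of $\phi_\tau$, the normalized profile $F(\tau,s):=\phi_\tau(s\,\psi_1(\tau))$ is smooth and $1$-periodic in $s$. Define
\[
\tilde{\Phi}(\psi):=\frac{\psi_1(\tau_0)}{\psi_1(\tau(0)_{\xi,\mu})}\,\Phi(\psi).
\]
Since $|\psi_1(\tau_0)/\psi_1(\tau(0)_{\xi,\mu})-1|\le C\varepsilon^2$, the derivative bound $|\tilde{\Phi}'(\psi)-1|\le C\varepsilon^2$ follows immediately, and integrating from $\tilde{\Phi}(0)=0$ over $[0,L_\Gamma/\varepsilon]$ gives $|\tilde{\Phi}(\psi)-\psi|\le C\varepsilon$. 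By construction $\Phi$ carries the $i$th local minimum $\psi_i$ of $\phi_{\xi,\mu,\ldots}$ to $i\,\psi_1(\tau(0)_{\xi,\mu})$, so $\tilde{\Phi}(\psi_i)=i\,\psi_1(\tau_0)$, which is the $i$th local minimum of $\phi_{\tau_0}$; and since $L_\Gamma=\varepsilon\psi_1(\tau_0)N$ is itself at such a point, $\tilde{\Phi}(L_\Gamma/\varepsilon)=L_\Gamma/\varepsilon$.

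For the profile comparison I would split
\[
\phi_{\xi,\mu,\ldots}(\psi)-\phi_{\tau_0}(\tilde{\Phi}(\psi))=\bigl[\phi_{\xi,\mu,\ldots}(\psi)-\phi_{\tau(0)_{\xi,\mu}}(\Phi(\psi))\bigr]+\bigl[\phi_{\tau(0)_{\xi,\mu}}(\Phi(\psi))-\phi_{\tau_0}(\tilde{\Phi}(\psi))\bigr].
\]
The first bracket is $O(\varepsilon^2)$ by the sharpened (\ref{eq:phi and phi_0}); for the second, the defining relation $\Phi(\psi)/\psi_1(\tau(0)_{\xi,\mu})=\tilde{\Phi}(\psi)/\psi_1(\tau_0)$ means it equals $F(\tau(0)_{\xi,\mu},s)-F(\tau_0,s)$ at a common $s$, which is $O(|\tau(0)_{\xi,\mu}-\tau_0|)=O(\varepsilon^2)$. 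The $\zeta$-bound is obtained identically using $\partial_s F$ together with the corresponding sharpened estimate for $\zeta-\zeta_{\tau(0)_{\xi,\mu}}\circ\Phi$.

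The only mild obstacle is bookkeeping: one must verify that the rescaling does not destroy the $O(\varepsilon^2)$ control on derivatives, which is precisely why the quadratic control $C_4\varepsilon^2$ (rather than merely $C(\tau_0)\varepsilon$) obtained in Lemma \ref{match the boundary value} is essential here. Without that quadratic improvement one would only recover the weaker $O(\varepsilon)$ estimates from Lemma \ref{Phi-estimates}.
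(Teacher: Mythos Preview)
Your proof is correct. The paper's own proof of this corollary is a single line---``From (\ref{eq:phi and phi_0}) and Lemma \ref{match the boundary value}''---so you have supplied the details the paper omits. Your period-rescaling $\tilde{\Phi}=\frac{\psi_1(\tau_0)}{\psi_1(\tau(0)_{\xi,\mu})}\Phi$ is a clean way to convert the comparison with $\phi_{\tau(0)_{\xi,\mu}}$ into one with $\phi_{\tau_0}$; an equally natural alternative (and probably what the authors have in mind) is to repeat the phase-space construction (\ref{Phi definition}) with the $\tau_0$-orbit in place of the $\tau(0)_{\xi,\mu}$-orbit, which works because $|\tau(\phi(\psi),\zeta(\psi))-\tau_0|\le|\tau-\tau(0)|+|\tau(0)-\tau_0|\le C\varepsilon^2$ after Lemma \ref{match the boundary value}. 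Both routes use the same essential input---the $O(\varepsilon^2)$ control on $|\tau(0)_{\xi,\mu}-\tau_0|$---and yield identical estimates; your version has the minor advantage of making the matching $\tilde{\Phi}(\psi_i)=i\,\psi_1(\tau_0)$ transparent.
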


\paragraph{Step 6. Main estimates of 0th mode}

For 
\[
\|\xi_{1}\|_{C_{x_{0}}^{1}},\|\xi_{2}\|_{C_{x_{0}}^{1}}\leq C_{1}\varepsilon^{2},\|\mu_{1}\|_{C_{\varepsilon}^{\alpha}},\|\mu_{2}\|_{C_{\varepsilon}^{\alpha}}\leq C_{2},
\]
 we have
\begin{align*}
 & \|\phi_{\xi_{1},\mu_{1},\omega_{\xi_{1},\mu_{1}},\tau(0)_{\xi_{1},\mu_{1}}}(\psi)-\phi_{\xi_{2},\mu_{2},\omega_{\xi_{2},\mu_{2}},\tau(0)_{\xi_{2},\mu_{2}}}(\psi)\|_{C_{\varepsilon}^{1}}\\
\leq & \|\phi_{\xi_{1},\mu_{1},\omega_{\xi_{1},\mu_{1}},\tau(0)_{\xi_{1},\mu_{1}}}(\psi)-\phi_{\xi_{2},\mu_{2},\omega_{\xi_{1},\mu_{1}},\tau(0)_{\xi_{1},\mu_{1}}}(\psi)\|_{C_{\varepsilon}^{1}}\\
 & +\|\phi_{\xi_{2},\mu_{2},\omega_{\xi_{1},\mu_{1}},\tau(0)_{\xi_{1},\mu_{1}}}(\psi)-\phi_{\xi_{2},\mu_{2},\omega_{\xi_{2},\mu_{2}},\tau(0)_{\xi_{2},\mu_{2}}}(\psi)\|_{C_{\varepsilon}^{1}}.
\end{align*}
Note that from (\ref{beta-mu estimate}) and (\ref{beta xi estimate})
we have 
\begin{align*}
 & \|\phi_{\xi_{1},\mu_{1},\omega_{\xi_{1},\mu_{1}},\tau(0)_{\xi_{1},\mu_{1}}}(\psi)-\phi_{\xi_{2},\mu_{2},\omega_{\xi_{1},\mu_{1}},\tau(0)_{\xi_{1},\mu_{1}}}(\psi)\|_{C_{\varepsilon}^{1}}\\
\leq & \int_{0}^{1}\|\frac{d}{dt}\phi_{t\xi_{1}+(1-t)\xi_{2},\mu_{1},\omega_{\xi_{1},\mu_{1}},\tau(0)_{\xi_{1},\mu_{1}}}(\psi)\|_{C_{\varepsilon}^{1}}d\psi\\
 & +\int_{0}^{1}\|\frac{d}{dt}\phi_{\xi_{2},t\mu_{1}+(1-t)\mu_{2},,\omega_{\xi_{1},\mu_{1}},\tau(0)_{\xi_{1},\mu_{1}}}(\psi)\|_{C_{\varepsilon}^{1}}d\psi\\
\leq & C\varepsilon\|\mu_{2}-\mu_{1}\|_{C_{\varepsilon}^{\alpha}}+C\|\xi_{2}-\xi_{1}\|_{C_{x_{0}}^{1}}.
\end{align*}
By comparing $\phi_{\xi_{2},\mu_{2},\omega_{\xi_{1},\mu_{1}},\tau(0)_{\xi_{1},\mu_{1}}}(\psi)$
with $\phi_{\xi_{1},\mu_{1},\omega_{\xi_{1},\mu_{1}},\tau(0)_{\xi_{1},\mu_{1}}}(\psi)$,
we have for $\phi_{\xi_{2},\mu_{2},\omega_{\xi_{1},\mu_{1}},\tau(0)_{\xi_{1},\mu_{1}}}(\psi)$
\begin{align*}
|\Delta\tau| & \leq C(\tau_{0},C_{1},C_{2},C_{3})(\varepsilon^{2}\|\mu_{2}-\mu_{1}\|_{C_{\varepsilon}^{\alpha}}+\varepsilon\|\xi_{2}-\xi_{1}\|_{C_{x_{0}}^{1}}),\\
|\zeta(\frac{L_{\Gamma}}{\varepsilon})| & \leq C(\tau_{0},C_{1},C_{2},C_{3})(\varepsilon\|\mu_{2}-\mu_{1}\|_{C_{\varepsilon}^{\alpha}}+\|\xi_{2}-\xi_{1}\|_{C_{x_{0}}^{1}}).
\end{align*}
If we denote 
\[
\left(\begin{array}{cc}
B_{11} & B_{12}\\
B_{21} & B_{22}
\end{array}\right)=\frac{\partial(\Delta\tau,\zeta(\frac{L_{\Gamma}}{\varepsilon}))}{\partial(\omega,\phi(0))}^{-1},
\]
we can get 
\begin{align*}
|B_{11}| & \leq2C_{5}^{-1}\varepsilon^{-2}\\
|B_{22}| & \leq2\exp(C(\tau_{0})(1+C_{1}+C_{2}+C_{3}))\varepsilon\\
|B_{12}| & \leq2K_{1}C_{5}^{-1}\exp(C(\tau_{0})(1+C_{1}+C_{2}+C_{3}))\\
|B_{21}| & \leq2K_{2}C_{5}^{-1}\exp(C(\tau_{0})(1+C_{1}+C_{2}+C_{3})).
\end{align*}
So we know for some $C=C(\tau_{0},C_{1},C_{2},C_{3})$ 
\[
\begin{cases}
|\omega_{\xi_{2},\mu_{2}}-\omega_{\xi_{1},\mu_{1}}| & \leq\frac{C}{\varepsilon}(\varepsilon\|\mu_{2}-\mu_{1}\|_{C_{\varepsilon}^{\alpha}}+\|\xi_{2}-\xi_{1}\|_{C_{x_{0}}^{1}})\\
|\phi(0)_{\xi_{2},\mu_{2}}-\phi(0)_{\xi_{1},\mu_{1}}| & \leq C(\varepsilon^{2}\|\mu_{2}-\mu_{1}\|_{C_{\varepsilon}^{\alpha}}+\varepsilon\|\xi_{2}-\xi_{1}\|_{C_{x_{0}}^{1}})
\end{cases}
\]
So from (\ref{beta omega estimate}) and Lemma \ref{estimate for the fundamental solution}
we have 
\begin{align*}
 & |\phi_{\xi_{2},\mu_{2},\omega_{\xi_{2},\mu_{2}},\tau(0)_{\xi_{2},\mu_{2}}}(\psi)-\phi_{\xi_{1},\mu_{1},\omega_{\xi_{1},\mu_{1}},\tau(0)_{\xi_{1},\mu_{1}}}(\psi)|\\
 & +|\zeta_{\xi_{2},\mu_{2},\omega_{\xi_{2},\mu_{2}},\tau(0)_{\xi_{2},\mu_{2}}}(\psi)-\zeta_{\xi_{1},\mu_{1},\omega_{\xi_{1},\mu_{1}},\tau(0)_{\xi_{1},\mu_{1}}}(\psi)|\\
\leq & C(\varepsilon\|\mu_{2}-\mu_{1}\|_{C_{\varepsilon}^{\alpha}}+\|\xi_{2}-\xi_{1}\|_{C_{x_{0}}^{1}}).
\end{align*}
And from (\ref{0th mode ODE})
\begin{align*}
 & \|\phi_{\xi_{2},\mu_{2},\omega_{\xi_{2},\mu_{2}},\tau(0)_{\xi_{2},\mu_{2}}}(\psi)-\phi_{\xi_{1},\mu_{1},\omega_{\xi_{1},\mu_{1}},\tau(0)_{\xi_{1},\mu_{1}}}(\psi)\|_{C_{\varepsilon}^{2,\alpha}}\\
\leq & C(\varepsilon\|\mu_{2}-\mu_{1}\|_{C_{\varepsilon}^{\alpha}}+\|\xi_{2}-\xi_{1}\|_{C_{x_{0}}^{1}}).
\end{align*}
At last note $C_{3}=C_{3}(\tau_{0},C_{1},C_{2})$. So $C(\tau_{0},C_{1},C_{2},C_{3})=C(\tau_{0},C_{1},C_{2}).$ 

\section{\label{sec:The-existence-of}The existence of CMC surfaces }

\subsection{\label{subsec:A-fixed-point}A fixed point argument}

In $0$th mode, if we prescribe $\xi\in C_{x_{0}}^{1}$, $\mu\in C_{\varepsilon}^{\alpha}$,
$\omega_{\xi,\mu}$ and $\phi(0)_{\xi,\mu}$ in $\rho$, we can get
$\phi_{\xi,\mu,\omega_{\xi,\mu},\phi(0)_{\xi,\mu}}$, which we denote
by $\phi_{\xi,\mu}$ for short. We define parameter $s$ using $\dot{\psi}=\frac{d\psi}{ds}=\phi_{\xi,\mu}^{2}+\tau_{\xi,\mu}$
where $\tau_{\xi,\mu}=-\phi_{\xi,\mu}^{2}+\frac{\phi_{\xi,\mu}}{\sqrt{1+\zeta_{\xi,\mu}^{2}}}$.
From (\ref{0th mode ODE}) we know
\[
\dot{\phi}_{\xi,\mu}^{2}+(\phi_{\xi,\mu}^{2}+\tau_{\xi,\mu})^{2}=\phi_{\xi,\mu}^{2}.
\]
And further we have 
\begin{align*}
\ddot{\phi}_{\xi,\mu} & =\phi_{\xi,\mu}-(2+\rho)\phi_{\xi,\mu}(\phi_{\xi,\mu}^{2}+\tau_{\xi,\mu}),\\
\ddot{\psi}_{\xi,\mu} & =\phi_{\xi,\mu}\dot{\phi}_{\xi,\mu}(2+\rho).
\end{align*}
Follow the calculations of mean curvature in Appendix \ref{(APP)The-calculation-of-mean curvature}
and we can get $H(\mathcal{D}_{\phi_{\xi,\mu},p_{0},\varepsilon}(w,\eta))$.
We substitute 
\[
\frac{\dot{\phi}_{\xi,\mu}\ddot{\psi}}{\phi_{\xi,\mu}}-\frac{\ddot{\phi}_{\xi,\mu}\dot{\psi}}{\phi_{\xi,\mu}}=\phi_{\xi,\mu}^{2}-\tau_{\xi,\mu}+\phi_{\xi,\mu}^{2}\rho
\]
for (\ref{one key calculation}).

\begin{Def}\label{new L Q E}

In the following, $L_{\xi,\mu}(\tilde{w},\eta)$ denotes any expression
which is linear differential operator (of order at most $2$), which
satisfies
\begin{align*}
\|L_{\xi,\mu}(\tilde{w},\eta)\|_{C_{\varepsilon}^{\alpha}}\leq & C(\|\tilde{w}\|_{C_{\varepsilon}^{2,\alpha}(SN\Gamma)}+\|\eta\|_{C_{x_{0},\varepsilon}^{2,\alpha}(\Gamma,N\Gamma)}),\\
\|L_{\xi_{1},\mu_{1}}(\tilde{w},\eta)-L_{\xi_{2},\mu_{2}}(\tilde{w},\eta)\|_{C_{\varepsilon}^{\alpha}}\leq & C(\varepsilon\|\mu_{1}-\mu_{2}\|_{C_{\varepsilon}^{\alpha}}+\|\xi_{1}-\xi_{2}\|_{C_{x_{0}}^{1}})\\
 & \times(\|\tilde{w}\|_{C_{\varepsilon}^{2,\alpha}(SN\Gamma)}+\|\eta\|_{C_{x_{0},\varepsilon}^{2,\alpha}(\Gamma,N\Gamma)}).
\end{align*}
$Q_{\xi,\mu}(\tilde{w},\eta)$ denotes any nonlinear differential
operator (of order less than or equal to $2$ ) in $\tilde{w}$ and
$\eta$ which vanishes quadratically in the pair $(\tilde{w},\eta)$
and such that 
\begin{align*}
\|Q_{\xi,\mu}(\tilde{w}_{1},\eta_{1})-Q_{\xi,\mu}(\tilde{w}_{2},\eta_{2})\|_{C_{\varepsilon}^{\alpha}}\leq & C\sup_{i=1,2}(\|\tilde{w}_{i}\|_{C_{\varepsilon}^{2,\alpha}(SN\Gamma)}+\|\eta_{i}\|_{C_{x_{0},\varepsilon}^{2,\alpha}(\Gamma,N\Gamma)})\\
 & \times(\|\tilde{w}_{1}-\tilde{w}_{2}\|_{C_{\varepsilon}^{2,\alpha}(SN\Gamma)}+\|\eta_{1}-\eta_{2}\|_{C_{x_{0},\varepsilon}^{2,\alpha}(\Gamma,N\Gamma)}),\\
\|Q_{\xi_{1},\mu_{1}}(\tilde{w},\eta)-Q_{\xi_{2},\mu_{2}}(\tilde{w},\eta)\|_{C_{\varepsilon}^{\alpha}}\leq & C(\varepsilon\|\mu_{1}-\mu_{2}\|_{C_{\varepsilon}^{\alpha}}+\|\xi_{1}-\xi_{2}\|_{C_{x_{0}}^{1}})\\
 & \times(\|\tilde{w}\|_{C_{\varepsilon}^{2,\alpha}(SN\Gamma)}^{2}+\|\eta\|_{C_{x_{0},\varepsilon}^{2,\alpha}(\Gamma,N\Gamma)}^{2}).
\end{align*}
Finally $E_{\xi,\mu}=E(\phi_{\xi,\mu},\frac{\partial\phi_{\xi,\mu}}{\partial\psi},\psi)$,
where $E$ is defined in Definition \ref{L,Q,E,def}. For $E_{\xi,\mu}$,
we have 
\[
\|E_{\xi_{1},\mu_{1}}-E_{\xi_{2},\mu_{2}}\|\leq C(\varepsilon\|\mu_{1}-\mu_{2}\|_{C_{\varepsilon}^{\alpha}}+\|\xi_{1}-\xi_{2}\|_{C_{x_{0}}^{1}}).
\]

\end{Def}

Note that each time when $\ddot{\phi},\ddot{\psi}$ appear, one term
$\rho$ comes out. Then, letting $\zeta_{\xi,\mu}=\partial_{\psi}\phi_{\xi,\mu}$,
we have, 

\begin{eqnarray}
H(\mathcal{D}_{\phi_{\xi,\mu},p_{0},\varepsilon}(\tilde{w},\eta)) & = & \frac{2}{\varepsilon}+\frac{\rho}{\varepsilon}+\frac{1}{\varepsilon}\tilde{\mathcal{L}}_{\xi,\mu}\tilde{w}+<\mathcal{J}_{\xi,\mu}\eta,\Upsilon>\nonumber \\
 &  & +\varepsilon(F_{1}(\phi_{\xi,\mu},\zeta_{\xi,\mu})\star R_{1}+F_{2}(\phi_{\xi,\mu},\zeta_{\xi,\mu})\star R_{2})\nonumber \\
 &  & +(\varepsilon^{2}+\varepsilon\rho)E_{\xi,\mu}+F_{3}(\phi_{\xi,\mu},\zeta_{\xi,\mu})\star R_{3}(\eta)\label{Perturbed mean curvature}\\
 &  & +(\varepsilon+\varepsilon^{-1}\rho)L_{\xi,\mu}(\tilde{w},\eta)+\varepsilon^{-1}(1+\rho)Q_{\xi,\mu}(\tilde{w},\eta)\nonumber 
\end{eqnarray}
 where 
\begin{eqnarray*}
\tilde{\mathcal{L}}_{\xi,\mu}\tilde{w} & = & -\frac{\dot{\psi}_{\xi,\mu}}{\phi_{\xi,\mu}^{3}}(\frac{\partial^{2}}{\partial s^{2}}+\frac{\partial^{2}}{\partial\theta^{2}})\tilde{w}-(\phi_{\xi,\mu}^{2}-\tau_{\xi,\mu})\frac{\dot{\phi}_{\xi,\mu}}{\phi_{\xi,\mu}^{4}}\frac{\partial w}{\partial s}-\frac{\dot{\psi}_{\xi,\mu}}{\phi_{\xi,\mu}^{3}}\tilde{w},\\
\mathcal{J}_{\xi,\mu}\eta & = & -\frac{\dot{\psi}_{\xi,\mu}^{3}}{\phi_{\xi,\mu}^{3}}\frac{\partial^{2}\eta}{(\partial x_{0})^{2}}-\frac{1}{\varepsilon}(\frac{\dot{\psi}_{\xi,\mu}\ddot{\psi}_{\xi,\mu}}{\phi_{\xi,\mu}^{3}}+2(\phi_{\xi,\mu}^{2}-\tau_{\xi,\mu})\frac{\dot{\phi}_{\xi,\mu}\dot{\psi}_{\xi,\mu}}{\phi_{\xi,\mu}^{4}})\frac{\partial\eta}{\partial x_{0}}\\
 &  & -\phi_{\xi,\mu}^{-2}(2\dot{\phi}_{\xi,\mu}\ddot{\psi}_{\xi,\mu}+2\frac{\dot{\phi}_{\xi,\mu}^{2}\dot{\psi}_{\xi,\mu}}{\phi_{\xi,\mu}}+\frac{\dot{\psi}_{\xi,\mu}^{3}}{\phi_{\xi,\mu}}\\
 &  & -2\frac{\dot{\psi}_{\xi,\mu}^{2}}{\phi_{\xi,\mu}}(\phi_{\xi,\mu}^{2}-\tau_{\xi,\mu})-2\phi_{\xi,\mu}\dot{\phi}_{\xi,\mu}^{2})R(\eta,X_{0})X_{0}.
\end{eqnarray*}

\begin{thm}\label{Perturbed high mode and 1st mode}
\begin{enumerate}
\item The operator 
\[
\tilde{\mathcal{L}}_{\xi,\mu}:\tilde{\Pi}C_{\varepsilon}^{2,\alpha}(SN\Gamma)\rightarrow\tilde{\Pi}C_{\varepsilon}^{0,\alpha}(SN\Gamma)
\]
 is invertible and for some uniform $C$ 
\[
\|\tilde{w}\|_{C_{\varepsilon}^{2,\alpha}}\leq C\|\tilde{\mathcal{L}}_{\xi,\mu}\tilde{w}\|_{C_{\varepsilon}^{0,\alpha}}.
\]
\item The operator
\[
\mathcal{J}_{\xi,\mu}:C_{x_{0},\varepsilon}^{2,\alpha}(\Gamma,N\Gamma)\rightarrow C_{\varepsilon}^{\alpha}(\Gamma,N\Gamma)
\]
 is invertible and for some uniform $C$ 
\begin{equation}
\|\eta\|_{C_{x_{0},\varepsilon}^{2,\alpha}}\leq C\|\mathcal{J}_{\xi,\mu}\eta\|_{C_{\varepsilon}^{\alpha}}.\label{J(xi,eta) estimate}
\end{equation}
\item If 
\[
\mathcal{J}_{\xi,\mu}\hat{\eta}=-\frac{2}{3}\varepsilon\dot{\phi}_{\xi,\mu}P_{1}(R(\Upsilon_{\theta},X_{0},\Upsilon,\Upsilon_{\theta})),
\]
we have 
\[
\|\hat{\eta}\|_{C_{x_{0},\varepsilon}^{2,\alpha}}\leq C\varepsilon^{2}.
\]
\end{enumerate}
\end{thm}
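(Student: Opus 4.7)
The plan is to realize both $\tilde{\mathcal{L}}_{\xi,\mu}$ and $\mathcal{J}_{\xi,\mu}$ as controlled perturbations of the operators $\tilde{\mathcal{L}}$ and $\mathcal{J}$ analyzed in Sections 3.2 and 3.3. The two key inputs are Corollary \ref{phi(xi,mu) estimate}, which gives $\|\phi_{\xi,\mu}-\phi_{\tau_{0}}\|_{C_{\varepsilon}^{2,\alpha}}\le C\varepsilon$, and Corollary \ref{Phi-global}, which provides a diffeomorphism $\tilde{\Phi}$ identifying periods of $\phi_{\xi,\mu}$ with those of $\phi_{\tau_{0}}$ up to errors $|\tilde{\Phi}-\mathrm{id}|\le C\varepsilon$ and $|\tilde{\Phi}'-1|\le C\varepsilon^{2}$. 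Together with $\dot{\psi}_{\xi,\mu}=\phi_{\xi,\mu}^{2}+\tau_{\xi,\mu}$, $\ddot{\psi}_{\xi,\mu}=(2+\rho)\phi_{\xi,\mu}\dot{\phi}_{\xi,\mu}$ and $|\tau_{\xi,\mu}-\tau_{0}|\le C\varepsilon^{2}$, every coefficient appearing in either operator is $O(\varepsilon)$-close in $C_{\varepsilon}^{0,\alpha}$ to its counterpart on the background Delaunay surface.

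For item 1 the plan is to repeat the coercivity argument of Section 3.2 verbatim, with $\phi_{\xi,\mu}$ in place of $\phi$. Rewriting
\[
\tilde{\mathcal{L}}_{\xi,\mu}\tilde{w}=-\frac{\phi_{\xi,\mu}}{\dot{\psi}_{\xi,\mu}^{3}}\Bigl(\frac{\dot{\psi}_{\xi,\mu}^{3}}{\phi_{\xi,\mu}^{2}}\partial_{\psi}\bigl(\frac{\dot{\psi}_{\xi,\mu}^{3}}{\phi_{\xi,\mu}^{2}}\partial_{\psi}\tilde{w}\bigr)+\frac{\dot{\psi}_{\xi,\mu}^{4}}{\phi_{\xi,\mu}^{4}}\tilde{w}+\frac{\dot{\psi}_{\xi,\mu}^{4}}{\phi_{\xi,\mu}^{4}}\partial_{\theta}^{2}\tilde{w}\Bigr),
\]
the associated bilinear form differs from the unperturbed one by an $O(\varepsilon)$-multiple of the $W_{\varepsilon}^{1,2}$ norm, and the Poincaré-type bound on each fibre (controlling $\|\tilde{w}\|_{L^{2}(S_{p}^{1})}$ by $\|\partial_{\theta}\tilde{w}\|_{L^{2}(S_{p}^{1})}$) is independent of $\xi,\mu$. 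Hence $B_{\xi,\mu}$ remains coercive for small $\varepsilon$, and Lax--Milgram together with Schauder regularity gives the claimed uniform $C_{\varepsilon}^{2,\alpha}$ estimate.

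For item 2 one cannot argue by naive perturbation because the first-order term in $\mathcal{J}_{\xi,\mu}$ carries a factor $\varepsilon^{-1}$. Instead I would re-run the proof of the main theorem of Section 3.3 for $\phi_{\xi,\mu}$: define $y_{0}^{\xi,\mu}$ by $dy_{0}=\phi_{\xi,\mu}^{2}\dot{\psi}_{\xi,\mu}^{-3}dx_{0}$, absorb the large first-order term into $-\partial_{y_{0}}^{2}\eta$, and introduce the perturbed averages $I_{1}^{\xi,\mu},I_{2}^{\xi,\mu}$. The essential point is that the exact identity $I_{1}^{2}I_{2}=1$ of Lemma \ref{average 1 lemma} holds on the background Delaunay profile $\phi_{\tau_{0}}$, so by Corollary \ref{F(phi)-F(phi0)} the analogous products for $\phi_{\xi,\mu}$ satisfy $I_{1}^{\xi,\mu,2}I_{2}^{\xi,\mu}=1+O(\varepsilon)$. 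Running the iteration $\tilde{\mathcal{J}}_{A}(\eta_{i+1}-\eta_{i})=(\Psi_{1}(\phi_{\xi,\mu},\zeta_{\xi,\mu})-I_{2}^{\xi,\mu})R(\eta_{i}-\eta_{i-1},X_{0})X_{0}+O(\varepsilon)L(\eta_{i}-\eta_{i-1})$, the oscillating factor has mean zero over each period of $\phi_{\xi,\mu}$ up to a residue of size $\varepsilon$, so Lemma \ref{better estimate in 1st mode} applied to the primary part plus a crude estimate on the $O(\varepsilon)$ residue yields $\|\eta_{i+1}-\eta_{i}\|_{C_{y_{0}}^{1}}\le C\varepsilon\|\eta_{i}-\eta_{i-1}\|_{C_{y_{0}}^{1}}$. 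Taking $\varepsilon$ small, the Neumann series converges and gives \eqref{J(xi,eta) estimate}.

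Item 3 then follows from item 2 applied to the explicit right-hand side $f=-\tfrac{2}{3}\varepsilon\dot{\phi}_{\xi,\mu}P_{1}(R(\Upsilon_{\theta},X_{0},\Upsilon,\Upsilon_{\theta}))$, using once more the refined estimate of Lemma \ref{better estimate in 1st mode}: the factor $\dot{\phi}_{\xi,\mu}/\phi_{\xi,\mu}$ has vanishing mean over each period of $\phi_{\tau_{0}}$ and hence, by Corollary \ref{F(phi)-F(phi0)}, almost-vanishing mean over each period of $\phi_{\xi,\mu}$, contributing an extra factor of $\varepsilon$ and producing the claimed bound $\|\hat{\eta}\|_{C_{x_{0},\varepsilon}^{2,\alpha}}\le C\varepsilon^{2}$. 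The only genuinely delicate step is item 2: the main obstacle is verifying that the exact "average 1" cancellation, previously used in identity form on the standard Delaunay profile, degrades only to an $O(\varepsilon)$ residue on $\phi_{\xi,\mu}$—precisely the regime in which Corollaries \ref{phi(xi,mu) estimate}, \ref{Phi-global} and \ref{F(phi)-F(phi0)} are designed to operate.
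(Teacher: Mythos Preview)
Your proposal is correct and follows essentially the same strategy as the paper: repeat the Lax--Milgram/coercivity argument verbatim for item 1, and for items 2 and 3 redo the change of variable $dy_{0}=\phi_{\xi,\mu}^{2}\dot{\psi}_{\xi,\mu}^{-3}\,dx_{0}$, compare the perturbed averages to $I_{1},I_{2}$ via Corollaries \ref{Phi-global} and \ref{F(phi)-F(phi0)}, split off the small constant so that Lemma \ref{better estimate in 1st mode} applies to the zero-mean remainder, and run the same Neumann iteration. The only refinement in the paper is that it records the sharper bounds $I_{1}'=I_{1}+O(\varepsilon^{2})$, $I_{2}'=I_{2}+O(\varepsilon^{2})$ and $\chi(L_{\Gamma})=O(\varepsilon^{2})$ coming from Corollary \ref{Phi-global}, whereas you only claim $O(\varepsilon)$; either estimate is enough to close the contraction.
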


\begin{proof}For the first item, consider the bilinear functional
on the space $\tilde{\Pi}W_{\varepsilon}^{1,2}(SN\Gamma)$ 
\[
B_{\xi,\mu}(w,v)=\int_{SN\Gamma}(v(\phi_{\xi,\mu}\tilde{\mathcal{L}}_{\xi,\mu})w)d\theta d\psi.
\]
By using exactly the same argument as used in Subsection \ref{subsec:High-mode},
we can prove the results.

For the second and third item, consider $\frac{\dot{\psi}_{\xi,\mu}^{3}}{\phi_{\xi,\mu}}\mathcal{J}_{\xi,\mu}\eta$.
Let $\frac{\partial}{\partial y_{0}}=\frac{\dot{\psi}_{\xi,\mu}^{3}}{\phi_{\xi,\mu}^{2}}\frac{\partial}{\partial x_{0}}$.
From Corollary \ref{Phi-global}, we know $y_{0}(L_{\Gamma})=I_{1}'L_{\Gamma},I_{1}'=I_{1}+O(\varepsilon^{2}).$
Now we define $\tilde{y}=(I_{1}')^{-1}y_{0}$ and define $\mathfrak{F}$
and $P_{p}^{\mathfrak{F}(p)}$ as in the paragraph before Lemma \ref{eta estimate}.
\begin{align*}
\frac{\dot{\psi}_{\xi,\mu}^{3}}{\phi_{\xi,\mu}}\mathcal{J}_{\xi,\mu}\eta(q)= & -\frac{\partial^{2}\eta}{\partial y_{0}^{2}}(q)-\Psi_{1}(\phi_{\xi,\mu},\frac{\partial\phi_{\xi,\mu}}{\partial\psi})R|_{q}(\eta(q),X_{0})X_{0}+\frac{1}{\varepsilon}\rho L(\eta)\\
= & \tilde{\mathcal{J}}_{A}'\eta+(I_{2}'-\Psi_{1}|_{q})(P_{\mathfrak{F}^{-1}(q)}^{q}R)(\eta(q),X_{0})X_{0}\\
 & +\Psi_{1}|_{x_{0}}(P_{\mathfrak{F}^{-1}(q)}^{q}R-R|_{q})(\eta(q),X_{0})X_{0}+\frac{1}{\varepsilon}\rho L(\eta)\\
= & \tilde{\mathcal{J}}_{A}'\eta+(I_{2}'-\Psi_{1}|_{q})(P_{\mathfrak{F}^{-1}(q)}^{q}R)(\eta(q),X_{0})X_{0}+\varepsilon L(\eta),
\end{align*}
where $I_{2}'=I_{2}+O(\varepsilon^{2})$ and $I_{1}'^{2}I_{2}'=1$.
$I_{1}'^{2}\tilde{\mathcal{J}}_{A}'$ is conjugate to $\mathcal{J}_{A}$.
The rest of the proof is similar to Theorem \ref{estimate on fundamental solution of 1st mode}.
However, the primitive of $(I_{2}'-\Psi_{1})$ and $\frac{\dot{\psi}_{\xi,\mu}^{3}}{\phi_{\xi,\mu}}\dot{\phi}_{\xi,\mu}$
may not be global smooth function on the geodesic. Nevertheless, from
Corollary \ref{F(phi)-F(phi0)}, we know 
\begin{align*}
\chi(y_{1}) & =\int_{0}^{y_{1}}(I_{2}'-\Psi_{1})dy_{0}=O(\varepsilon),\\
\chi(L_{\Gamma}) & =O(\varepsilon^{2}).
\end{align*}
From the proof of Lemma \ref{better estimate in 1st mode}, by noticing
\[
I_{2}'-\Psi_{1}=(I_{2}'-\Psi_{1}-\frac{\chi(L_{\Gamma})}{L_{\Gamma}})+\frac{\chi(L_{\Gamma})}{L_{\Gamma}},
\]
 we can prove the second item. For $\frac{\dot{\psi}_{\xi,\mu}^{3}}{\phi_{\xi,\mu}}\dot{\phi}_{\xi,\mu}$
we can do the same thing. So all the argument in Theorem \ref{estimate on fundamental solution of 1st mode}
works here and we can prove this theorem.

\end{proof}

Consider
\begin{equation}
\begin{cases}
\frac{1}{\varepsilon}\tilde{\mathcal{L}}_{\xi,\mu}\tilde{w}= & -\Pi_{0}(\varepsilon(F_{1}(\phi_{\xi,\mu},\zeta_{\xi,\mu})\star R_{1}+F_{2}(\phi_{\xi,\mu},\zeta_{\xi,\mu})\star R_{2})+(\varepsilon^{2}+\varepsilon\rho)E_{\xi,\mu}\\
 & +F_{3}(\phi_{\xi,\mu},\zeta_{\xi,\mu})\star R_{3}(\eta)+(\varepsilon+\varepsilon^{-1}\rho)L_{\xi,\mu}(\tilde{w},\eta)+\varepsilon^{-1}(1+\rho)Q_{\xi,\mu}(\tilde{w},\eta)),\\
\mathcal{J}_{\xi,\mu}\eta= & -P_{1}(\varepsilon F_{2}(\phi_{\xi,\mu},\zeta_{\xi,\mu})\star R_{2})+(\varepsilon^{2}+\varepsilon\rho)E_{\xi,\mu}+(\varepsilon+\varepsilon^{-1}\rho)L_{\xi,\mu}(\tilde{w},\eta)\\
 & +\varepsilon^{-1}(1+\rho)Q_{\xi,\mu}(\tilde{w},\eta)).
\end{cases}\label{perturbed high and 1st mode}
\end{equation}
From Theorem \ref{Perturbed high mode and 1st mode} and the properties
of $L_{\xi,\mu},Q_{\xi,\mu},E_{\xi,\mu}$ in Definition \ref{new L Q E},
we can solve this system and get $(\tilde{w}_{\xi,\mu},\eta_{\xi,\mu})$
which satisfies
\[
\|\tilde{w}_{\xi,\mu}\|_{C_{\varepsilon}^{2,\alpha}}+\|\eta_{\xi,\mu}\|_{C_{x_{0},\varepsilon}^{2,\alpha}}\leq C\varepsilon^{2}.
\]

Moreover, from Theorem \ref{0th mode nonlinear ode solution}, Theorem
\ref{Perturbed high mode and 1st mode}, Definition \ref{new L Q E},
we can prove

\begin{lem}\label{Delta mu delta xi influence on w and eta}
\[
\begin{cases}
\|\tilde{w}_{\xi+\Delta\xi,\mu+\Delta\mu}-\tilde{w}_{\xi,\mu}\|_{C_{\varepsilon}^{2,\alpha}} & \leq C\varepsilon^{2}(\varepsilon\|\Delta\mu\|_{C_{\varepsilon}^{\alpha}}+\|\Delta\xi\|_{C_{x_{0}}^{1}}),\\
\|\eta_{\xi+\Delta\xi,\mu+\Delta\mu}-\eta_{\xi,\mu}\|_{C_{x_{0},\varepsilon}^{2,\alpha}} & \leq C\varepsilon(\varepsilon\|\Delta\mu\|_{C_{\varepsilon}^{\alpha}}+\|\Delta\xi\|_{C_{x_{0}}^{1}}).
\end{cases}
\]

\end{lem}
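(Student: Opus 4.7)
The plan is to read off from (\ref{Perturbed mean curvature}) that $(\tilde w_{\xi,\mu},\eta_{\xi,\mu})$ is the fixed point of the coupled system obtained by projecting $H(\mathcal D_{\phi_{\xi,\mu},p_0,\varepsilon}(\tilde w,\eta))-2/\varepsilon=0$ onto high and first modes,
\begin{align*}
\tilde{\mathcal L}_{\xi,\mu}\tilde w &= -\varepsilon\,\tilde\Pi\!\bigl[\varepsilon(F_1\star R_1+F_2\star R_2)+E_{\xi,\mu}+F_3\star R_3(\eta)+T_{\xi,\mu}(\tilde w,\eta)+\cdots\bigr],\\
\langle\mathcal J_{\xi,\mu}\eta,\Upsilon\rangle &= -\Pi_1\!\bigl[\varepsilon(F_1\star R_1+F_2\star R_2)+E_{\xi,\mu}+F_3\star R_3(\eta)+T_{\xi,\mu}(\tilde w,\eta)+\cdots\bigr];
\end{align*}
the $\rho/\varepsilon$ term is purely zeroth mode and drops out. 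I would then subtract the systems for $(\xi_1,\mu_1)$ and $(\xi_2,\mu_2)$ with solutions $(\tilde w_i,\eta_i)$, so that $\Delta\tilde w=\tilde w_2-\tilde w_1$ and $\Delta\eta=\eta_2-\eta_1$ solve
\begin{align*}
\tilde{\mathcal L}_{\xi_1,\mu_1}\Delta\tilde w &= (\tilde{\mathcal L}_{\xi_1,\mu_1}-\tilde{\mathcal L}_{\xi_2,\mu_2})\tilde w_2+\varepsilon\,\tilde\Pi[\Delta(\mathrm{RHS})],\\
\langle\mathcal J_{\xi_1,\mu_1}\Delta\eta,\Upsilon\rangle &= \langle(\mathcal J_{\xi_1,\mu_1}-\mathcal J_{\xi_2,\mu_2})\eta_2,\Upsilon\rangle+\Pi_1[\Delta(\mathrm{RHS})].
\end{align*}

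Next, the coefficients of $\tilde{\mathcal L}_{\xi,\mu}$, $\mathcal J_{\xi,\mu}$ and the tail functions $F_1,F_2,F_3,E_{\xi,\mu}$ are smooth in $(\phi_{\xi,\mu},\dot\phi_{\xi,\mu})$, so their variations are controlled by $\|\Delta\phi_{\xi,\mu}\|_{C^{2,\alpha}_\varepsilon}$ with a uniform constant. Theorem \ref{0th mode nonlinear ode solution} supplies
$$\|\Delta\phi_{\xi,\mu}\|_{C^{2,\alpha}_\varepsilon}\leq C(\varepsilon\|\Delta\mu\|_{C^\alpha_\varepsilon}+\|\Delta\xi\|_{C^1_{x_0}}).$$
Combining this with the a priori bounds $\|\tilde w_i\|_{C^{2,\alpha}_\varepsilon}+\|\eta_i\|_{C^{2,\alpha}_{x_0,\varepsilon}}\leq C\varepsilon^2$ and with Lemma \ref{L and Q estimates} for the $L^j_{\xi,\mu},Q^j_{\xi,\mu}$ pieces of $T_{\xi,\mu}$, each source term is bounded by $C\varepsilon^2(\varepsilon\|\Delta\mu\|_{C^\alpha_\varepsilon}+\|\Delta\xi\|_{C^1_{x_0}})$ plus a small linear coupling $C\varepsilon(\|\Delta\tilde w\|_{C^{2,\alpha}_\varepsilon}+\|\Delta\eta\|_{C^{2,\alpha}_{x_0,\varepsilon}})$ in the unknowns.

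Applying the uniform invertibility from Theorem \ref{Perturbed high mode and 1st mode} and absorbing the small linear coupling on the left for $\varepsilon$ small then yields the stated bounds. The asymmetry in the conclusion arises because $\tilde{\mathcal L}_{\xi,\mu}\tilde w$ equals $\varepsilon$ times the projected source (the $1/\varepsilon$ prefactor in (\ref{Perturbed mean curvature}) supplies the extra $\varepsilon$), whereas $\langle\mathcal J_{\xi,\mu}\eta,\Upsilon\rangle$ matches the source directly, which explains the $\varepsilon^2$ versus $\varepsilon$ Lipschitz constants. The main obstacle I anticipate is the $\varepsilon^{-1}Q^1_{\xi,\mu}(\tilde w,\eta)$ term inside $T_{\xi,\mu}$: its Lipschitz contribution is a coupling of order $\varepsilon^{-1}(\|\tilde w_i\|+\|\eta_i\|)\leq C\varepsilon$ after the a priori bounds---just small enough to be absorbed, but binding $\Delta\tilde w$ and $\Delta\eta$ together, so both estimates must be closed simultaneously in the product norm rather than one after the other.
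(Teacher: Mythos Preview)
Your proposal is correct and follows exactly the approach the paper indicates: the paper itself gives no detailed proof of this lemma, saying only that it follows ``from Theorem \ref{0th mode nonlinear ode solution}, Theorem \ref{Perturbed high mode and 1st mode}, Lemma \ref{L and Q estimates}'', which is precisely the subtraction-and-absorption argument you spell out. Your identification of the $\varepsilon^{-1}Q^1_{\xi,\mu}$ term as the binding coupling and your explanation of the $\varepsilon^2$ versus $\varepsilon$ asymmetry from the $\frac{1}{\varepsilon}\tilde{\mathcal L}_{\xi,\mu}\tilde w$ prefactor in (\ref{Perturbed mean curvature}) are both accurate.
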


One can calculate
\begin{eqnarray*}
 &  & H(\mathcal{D}_{\phi_{\xi,\mu},p_{0},\varepsilon}(\tilde{w}_{\xi,\mu},\eta_{\xi,\mu}))-\frac{2}{\varepsilon}=\Pi_{0}(H(\mathcal{D}_{\phi_{\xi,\mu},p_{0},\varepsilon}(\tilde{w}_{\xi,\mu},\eta_{\xi,\mu}))-\frac{2}{\varepsilon})\\
 & = & F_{4}(\phi_{\xi,\mu},\zeta_{\xi,\mu})(\Pi_{0}(R(\Upsilon,X_{0},\eta_{\xi,\mu},\Upsilon))+\xi(x_{0}))+\varepsilon^{2}\mu+\Pi_{0}((\varepsilon^{2}+\varepsilon\rho)E_{\xi,\mu}\\
 &  & +(\varepsilon+\varepsilon^{-1}\rho)L_{\xi,\mu}(\tilde{w},\eta)+\varepsilon^{-1}(1+\rho)Q_{\xi,\mu}(\tilde{w},\eta))+\varepsilon^{2}\omega_{\xi,\mu}\phi_{\xi,\mu}^{-1}\frac{d\phi_{\xi,\mu}}{d\psi}.
\end{eqnarray*}

Now we define a map 
\begin{align*}
\Omega:C_{x_{0}}^{1}\times C_{\varepsilon}^{\alpha} & \rightarrow C_{x_{0}}^{1}\times C_{\varepsilon}^{\alpha}\\
(\xi,\mu) & \mapsto(\Omega^{1}(\xi,\mu),\Omega^{2}(\xi,\mu))
\end{align*}
where
\begin{equation}
\begin{cases}
\Omega^{1}(\xi,\mu)= & -\Pi_{0}(R(\Upsilon,X_{0},\eta_{\xi,\mu},\Upsilon))\\
\Omega^{2}(\xi,\mu)= & -\varepsilon^{-2}\Pi_{0}((\varepsilon^{2}+\varepsilon\rho)E_{\xi,\mu}+(\varepsilon+\varepsilon^{-1}\rho)L_{\xi,\mu}(\tilde{w},\eta)\\
 & +\varepsilon^{-1}(1+\rho)Q_{\xi,\mu}(\tilde{w},\eta)).
\end{cases}\label{Omega expression}
\end{equation}

\begin{lem}\label{fixed point argument}For fixed $C_{1},C_{2}$
if $\|\xi\|_{C_{x_{0}}^{1}}\leq C_{1}\varepsilon^{2},\|\mu\|_{C_{\varepsilon}^{\alpha}}\leq C_{2}$,
there is $C>0$ which does not depend on $\varepsilon$ such that
\begin{eqnarray}
 &  & \|(\Omega^{1}(\xi_{1},\mu_{1}),\Omega^{2}(\xi_{1},\mu_{1}))-(\Omega^{1}(\xi_{2},\mu_{2}),\Omega^{2}(\xi_{2},\mu_{2}))\|_{\varepsilon,\alpha}\nonumber \\
 & \leq & C\varepsilon\|(\xi_{1}-\xi_{2},\mu_{1}-\mu_{2})\|_{\varepsilon,\alpha}\label{Omega estimate}
\end{eqnarray}
where the $\|(\cdot,\cdot)\|_{\varepsilon,\alpha}$ norm is defined
in Subsection \ref{2.1.3}.

\end{lem}

\begin{proof}

\begin{align*}
 & \|(\Omega^{1}(\xi_{1},\mu_{1}),\Omega^{2}(\xi_{1},\mu_{1}))-(\Omega^{1}(\xi_{2},\mu_{2}),\Omega^{2}(\xi_{2},\mu_{2}))\|_{\varepsilon,\alpha}\\
\leq & \|\Omega^{1}(\xi_{1},\mu_{1})-\Omega^{1}(\xi_{2},\mu_{2})\|_{C_{x_{0}}^{1}}+\varepsilon\|\Omega^{2}(\xi_{1},\mu_{1})-\Omega^{2}(\xi_{2},\mu_{2})\|_{C_{\varepsilon}^{\alpha}}.
\end{align*}
From Lemma \ref{Delta mu delta xi influence on w and eta}, 
\begin{align*}
\|\Omega^{1}(\xi_{1},\mu_{1})-\Omega^{1}(\xi_{2},\mu_{2})\|_{C_{x_{0}}^{1}} & \leq C\|\eta_{\xi_{1},\mu_{1}}-\eta_{\xi_{2},\mu_{2}}\|_{C_{x_{0}}^{1}}\\
 & \leq C\varepsilon\|(\xi_{1}-\xi_{2},\mu_{1}-\mu_{2})\|_{\varepsilon,\alpha}.
\end{align*}
Notice that $\rho=\rho_{\xi,\mu}=O(\varepsilon^{2})$. From Definition
\ref{new L Q E} and Lemma \ref{Delta mu delta xi influence on w and eta},
we have 
\begin{align*}
\|\Omega^{2}(\xi_{1},\mu_{1})-\Omega^{2}(\xi_{2},\mu_{2})\|_{C_{\varepsilon}^{\alpha}}\leq & C\varepsilon^{-2}(\varepsilon^{2}|E_{\xi_{1},\mu_{1}}-E_{\xi_{2},\mu_{2}}|\\
 & +\varepsilon|L_{\xi_{1},\mu_{1}}(\tilde{w}_{\xi_{1},\mu_{1}},\eta_{\xi_{1},\mu_{1}})-L_{\xi_{2},\mu_{2}}(\tilde{w}_{\xi_{2},\mu_{2}},\eta_{\xi_{2},\mu_{2}})|\\
 & +\varepsilon^{-1}|Q_{\xi_{1},\mu_{1}}(\tilde{w}_{\xi_{1},\mu_{1}},\eta_{\xi_{1},\mu_{1}})-Q_{\xi_{2},\mu_{2}}(\tilde{w}_{\xi_{2},\mu_{2}},\eta_{\xi_{2},\mu_{2}})|\\
 & +O(\varepsilon)|\rho_{\xi_{1},\mu_{1}}-\rho_{\xi_{2},\mu_{2}}|)\\
\leq & C\|(\xi_{1}-\xi_{2},\mu_{1}-\mu_{2})\|_{\varepsilon,\alpha}.
\end{align*}
So we proved this lemma. 

\end{proof}

If $\xi^{0}=0,\mu^{0}=0$, we can get $\tilde{w}_{0,0},\eta_{0,0},$
whose norms only depend on the curvature terms along the geodesic.
So there is $C_{7}$ which only depends on the norms of the curvatures
along the geodesic, such that
\begin{eqnarray*}
\|\Omega^{1}(0,0)\|_{C_{x_{0}}^{1}} & \leq & C_{7}\varepsilon^{2}\\
\varepsilon\|\Omega^{2}(0,0)\|_{C_{\varepsilon}^{\alpha}} & \leq & C_{7}\varepsilon
\end{eqnarray*}
and if we assume 
\begin{eqnarray*}
\xi^{1} & = & \Omega^{1}(0,0)\\
\mu^{1} & = & \Omega^{2}(0,0),
\end{eqnarray*}
from (\ref{Omega estimate}) 
\begin{eqnarray*}
\|\Omega^{1}(\xi^{1},\mu^{1})-\xi^{1}\|_{C_{x_{0}}^{1}} & \leq & C_{7}\varepsilon^{2}\\
\varepsilon\|\Omega^{2}(\xi^{1},\mu^{1})-\mu^{1}\|_{C_{\varepsilon}^{\alpha}} & \leq & C_{7}\varepsilon^{2},
\end{eqnarray*}
where we can use the same constant $C_{7}.$ Then 
\[
\|\Omega(\xi^{1},\mu^{1})-(\xi^{1},\mu^{1})\|_{\varepsilon,\alpha}\leq2C_{7}\varepsilon^{2}.
\]

Let 
\[
\Xi(5C_{7})=\{(\xi,\mu):\|(\xi,\mu)-(\xi^{1},\mu^{1})\|_{\varepsilon,\alpha}\leq5C_{7}\varepsilon^{2}\}
\]
We assume $C_{1}=C_{2}=10C_{7}$. Let 
\[
\Xi(C_{1},C_{2})=\{(\xi,\mu):\|\xi\|_{C_{x_{0}}^{1}}\leq C_{1}\varepsilon^{2},\|\mu\|_{C_{\varepsilon}^{\alpha}}\leq C_{2}\}.
\]
It is obvious that 
\[
\Xi(5C_{7})\subset\Xi(C_{1},C_{2}).
\]
So if $(\xi_{1},\mu_{1}),(\xi_{2},\mu_{2})\in\Xi(5C_{7})$, then
\[
\|\Omega(\xi_{1},\mu_{1})-\Omega(\xi_{2},\mu_{2})\|_{\varepsilon,\alpha}\leq C\varepsilon\|(\xi_{1}-\xi_{2},\mu_{1}-\mu_{2})\|_{\varepsilon,\alpha}.
\]
If we choose $\varepsilon$ such that 
\[
C\varepsilon\leq\frac{1}{100},
\]
then $\Omega_{2}$ maps $\Xi(5C_{7})$ into itself. Note that $\Xi(5C_{7})$
is a complete metric space. From fixed point theorem, there is a unique
\[
(\hat{\xi},\hat{\mu})\in\Xi(5C_{7})
\]
 such that 
\[
\Omega(\hat{\xi},\hat{\mu})=(\hat{\xi},\hat{\mu}).
\]

For this $(\hat{\xi},\hat{\mu}),$ we have 
\[
H(\mathcal{D}_{\phi_{\hat{\xi},\hat{\mu}},p_{0},\varepsilon}(\tilde{w}_{\hat{\xi},\hat{\mu}},\eta_{\hat{\xi},\hat{\mu}}))=\frac{2}{\varepsilon}+\varepsilon^{2}\omega_{\hat{\xi},\hat{\mu}}\phi_{\hat{\xi},\hat{\mu}}^{-1}\frac{\partial\phi_{\hat{\xi},\hat{\mu}}}{\partial\psi}.
\]

\subsection{The energy of the surface\label{subsec:The-energy-of} and the last
step of the proof. }

So far what we've got is a global smooth surface $\mathcal{D}_{\phi_{\hat{\xi},\hat{\mu}},p_{0},\varepsilon}(\tilde{w}_{\hat{\xi},\hat{\mu}},\eta_{\hat{\xi},\hat{\mu}})$
whose mean curvature is 
\[
\frac{2}{\varepsilon}+\omega_{\hat{\xi},\hat{\mu}}\varepsilon^{2}\phi_{\hat{\xi},\hat{\mu}}^{-1}\frac{\partial\phi_{\hat{\xi},\hat{\mu}}}{\partial\psi}.
\]
In (\ref{0th mode ODE}), we have chosen a point $p_{0}$ where $\psi=0$
(and also $x_{0}=0$), i.e. $\phi(\psi)$ attains a local minimum
at $p_{0}.$ We call $p_{0}$ the starting point. From the analysis
above, we find that $\hat{\xi},\hat{\mu}$ and $\omega_{\hat{\xi},\hat{\mu}}$
only depend on $p_{0}$. The last thing we can do is to move the starting
point along the geodesic such that $\omega_{\hat{\xi},\hat{\mu}}=0.$
View $\psi\in(-\varepsilon_{0},\varepsilon_{0})$ as local coordinate
about $p_{0}$ and $\psi(p_{0})=0.$ Now choose $\psi=\delta$ instead
of $\psi=0$ as the starting point. We can do all the analysis above
and get $(\hat{\xi}_{\delta},\hat{\mu}_{\delta},\omega_{\delta}=\omega_{\hat{\xi}_{\delta},\hat{\mu}_{\delta}},\phi_{\delta}(\delta)=\phi_{\hat{\xi}_{\delta},\hat{\mu}_{\delta}}(\delta),\phi_{\delta}=\phi_{\hat{\xi}_{\delta},\hat{\mu}_{\delta}},\tilde{w}_{\delta},\eta_{\delta})$
which satisfy similar estimates as $\delta=0$ case and 
\[
H(\mathcal{D}_{\phi_{\delta},\delta,\varepsilon}(\tilde{w}_{\delta},\eta_{\delta}))=\frac{2}{\varepsilon}+\omega_{\delta}\varepsilon^{2}\phi_{\delta}^{-1}\frac{\partial\phi_{\delta}}{\partial\psi}.
\]
 We make the following notations 
\begin{eqnarray*}
\frac{\partial f_{\delta}(\psi)}{\partial\delta}|_{\delta=0} & = & \lim_{\delta\rightarrow0}\frac{f_{\delta}(\psi)-f_{0}(\psi)}{\delta}\\
\frac{\partial^{\prime}f_{\delta}(\psi)}{\partial\delta}|_{\delta=0} & = & \lim_{\delta\rightarrow0}\frac{f_{\delta}(\psi)-f_{0}(\psi-\delta)}{\delta}.
\end{eqnarray*}

We can take a new angle of view. We identify different starting points
and imagine that the curvature terms are translated along the geodeisc.
We know
\begin{equation}
\frac{\partial'}{\partial\delta}R_{i}=\varepsilon\bar{R}_{i}\label{delta prime deirvative of R}
\end{equation}
and $\bar{R}_{i}$'s derivatives with respect to $x_{0}$ are bounded.
Besides, $\bar{R}_{i}$ belongs to the same subspace (range of $\Pi_{0},\Pi_{1},\tilde{\Pi}$)
as $R_{i}$ does. So if we revise each mode, we know that $\phi_{\delta},\tilde{w}_{\delta},\eta_{\delta}$
vary smoothly in $\delta.$ We want to study $\frac{\partial^{\prime}\phi_{\delta}}{\partial\delta},\frac{\partial^{\prime}\tilde{w}_{\delta}}{\partial\delta},\frac{\partial^{\prime}\eta_{\delta}}{\partial\delta}$. 

Now we start with the initial surface $\mathcal{D}_{\phi_{\hat{\xi},\hat{\mu}},p_{0},\varepsilon}(\tilde{w}_{\hat{\xi},\hat{\mu}},\eta_{\hat{\xi},\hat{\mu}}).$
$\phi_{\hat{\xi},\hat{\mu}}$ satisfies (\ref{0th mode ODE}) with
$\xi=\hat{\xi},\mu=\hat{\mu},\phi(0)=\phi(0)_{\hat{\xi},\hat{\mu}},\omega=\omega_{\hat{\xi},\hat{\mu}}$.
Heuristically when $\delta$ has a variation of size $1$, the $C_{x_{0}}^{1}$
norm of $R_{i}$ will has a variation of $O(\varepsilon)$. At first
we fix $\xi=\hat{\xi}$ and $\mu=\hat{\mu}$. By analyzing both nonlinear
ODE and its linearized equations, we find $\omega$ should be perturbed
as large as $O(\varepsilon)$ and $\phi(0)$ should be perturbed as
large as $O(\varepsilon^{3})$ to match the boundary value. Hence
we know $\|\phi_{\delta}(\psi+\delta)-\phi_{\hat{\xi},\hat{\mu}}(\psi)\|_{C_{\varepsilon}^{2,\alpha}}=O(\varepsilon^{2}).$
From Theorem \ref{Perturbed high mode and 1st mode}, (\ref{perturbed high and 1st mode})and
Definition \ref{new L Q E} we have $\|\tilde{w}_{\delta}(\psi+\delta)-\tilde{w}(\psi)\|_{C_{\varepsilon}^{2,\alpha}}\leq C\varepsilon^{3},\|\eta_{\delta}(\psi+\delta)-\eta(\psi)\|_{C_{x_{0},\varepsilon}^{2,\alpha}}\leq C\varepsilon^{3}.$
Now we replace $(\hat{\xi},\hat{\mu})$ with $(\Omega^{1}(\hat{\xi},\hat{\mu}),\Omega^{2}(\hat{\xi},\hat{\mu})).$
From (\ref{Omega expression}), $\hat{\xi}$ has a $O(\varepsilon^{3})$
variation and $\hat{\mu}$ has a variation of $O(\varepsilon)$ which
implies an $\varepsilon^{2}$ variation again on $\phi_{\delta}$
by (\ref{main estimates for 0th mode-1}) and hence a $O(\varepsilon^{4})$
variation on $\tilde{w}_{\delta}$ and a $O(\varepsilon^{3})$ variation
on $\eta_{\delta}$. However, in the second step of iteration $\hat{\mu}$
only has a variation of $O(\varepsilon^{2})$. So the iteration argument
works. 

\begin{tabular}{|c|c|c|c|c|c|c|}
\hline 
 & 1 & 2 & 3 & $\cdots$ & k & $\cdots$\tabularnewline
\hline 
$\begin{array}{c}
\omega_{\delta}\\
|\cdot|
\end{array}$ & $\varepsilon$ & $\varepsilon$ & $\varepsilon^{2}$ & $\cdots$ & $\varepsilon^{k-1}$ & $\cdots$\tabularnewline
\hline 
$\begin{array}{c}
\phi_{\delta}(\delta)\\
|\cdot|
\end{array}$ & $\varepsilon^{3}$ & $\varepsilon^{3}$ & $\varepsilon^{4}$ & $\cdots$ & $\varepsilon^{k+1}$ & $\cdots$\tabularnewline
\hline 
$\begin{array}{c}
\phi_{\delta}(\cdot+\delta)\\
\|\cdot\|_{C_{\varepsilon}^{2,\alpha}}
\end{array}$ & $\varepsilon^{2}$ & \multicolumn{1}{c|}{$\varepsilon^{2}$} & $\varepsilon^{3}$ & $\cdots$ & $\varepsilon^{k}$ & $\cdots$\tabularnewline
\hline 
$\begin{array}{c}
\tilde{w}_{\delta}(\cdot+\delta)\\
\|\cdot\|_{C_{\varepsilon}^{2,\alpha}}
\end{array}$ & $\varepsilon^{3}$ & $\varepsilon^{4}$ & $\varepsilon^{5}$ & $\cdots$ & $\varepsilon^{k+2}$ & $\cdots$\tabularnewline
\hline 
$\begin{array}{c}
\eta_{\delta}(\cdot+\delta)\\
\|\cdot\|_{C_{x_{0},\varepsilon}^{2,\alpha}}
\end{array}$ & $\varepsilon^{3}$ & $\varepsilon^{3}$ & $\varepsilon^{4}$ & $\cdots$ & $\varepsilon^{k+1}$ & $\cdots$\tabularnewline
\hline 
$\begin{array}{c}
\xi_{\delta}(\cdot+\delta)\\
\|\cdot\|_{C_{x_{0}}^{1}}
\end{array}$ & $\varepsilon^{3}$ & $\varepsilon^{3}$ & $\varepsilon^{4}$ & $\cdots$ & $\varepsilon^{k+1}$ & $\cdots$\tabularnewline
\hline 
$\begin{array}{c}
\mu_{\delta}(\cdot+\delta)\\
\|\cdot\|_{C_{\varepsilon}^{\alpha}}
\end{array}$ & $\varepsilon$ & $\varepsilon^{2}$ & $\varepsilon^{3}$ & $\cdots$ & $\varepsilon^{k}$ & $\cdots$\tabularnewline
\hline 
\end{tabular}

In the above form, for example, the $\varepsilon^{k+1}$ in $(\eta_{\delta}(\cdot+\delta),k)$
position means in the $k$th step of iteration, $\eta_{\delta}(\cdot+\delta)$
has a variation of $\varepsilon^{k+1}$ measured in $\|\cdot\|_{C_{x_{0},\varepsilon}^{2,\alpha}}$
norm.

The above argument can be made precise by taking $\frac{\partial'}{\partial\delta}$
derivative to each mode as well as the expression (\ref{Omega expression}). 

At last we get
\begin{align*}
\|\frac{\partial'}{\partial\delta}\phi_{\delta}|_{\delta=0}\|_{C_{\varepsilon}^{2,\alpha}} & \leq C\varepsilon^{2},\\
\|\frac{\partial'}{\partial\delta}\eta_{\delta}|_{\delta=0}\|_{C_{x_{0},\varepsilon}^{2,\alpha}} & \leq C\varepsilon^{3},\\
\|\frac{\partial'}{\partial\delta}\tilde{w}_{\delta}|_{\delta=0}\|_{C_{\varepsilon}^{2,\alpha}} & \leq C\varepsilon^{3}.
\end{align*}
So we have

\begin{align*}
 & \frac{\partial}{\partial\delta}(\varepsilon(\phi_{\delta}+\tilde{w}_{\delta})+<\eta_{\delta},\Upsilon>)|_{\delta=0}\\
= & \frac{\partial'}{\partial\delta}(\varepsilon(\phi_{\delta}+\tilde{w}_{\delta})+<\eta_{\delta},\Upsilon>)|_{\delta=0}-\frac{\partial}{\partial\psi}(\varepsilon(\phi_{\hat{\xi},\hat{\mu}}+\tilde{w}_{\hat{\xi},\hat{\mu}})+<\eta_{\hat{\xi},\hat{\mu}},\Upsilon>)|_{\psi=0}\\
= & -\varepsilon\frac{\partial\phi_{\hat{\xi},\hat{\mu}}}{\partial\psi}+O(\varepsilon^{3}).
\end{align*}

Consider the energy functional of the surface $\Sigma_{\delta}=\mathcal{D}_{\phi_{\delta},\delta,\varepsilon}(\tilde{w}_{\delta},\eta_{\delta})$
\begin{align*}
EN(\Sigma_{\delta}) & ={\rm Area}(\Sigma_{\delta})-\frac{2}{\varepsilon}{\rm Vol}(\Sigma_{\delta}).
\end{align*}
We have
\begin{eqnarray*}
 &  & \frac{d}{d\delta}EN(\Sigma_{\delta})|_{\delta=0}\\
 & = & \frac{d}{d\delta}(\text{Area(\ensuremath{\Sigma_{\delta}})-\ensuremath{\frac{2}{\varepsilon}}Vol(\ensuremath{\Sigma_{\delta}})})\\
 & = & \int_{\Sigma_{0}}H\frac{\partial}{\partial\delta}(\varepsilon(\phi_{\delta}(\psi)+\tilde{w}_{\delta})+<\eta_{\delta},\Upsilon>)|_{\delta=0}<N,\Upsilon>dS\\
 &  & -\frac{2}{\varepsilon}\int_{\Sigma_{0}}\frac{\partial}{\partial\delta}(\varepsilon(\phi_{\delta}(\psi)+\tilde{w}_{\delta})+<\eta_{\delta},\Upsilon>)|_{\delta=0}<N,\Upsilon>dS\\
 & = & -\varepsilon^{3}\omega_{\hat{\xi},\hat{\mu}}\int_{\Sigma_{0}}\phi_{\hat{\xi},\hat{\mu}}^{-1}[(\frac{\partial\phi_{\hat{\xi},\hat{\mu}}}{\partial\psi})^{2}+O(\varepsilon^{2})]<N,\Upsilon>dS,
\end{eqnarray*}
where $\Sigma_{0}=\mathcal{D}_{\phi_{\hat{\xi},\hat{\mu}},p_{0},\varepsilon}(\tilde{w}_{\hat{\xi},\hat{\mu}},\eta_{\hat{\xi},\hat{\mu}}).$
Note that 
\[
\int_{\Sigma_{0}}\phi_{\hat{\xi},\hat{\mu}}^{-1}(\frac{\partial\phi_{\hat{\xi},\hat{\mu}}}{\partial\psi})^{2}<N,\Upsilon>dS
\]
is always positive.

Similarly for $\delta\in[0,\frac{L_{\Gamma}}{\varepsilon}]$, we can
prove that 
\begin{align*}
 & \frac{d}{d\delta}EN(\Sigma_{\delta})|_{\delta=0}\\
= & -\varepsilon^{3}\omega_{\delta}\int_{\Sigma_{\delta}}\phi_{\delta}^{-1}[(\frac{\partial}{\partial\psi}\phi_{\delta})^{2}+O(\varepsilon^{2})]<N,\Upsilon>dS
\end{align*}
with the integral being always positive. If $E$ is constant when
$\delta\in[0,\frac{L_{\Gamma}}{\varepsilon}]$, we have for every
$\delta$, $\omega_{\delta}=0.$ Then we have infinitely many Delaunay
type constant mean curvature surfaces. If $E$ is not always constant,
we will at least get two zeros of $\frac{d}{d\delta}E(\Sigma_{\delta}),$
where we have $\omega_{\delta}=0.$ Then we get two Delaunay type
constant mean curvature surfaces. The two surfaces are not the same,
because they correspond to the maximal value and minimal value of
$E.$ That the Delaunay type CMC surfaces are embedded is evident.
First the CMC surfaces constructed can be viewed as a smooth map from
$T^{2}$ to $M,$ because the unit normal bundle of $\Gamma$ has
$T^{2}$ topology. And because $T^{2}$ is compact topological space,
we need only to prove that this map is injective. This is easily seen
from the fact that $\Gamma$ is simply closed and embedded and the
estimates of the functions in each mode. So we proved the main theorem.

For Corollary \ref{rotational symmetry}, we see that the non-degeneracy
condition of the Jacobi operator of the geodesic is only used in 1st
mode. However, when the metric around $\Gamma$ has rotational symmetry,
i.e. $\frac{\partial}{\partial\theta}$ is killing vector field, if
we look at the expression of mean curvature (\ref{eq:mean curvature expression}),
we will find that $R_{1},R_{2}$ and $E$ have no 1st mode or high
mode projections. So we may assume $\eta=0$ and $\tilde{w}=0$. The
only thing that we need to do is to solve the 0th mode. So we can
use the procedure in Subsection \ref{subsec:0th-mode} to solve 0th
mode up to the kernel $\frac{\partial\phi}{\partial\psi}$. And then
we use the same argument as in this Subsection to remove the kernel.
So we can get the CMC surfaces of Delaunay type.

\appendix

\section{The calculation of the mean curvature\label{(APP)The-calculation-of-mean curvature}}

We prove (\ref{eq:mean curvature expression}) here. By definition
we have

\begin{eqnarray*}
H(\mathcal{D}_{\phi_{\tau_{0}},p_{0},\varepsilon}(w,\eta)) & = & g^{ss}<N,\nabla_{\partial_{s}}\partial_{s}>+2g^{s\theta}<N,\nabla_{\partial_{\theta}}\partial_{s}>+g^{\theta\theta}<N,\nabla_{\partial_{\theta}}\partial_{\theta}>\\
 & = & \frac{1}{k}(g^{ss}<kN,\nabla_{\partial_{s}}\partial_{s}>+2g^{s\theta}<kN,\nabla_{\partial_{\theta}}\partial_{s}>\\
 &  & +g^{\theta\theta}<kN,\nabla_{\partial_{\theta}}\partial_{\theta}>).
\end{eqnarray*}

First, we would like to point out that in the following, terms like
$\eta,\frac{\partial\eta}{\partial x_{0}}$ and $\varepsilon\frac{\partial^{2}\eta}{\partial x_{0}^{2}}$
are collected in $L(w,\eta).$ It is also similar for $Q(w,\eta).$
This is why in Definition \ref{L,Q,E,def}, we use $C_{x_{0},\varepsilon}^{2,\alpha}$
norm of $\eta.$
\begin{eqnarray*}
g^{ss}<kN,\nabla_{\partial_{s}}\partial_{s}> & = & g^{ss}<N_{0}+a_{1}\partial_{s}+a_{2}\partial_{\theta},\nabla_{\partial_{s}}\partial_{s}>
\end{eqnarray*}
\begin{eqnarray}
<\partial_{s},\nabla_{\partial_{s}}\partial_{s}> & = & \frac{1}{2}\partial_{s}<\partial_{s},\partial_{s}>\nonumber \\
 & = & \frac{\varepsilon^{2}}{2}\partial_{s}(\phi^{2}+\varepsilon^{2}\phi^{2}\dot{\psi}^{2}R(\Upsilon,X_{0},\Upsilon,X_{0})+2\varepsilon\phi\dot{\psi}^{2}R(\Upsilon,X_{0},\eta,X_{0})\nonumber \\
 &  & +\frac{4}{3}\varepsilon\phi\dot{\phi}\dot{\psi}R(\Upsilon,X_{0},\eta,\Upsilon)+2\dot{\phi}\frac{\partial w}{\partial s}+2\dot{\phi}\dot{\psi}<\Upsilon,\frac{\partial\eta}{\partial x_{0}}>_{e}\nonumber \\
 &  & +\varepsilon^{2}L(w,\eta)+Q(w,\eta)+O(\varepsilon^{3})),\nonumber \\
 & = & \frac{\varepsilon^{2}}{2}(2\phi\dot{\phi}+L(w,\eta)+Q(w,\eta)+O(\varepsilon^{2})),\label{eq:partial(s)-for-partial(s)}
\end{eqnarray}
\begin{eqnarray}
<\partial_{\theta},\nabla_{\partial_{s}}\partial_{s}> & = & \partial_{s}<\partial_{\theta},\partial_{s}>-<\nabla_{\partial_{s}}\partial_{\theta},\partial_{s}>\nonumber \\
 & = & \partial_{s}<\partial_{\theta},\partial_{s}>-\frac{1}{2}\partial_{\theta}<\partial_{s},\partial_{s}>\nonumber \\
 & = & \varepsilon^{2}\partial_{s}(\frac{2}{3}\varepsilon^{2}\phi^{3}\dot{\psi}R(\Upsilon,X_{0},\Upsilon,\Upsilon_{\theta})+\frac{2}{3}\varepsilon\phi^{2}\dot{\psi}(R(\Upsilon,X_{0},\eta,\Upsilon_{\theta})\nonumber \\
 &  & +R(\eta,X_{0},\Upsilon,\Upsilon_{\theta}))+\frac{1}{3}\varepsilon\phi^{2}\dot{\phi}R(\eta,\Upsilon,\Upsilon,\Upsilon_{\theta})+\dot{\phi}\frac{\partial w}{\partial\theta}\nonumber \\
 &  & +\phi\dot{\psi}<\frac{\partial\eta}{\partial x_{0}},\Upsilon_{\theta}>_{e}+\varepsilon^{2}L(w,\eta)+Q(w,\eta)+O(\varepsilon^{3}))\nonumber \\
 &  & -\frac{\varepsilon^{2}}{2}\partial_{\theta}(\phi^{2}+\varepsilon^{2}\phi^{2}\dot{\psi}^{2}R(\Upsilon,X_{0},\Upsilon,X_{0})+2\varepsilon\phi\dot{\psi}^{2}R(\Upsilon,X_{0},\eta,X_{0})\nonumber \\
 &  & +\frac{4}{3}\varepsilon\phi\dot{\phi}\dot{\psi}R(\Upsilon,X_{0},\eta,\Upsilon)+2\dot{\phi}\frac{\partial w}{\partial s}+2\dot{\phi}\dot{\psi}<\Upsilon,\frac{\partial\eta}{\partial x^{0}}>_{e}\nonumber \\
 &  & +\varepsilon^{2}L(w,\eta)+Q(w,\eta)+O(\varepsilon^{3})),\nonumber \\
 & = & \varepsilon^{2}(O(\varepsilon^{2})+L(w,\eta)+Q(w,\eta)).\label{eq:partial(theta)-for-partial(s)}
\end{eqnarray}
\begin{eqnarray*}
\nabla_{\partial_{s}}\partial_{s} & = & \varepsilon\nabla_{\partial_{s}}(\dot{\psi}X_{0}+(\dot{\phi}+\frac{\partial w}{\partial s})\Upsilon+\dot{\psi}\frac{\partial\eta^{i}}{\partial x_{0}}X_{i})\\
 & = & \varepsilon(\ddot{\psi}X_{0}+(\ddot{\phi}+\frac{\partial^{2}w}{\partial s^{2}})\Upsilon+(\ddot{\psi}\frac{\partial\eta^{i}}{\partial x^{0}}+\dot{\psi}^{2}\varepsilon\frac{\partial^{2}\eta^{i}}{\partial x_{0}^{2}})X_{i})\\
 &  & +\varepsilon^{2}(\dot{\psi}^{2}\nabla_{X_{0}}X_{0}+(\dot{\phi}+\frac{\partial w}{\partial s})^{2}\nabla_{\Upsilon}\Upsilon+\dot{\psi}^{2}(\frac{\partial\eta^{i}}{\partial x_{0}})(\frac{\partial\eta^{j}}{\partial x_{0}})\nabla_{X_{i}}X_{j}\\
 &  & +2\dot{\psi}(\dot{\phi}+\frac{\partial w}{\partial s})\nabla_{X_{0}}\Upsilon+2\dot{\psi}^{2}\frac{\partial\eta^{i}}{\partial x_{0}}\nabla_{X_{0}}X_{i}+2(\dot{\phi}+\frac{\partial w}{\partial s})\dot{\psi}\frac{\partial\eta^{i}}{\partial x_{0}}\nabla_{\Upsilon}X_{i})
\end{eqnarray*}

We calculate 

\begin{eqnarray*}
<N_{0},\nabla_{\partial_{s}}\partial_{s}> & = & <\frac{1}{\phi}(\dot{\phi}X_{0}-\dot{\psi}\Upsilon),\nabla_{\partial_{s}}\partial_{s}>
\end{eqnarray*}
term by term. There would be $20$ terms totally.

\begin{eqnarray}
<\frac{\dot{\phi}}{\phi}X_{0},\varepsilon\ddot{\psi}X_{0}> & = & \varepsilon\frac{\dot{\phi}\ddot{\psi}}{\phi}<X_{0},X_{0}>\nonumber \\
 & = & \varepsilon\frac{\dot{\phi}\ddot{\psi}}{\phi}(1+\varepsilon^{2}\phi^{2}R(\Upsilon,X_{0},\Upsilon,X_{0})_{p}+2\varepsilon\phi R(\Upsilon,X_{0},\eta,X_{0})_{p}\nonumber \\
 &  & +\varepsilon^{2}L(w,\eta)+Q(w,\eta)+O(\varepsilon^{3}))\nonumber \\
 & = & \varepsilon\frac{\dot{\phi}\ddot{\psi}}{\phi}+\varepsilon^{3}\phi\dot{\phi}\ddot{\psi}R(\Upsilon,X_{0},\Upsilon,X_{0})+2\varepsilon^{2}\dot{\phi}\ddot{\psi}R(\Upsilon,X_{0},\eta,X_{0})\nonumber \\
 &  & +\varepsilon^{3}L(w,\eta)+\varepsilon Q(w,\eta)+O(\varepsilon^{4}),\label{eq:}
\end{eqnarray}

\begin{eqnarray*}
<\frac{\dot{\phi}}{\phi}X_{0},\varepsilon(\ddot{\phi}+\frac{\partial^{2}w}{\partial s^{2}})\Upsilon> & = & \varepsilon\frac{\dot{\phi}}{\phi}(\ddot{\phi}+\frac{\partial^{2}w}{\partial s^{2}})<X_{0},\Upsilon>\\
 & = & \varepsilon\frac{\dot{\phi}}{\phi}(\ddot{\phi}+\frac{\partial^{2}w}{\partial s^{2}})(\frac{2}{3}\varepsilon\phi R(\Upsilon,X_{0},\eta,\Upsilon)\\
 &  & +\varepsilon^{2}L(w,\eta)+Q(w,\eta)+O(\varepsilon^{3}))\\
 & = & \frac{2}{3}\varepsilon^{2}\dot{\phi}\ddot{\phi}R(\Upsilon,X_{0},\eta,\Upsilon)+\varepsilon^{3}L(w,\eta)+\varepsilon Q(w,\eta)+O(\varepsilon^{4}),
\end{eqnarray*}

\begin{eqnarray*}
<\frac{\dot{\phi}}{\phi}X_{0},\varepsilon\ddot{\psi}\frac{\partial\eta^{i}}{\partial x^{0}}X_{i}> & = & \varepsilon\frac{\dot{\phi}\ddot{\psi}}{\phi}\frac{\partial\eta^{i}}{\partial x^{0}}<X_{0},X_{i}>\\
 & = & \varepsilon\frac{\dot{\phi}\ddot{\psi}}{\phi}\frac{\partial\eta^{i}}{\partial x_{0}}(O(\varepsilon^{2})+\varepsilon L(w,\eta)+Q(w,\eta)+O(\varepsilon))\\
 & = & \varepsilon^{3}L(w,\eta)+\varepsilon Q(w,\eta),
\end{eqnarray*}
\begin{eqnarray}
<\frac{\dot{\phi}}{\phi}X_{0},\varepsilon^{2}\dot{\psi}^{2}\frac{\partial^{2}\eta^{i}}{\partial x_{0}^{2}}X_{i}> & = & \varepsilon^{2}\frac{\dot{\phi}\dot{\psi}^{2}}{\phi}\frac{\partial^{2}\eta^{i}}{\partial x_{0}^{2}}(O(\varepsilon^{2})+\varepsilon L(w,\eta)\nonumber \\
 &  & +Q(w,\eta)+O(\varepsilon))\label{2nd-derivative-eta}\\
 & = & \varepsilon^{3}L(w,\eta)+\varepsilon Q(w,\eta),\nonumber 
\end{eqnarray}
\begin{eqnarray*}
<\frac{\dot{\phi}}{\phi}X_{0},\varepsilon^{2}\dot{\psi}^{2}\nabla_{X_{0}}X_{0}> & = & \varepsilon^{2}\frac{\dot{\phi}\dot{\psi}^{2}}{\phi}<X_{0},\nabla_{X_{0}}X_{0}>\\
 & = & \varepsilon^{2}\frac{\dot{\phi}\dot{\psi}^{2}}{\phi}(O(\varepsilon^{2})+\varepsilon L(w,\eta)+Q(w,\eta))\\
 & = & O(\varepsilon^{4})+\varepsilon^{3}L(w,\eta)+\varepsilon^{2}Q(w,\eta),
\end{eqnarray*}
\begin{eqnarray*}
<\frac{\dot{\phi}}{\phi}X_{0},\varepsilon^{2}(\dot{\phi}+\frac{\partial w}{\partial s})^{2}\nabla_{\Upsilon}\Upsilon> & = & \varepsilon^{2}\frac{\dot{\phi}}{\phi}(\dot{\phi}+\frac{\partial w}{\partial s})^{2}<X_{0},\nabla_{\Upsilon}\Upsilon>\\
 & = & \varepsilon^{2}\frac{\dot{\phi}}{\phi}(\dot{\phi}+\frac{\partial w}{\partial s})^{2}(\Upsilon<X_{0},\Upsilon>-<\nabla_{X_{0}}\Upsilon,\Upsilon>).
\end{eqnarray*}
The method to calculate this will be used several times, so we write
it in detail. Although $\Upsilon$ actually depends on $\theta$,
here we are only interested in $\nabla_{\Upsilon}\Upsilon,$ we may
pretend $\Upsilon$ is constant vector in the coordinates $\{x_{0},x_{1},x_{2}\}.$
We may make such assumption where it is convenient. Evidently $\Upsilon(x_{k})=\Upsilon^{k}$.
So 
\begin{eqnarray*}
\Upsilon<X_{0},\Upsilon> & = & \Upsilon(\frac{2}{3}R(X_{k},X_{0},X_{l},\Upsilon)_{p}x_{k}x_{l}+O(r^{3}))\\
 & = & \frac{2}{3}R(X_{k},X_{0},X_{l},\Upsilon)_{p}(\Upsilon^{k}(\varepsilon(\phi+w)\Upsilon^{l}+\eta^{l})\\
 &  & +(\varepsilon(\phi+w)\Upsilon^{k}+\eta^{k})\Upsilon^{l})+O(\varepsilon^{2})+\varepsilon L(w,\eta)+Q(w,\eta)\\
 & = & \frac{2}{3}R(\Upsilon,X_{0},\eta,\Upsilon)+O(\varepsilon^{2})+\varepsilon L(w,\eta)+Q(w,\eta),
\end{eqnarray*}
\begin{eqnarray*}
<\nabla_{X_{0}}\Upsilon,\Upsilon>_{p} & = & 0
\end{eqnarray*}
 for all $p\in\Gamma.$ We consider 
\[
X_{j}<\nabla_{X_{0}}\Upsilon,\Upsilon>_{p}=<\nabla_{X_{j}}\nabla_{X_{0}}\Upsilon,\Upsilon>_{p}+<\nabla_{X_{0}}\Upsilon,\nabla_{X_{j}}\Upsilon>_{p}.
\]
From Lemma \ref{Connection coefficient estimate} we know $(\nabla_{X_{\alpha}}X_{\beta})_{p}=0.$
So 
\begin{eqnarray*}
X_{j}<\nabla_{X_{0}}\Upsilon,\Upsilon>_{p} & = & <\nabla_{X_{j}}\nabla_{X_{0}}\Upsilon,\Upsilon>_{p}\\
 & = & <\nabla_{X_{0}}\nabla_{X_{j}}\Upsilon,\Upsilon>_{p}+R(X_{j},X_{0},\Upsilon,\Upsilon)_{p}.
\end{eqnarray*}
We know $R(X_{j},X_{0},\Upsilon,\Upsilon)_{p}=0.$ And we know $\nabla_{X_{j}}\Upsilon=0$
always holds on the geodesic. So $(\nabla_{X_{0}}\nabla_{X_{j}}\Upsilon)_{p}=0.$
So $X_{j}<\nabla_{X_{0}}\Upsilon,\Upsilon>_{p}=0.$ And we know 
\begin{eqnarray*}
<\nabla_{X_{0}}\Upsilon,\Upsilon>(x_{0},x_{1},x_{2}) & = & O(r^{2})=O(\varepsilon^{2})+\varepsilon L(w,\eta)+Q(w,\eta).
\end{eqnarray*}
Now we get 
\begin{eqnarray*}
<\frac{\dot{\phi}}{\phi}X_{0},\varepsilon^{2}(\dot{\phi}+\frac{\partial w}{\partial s})^{2}\nabla_{\Upsilon}\Upsilon> & = & \frac{2}{3}\varepsilon^{2}\frac{\dot{\phi}^{3}}{\phi}R(\Upsilon,X_{0},\eta,\Upsilon)\\
 &  & +O(\varepsilon^{4})+\varepsilon^{3}L(w,\eta)+\varepsilon^{2}Q(w,\eta),
\end{eqnarray*}
By using the skill above we can calculate all the remaining terms.
We state the result directly.
\begin{eqnarray*}
\varepsilon^{2}<\frac{\dot{\phi}}{\phi}X_{0},\dot{\psi}^{2}(\frac{\partial\eta^{i}}{\partial x_{0}})(\frac{\partial\eta^{j}}{\partial x_{0}})\nabla_{X_{i}}X_{j}> & = & \varepsilon^{3}Q(w,\eta),
\end{eqnarray*}
\begin{eqnarray*}
\varepsilon^{2}<\frac{\dot{\phi}}{\phi}X_{0},2\dot{\psi}(\dot{\phi}+\frac{\partial w}{\partial s})\nabla_{X_{0}}\Upsilon> & = & 2\varepsilon^{3}\dot{\phi}^{2}\dot{\psi}R(\Upsilon,X_{0},\Upsilon,X_{0})\\
 &  & +2\varepsilon^{2}\frac{\dot{\phi}^{2}\dot{\psi}}{\phi}R(\Upsilon,X_{0},\eta,X_{0})\\
 &  & +O(\varepsilon^{4})+\varepsilon^{3}L(w,\eta)+\varepsilon^{2}Q(w,\eta),
\end{eqnarray*}
\begin{eqnarray*}
\varepsilon^{2}<\frac{\dot{\phi}}{\phi}X_{0},2\dot{\psi}^{2}\frac{\partial\eta^{i}}{\partial x_{0}}\nabla_{X_{0}}X_{i}> & = & \varepsilon^{3}L(w,\eta),
\end{eqnarray*}
\begin{eqnarray*}
\varepsilon^{2}<\frac{\dot{\phi}}{\phi}X_{0},2(\dot{\phi}+\frac{\partial w}{\partial s})\dot{\psi}\frac{\partial\eta^{i}}{\partial x_{0}}\nabla_{\Upsilon}X_{i}> & = & \varepsilon^{3}L(w,\eta)+\varepsilon^{2}Q(w,\eta),
\end{eqnarray*}
\begin{eqnarray*}
<-\frac{\dot{\psi}}{\phi}\Upsilon,\varepsilon\ddot{\psi}X_{0}> & = & -\varepsilon\frac{\dot{\psi}\ddot{\psi}}{\phi}<\Upsilon,X_{0}>\\
 & = & -\frac{2}{3}\varepsilon^{2}\dot{\psi}\ddot{\psi}R(\Upsilon,X_{0},\eta,\Upsilon)+\varepsilon^{3}L(w,\eta)+\varepsilon Q(w,\eta)+O(\varepsilon^{4}),
\end{eqnarray*}
\begin{eqnarray*}
<-\frac{\dot{\psi}}{\phi}\Upsilon,\varepsilon(\ddot{\phi}+\frac{\partial^{2}w}{\partial s^{2}})\Upsilon> & = & -\varepsilon\frac{\ddot{\phi}\dot{\psi}}{\phi}-\varepsilon\frac{\dot{\psi}}{\phi}\frac{\partial^{2}w}{\partial s^{2}}\\
 &  & +\varepsilon^{3}L(w,\eta)+\varepsilon Q(w,\eta)+O(\varepsilon^{4}),
\end{eqnarray*}
\begin{eqnarray*}
<-\frac{\dot{\psi}}{\phi}\Upsilon,\varepsilon\ddot{\psi}\frac{\partial\eta^{i}}{\partial x_{0}}X_{i}> & = & -\varepsilon\frac{\dot{\psi}\ddot{\psi}}{\phi}<\frac{\partial\eta}{\partial x_{0}},\Upsilon>,
\end{eqnarray*}
\begin{eqnarray*}
<-\frac{\dot{\psi}}{\phi}\Upsilon,\varepsilon^{2}\dot{\psi}^{2}\frac{\partial^{2}\eta^{i}}{\partial x_{0}^{2}}X_{i}> & = & -\varepsilon^{2}\frac{\dot{\psi}^{3}}{\phi}<\frac{\partial^{2}\eta}{\partial x_{0}^{2}},\Upsilon>,
\end{eqnarray*}
\begin{eqnarray*}
<-\frac{\dot{\psi}}{\phi}\Upsilon,\varepsilon^{2}\dot{\psi}^{2}\nabla_{X_{0}}X_{0}> & = & \varepsilon^{3}\dot{\psi}^{3}R(\Upsilon,X_{0},\Upsilon,X_{0})+\varepsilon^{2}\frac{\dot{\psi}^{3}}{\phi}R(\Upsilon,X_{0},\eta,X_{0})\\
 &  & +\varepsilon^{3}L(w,\eta)+\varepsilon^{2}Q(w,\eta)+O(\varepsilon^{4}),
\end{eqnarray*}
\[
<-\frac{\dot{\psi}}{\phi}\Upsilon,\varepsilon^{2}(\dot{\phi}+\frac{\partial w}{\partial s})^{2}\nabla_{\Upsilon}\Upsilon>=\varepsilon^{3}L(w,\eta)+\varepsilon^{2}Q(w,\eta)+O(\varepsilon^{4}),
\]
\begin{eqnarray*}
<-\frac{\dot{\psi}}{\phi}\Upsilon,\varepsilon^{2}\dot{\psi}^{2}(\frac{\partial\eta^{i}}{\partial x_{0}})(\frac{\partial\eta^{j}}{\partial x_{0}})\nabla_{X_{i}}X_{j}> & = & \varepsilon^{3}Q(w,\eta),
\end{eqnarray*}
\begin{eqnarray*}
<-\frac{\dot{\psi}}{\phi}\Upsilon,\varepsilon^{2}2\dot{\psi}(\dot{\phi}+\frac{\partial w}{\partial s})\nabla_{X_{0}}\Upsilon> & = & \varepsilon^{3}L(w,\eta)+\varepsilon^{2}Q(w,\eta)+O(\varepsilon^{4}),
\end{eqnarray*}
\[
<-\frac{\dot{\psi}}{\phi}\Upsilon,\varepsilon^{2}2\dot{\psi}^{2}\frac{\partial\eta^{i}}{\partial x_{0}}\nabla_{X_{0}}X_{i}>=\varepsilon^{3}L(w,\eta),
\]
\[
<-\frac{\dot{\psi}}{\phi}\Upsilon,\varepsilon^{2}2(\dot{\phi}+\frac{\partial w}{\partial s})\dot{\psi}\frac{\partial\eta^{i}}{\partial x_{0}}\nabla_{\Upsilon}X_{i}>=\varepsilon^{3}L(w,\eta)+\varepsilon^{2}Q(w,\eta).
\]

Collecting all the terms above and notice that
\begin{equation}
\frac{\dot{\phi}\ddot{\psi}}{\phi}-\frac{\ddot{\phi}\dot{\psi}}{\phi}=\phi^{2}-\tau_{0}\label{one key calculation}
\end{equation}
 we get 
\begin{eqnarray}
<N_{0},\nabla_{\partial_{s}}\partial_{s}> & = & \varepsilon(\phi^{2}-\tau_{0})-\varepsilon\frac{\dot{\psi}}{\phi}\frac{\partial^{2}w}{\partial s^{2}}-\varepsilon^{2}\frac{\dot{\psi}^{3}}{\phi}<\frac{\partial^{2}\eta}{(\partial x_{0})^{2}},\Upsilon>-\varepsilon\frac{\dot{\psi}\ddot{\psi}}{\phi}<\frac{\partial\eta}{\partial x_{0}},\Upsilon>\nonumber \\
 &  & +\varepsilon^{3}(\phi\dot{\phi}\ddot{\psi}+2\dot{\phi}^{2}\dot{\psi}+\dot{\psi}^{3})R(\Upsilon,X_{0},\Upsilon,X_{0})\nonumber \\
 &  & +\varepsilon^{2}(2\dot{\phi}\ddot{\psi}+2\frac{\dot{\phi}^{2}\dot{\psi}}{\phi}+\frac{\dot{\psi}^{3}}{\phi})R(\Upsilon,X_{0},\eta,X_{0})\nonumber \\
 &  & +\varepsilon^{2}(\frac{2}{3}\dot{\phi}\ddot{\phi}+\frac{2}{3}\frac{\dot{\phi}^{3}}{\phi}-\frac{2}{3}\dot{\psi}\ddot{\psi})R(\Upsilon,X_{0},\eta,\Upsilon)\nonumber \\
 &  & +\varepsilon^{3}L(w,\eta)+\varepsilon Q(w,\eta)+O(\varepsilon^{4}),\label{eq:N0-partial(s)}
\end{eqnarray}

\begin{eqnarray*}
g^{\theta\theta}<kN,\nabla_{\partial_{\theta}}\partial_{\theta}> & = & g^{\theta\theta}<N_{0}+a_{1}\partial_{s}+a_{2}\partial_{\theta},\nabla_{\partial_{\theta}}\partial_{\theta}>.
\end{eqnarray*}
\begin{eqnarray}
<\partial_{s},\nabla_{\partial_{\theta}}\partial_{\theta}> & = & \partial_{\theta}<\partial_{s},\partial_{\theta}>-\frac{1}{2}\partial_{s}<\partial_{\theta},\partial_{\theta}>\nonumber \\
 & = & \varepsilon^{2}\partial_{\theta}(\frac{2}{3}\varepsilon^{2}\phi^{3}\dot{\psi}R(\Upsilon,X_{0},\Upsilon,\Upsilon_{\theta})+\frac{2}{3}\varepsilon\phi^{2}\dot{\psi}(R(\Upsilon,X_{0},\eta,\Upsilon_{\theta})\nonumber \\
 &  & +R(\eta,X_{0},\Upsilon,\Upsilon_{\theta}))+\frac{1}{3}\varepsilon\phi^{2}\dot{\phi}R(\eta,\Upsilon,\Upsilon,\Upsilon_{\theta})\nonumber \\
 &  & +\dot{\phi}\frac{\partial w}{\partial\theta}+\phi\dot{\psi}<\frac{\partial\eta}{\partial x_{0}},\Upsilon_{\theta}>_{e}+\varepsilon^{2}L(w,\eta)+Q(w,\eta)+O(\varepsilon^{3}))\nonumber \\
 &  & -\frac{\varepsilon^{2}}{2}\partial_{s}(\phi^{2}+2\phi w+\frac{1}{3}\varepsilon^{2}\phi^{4}R(\Upsilon,\Upsilon_{\theta},\Upsilon,\Upsilon_{\theta})\nonumber \\
 &  & +\frac{2}{3}\varepsilon\phi^{3}R(\Upsilon,\Upsilon_{\theta},\eta,\Upsilon_{\theta})+\varepsilon^{2}L(w,\eta)+Q(w,\eta)+O(\varepsilon^{3}))\nonumber \\
 & = & -\varepsilon^{2}\phi\dot{\phi}+O(\varepsilon^{4})+\varepsilon^{2}L(w,\eta)+\varepsilon^{2}Q(w,\eta),\label{eq:partial(s)-for-partial(theta)}
\end{eqnarray}
\begin{eqnarray}
<\partial_{\theta},\nabla_{\partial_{\theta}}\partial_{\theta}> & = & \frac{1}{2}\partial_{\theta}<\partial_{\theta},\partial_{\theta}>\nonumber \\
 & = & \frac{\varepsilon^{2}}{2}\partial_{\theta}(\phi^{2}+2\phi w+\frac{1}{3}\varepsilon^{2}\phi^{4}R(\Upsilon,\Upsilon_{\theta},\Upsilon,\Upsilon_{\theta})\nonumber \\
 &  & +\frac{2}{3}\varepsilon\phi^{3}R(\Upsilon,\Upsilon_{\theta},\eta,\Upsilon_{\theta})+\varepsilon^{2}L(w,\eta)+Q(w,\eta)+O(\varepsilon^{3}))\nonumber \\
 & = & O(\varepsilon^{4})+\varepsilon^{2}L(w,\eta)+\varepsilon^{2}Q(w,\eta),\label{eq:partial(theta)-for-partial(theta)}
\end{eqnarray}
\begin{eqnarray*}
\nabla_{\partial_{\theta}}\partial_{\theta} & = & \varepsilon\frac{\partial w}{\partial\theta}\Upsilon_{\theta}+\varepsilon\frac{\partial^{2}w}{\partial\theta^{2}}\Upsilon+\varepsilon^{2}(\phi+w)^{2}\nabla_{\Upsilon_{\theta}}\Upsilon_{\theta}\\
 &  & +\varepsilon^{2}(\frac{\partial w}{\partial\theta})^{2}\nabla_{\Upsilon}\Upsilon+\varepsilon^{2}(\phi+w)\frac{\partial w}{\partial\theta}(\nabla_{\Upsilon_{\theta}}\Upsilon+\nabla_{\Upsilon}\Upsilon_{\theta})
\end{eqnarray*}
\begin{eqnarray*}
<\frac{\dot{\phi}}{\phi}X_{0},\varepsilon\frac{\partial w}{\partial\theta}\Upsilon_{\theta}> & = & \varepsilon^{3}L(w,\eta)+\varepsilon Q(w,\eta),
\end{eqnarray*}
\begin{eqnarray*}
<\frac{\dot{\phi}}{\phi}X_{0},\varepsilon\frac{\partial^{2}w}{\partial\theta^{2}}\Upsilon> & = & \varepsilon^{4}L(w,\eta)+\varepsilon Q(w,\eta),
\end{eqnarray*}
\begin{eqnarray*}
<\frac{\dot{\phi}}{\phi}X_{0},\varepsilon^{2}(\phi+w)^{2}\nabla_{\Upsilon_{\theta}}\Upsilon_{\theta}> & = & \varepsilon^{2}\frac{\dot{\phi}}{\phi}(\phi+w)^{2}<X_{0},\nabla_{\Upsilon_{\theta}}\Upsilon_{\theta}>.
\end{eqnarray*}
Notice that 
\begin{eqnarray*}
<X_{0},\nabla_{\Upsilon_{\theta}}\Upsilon_{\theta}> & = & \Upsilon_{\theta}<X_{0},\Upsilon_{\theta}>-<\nabla_{\Upsilon_{\theta}}X_{0},\Upsilon_{\theta}>.
\end{eqnarray*}
 We recall $\Upsilon(x_{0},x_{1},x_{2})=\frac{(x_{1}-\eta_{1},x_{2}-\eta_{2})}{\sqrt{(x_{1}-\eta_{1})^{2}+(x_{2}-\eta_{2})^{2}}}=(\cos\theta,\sin\theta)$
and $\Upsilon_{\theta}(x_{0},x_{1},x_{2})=\frac{(-x_{2}+\eta_{2},x_{1}-\eta_{1})}{\sqrt{(x_{1}-\eta_{1})^{2}+(x_{2}-\eta_{2})^{2}}}=(-\sin\theta,\cos\theta).$
We denote $(-x_{2}+\eta_{2},x_{1}-\eta_{1})$ by $\tilde{\partial}_{\theta},$
and $\sqrt{(x_{1}-\eta_{1})^{2}+(x_{2}-\eta_{2})^{2}}$ by $\tilde{r}$
\begin{eqnarray*}
\Upsilon_{\theta}<X_{0},\Upsilon_{\theta}> & = & \frac{1}{\tilde{r}}\tilde{\partial}_{\theta}<X_{0},-\sin\theta X_{1}+\cos\theta X_{2}>\\
 & = & \frac{1}{\tilde{r}}<X_{0},-\Upsilon>+(-\sin\theta)\Upsilon_{\theta}<X_{0},X_{1}>+\cos\theta\Upsilon_{\theta}<X_{0},X_{2}>\\
 & = & \frac{2}{3}\varepsilon\phi R(\Upsilon_{\theta},X_{0},\Upsilon,\Upsilon_{\theta})+\frac{2}{3}R(\Upsilon_{\theta},X_{0},\eta,\Upsilon_{\theta})-\frac{2}{3}R(\Upsilon,X_{0},\eta,\Upsilon)\\
 &  & +\varepsilon L(w,\eta)+\varepsilon^{-1}Q(w,\eta)+O(\varepsilon^{2}),
\end{eqnarray*}
where we calculate $\Upsilon_{\theta}<X_{0},X_{i}>$ directly from
Lemma \ref{gij expansion}. To calculate $<\nabla_{\Upsilon_{\theta}}X_{0},\Upsilon_{\theta}>$
we can regard $\Upsilon_{\theta}$ as constant in $(x_{0},x_{1},x_{2})$
coordinate.
\begin{eqnarray*}
<\nabla_{\Upsilon_{\theta}}X_{0},\Upsilon_{\theta}> & = & \frac{1}{2}X_{0}<\Upsilon_{\theta},\Upsilon_{\theta}>\\
 & = & O(\varepsilon^{2})+\varepsilon L(w,\eta)+Q(w,\eta).
\end{eqnarray*}
We have 
\begin{eqnarray*}
<\frac{\dot{\phi}}{\phi}X_{0},\varepsilon^{2}(\phi+w)^{2}\nabla_{\Upsilon_{\theta}}\Upsilon_{\theta}> & = & \frac{2}{3}\varepsilon^{3}\dot{\phi}\phi^{2}R(\Upsilon_{\theta},X_{0},\Upsilon,\Upsilon_{\theta})\\
 &  & +\frac{2}{3}\varepsilon^{2}\phi\dot{\phi}(R(\Upsilon_{\theta},X_{0},\eta,\Upsilon_{\theta})-R(\Upsilon,X_{0},\eta,\Upsilon))\\
 &  & +\varepsilon^{3}L(w,\eta)+\varepsilon Q(w,\eta)+O(\varepsilon^{4}).
\end{eqnarray*}
\begin{eqnarray*}
<\frac{\dot{\phi}}{\phi}X_{0},\varepsilon^{2}(\frac{\partial w}{\partial\theta})^{2}\nabla_{\Upsilon}\Upsilon> & = & \varepsilon^{3}Q(w,\eta).
\end{eqnarray*}
\begin{eqnarray*}
<\frac{\dot{\phi}}{\phi}X_{0},\varepsilon^{2}(\phi+w)\frac{\partial w}{\partial\theta}\nabla_{\Upsilon_{\theta}}\Upsilon> & = & \varepsilon^{3}L(w,\eta)+\varepsilon^{2}Q(w,\eta).
\end{eqnarray*}
\begin{eqnarray*}
<\frac{\dot{\phi}}{\phi}X_{0},\varepsilon^{2}(\phi+w)\frac{\partial w}{\partial\theta}\nabla_{\Upsilon}\Upsilon_{\theta}> & = & \varepsilon^{3}L(w,\eta)+\varepsilon^{2}Q(w,\eta).
\end{eqnarray*}
 Note that $\nabla_{\Upsilon_{\theta}}\Upsilon$ and $\nabla_{\Upsilon}\Upsilon_{\theta}$
are not the same.
\begin{eqnarray*}
<-\frac{\dot{\psi}}{\phi}\Upsilon,\varepsilon\frac{\partial w}{\partial\theta}\Upsilon_{\theta}> & = & \varepsilon^{3}L(w,\eta)+\varepsilon Q(w,\eta).
\end{eqnarray*}
\begin{eqnarray*}
<-\frac{\dot{\psi}}{\phi}\Upsilon,\varepsilon\frac{\partial^{2}w}{\partial\theta^{2}}\Upsilon> & = & -\varepsilon\frac{\dot{\psi}}{\phi}\frac{\partial^{2}w}{\partial\theta^{2}}+\varepsilon^{3}L(w,\eta)+\varepsilon^{3}Q(w,\eta).
\end{eqnarray*}
\begin{eqnarray*}
<-\frac{\dot{\psi}}{\phi}\Upsilon,\varepsilon^{2}(\phi+w)^{2}\nabla_{\Upsilon_{\theta}}\Upsilon_{\theta}> & = & -\varepsilon^{2}\frac{\dot{\psi}}{\phi}(\phi+w)^{2}<\Upsilon,\nabla_{\Upsilon_{\theta}}\Upsilon_{\theta}>\\
 & = & -\varepsilon^{2}\frac{\dot{\psi}}{\phi}(\phi+w)^{2}(\Upsilon_{\theta}<\Upsilon,\Upsilon_{\theta}>-<\nabla_{\Upsilon_{\theta}}\Upsilon,\Upsilon_{\theta}>),
\end{eqnarray*}
First we have 
\begin{eqnarray*}
\Upsilon_{\theta}<\Upsilon,\Upsilon_{\theta}> & = & \frac{1}{\tilde{r}}(<\Upsilon_{\theta},\Upsilon_{\theta}>-<\Upsilon,\Upsilon>)\\
 &  & +(-\sin\theta\cos\theta\Upsilon_{\theta}<X_{1},X_{1}>+\sin\theta\cos\theta\Upsilon_{\theta}<X_{2},X_{2}>\\
 &  & +(\cos^{2}\theta-\sin^{2}\theta)\Upsilon_{\theta}<X_{1},X_{2}>)\\
 & = & \frac{1}{3}R(\Upsilon,\Upsilon_{\theta},\eta,\Upsilon_{\theta})+\varepsilon L(w,\eta)+\varepsilon^{-1}Q(w,\eta)+O(\varepsilon^{2}),
\end{eqnarray*}
\begin{eqnarray*}
<\nabla_{\Upsilon_{\theta}}\Upsilon,\Upsilon_{\theta}> & = & \frac{1}{\tilde{r}}<\nabla_{\tilde{r}\Upsilon_{\theta}}\Upsilon,\Upsilon_{\theta}>\\
 & = & \frac{1}{\tilde{r}}<\nabla_{\Upsilon}\tilde{r}\Upsilon_{\theta},\Upsilon_{\theta}>\\
 & = & \frac{1}{\tilde{r}}<\Upsilon_{\theta},\Upsilon_{\theta}>+<\nabla_{\Upsilon}\Upsilon_{\theta},\Upsilon_{\theta}>.
\end{eqnarray*}
 From 
\begin{eqnarray*}
\frac{1}{\tilde{r}}<\Upsilon_{\theta},\Upsilon_{\theta}> & = & \frac{1}{\tilde{r}}+\frac{1}{3}\varepsilon\phi R(\Upsilon,\Upsilon_{\theta},\Upsilon,\Upsilon_{\theta})+\frac{2}{3}R(\Upsilon,\Upsilon_{\theta},\eta,\Upsilon_{\theta})\\
 &  & +\varepsilon L(w,\eta)+\varepsilon^{-1}Q(w,\eta)+O(\varepsilon^{2}),
\end{eqnarray*}
\begin{eqnarray*}
<\nabla_{\Upsilon}\Upsilon_{\theta},\Upsilon_{\theta}> & = & \frac{1}{2}\Upsilon<\Upsilon_{\theta},\Upsilon_{\theta}>\\
 & = & \frac{1}{3}\varepsilon\phi R(\Upsilon,\Upsilon_{\theta},\Upsilon,\Upsilon_{\theta})+\frac{1}{3}R(\Upsilon,\Upsilon_{\theta},\eta,\Upsilon_{\theta})\\
 &  & +O(\varepsilon^{2})+\varepsilon L(w,\eta)+Q(w,\eta).
\end{eqnarray*}
We get 
\begin{eqnarray*}
<\nabla_{\Upsilon_{\theta}}\Upsilon,\Upsilon_{\theta}> & = & \frac{1}{\tilde{r}}+\frac{2}{3}\varepsilon\phi R(\Upsilon,\Upsilon_{\theta},\Upsilon,\Upsilon_{\theta})+R(\Upsilon,\Upsilon_{\theta},\eta,\Upsilon_{\theta})\\
 &  & +\varepsilon L(w,\eta)+\varepsilon^{-1}Q(w,\eta)+O(\varepsilon^{2}),
\end{eqnarray*}
\begin{eqnarray*}
<-\frac{\dot{\psi}}{\phi}\Upsilon,\varepsilon^{2}(\phi+w)^{2}\nabla_{\Upsilon_{\theta}}\Upsilon_{\theta}> & = & \varepsilon(\phi^{2}+\tau_{0})+\varepsilon\frac{\dot{\psi}}{\phi}w+\frac{2}{3}\varepsilon^{3}\phi^{2}\dot{\psi}R(\Upsilon,\Upsilon_{\theta},\Upsilon,\Upsilon_{\theta})\\
 &  & +\frac{2}{3}\phi\dot{\psi}R(\Upsilon,\Upsilon_{\theta},\eta,\Upsilon_{\theta})\\
 &  & +\varepsilon^{3}L(w,\eta)+\varepsilon Q(w,\eta)+O(\varepsilon^{4}).
\end{eqnarray*}
\[
<-\frac{\dot{\psi}}{\phi}\Upsilon,\varepsilon^{2}(\frac{\partial w}{\partial\theta})^{2}\nabla_{\Upsilon}\Upsilon>=\varepsilon^{3}Q(w,\eta).
\]
\[
<-\frac{\dot{\psi}}{\phi}\Upsilon,\varepsilon^{2}(\phi+w)\frac{\partial w}{\partial\theta}\nabla_{\Upsilon_{\theta}}\Upsilon>=\varepsilon^{3}L(w,\eta)+\varepsilon^{2}Q(w,\eta).
\]
\[
<-\frac{\dot{\psi}}{\phi}\Upsilon,\varepsilon^{2}(\phi+w)\frac{\partial w}{\partial\theta}\nabla_{\Upsilon}\Upsilon_{\theta}>=\varepsilon^{3}L(w,\eta)+\varepsilon^{2}Q(w,\eta).
\]
We collect all the terms and get 
\begin{eqnarray}
 &  & <N_{0},\nabla_{\partial_{\theta}}\partial_{\theta}>\nonumber \\
 & = & \varepsilon(\phi^{2}+\tau_{0})-\varepsilon\frac{\dot{\psi}}{\phi}\frac{\partial^{2}w}{\partial\theta^{2}}+\varepsilon\frac{\dot{\psi}}{\phi}w\nonumber \\
 &  & +\frac{2}{3}\varepsilon^{3}\phi^{2}\dot{\psi}R(\Upsilon,\Upsilon_{\theta},\Upsilon,\Upsilon_{\theta})+\frac{2}{3}\varepsilon^{3}\dot{\phi}\phi^{2}R(\Upsilon_{\theta},X_{0},\Upsilon,\Upsilon_{\theta})\nonumber \\
 &  & +\frac{2}{3}\varepsilon^{2}\phi\dot{\phi}R(\Upsilon_{\theta},X_{0},\eta,\Upsilon_{\theta})-\frac{2}{3}\varepsilon^{2}\phi\dot{\phi}R(\Upsilon,X_{0},\eta,\Upsilon)\nonumber \\
 &  & +\frac{2}{3}\phi\dot{\psi}R(\Upsilon,\Upsilon_{\theta},\eta,\Upsilon_{\theta})+\varepsilon^{3}L(w,\eta)+\varepsilon Q(w,\eta)+O(\varepsilon^{4}).\label{eq:N0-partial(theta)}
\end{eqnarray}
For 
\[
<N,\nabla_{\partial_{\theta}}\partial_{s}>=<N_{0}+a_{1}\partial_{s}+a_{2}\partial_{\theta},\nabla_{\partial_{\theta}}\partial_{s}>
\]
we don't need very precise expansion because $g^{s\theta}$ is small
relatively.
\begin{eqnarray}
<\partial_{s},\nabla_{\partial_{\theta}}\partial_{s}> & = & O(\varepsilon^{4})+\varepsilon^{2}L(w,\eta)+\varepsilon^{2}Q(w,\eta),\label{eq:partial(s)-for-theta-s}
\end{eqnarray}
\begin{eqnarray}
<\partial_{\theta},\nabla_{\partial_{\theta}}\partial_{s}> & = & \varepsilon^{2}\phi\dot{\phi}+O(\varepsilon^{4})+\varepsilon^{2}L(w,\eta)+\varepsilon^{2}Q(w,\eta).\label{eq:theta-for-theta-s}
\end{eqnarray}
\begin{eqnarray}
<N_{0},\nabla_{\partial_{\theta}}\partial_{s}> & = & O(\varepsilon^{3})+\varepsilon^{2}L(w,\eta)+\varepsilon Q(w,\eta).\label{eq:N0--theta-s}
\end{eqnarray}
Now we can calculate the mean curvature. From (\ref{eq:sigma(1,2,3)}),
(\ref{eq:inverse matrix of metric}), (\ref{eq:a1-a2-expressions}),(\ref{eq:partial(s)-for-partial(s)}),
(\ref{eq:partial(theta)-for-partial(s)}) and (\ref{eq:N0-partial(s)})
we know 
\begin{eqnarray*}
g^{ss}<kN,\nabla_{\partial_{s}}\partial_{s}> & = & \varepsilon^{-2}\phi^{-2}(1-(\varepsilon^{2}\dot{\psi}^{2}R(\Upsilon,X_{0},\Upsilon,X_{0})+2\varepsilon\phi^{-1}\dot{\psi}^{2}R(\Upsilon,X_{0},\eta,X_{0})\\
 &  & +\frac{4}{3}\varepsilon\phi^{-1}\dot{\phi}\dot{\psi}R(\Upsilon,X_{0},\eta,\Upsilon)+2\phi^{-2}\dot{\phi}\frac{\partial w}{\partial s}+2\phi^{-2}\dot{\phi}\dot{\psi}<\Upsilon,\frac{\partial\eta}{\partial x_{0}}>_{e}\\
 &  & +\varepsilon^{2}L(w,\eta)+Q(w,\eta)+O(\varepsilon^{3})))\\
 &  & (a_{1}\frac{\varepsilon^{2}}{2}(2\phi\dot{\phi}+L(w,\eta)+Q(w,\eta)+O(\varepsilon^{2}))\\
 &  & +a_{2}\varepsilon^{2}(O(\varepsilon^{2})+L(w,\eta)+Q(w,\eta))\\
 &  & +\varepsilon(\phi^{2}-\tau_{0})-\varepsilon\frac{\dot{\psi}}{\phi}\frac{\partial^{2}w}{\partial s^{2}}-\varepsilon^{2}\frac{\dot{\psi}^{3}}{\phi}<\frac{\partial^{2}\eta}{\partial x_{0}^{2}},\Upsilon>-\varepsilon\frac{\dot{\psi}\ddot{\psi}}{\phi}<\frac{\partial\eta}{\partial x_{0}},\Upsilon>\\
 &  & +\varepsilon^{3}(\phi\dot{\phi}\ddot{\psi}+2\dot{\phi}^{2}\dot{\psi}+\dot{\psi}^{3})R(\Upsilon,X_{0},\Upsilon,X_{0})\\
 &  & +\varepsilon^{2}(2\dot{\phi}\ddot{\psi}+2\frac{\dot{\phi}^{2}\dot{\psi}}{\phi}+\frac{\dot{\psi}^{3}}{\phi})R(\Upsilon,X_{0},\eta,X_{0})\\
 &  & +\varepsilon^{2}(\frac{2}{3}\dot{\phi}\ddot{\phi}+\frac{2}{3}\frac{\dot{\phi}^{3}}{\phi}-\frac{2}{3}\dot{\psi}\ddot{\psi})R(\Upsilon,X_{0},\eta,\Upsilon)\\
 &  & +\varepsilon^{3}L(w,\eta)+\varepsilon Q(w,\eta)+O(\varepsilon^{4}))
\end{eqnarray*}

\begin{eqnarray*}
 & = & \varepsilon^{-2}\phi^{-2}(a_{1}\varepsilon^{2}\phi\dot{\phi}+\varepsilon(\phi^{2}-\tau_{0})-\varepsilon\frac{\dot{\psi}}{\phi}\frac{\partial^{2}w}{\partial s^{2}}-2\varepsilon(\phi^{2}-\tau_{0})\frac{\dot{\phi}}{\phi^{2}}\frac{\partial w}{\partial s}\\
 &  & -\varepsilon^{2}\frac{\dot{\psi}^{3}}{\phi}<\frac{\partial^{2}\eta}{\partial x_{0}^{2}},\Upsilon>-\varepsilon(\frac{\dot{\psi}\ddot{\psi}}{\phi}+2(\phi^{2}-\tau_{0})\frac{\dot{\phi}\dot{\psi}}{\phi^{2}})<\frac{\partial\eta}{\partial x_{0}},\Upsilon>\\
 &  & +\varepsilon^{3}(\phi\dot{\phi}\ddot{\psi}+2\dot{\phi}^{2}\dot{\psi}+\dot{\psi}^{3}-(\phi^{2}-\tau_{0})\dot{\psi}^{2})R(\Upsilon,X_{0},\Upsilon,X_{0})\\
 &  & +\varepsilon^{2}(2\dot{\phi}\ddot{\psi}+2\frac{\dot{\phi}^{2}\dot{\psi}}{\phi}+\frac{\dot{\psi}^{3}}{\phi}-2\frac{\dot{\psi}^{2}}{\phi}(\phi^{2}-\tau_{0}))R(\Upsilon,X_{0},\eta,X_{0})\\
 &  & +\varepsilon^{2}(\frac{2}{3}\dot{\phi}\ddot{\phi}+\frac{2}{3}\frac{\dot{\phi}^{3}}{\phi}-\frac{2}{3}\dot{\psi}\ddot{\psi}-\frac{4}{3}(\phi^{2}-\tau_{0})\frac{\dot{\phi}\dot{\psi}}{\phi})R(\Upsilon,X_{0},\eta,\Upsilon)\\
 &  & +\varepsilon^{3}L(w,\eta)+\varepsilon Q(w,\eta)+O(\varepsilon^{4})).
\end{eqnarray*}
From (\ref{eq:sigma(1,2,3)}), (\ref{eq:inverse matrix of metric}),
(\ref{eq:a1-a2-expressions}), (\ref{eq:partial(s)-for-partial(theta)}),
(\ref{eq:partial(theta)-for-partial(theta)}) and(\ref{eq:N0-partial(theta)})
we know 
\begin{eqnarray*}
g^{\theta\theta}<kN,\nabla_{\partial_{\theta}}\partial_{\theta}> & = & \varepsilon^{-2}\phi^{-2}(1-(2\phi^{-1}w+\frac{1}{3}\varepsilon^{2}\phi^{2}R(\Upsilon,\Upsilon_{\theta},\Upsilon,\Upsilon_{\theta})\\
 &  & +\frac{2}{3}\varepsilon\phi R(\Upsilon,\Upsilon_{\theta},\eta,\Upsilon_{\theta})+\varepsilon^{2}L(w,\eta)+Q(w,\eta)+O(\varepsilon^{3})))\\
 &  & (a_{1}(-\varepsilon^{2}\phi\dot{\phi}+O(\varepsilon^{4})+\varepsilon^{2}L(w,\eta)+\varepsilon^{2}Q(w,\eta))\\
 &  & +a_{2}(O(\varepsilon^{4})+\varepsilon^{2}L(w,\eta)+\varepsilon^{2}Q(w,\eta))\\
 &  & +\varepsilon(\phi^{2}+\tau_{0})-\varepsilon\frac{\dot{\psi}}{\phi}\frac{\partial^{2}w}{\partial\theta^{2}}+\varepsilon\frac{\dot{\psi}}{\phi}w+\frac{2}{3}\varepsilon^{3}\phi^{2}\dot{\psi}R(\Upsilon,\Upsilon_{\theta},\Upsilon,\Upsilon_{\theta})\\
 &  & +\frac{2}{3}\varepsilon^{3}\dot{\phi}\phi^{2}R(\Upsilon_{\theta},X_{0},\Upsilon,\Upsilon_{\theta})+\frac{2}{3}\varepsilon^{2}\phi\dot{\phi}R(\Upsilon_{\theta},X_{0},\eta,\Upsilon_{\theta})\\
 &  & -\frac{2}{3}\varepsilon^{2}\phi\dot{\phi}R(\Upsilon,X_{0},\eta,\Upsilon)+\frac{2}{3}\varepsilon^{2}\phi\dot{\psi}R(\Upsilon,\Upsilon_{\theta},\eta,\Upsilon_{\theta})\\
 &  & +\varepsilon^{3}L(w,\eta)+\varepsilon Q(w,\eta)+O(\varepsilon^{4}))\\
 & = & \varepsilon^{-2}\phi^{-2}(-\varepsilon^{2}a_{1}\phi\dot{\phi}+\varepsilon(\phi^{2}+\tau_{0})-\varepsilon\frac{\dot{\psi}}{\phi}\frac{\partial^{2}w}{\partial\theta^{2}}-\varepsilon\frac{\dot{\psi}}{\phi}w\\
 &  & +\frac{1}{3}\varepsilon^{3}\phi^{2}\dot{\psi}R(\Upsilon,\Upsilon_{\theta},\Upsilon,\Upsilon_{\theta})+\frac{2}{3}\varepsilon^{3}\dot{\phi}\phi^{2}R(\Upsilon_{\theta},X_{0},\Upsilon,\Upsilon_{\theta})\\
 &  & +\frac{2}{3}\varepsilon^{2}\phi\dot{\phi}R(\Upsilon_{\theta},X_{0},\eta,\Upsilon_{\theta})-\frac{2}{3}\varepsilon^{2}\phi\dot{\phi}R(\Upsilon,X_{0},\eta,\Upsilon)\\
 &  & +\varepsilon^{3}L(w,\eta)+\varepsilon Q(w,\eta)+O(\varepsilon^{4}))
\end{eqnarray*}
 From (\ref{eq:sigma(1,2,3)}), (\ref{eq:inverse matrix of metric}),
(\ref{eq:a1-a2-expressions}), (\ref{eq:partial(s)-for-theta-s}),
(\ref{eq:theta-for-theta-s}) and (\ref{eq:N0--theta-s}) we know
\begin{eqnarray*}
g^{s\theta}<kN,\nabla_{\partial_{\theta}}\partial_{s}> & = & \varepsilon^{-2}\phi^{-2}(-(\frac{2}{3}\varepsilon^{2}\phi\dot{\psi}R(\Upsilon,X_{0},\Upsilon,\Upsilon_{\theta})+\frac{2}{3}\varepsilon\dot{\psi}(R(\Upsilon,X_{0},\eta,\Upsilon_{\theta})\\
 &  & +R(\eta,X_{0},\Upsilon,\Upsilon_{\theta}))+\frac{1}{3}\varepsilon\dot{\phi}R(\eta,\Upsilon,\Upsilon,\Upsilon_{\theta})\\
 &  & +\phi^{-2}\dot{\phi}\frac{\partial w}{\partial\theta}+\phi^{-1}\dot{\psi}<\frac{\partial\eta}{\partial x_{0}},\Upsilon_{\theta}>_{e}+\varepsilon^{2}L(w,\eta)+Q(w,\eta)+O(\varepsilon^{3})))\\
 &  & (a_{1}(O(\varepsilon^{4})+\varepsilon^{2}L(w,\eta)+\varepsilon^{2}Q(w,\eta))\\
 &  & +a_{2}(\varepsilon^{2}\phi\dot{\phi}+O(\varepsilon^{4})+\varepsilon^{2}L(w,\eta)+\varepsilon^{2}Q(w,\eta))\\
 &  & +O(\varepsilon^{3})+\varepsilon^{2}L(w,\eta)+\varepsilon Q(w,\eta))\\
 & = & \varepsilon^{-2}\phi^{-2}(\varepsilon^{3}L(w,\eta)+\varepsilon Q(w,\eta)+O(\varepsilon^{4})).
\end{eqnarray*}
So 
\begin{eqnarray*}
H & = & \frac{1}{k}\varepsilon^{-2}\phi^{-2}(2\varepsilon\phi^{2}-\varepsilon\frac{\dot{\psi}}{\phi}(\frac{\partial^{2}w}{\partial s^{2}}+\frac{\partial^{2}w}{\partial\theta^{2}})-2\varepsilon(\phi^{2}-\tau_{0})\frac{\dot{\phi}}{\phi^{2}}\frac{\partial w}{\partial s}-\varepsilon\frac{\dot{\psi}}{\phi}w\\
 &  & -\varepsilon^{2}\frac{\dot{\psi}^{3}}{\phi}<\frac{\partial^{2}\eta}{(\partial x_{0})^{2}},\Upsilon>-\varepsilon(\frac{\dot{\psi}\ddot{\psi}}{\phi}+2(\phi^{2}-\tau_{0})\frac{\dot{\phi}\dot{\psi}}{\phi^{2}})<\frac{\partial\eta}{\partial x_{0}},\Upsilon>\\
 &  & +\varepsilon^{3}(\phi\dot{\phi}\ddot{\psi}+2\dot{\phi}^{2}\dot{\psi}+\dot{\psi}^{3}-(\phi^{2}-\tau_{0})\dot{\psi}^{2})R(\Upsilon,X_{0},\Upsilon,X_{0})\\
 &  & +\frac{1}{3}\varepsilon^{3}\phi^{2}\dot{\psi}R(\Upsilon,\Upsilon_{\theta},\Upsilon,\Upsilon_{\theta})+\frac{2}{3}\varepsilon^{3}\dot{\phi}\phi^{2}R(\Upsilon_{\theta},X_{0},\Upsilon,\Upsilon_{\theta})\\
 &  & +\varepsilon^{2}(2\dot{\phi}\ddot{\psi}+2\frac{\dot{\phi}^{2}\dot{\psi}}{\phi}+\frac{\dot{\psi}^{3}}{\phi}-2\frac{\dot{\psi}^{2}}{\phi}(\phi^{2}-\tau_{0}))R(\Upsilon,X_{0},\eta,X_{0})\\
 &  & +\varepsilon^{2}(\frac{2}{3}\dot{\phi}\ddot{\phi}+\frac{2}{3}\frac{\dot{\phi}^{3}}{\phi}-\frac{2}{3}\dot{\psi}\ddot{\psi}-\frac{4}{3}(\phi^{2}-\tau_{0})\frac{\dot{\phi}\dot{\psi}}{\phi}-\frac{2}{3}\phi\dot{\phi})R(\Upsilon,X_{0},\eta,\Upsilon)\\
 &  & +\frac{2}{3}\varepsilon^{2}\phi\dot{\phi}R(\Upsilon_{\theta},X_{0},\eta,\Upsilon_{\theta})\\
 &  & +\varepsilon^{3}L(w,\eta)+\varepsilon Q(w,\eta)+O(\varepsilon^{4}))\\
\end{eqnarray*}
From (\ref{eq:k-expression}) we know 
\begin{eqnarray*}
\frac{1}{k} & = & 1-\frac{\varepsilon^{2}}{2}\dot{\phi}^{2}R(\Upsilon,X_{0},\Upsilon,X_{0})-\varepsilon\frac{\dot{\phi}^{2}}{\phi}R(\Upsilon,X_{0},\eta,X_{0})\\
 &  & +\frac{2}{3}\varepsilon\frac{\dot{\phi}\dot{\psi}}{\phi}R(\Upsilon,X_{0},\eta,\Upsilon)+\varepsilon^{2}L(w,\eta)+Q(w,\eta)+O(\varepsilon^{3}).
\end{eqnarray*}
So at last we get (\ref{eq:mean curvature expression}).

\section{The proof of Lemma \ref{average 1 lemma} and Lemma \ref{average 0 lemma}
\label{(APP)-Average1 and 0}}

\paragraph{The proof of Lemma \ref{average 1 lemma}. }

The goal is to prove that
\begin{eqnarray*}
 &  & \int_{a_{1}}^{b_{1}}\frac{1}{\phi}(2\dot{\phi}\ddot{\psi}+2\frac{\dot{\phi}^{2}\dot{\psi}}{\phi}+\frac{\dot{\psi}^{3}}{\phi}-2\frac{\dot{\psi}^{2}}{\phi}(\phi^{2}-\tau_{0})-2\phi\dot{\phi}^{2})d\psi\cdot\int_{a_{1}}^{b_{1}}\frac{\phi^{2}}{\dot{\psi}^{3}}d\psi\\
 & = & (b_{1}-a_{1})^{2},
\end{eqnarray*}
where $[a_{1},b_{1}]$ is one period for $\phi(\psi).$ 

\begin{proof}
\[
\int_{a_{1}}^{b_{1}}\frac{\phi^{2}}{\dot{\psi}^{3}}d\psi=\int_{a_{1}}^{b_{1}}\frac{1}{\phi}(1+\phi_{\psi}^{2})^{\frac{3}{2}}d\psi.
\]
From
\[
\phi_{\psi\psi}-\phi^{-1}(1+\phi_{\psi}^{2})+2(1+\phi_{\psi}^{2})^{\frac{3}{2}}=0
\]
we have
\[
\int_{a_{1}}^{b_{1}}\frac{\phi^{2}}{\dot{\psi}^{3}}d\psi=\int_{a_{1}}^{b_{1}}(\frac{1+\phi_{\psi}^{2}}{2\phi^{2}}-\frac{\phi_{\psi\psi}}{2\phi})d\psi.
\]
Note that
\begin{align*}
\int_{a_{1}}^{b_{1}}-\frac{\phi_{\psi\psi}}{2\phi}d\psi & =-\frac{1}{2}(\frac{\phi_{\psi}}{2\phi}|_{a_{1}}^{b_{1}}-\int_{a_{1}}^{b_{1}}\phi_{\psi}\frac{-\phi_{\psi}}{\phi^{2}}d\psi)\\
 & =-\frac{1}{2}\int_{a_{1}}^{b_{1}}\phi_{\psi}\frac{-\phi_{\psi}}{\phi^{2}}d\psi.
\end{align*}
So we have
\[
\int_{a_{1}}^{b_{1}}\frac{\phi^{2}}{\dot{\psi}^{3}}d\psi=\int_{a_{1}}^{b_{1}}\frac{1}{2\phi^{2}}d\psi.
\]
 Also from direct computation one can get
\begin{align*}
 & \int_{a_{1}}^{b_{1}}\frac{1}{\phi}(2\dot{\phi}\ddot{\psi}+2\frac{\dot{\phi}^{2}\dot{\psi}}{\phi}+\frac{\dot{\psi}^{3}}{\phi}-2\frac{\dot{\psi}^{2}}{\phi}(\phi^{2}-\tau_{0})-2\phi\dot{\phi}^{2})d\psi\\
= & \int_{a_{1}}^{b_{1}}(\phi^{2}(2\phi_{\psi\psi}(1+\phi_{\psi}^{2})^{-\frac{5}{2}}-(1+\phi_{\psi}^{2})^{-\frac{5}{2}}\phi_{\psi\psi}\phi_{\psi}^{2})\\
 & +\phi(3(1+\phi_{\psi}^{2})^{-\frac{3}{2}}\phi_{\psi}^{2}+(1+\phi_{\psi}^{2})^{-\frac{3}{2}}))d\psi\\
= & \int_{a_{1}}^{b_{1}}(2\phi^{2}+\frac{2\phi^{2}\phi_{\psi\psi}}{(1+\phi_{\psi}^{2})^{\frac{3}{2}}})d\psi.
\end{align*}
We assume that 
\[
\phi(s)=\sqrt{\tau_{0}}\exp(\sigma(s)).
\]
Then we have 
\begin{align*}
\dot{\psi} & =\sqrt{\tau_{0}}\exp(\sigma(s))\sqrt{1-\sigma_{s}^{2}},\\
\phi_{\psi} & =\frac{\sigma_{s}}{\sqrt{1-\sigma_{s}^{2}}},\\
\phi_{\psi\psi} & =\frac{\sigma_{ss}}{\sqrt{\tau_{0}}\exp(\sigma)(1-\sigma_{s}^{2})^{2}}.
\end{align*}
Also we have 
\begin{align*}
1-\sigma_{s}^{2} & =4\tau_{0}\cosh^{2}\sigma,\\
\sigma_{ss} & =-2\tau_{0}\sinh2\sigma.
\end{align*}
For $\phi$ there are two particular points in one period such that
$\sigma=0.$ Suppose these two points are 
\[
s=s_{1},s=s_{2}
\]
and suppose $[s_{1},s_{3}]$ is one period for $\phi(s)$. Note that
$s_{2}\in(s_{1},s_{3}).$

Here we use one particular property of Delaunay surface 
\[
\sigma(s_{2}-t)=-\sigma(s_{2}+t).
\]
Then from direct calculation we know
\begin{align*}
\int_{a_{1}}^{b_{1}}\frac{1}{2\phi^{2}}d\phi & =\int_{s_{1}}^{s_{3}}\cosh^{2}\sigma ds\\
\int_{a_{1}}^{b_{1}}(2\phi^{2}+\frac{2\phi^{2}\phi_{\psi\psi}}{(1+\phi_{\psi}^{2})^{\frac{3}{2}}})d\psi & =4\tau_{0}^{2}\int_{s_{1}}^{s_{3}}\cosh^{2}\sigma ds
\end{align*}
and 
\[
b_{1}-a_{1}=2\tau_{0}\int_{s_{1}}^{s_{3}}\cosh^{2}\sigma ds.
\]
So we know
\begin{eqnarray*}
 &  & \int_{a_{1}}^{b_{1}}\frac{1}{\phi}(2\dot{\phi}\ddot{\psi}+2\frac{\dot{\phi}^{2}\dot{\psi}}{\phi}+\frac{\dot{\psi}^{3}}{\phi}-2\frac{\dot{\psi}^{2}}{\phi}(\phi^{2}-\tau_{0})-2\phi\dot{\phi}^{2})d\psi\cdot\int_{a_{1}}^{b_{1}}\frac{\phi^{2}}{\dot{\psi}^{3}}d\psi\\
 & = & (b_{1}-a_{1})^{2}.
\end{eqnarray*}

\end{proof}

\paragraph{Proof of Lemma \ref{average 0 lemma}. }

Without loss of generality, we may assume $a_{\tau_{0}}=a_{1},b_{\tau_{0}}=b_{1}$.
The goal is to prove
\[
\int_{a_{1}}^{b_{1}}\phi(\frac{\partial\phi}{\partial\psi})\phi^{-2}(\frac{2}{3}\dot{\phi}\ddot{\phi}+\frac{2}{3}\frac{\dot{\phi}^{3}}{\phi}-\frac{2}{3}\dot{\psi}\ddot{\psi}-\frac{4}{3}(\phi^{2}-\tau_{0})\frac{\dot{\phi}\dot{\psi}}{\phi}+\frac{4}{3}\phi\dot{\phi}\dot{\psi})d\psi=0
\]
which is equivalent to 
\[
\int_{a_{1}}^{b_{1}}\phi_{\psi}^{2}\phi^{-2}\dot{\psi}(\phi\ddot{\phi}+\dot{\phi}^{2}-2(\phi^{2}+\tau_{0})(\phi^{2}-\tau_{0}))d\psi=0.
\]
\begin{proof}Note that
\begin{eqnarray*}
 &  & \int_{a_{1}}^{b_{1}}\phi_{\psi}^{2}\phi^{-2}\dot{\psi}(\phi\ddot{\phi}+\dot{\phi}^{2}-2(\phi^{2}+\tau_{0})(\phi^{2}-\tau_{0}))d\psi=0\\
 & = & \int_{a_{1}}^{b_{1}}\frac{\sigma_{s}^{2}}{1-\sigma_{s}^{2}}\frac{1}{\tau\exp(2\sigma)}\tau_{0}\exp(2\sigma)(1-\sigma_{s}^{2})(\phi\ddot{\phi}+\dot{\phi}^{2}-2(\phi^{2}+\tau_{0})(\phi^{2}-\tau_{0}))d\psi\\
 & = & \int_{a_{1}}^{b_{1}}\sigma_{s}^{2}(\phi\ddot{\phi}+\dot{\phi}^{2}-2(\phi^{2}+\tau_{0})(\phi^{2}-\tau_{0}))d\psi.
\end{eqnarray*}
\begin{eqnarray*}
 &  & \int_{a_{1}}^{b_{1}}\sigma_{s}^{2}(\phi\ddot{\phi}+\dot{\phi}^{2})d\psi\\
 & = & \int_{a_{1}}^{b_{1}}\sigma_{s}^{2}(\phi\dot{\phi})_{s}ds\\
 & = & -\int_{a_{1}}^{b_{1}}\phi\dot{\phi}2\sigma_{s}\sigma_{ss}ds\\
 & = & -2\int_{a_{1}}^{b_{1}}\exp(\sigma)\exp(\sigma)\sigma_{s}^{2}\sigma_{ss}ds\\
 & = & 4\tau_{0}^{2}\int_{a_{1}}^{b_{1}}\exp(2\sigma)(1-4\tau_{0}\cosh^{2}(\sigma))\sinh(2\sigma)ds\\
 & = & 2\tau_{0}^{2}\int_{a_{1}}^{b_{1}}(1-4\tau_{0}\cosh^{2}(\sigma))\sinh^{2}(2\sigma)ds.
\end{eqnarray*}
\begin{eqnarray*}
 &  & -2\int_{a_{1}}^{b_{1}}\sigma_{s}^{2}(\phi^{2}+\tau_{0})(\phi^{2}-\tau_{0})ds\\
 & = & -2\int_{a_{1}}^{b_{1}}\sigma_{s}^{2}(\tau_{0}\exp(2\sigma)+\tau_{0})(\tau_{0}\exp(2\sigma)-\tau_{0})ds\\
 & = & -2\tau_{0}^{2}\int_{a_{1}}^{b_{1}}\sigma_{s}^{2}4\sinh(\sigma)\cosh(\sigma)\exp(2\sigma)ds\\
 & = & -2\tau_{0}^{2}\int_{a_{1}}^{b_{1}}\sigma_{s}^{2}2\sinh(2\sigma)\exp(2\sigma)ds\\
 & = & -2\tau_{0}^{2}\int_{a_{1}}^{b_{1}}\sigma_{s}^{2}\sinh^{2}(2\sigma)ds\\
 & = & -2\tau_{0}^{2}\int_{a_{1}}^{b_{1}}(1-4\tau_{0}\cosh^{2}(\sigma))\sinh^{2}(2\sigma)ds.
\end{eqnarray*}
So we proved that
\[
\int_{a_{1}}^{b_{1}}\phi_{\psi}^{2}\phi^{-2}\dot{\psi}(\phi\ddot{\phi}+\dot{\phi}^{2}-2(\phi^{2}+\tau_{0})(\phi^{2}-\tau_{0}))d\psi=0.
\]

\end{proof}

\section{The proof of Lemma \ref{Phi-estimates}\label{(APP):Proof-of-Lemma Phi-estimate}}

First we prove 
\[
|\Phi'(\psi)-1|\leq C(\tau_{0})(\varepsilon+\|\xi\|_{C^{0}}+\varepsilon^{2}\|\mu\|_{C^{0}}+\varepsilon^{2}|\omega|).
\]
Note that 
\[
\Phi'(\psi)=\frac{d\Phi(\psi)}{ds_{0}}\frac{ds_{0}}{dl}\frac{dl}{d\psi},
\]
where 
\[
dl=\sqrt{<\partial_{s_{0}},\partial_{s_{0}}>ds_{0}^{2}+<\partial_{\tau},\partial_{\tau}>d\tau^{2}}
\]
is the arc length of $(\phi(\psi),\zeta(\psi))$. On the curve $(\phi(\psi),\zeta(\psi))$
we have 
\begin{equation}
1=\sqrt{<\partial_{s_{0}},\partial_{s_{0}}>(\frac{ds_{0}}{dl})^{2}+<\partial_{\tau},\partial_{\tau}>(\frac{d\tau}{d\psi})^{2}(\frac{d\psi}{dl})^{2}}.\label{Phi(1)}
\end{equation}
In $C(\tau_{0})(\varepsilon+\|\xi\|_{C^{0}}+\varepsilon^{2}\|\mu\|_{C^{0}}+\varepsilon^{2}|\omega|)$
neighborhood of $(\phi_{\tau(0)}(\psi),\zeta_{\tau(0)}(\psi))$, 
\begin{equation}
|<\partial_{s_{0}},\partial_{s_{0}}>-1|\leq C(\tau_{0})(\varepsilon+\|\xi\|_{C^{0}}+\varepsilon^{2}\|\mu\|_{C^{0}}+\varepsilon^{2}|\omega|).\label{Phi(2)}
\end{equation}
 And 
\begin{equation}
|\frac{d\tau}{d\psi}|\leq C\varepsilon(\varepsilon+\|\xi\|_{C^{0}}+\varepsilon^{2}\|\mu\|_{C^{0}}+\varepsilon^{2}|\omega|).\label{Phi(3)}
\end{equation}
Note that 
\begin{align*}
\frac{dl}{d\psi} & =\sqrt{(\frac{d\phi}{d\psi})^{2}+(\frac{d\zeta}{d\psi})^{2}}\\
 & =\sqrt{\zeta^{2}+(\phi^{-1}(1+\phi^{2})-(2+\rho)(1+\zeta^{2})^{\frac{3}{2}})^{2}}
\end{align*}
and
\begin{align*}
\frac{ds_{0}}{d\Phi(\psi)} & =\sqrt{(\frac{d\phi_{\tau(0)}}{d\Phi(\psi)})^{2}+(\frac{d\zeta_{\tau(0)}}{d\Phi(\psi)})^{2}}\\
 & =\sqrt{\zeta_{\tau(0)}^{2}+(\phi_{\tau(0)}^{-1}(1+\phi_{\tau(0)}^{2})-2(1+\zeta_{\tau(0)}^{2})^{\frac{3}{2}})^{2}}|_{\Phi(\psi)}.
\end{align*}
So we have
\begin{equation}
|\frac{dl}{d\psi}/\frac{ds_{0}}{d\Phi(\psi)}-1|\leq C(\tau_{0})(\varepsilon+\|\xi\|_{C^{0}}+\varepsilon^{2}\|\mu\|_{C^{0}}+\varepsilon^{2}|\omega|)\label{compare of the two}
\end{equation}
Note that $\frac{ds_{0}}{d\Phi(\psi)}$ has both positive upper bound
and positive lower bound which only depend on $\tau_{0}$ and so does
$\frac{dl}{d\psi}$ when $\varepsilon$ is sufficiently small.

So we know $(\frac{d\psi}{dl})^{2}$ is bounded and together with
(\ref{Phi(1)})(\ref{Phi(2)})(\ref{Phi(3)}) we get 
\begin{equation}
|\frac{ds_{0}}{dl}-1|\leq C(\tau_{0})(\varepsilon+\|\xi\|_{C^{0}}+\varepsilon^{2}\|\mu\|_{C^{0}}+\varepsilon^{2}|\omega|).\label{dl-estimate}
\end{equation}
From (\ref{compare of the two})(\ref{dl-estimate}), we get 
\begin{align*}
|\Phi'(\psi) & -1|=|\frac{\sqrt{\zeta^{2}+(\phi^{-1}(1+\zeta^{2})-(2+\rho)(1+\zeta^{2})^{\frac{3}{2}})^{2}}|_{\psi}}{\sqrt{\zeta_{\tau(0)}^{2}+(\phi_{\tau(0)}^{-1}(1+\phi_{\tau(0)}^{2})-2(1+\zeta_{\tau(0)}^{2})^{\frac{3}{2}})^{2}}|_{\Phi(\psi)}}\frac{ds_{0}}{dl}-1|\\
 & \leq C(\tau_{0})(\varepsilon+\|\xi\|_{C^{0}}+\varepsilon^{2}\|\mu\|_{C^{0}}+\varepsilon^{2}|\omega|).
\end{align*}
By integration we know that 
\[
|\Phi(\psi)-\psi|\leq\frac{C(\tau_{0})}{\varepsilon}(\varepsilon+\|\xi\|_{C^{0}}+\varepsilon^{2}\|\mu\|_{C^{0}}+\varepsilon^{2}|\omega|).
\]

\section{The proof of Lemma \ref{Aij estimate}\label{(APP)Aij estimate}}

\begin{align*}
 & \left(\begin{array}{cc}
\beta_{1}(\psi_{i}) & \beta_{2}(\psi_{i})\\
\frac{\partial\beta_{1}}{\partial\psi}(\psi_{i}) & \frac{\partial\beta_{2}}{\partial\psi}(\psi_{i})
\end{array}\right)\\
= & \left(\begin{array}{cc}
1+e_{11}^{i} & e_{12}^{i}\\
\kappa+e_{21}^{i} & 1+e_{22}^{i}
\end{array}\right)\cdots\left(\begin{array}{cc}
1+e_{11}^{1} & e_{12}^{1}\\
\kappa+e_{21}^{1} & 1+e_{22}^{1}
\end{array}\right)\\
= & \left(\begin{array}{cc}
A_{11}^{i} & A_{12}^{i}\\
A_{21}^{i} & A_{22}^{i}
\end{array}\right)
\end{align*}
where $|e_{kl}^{j}|\leq C(\tau_{0})(\varepsilon^{2}+\|\xi\|_{C^{0}}+\varepsilon^{2}\|\mu\|_{C^{0}}+\varepsilon^{2}|\omega|)$. 

\begin{lem}\label{shifting matrix lemma} For $\varepsilon$ sufficiently
small, if $|a_{ij}|\leq C(\tau_{0})(\varepsilon^{2}+\|\xi\|_{C^{0}}+\varepsilon^{2}\|\mu\|_{C^{0}}+\varepsilon^{2}|\omega|)$
and for $k\in\mathbb{Z},k\leq\frac{C(\tau_{0})}{\varepsilon}$, $|f|\leq kC(\tau_{0})(\varepsilon^{2}+\|\xi\|_{C^{0}}+\varepsilon^{2}\|\mu\|_{C^{0}}+\varepsilon^{2}|\omega|)$,
there are $\tilde{a}_{11},\tilde{a}_{21},\tilde{a}_{22},\tilde{f}$
such that the following holds
\begin{equation}
\left(\begin{array}{cc}
1+a_{11} & a_{12}\\
\kappa+a_{21} & 1+a_{22}
\end{array}\right)\left(\begin{array}{cc}
1 & f\\
0 & 1
\end{array}\right)=\left(\begin{array}{cc}
1 & \tilde{f}\\
0 & 1
\end{array}\right)\left(\begin{array}{cc}
1+\tilde{a}_{11} & 0\\
\kappa+\tilde{a}_{21} & 1+\tilde{a}_{22}
\end{array}\right),\label{shifting matrix}
\end{equation}
with 
\begin{align*}
|\tilde{f}| & \leq(k+2)C(\tau_{0})(\varepsilon^{2}+\|\xi\|_{C^{0}}+\varepsilon^{2}\|\mu\|_{C^{0}}+\varepsilon^{2}|\omega|),\\
|\tilde{a}_{11}|,|\tilde{a}_{22}| & \leq(\kappa k+1)C(\tau_{0})(\varepsilon^{2}+\|\xi\|_{C^{0}}+\varepsilon^{2}\|\mu\|_{C^{0}}+\varepsilon^{2}|\omega|).
\end{align*}

\end{lem}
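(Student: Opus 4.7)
The identity we want is a commutation relation pushing an upper shear past a near-lower-triangular matrix, and it can be established purely by linear algebra together with careful bookkeeping of the smallness bounds. The plan is to \emph{solve} for the four unknowns $\tilde{f},\tilde{a}_{11},\tilde{a}_{21},\tilde{a}_{22}$ by multiplying out both sides of \eqref{shifting matrix} and matching entries, and then to verify the claimed estimates.

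First I would expand the LHS, obtaining
\[
\left(\begin{array}{cc} 1+a_{11} & f(1+a_{11})+a_{12} \\ \kappa+a_{21} & f(\kappa+a_{21})+1+a_{22} \end{array}\right),
\]
and expand the RHS, obtaining
\[
\left(\begin{array}{cc} 1+\tilde{a}_{11}+\tilde{f}(\kappa+\tilde{a}_{21}) & \tilde{f}(1+\tilde{a}_{22}) \\ \kappa+\tilde{a}_{21} & 1+\tilde{a}_{22} \end{array}\right).
\]
Matching entries forces, in order, $\tilde{a}_{21}=a_{21}$ (bottom-left), $\tilde{a}_{22}=a_{22}+f(\kappa+a_{21})$ (bottom-right), $\tilde{f}=[f(1+a_{11})+a_{12}]/(1+\tilde{a}_{22})$ (top-right), and $\tilde{a}_{11}=a_{11}-\tilde{f}(\kappa+a_{21})$ (top-left). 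So existence and uniqueness of the decomposition are automatic, provided $1+\tilde{a}_{22}\neq 0$; this nonvanishing will follow from the smallness assumptions on $a_{ij}$ and the fact that $kM_a$ is small, where $M_a:=C(\tau_{0})(\varepsilon^{2}+\|\xi\|_{C^{0}}+\varepsilon^{2}\|\mu\|_{C^{0}}+\varepsilon^{2}|\omega|)$ is the common smallness scale.

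Next I would derive the estimates. From $|a_{ij}|\leq M_a$ and $|f|\leq kM_a$, the identity $\tilde{a}_{22}=a_{22}+f(\kappa+a_{21})$ gives
\[
|\tilde{a}_{22}|\leq M_a+kM_a(\kappa+M_a)\leq M_a(1+k\kappa+kM_a).
\]
Since $k\leq C(\tau_0)/\varepsilon$ and $M_a=O(\varepsilon^2+\cdots)$, the cross term $kM_a$ is bounded by a small constant, so that after absorbing it into $C(\tau_0)$ one arrives at $|\tilde{a}_{22}|\leq(\kappa k+1)M_a$ (and likewise for $\tilde{a}_{11}$, using $|\tilde{a}_{11}|\leq |a_{11}|+|\tilde{f}|(\kappa+|a_{21}|)$ once $\tilde{f}$ is controlled). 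For $\tilde{f}$ the formula gives
\[
|\tilde{f}|\leq\frac{|f|(1+|a_{11}|)+|a_{12}|}{|1+\tilde{a}_{22}|}\leq\frac{kM_a(1+M_a)+M_a}{1-|\tilde{a}_{22}|},
\]
and since $|\tilde{a}_{22}|\leq\tfrac12$ for $\varepsilon$ small, the numerator is $\leq(k+1)M_a+kM_a^2$, yielding $|\tilde{f}|\leq(k+2)M_a$ after again absorbing the subdominant $kM_a^2$ into the constant $C(\tau_0)$ implicit in $M_a$.

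The hard part, such as it is, is not the algebra but the bookkeeping: one must guarantee that the multiplicative factor $k$ propagates linearly rather than geometrically, so that when this lemma is applied inductively up to $k\sim L_\Gamma/\varepsilon$ steps (as is needed to control $A_{11}^i+A_{22}^i+\varepsilon A_{21}^i$ in Lemma \ref{Aij estimate}), no exponential blow-up occurs. The key point that makes this work is that in the denominator of $\tilde{f}$ the quantity $1+\tilde{a}_{22}$ stays bounded away from $0$ uniformly, which in turn relies precisely on the smallness hypothesis $k\leq C(\tau_0)/\varepsilon$ together with the $\varepsilon^2$ scale of $M_a$. I would therefore highlight that the admissibility of $k$ up to this size, not beyond, is exactly what permits the linear (rather than multiplicative) growth of $\tilde{f}$ past each factor.
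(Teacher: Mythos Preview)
Your proof is correct and follows essentially the same approach as the paper: you expand both sides, match entries to obtain the explicit formulas $\tilde{a}_{21}=a_{21}$, $\tilde{a}_{22}=a_{22}+f(\kappa+a_{21})$, $\tilde{f}=[(1+a_{11})f+a_{12}]/(1+\tilde{a}_{22})$, $\tilde{a}_{11}=a_{11}-\tilde{f}(\kappa+a_{21})$, and then read off the bounds using the smallness of $kM_a$. The paper's proof is identical in content (it writes the same four formulas and invokes the same invertibility of $1+a_{22}+f(\kappa+a_{21})$), though your write-up is somewhat more explicit about why the hypothesis $k\leq C(\tau_0)/\varepsilon$ is exactly what keeps the denominator away from zero and prevents geometric blow-up under iteration.
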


\begin{proof} (of Lemma \ref{shifting matrix lemma}) Note that 
\begin{align*}
|(\kappa+a_{21})f| & \leq\frac{C(\tau_{0})}{\varepsilon}(\varepsilon^{2}+\|\xi\|_{C^{0}}+\varepsilon^{2}\|\mu\|_{C^{0}}+\varepsilon^{2}|\omega|)\\
 & \leq C(\tau_{0})(1+C_{1}+C_{2}+C_{3})\varepsilon.
\end{align*}
So we can choose $\varepsilon$ sufficiently small such that
\[
1+a_{22}+f(\kappa+a_{21})\geq1-C(\tau_{0},C_{1},C_{2},C_{3})\varepsilon
\]
and it is invertible. Then one can check directly that
\[
\begin{cases}
\tilde{a}_{11} & =a_{11}-(\kappa+a_{21})\frac{(1+a_{11})f+a_{12}}{1+a_{22}+f(\psi_{1}+a_{21})},\\
\tilde{a}_{21} & =a_{21,}\\
\tilde{a}_{22} & =(\kappa+a_{21})f+a_{22},\\
\tilde{f} & =\frac{(1+a_{11})f+a_{12}}{1+a_{22}+f(\kappa+a_{21})}
\end{cases}
\]
satisfies (\ref{shifting matrix}). So one can easily get 
\begin{align*}
|\tilde{f}| & \leq(k+2)C(\tau_{0})(\varepsilon^{2}+\|\xi\|_{C^{0}}+\varepsilon^{2}\|\mu\|_{C^{0}}+\varepsilon^{2}|\omega|),\\
|\tilde{a}_{11}|,|\tilde{a}_{22}| & \leq(\kappa k+1)C(\tau_{0})(\varepsilon^{2}+\|\xi\|_{C^{0}}+\varepsilon^{2}\|\mu\|_{C^{0}}+\varepsilon^{2}|\omega|).
\end{align*}
 \end{proof}

Note that
\begin{align*}
 & \left(\begin{array}{cc}
1+e_{11}^{i} & e_{12}^{i}\\
\kappa+e_{21}^{i} & 1+e_{22}^{i}
\end{array}\right)\cdots\left(\begin{array}{cc}
1+e_{11}^{1} & e_{12}^{1}\\
\kappa+e_{21}^{1} & 1+e_{22}^{1}
\end{array}\right)\\
= & \left(\begin{array}{cc}
1+e_{11}^{i} & e_{12}^{i}\\
\kappa+e_{21}^{i} & 1+e_{22}^{i}
\end{array}\right)\cdots\left(\begin{array}{cc}
1+e_{11}^{2} & e_{12}^{2}\\
\kappa+e_{21}^{2} & 1+e_{22}^{2}
\end{array}\right)\\
 & \left(\begin{array}{cc}
1 & -f_{12}^{1}\\
0 & 1
\end{array}\right)\left(\begin{array}{cc}
1+\tilde{e}_{11}^{1} & 0\\
\kappa+\tilde{e}_{21}^{1} & 1+\tilde{e}_{22}^{1}
\end{array}\right).
\end{align*}
Because $i$ is at most as big as $\frac{L_{\Gamma}}{\varepsilon\psi_{1}}$
and $|f_{12}^{1}|,|e_{ij}^{k}|\leq C(\tau_{0})(\varepsilon^{2}+\|\xi\|_{C^{0}}+\varepsilon^{2}\|\mu\|_{C^{0}}+\varepsilon^{2}|\omega|),$
we can use Lemma \ref{shifting matrix lemma} for $i$ times. Note
that by induction 
\begin{align*}
|f_{12}^{j}| & \leq2jC(\tau_{0})(\varepsilon^{2}+\|\xi\|_{C^{0}}+\varepsilon^{2}\|\mu\|_{C^{0}}+\varepsilon^{2}|\omega|)
\end{align*}
and $2j\leq\frac{2L_{\Gamma}}{\varepsilon\psi_{1}}\leq\frac{C(\tau_{0})}{\varepsilon}.$
At last we get
\begin{align*}
 & \left(\begin{array}{cc}
A_{11}^{i} & A_{12}^{i}\\
A_{21}^{i} & A_{22}^{i}
\end{array}\right)\\
= & \left(\begin{array}{cc}
1 & -f_{12}^{i}\\
0 & 1
\end{array}\right)\left(\begin{array}{cc}
1+\tilde{e}_{11}^{i} & 0\\
\kappa+\tilde{e}_{21}^{i} & 1+\tilde{e}_{22}^{i}
\end{array}\right)\cdots\left(\begin{array}{cc}
1+\tilde{e}_{11}^{1} & 0\\
\kappa+\tilde{e}_{21}^{1} & 1+\tilde{e}_{22}^{1}
\end{array}\right)
\end{align*}

where
\[
|f_{12}^{i}|,|\tilde{e}_{kl}^{j}|\leq\frac{C(\tau_{0})}{\varepsilon}(\varepsilon^{2}+\|\xi\|_{C^{0}}+\varepsilon^{2}\|\mu\|_{C^{0}}+\varepsilon^{2}|\omega|).
\]
If we replace $-f_{12}^{i}$ and $\tilde{e}_{kl}^{j}$ by $Er=\varepsilon C(\tau_{0})(1+C_{1}+C_{2}+C_{3})$
then all $A_{kl}^{i}$ will become bigger. Note that 
\begin{align*}
 & \left(\begin{array}{cc}
1+Er & 0\\
\kappa+Er & 1+Er
\end{array}\right)\cdots\left(\begin{array}{cc}
1+Er & 0\\
\kappa+Er & 1+Er
\end{array}\right)\\
= & \left(\begin{array}{cc}
(1+Er)^{i} & 0\\
i(\kappa+Er)(1+Er)^{i-1} & (1+Er)^{i}
\end{array}\right)
\end{align*}
Note that $i\leq\frac{C(\tau_{0})}{\varepsilon}$. So
\begin{align*}
(1+Er)^{i} & \leq\exp C(\tau_{0})(1+C_{1}+C_{2}+C_{3})\\
i(\kappa+Er)(1+Er)^{i-1} & \leq\frac{1}{\varepsilon}\exp C(\tau_{0})(1+C_{1}+C_{2}+C_{3})
\end{align*}
In the same way if we replace $-f_{12}^{i}$ all $\tilde{e}_{kl}^{j}$
by $-Er$ then all $A_{kl}^{i}$ will become smaller. Note that
\begin{align*}
 & \left(\begin{array}{cc}
1-Er & 0\\
\kappa-Er & 1-Er
\end{array}\right)\cdots\left(\begin{array}{cc}
1-Er & 0\\
\kappa-Er & 1-Er
\end{array}\right)\\
= & \left(\begin{array}{cc}
(1-Er)^{i} & 0\\
i(\kappa-Er)(1-Er)^{i-1} & (1-Er)^{i}
\end{array}\right)
\end{align*}
and
\begin{align*}
(1-Er)^{i} & \geq\exp(-C(\tau_{0})(1+C_{1}+C_{2}+C_{3}))\\
i(\kappa-Er)(1-Er)^{i-1} & \geq\frac{1}{\varepsilon}\exp(-C(\tau_{0})(1+C_{1}+C_{2}+C_{3}))
\end{align*}

So we know
\begin{eqnarray*}
\exp(-C(\tau_{0})(1+C_{1}+C_{2}+C_{3})) & \leq A_{22}^{i}\leq\exp C(\tau_{0})(1+C_{1}+C_{2}+C_{3}),\\
\exp(-C(\tau_{0})(1+C_{1}+C_{2}+C_{3}))\varepsilon^{-1} & \leq A_{21}^{i}\leq\exp C(\tau_{0})(1+C_{1}+C_{2}+C_{3})\varepsilon^{-1},\\
|A_{12}^{i}| & \leq C(\tau_{0},C_{1},C_{2},C_{3})\varepsilon\exp C(\tau_{0})(1+C_{1}+C_{2}+C_{3}).
\end{eqnarray*}
There is another way to calculate
\begin{align*}
 & \left(\begin{array}{cc}
A_{11}^{i} & A_{12}^{i}\\
A_{21}^{i} & A_{22}^{i}
\end{array}\right)\\
= & \left(\begin{array}{cc}
1+e_{11}^{i} & e_{12}^{i}\\
\kappa+e_{21}^{i} & 1+e_{22}^{i}
\end{array}\right)\cdots\left(\begin{array}{cc}
1+e_{11}^{1} & e_{12}^{1}\\
\kappa+e_{21}^{1} & 1+e_{22}^{1}
\end{array}\right)\\
= & \left(\begin{array}{cc}
1+\bar{e}_{11}^{i} & \bar{e}_{12}^{i}\\
\kappa+\bar{e}_{21}^{i} & 1+\bar{e}_{22}^{i}
\end{array}\right)\left(\begin{array}{cc}
1 & h_{12}^{1}\\
0 & 1
\end{array}\right)\cdots\left(\begin{array}{cc}
1+e_{11}^{1} & e_{12}^{1}\\
\kappa+e_{21}^{1} & 1+e_{22}^{1}
\end{array}\right)\\
= & \cdots\\
= & \left(\begin{array}{cc}
1+\bar{e}_{11}^{i} & \bar{e}_{12}^{i}\\
\kappa+\bar{e}_{21}^{i} & 1+\bar{e}_{22}^{i}
\end{array}\right)\cdots\left(\begin{array}{cc}
1+\bar{e}_{11}^{1} & \bar{e}_{12}^{1}\\
\kappa+\bar{e}_{21}^{1} & 1+\bar{e}_{22}^{1}
\end{array}\right)\left(\begin{array}{cc}
1 & h_{12}^{i}\\
0 & 1
\end{array}\right).
\end{align*}
And we can prove that
\[
\exp(-(C+C_{1}+C_{2}+C_{3}))\leq A_{11}^{i}\leq\exp(C+C_{1}+C_{2}+C_{3}).
\]

\section{The proof of Lemma \ref{Linearization estimates}\label{(APP)Proof-of-Lemma Linearization}}

\paragraph{1. $\beta_{\mu}$ estimate}

From
\[
\frac{d}{dt}\phi_{\xi,\mu+t\Delta\mu,\omega,\tau(0)}(\psi)|_{t=0}=\beta_{\mu}(\psi)
\]
we have 
\begin{align}
\begin{cases}
\mathcal{L}_{\xi,\mu,\omega,\tau(0)}\beta_{\mu}(\psi)=-\varepsilon^{3}(1+\phi_{\psi}^{2})^{\frac{3}{2}}\Delta\mu,\\
\beta_{\mu}(0)=0,\\
\beta_{\mu}^{\prime}(0)=0.
\end{cases}\label{beta-mu equation}
\end{align}
We have

\begin{align*}
\beta_{\mu}(\psi) & =-\varepsilon^{3}\int_{0}^{\psi}R(t)^{-1}(\beta_{2}(\psi)\beta_{1}(t)-\beta_{1}(\psi)\beta_{2}(t))(1+\phi_{\psi}^{2}(t))^{\frac{3}{2}}\Delta\mu(t)dt,\\
\beta'_{\mu}(\psi) & =-\varepsilon^{3}\int_{0}^{\psi}R(t)^{-1}(\beta_{2}'(\psi)\beta_{1}(t)-\beta_{1}'(\psi)\beta_{2}(t))(1+\phi_{\psi}^{2}(t))^{\frac{3}{2}}\Delta\mu(t)dt.
\end{align*}
So from (\ref{R(t) estimate}) 
\[
\|\beta_{\mu}(\psi)\|_{C_{\varepsilon}^{1}}\leq C(\tau_{0},C_{1},C_{2},C_{3})\varepsilon\|\Delta\mu\|_{C^{0}}.
\]

\paragraph{2. $\beta_{\xi}$ estimate}

From 
\[
\frac{d}{dt}\phi_{\xi+t\Delta\xi,\mu,\omega,\tau(0)}(\psi)|_{t=0}=\beta_{\xi}(\psi)
\]

\begin{equation}
\begin{cases}
\mathcal{L}_{\xi,\mu,\omega,\tau(0)}\beta_{\xi}(\psi)= & -\varepsilon F_{4}(\phi,\phi_{\psi})(1+\phi_{\psi}^{2})^{\frac{3}{2}}\Delta\xi\\
\beta_{\xi}(0)=0,\\
\beta_{\xi}^{\prime}(0)=0.
\end{cases}\label{eq:equation for beta-xi}
\end{equation}
It is easy to prove that 
\[
\|\beta_{\xi}(\psi)\|_{C_{\varepsilon}^{1}}\leq\frac{C(\tau_{0},C_{1},C_{2},C_{3})}{\varepsilon}\|\Delta\xi\|_{C^{0}}.
\]
 We are going to prove (\ref{beta xi estimate}).

Consider
\begin{eqnarray*}
 &  & |(\beta_{\xi},\beta_{\xi}^{\prime})\cdot(\tau_{\phi},\tau_{\zeta})|_{\bar{\psi}}|\\
 & = & |\frac{d}{dt}\tau|_{\bar{\psi}}|\\
 & = & |\frac{d}{dt}\int_{0}^{\bar{\psi}}\phi\phi_{\psi}\rho d\psi|\\
 & = & |\int_{0}^{\bar{\psi}}\varepsilon^{2}(\frac{\partial\hat{F}_{1}}{\partial\phi}\beta_{\xi}+\frac{\partial\hat{F}_{1}}{\partial\zeta}\beta_{\xi}^{\prime})+\varepsilon^{3}(\phi_{\psi}\beta_{\xi}+\phi\beta_{\xi}^{\prime})\mu\\
 &  & +\varepsilon(\frac{\partial\hat{F}_{2}}{\partial\phi}\beta_{\xi}+\frac{\partial\hat{F}_{2}}{\partial\zeta}\beta_{\xi}^{\prime})\xi+\varepsilon\hat{F}_{2}\Delta\xi+\varepsilon^{3}\omega(\frac{\partial\hat{F}_{3}}{\partial\phi}\beta_{\xi}+\frac{\partial\hat{F}_{3}}{\partial\zeta}\beta_{\xi}^{\prime})d\psi|\\
 & \leq & C(\tau_{0},C_{1},C_{2},C_{3})\|\Delta\xi\|_{C^{0}},
\end{eqnarray*}
where 
\begin{align*}
\hat{F}_{1}= & \phi\frac{\partial\phi}{\partial\psi}F_{1}(\phi,\phi_{\psi})\star R_{1},\\
\hat{F}_{2}= & \phi\frac{\partial\phi}{\partial\psi}F_{4}(\phi,\phi_{\psi}),\\
\hat{F}_{3}= & \phi_{\psi}^{2}.
\end{align*}

On the points $\psi_{i}$ where $\phi$ attains its local minimum,
we have $\tau_{\zeta}=0$ and $|\tau_{\phi}|$ has uniform positive
lower bound which only depends on $\tau_{0}$. So 
\begin{align*}
|\beta_{\xi}(\psi_{i})| & \leq C(\tau_{0},C_{1},C_{2},C_{3})\|\Delta\xi\|_{C^{0}}\\
|\beta_{\xi}^{\prime}(\psi_{i})| & \leq\frac{C(\tau_{0},C_{1},C_{2},C_{3})}{\varepsilon}\|\Delta\xi\|_{C^{0}}.
\end{align*}
So from (\ref{Beta piecewise estimate}), when $\psi\in[\psi_{i},\psi_{i+1}]$,
\begin{eqnarray}
 &  & \|\beta_{\xi}(\psi)-[\beta_{\xi}(\psi_{i})h(\tau(0))((\psi-\psi_{i})\frac{\partial\phi}{\partial\psi}+v_{i}(\psi))\nonumber \\
 &  & +\beta_{\xi}^{\prime}(\psi_{i})h(\tau(0))\frac{\partial\phi}{\partial\psi}]\|_{C_{\varepsilon}^{1}}\nonumber \\
 & \leq & C(\tau_{0},C_{1},C_{2},C_{3})\varepsilon\|\Delta\xi\|_{C^{0}}.\label{beta(xi) estimate by delta xi}
\end{eqnarray}
This tells us that the dominant part of $\beta_{\xi}(\psi)$ is $\beta_{\xi}^{\prime}(\psi_{i})h(\tau(0))\frac{\partial\phi}{\partial\psi}.$

Then we have, for $\bar{\psi}\in[\psi_{k},\psi_{k+1})$ 
\begin{align*}
 & |\int_{0}^{\bar{\psi}}\varepsilon^{2}(\frac{\partial\hat{F}_{1}}{\partial\phi}\beta_{\xi}+\frac{\partial\hat{F}_{1}}{\partial\zeta}\beta_{\xi}^{\prime})d\psi|\\
\leq & |\sum_{i=0}^{k}\int_{\psi_{i-1}}^{\psi_{i}}\varepsilon^{2}(\frac{\partial\hat{F}_{1}}{\partial\phi}\beta_{\xi}+\frac{\partial\hat{F}_{1}}{\partial\zeta}\beta_{\xi}^{\prime})d\psi\\
 & +\int_{\psi_{k}}^{\bar{\psi}}\varepsilon^{2}(\frac{\partial\hat{F}_{1}}{\partial\phi}\beta_{\xi}+\frac{\partial\hat{F}_{1}}{\partial\zeta}\beta_{\xi}^{\prime})d\psi|\\
\leq & \frac{C(\tau_{0},C_{1},C_{2},C_{3})}{\varepsilon}\|\Delta\xi\|_{C^{0}}|\sum_{i=0}^{k}\int_{\psi_{i-1}}^{\psi_{i}}\varepsilon^{2}(\frac{\partial\hat{F}_{1}}{\partial\phi}\frac{\partial\phi}{\partial\psi}+\frac{\partial\hat{F}_{1}}{\partial\zeta}\frac{\partial^{2}\phi}{\partial\psi^{2}})d\psi|\\
 & +C(\tau_{0},C_{1},C_{2},C_{3})\varepsilon\|\Delta\xi\|_{C^{0}}\\
\leq & C(\tau_{0},C_{1},C_{2},C_{3})\varepsilon\|\Delta\xi\|_{C^{0}}|\sum_{i=0}^{k}(\hat{F}_{1}(\psi_{i+1})-\hat{F}_{1}(\psi_{i}))|\\
 & +C(\tau_{0},C_{1},C_{2},C_{3})\varepsilon\|\Delta\xi\|_{C^{0}}\\
\leq & C(\tau_{0},C_{1},C_{2},C_{3})\varepsilon\|\Delta\xi\|_{C^{0}}.
\end{align*}
It is easy to see 
\begin{align*}
 & |\int_{0}^{\bar{\psi}}(\varepsilon^{3}(\beta_{\xi}\phi_{\psi}+\phi\beta_{\xi}^{\prime})\mu+\varepsilon(\frac{\partial\hat{F}_{2}}{\partial\phi}\beta_{\xi}+\frac{\partial\hat{F}_{2}}{\partial\zeta}\beta_{\xi}^{\prime})\xi\\
 & +\varepsilon^{3}\omega(\frac{\partial\hat{F}_{3}}{\partial\phi}\beta_{\xi}+\frac{\partial\hat{F}_{3}}{\partial\zeta}\beta_{\xi}^{\prime}))d\psi|\\
\leq & C(\tau_{0},C_{1},C_{2},C_{3})\varepsilon\|\Delta\xi\|_{C^{0}}.
\end{align*}

The last term is 
\begin{align*}
\int_{0}^{\bar{\psi}}\varepsilon\hat{F}_{2}\Delta\xi d\psi & =\varepsilon\int_{0}^{\bar{\psi}}\phi(\frac{\partial\phi}{\partial\psi})F_{4}(\phi,\phi_{\psi})\Delta\xi d\psi.
\end{align*}
From Lemma \ref{average 0 lemma}, using the argument of the proof
of Lemma \ref{better estimate in 1st mode}, also using Corollary
\ref{F(phi)-F(phi0)}, we can prove
\[
|\int_{0}^{\bar{\psi}}\varepsilon\hat{F}_{2}\Delta\xi d\psi|\leq C(\tau_{0},C_{1},C_{2},C_{3})\varepsilon\|\Delta\xi\|_{C_{x_{0}}^{1}}.
\]

So we have 
\begin{equation}
|\beta_{\xi}(\psi_{i})|\leq C(\tau_{0},C_{1},C_{2},C_{3})\varepsilon\|\Delta\xi\|_{C_{x_{0}}^{1}}.\label{beta-xi-psi}
\end{equation}
Once we prove that 
\[
|\beta_{\xi}'(\psi_{i})|\leq C(\tau_{0},C_{1},C_{2},C_{3})\|\Delta\xi\|_{C_{x_{0}}^{1}},
\]
we can deduce (\ref{beta xi estimate}) from (\ref{beta(xi) estimate by delta xi}).

Note that
\begin{eqnarray}
 &  & |\beta_{\xi}^{\prime}(\psi_{i+1})-(\beta_{\xi}(\psi_{i})\beta_{1,i}^{\prime}(\psi_{i+1})+\beta_{\xi}^{\prime}(\psi_{i})\beta_{2,i}^{\prime}(\psi_{i+1}))|\nonumber \\
 & = & |\int_{\psi_{i}}^{\psi_{i+1}}R_{i}^{-1}(\beta_{2,i}^{\prime}(\psi)\beta_{1,i}(t)-\beta_{1,i}^{\prime}(\psi)\beta_{2,i}(t))\mathcal{L}_{\xi,\mu,\omega,\tau(0)}\beta_{\xi}(t)dt|\nonumber \\
 & \leq & C(\tau_{0},C_{1},C_{2},C_{3})\varepsilon\|\Delta\xi\|_{C^{0}}\label{beta'xi}
\end{eqnarray}
 where
\[
R_{i}(\psi)=\left|\begin{array}{cc}
\beta_{1,i}(\psi) & \beta_{2,i}(\psi)\\
\beta_{1,i}'(\psi) & \beta_{2,i}'(\psi)
\end{array}\right|
\]
is the corresponding Wronskian. From Lemma \ref{estimates of beta(j,i-1)}
and (\ref{beta(i-1)(psi i)}) we can deduce

\begin{align*}
|\beta_{\xi}^{\prime}(\psi_{i+1})-\beta_{\xi}^{\prime}(\psi_{i})| & \leq C(\tau_{0},C_{1},C_{2},C_{3})\varepsilon(\|\Delta\xi\|_{C_{x_{0}}^{1}}+\varepsilon\beta'_{\xi}(\psi_{i})).
\end{align*}
From $\beta_{\xi}^{\prime}(0)=0$, by an induction argument, we get
\[
|\beta_{\xi}^{\prime}(\psi_{i})|\leq C(\tau_{0},C_{1},C_{2},C_{3})\|\Delta\xi\|_{C_{x_{0}}^{1}}.
\]
So we have 
\[
\|\beta_{\xi}(\psi)\|_{C_{\varepsilon}^{1}}\leq C(\tau_{0},C_{1},C_{2},C_{3})\|\Delta\xi\|_{C_{x_{0}}^{1}}.
\]

\paragraph{3. $\beta_{\omega}$ estimate}

From 
\[
\frac{d}{dt}\phi_{\xi,\mu,\omega+t,\tau(0)}(\psi)|_{t=0}=\beta_{\omega}(\psi),
\]
in the same way as we do for $\beta_{\mu}$ we can get (\ref{beta omega estimate}).

\paragraph{4. Proof of (\ref{estimates for Jacobi matrix})}

\begin{eqnarray*}
 &  & \frac{d}{dt}\tau(\bar{\psi})\\
 & = & \frac{d}{dt}\int_{0}^{\bar{\psi}}\phi\phi_{\psi}\rho d\psi\\
 & = & \int_{0}^{\bar{\psi}}\varepsilon^{2}(\frac{\partial\hat{F}_{1}}{\partial\phi}\beta_{\omega}+\frac{\partial\hat{F}_{1}}{\partial\phi_{\psi}}\beta_{\omega}^{\prime})+\varepsilon^{3}(\beta_{\omega}\phi_{\psi}+\phi\beta_{\omega}^{\prime})\mu\\
 &  & +\varepsilon(\frac{\partial\hat{F}_{2}}{\partial\phi}\beta_{\omega}+\frac{\partial\hat{F}_{2}}{\partial\phi_{\psi}}\beta_{\omega}^{\prime})R(\xi)+\varepsilon^{3}\omega(\frac{\partial\hat{F}_{3}}{\partial\phi}\beta_{\omega}+\frac{\partial\hat{F}_{3}}{\partial\phi_{\psi}}\beta_{\omega}^{\prime})+\varepsilon^{3}\phi_{\psi}^{2}d\psi
\end{eqnarray*}
In the similar way as we did for $\xi$, we can get 
\begin{eqnarray*}
 &  & |\int_{0}^{\bar{\psi}}\varepsilon^{2}(\frac{\partial\hat{F}_{1}}{\partial\phi}\beta_{\omega}+\frac{\partial\hat{F}_{1}}{\partial\phi_{\psi}}\beta_{\omega}^{\prime})+\varepsilon^{3}(\beta_{\omega}\phi_{\psi}+\phi\beta_{\omega}^{\prime})\mu\\
 &  & +\varepsilon(\frac{\partial\hat{F}_{2}}{\partial\phi}\beta_{\omega}+\frac{\partial\hat{F}_{2}}{\partial\phi_{\psi}}\beta_{\omega}^{\prime})R(\xi)+\varepsilon^{3}\omega(\frac{\partial\hat{F}_{3}}{\partial\phi}\beta_{\omega}+\frac{\partial\hat{F}_{3}}{\partial\phi_{\psi}}\beta_{\omega}^{\prime})|\\
 & \leq & C(\tau_{0},C_{1},C_{2},C_{3})\varepsilon^{3}
\end{eqnarray*}
The dominant term turns out to be 
\[
\int_{0}^{\bar{\psi}}\varepsilon^{3}\phi_{\psi}^{2}d\psi.
\]
 So for $(\xi,\mu,\omega,\phi(0))$ satisfying (\ref{C1C2C3C(tau0)}),
we can choose $\varepsilon$ sufficiently small, such that there is
a uniform constant $C_{5}=C_{5}(\tau_{0})>0$, which does not depend
on $C_{1},C_{2},C_{3},\varepsilon$ such that 
\begin{equation}
\frac{\partial}{\partial\omega}\tau(\frac{L_{\Gamma}}{\varepsilon})\geq C_{5}(\tau_{0})\varepsilon^{2}.\label{eq:C5}
\end{equation}

If we perturb $\phi(0)$ from $\frac{1-\sqrt{1-4\tau(0)}}{2}$ to
$\frac{1-\sqrt{1-4\tau(0)}}{2}+t$, the linearized function is just
$\beta_{1}(\psi).$ 
\begin{align*}
 & |\frac{\partial}{\partial\phi(0)}(\tau(\frac{L_{\Gamma}}{\varepsilon})-\tau(0))|\\
= & |\int_{0}^{\frac{L_{\Gamma}}{\varepsilon}}\varepsilon^{2}(\frac{\partial\hat{F}_{1}}{\partial\phi}\beta_{1}+\frac{\partial\hat{F}_{1}}{\partial\zeta}\beta_{1}^{\prime})+\varepsilon^{3}(\phi_{\psi}\beta_{1}+\phi\beta_{1}^{\prime})\mu\\
 & +\varepsilon(\frac{\partial\hat{F}_{2}}{\partial\phi}\beta_{1}+\frac{\partial\hat{F}_{2}}{\partial\zeta}\beta_{1}^{\prime})R(\xi)+\varepsilon^{3}\omega(\frac{\partial\hat{F}_{3}}{\partial\phi}\beta_{1}+\frac{\partial\hat{F}_{3}}{\partial\zeta}\beta_{1}^{\prime})d\psi|\\
 & \leq K_{1}(\tau_{0},C_{1},C_{2},C_{3})\varepsilon,
\end{align*}
where we deal with $\varepsilon^{2}(\frac{\partial\hat{F}_{1}}{\partial\phi}\beta_{1}+\frac{\partial\hat{F}_{1}}{\partial\phi_{\psi}}\beta_{1}^{\prime})$
in the same way as we did for $\beta_{\xi}$ (note that we have Lemma
\ref{Aij estimate}, \ref{estimate for the fundamental solution}). 

The estimates for $|\frac{\partial\zeta(\frac{L_{\Gamma}}{\varepsilon})}{\partial\omega}|,\frac{\partial\zeta(\frac{L_{\Gamma}}{\varepsilon})}{\partial\phi(0)}$
can be proved from Lemma \ref{Aij estimate}, \ref{estimate for the fundamental solution}
and (\ref{beta omega estimate}). At last we proved (\ref{estimates for Jacobi matrix}).

\subsection*{Acknowledgement }

The author shows his great respect and thanks to Professor Frank Pacard who gave him this problem as well as many deep insights, including how to deal with the high mode, 1st mode and how to do fixed point argument. The author owes him a lot.
 The author thanks Professor Gang Tian for long time help,
Professor Jie Qing for discussions on the ODE and Jinxing Xu for discussions
on Appendix \ref{(APP)Aij estimate}. The author was supported
by FMJH and partially supported by NSFC grant NO.11301284, NO.11571185, NO.11871283.

\bibliographystyle{plain}
\bibliography{Final-Version.bbl}

\end{document}